\documentclass[11pt]{article}

\usepackage[utf8]{inputenc}
\usepackage[T1]{fontenc}
\usepackage[british]{babel}
\usepackage[a4paper]{geometry}
\usepackage{amsmath,amssymb,amsthm}
\usepackage[shortlabels]{enumitem}
\usepackage[all]{xy}
\usepackage{tikz}
\usepackage{multicol}
\usepackage{float}
\usepackage{graphicx}
\usepackage{wrapfig}
\usetikzlibrary{automata, arrows, arrows.meta, positioning}

\theoremstyle{plain}
\newtheorem{theo}{Theorem}[section]
\newtheorem{lem}[theo]{Lemma}
\newtheorem{cor}[theo]{Corollary}
\newtheorem{prop}[theo]{Proposition}
\theoremstyle{definition}
\newtheorem{defi}[theo]{Definition}

\newtheorem{rem}[theo]{Remark}

\theoremstyle{plain}

\theoremstyle{definition}

\theoremstyle{plain}
\newtheorem{theorem}{Theorem}

\title{The $\ell$-modular local theta correspondence}
\author{Justin Trias}
\date{}

\begin{document}

\maketitle

\begin{abstract} We study the validity of the local theta correspondence over a non-archimedean local field in the context of modular representation theory \textit{i.e.} for representations with coefficient fields of positive characteristic. For a symplectic-orthogonal or a unitary-unitary dual pair over a $p$-adic field, we obtain a bijective correspondence, as long as the characteristic of the coefficient field is large enough compared to the size of the dual pair, and call it the modular local theta correspondence. \end{abstract}

\section*{Introduction}

The classical theta correspondence for complex representations is at the source of significant applications in number theory and automorphic forms, as it helped establishing the Langlands correspondence for some groups \cite{gan_takeda_gsp4} and has deep relations to special values of local constants \cite{waldspurger_shimura,waldspurger_shimura_quaternions} and formal degree \cite{gan_ichino}. The global theta correspondence pieces together data obtained from the local theta correspondence, coming from representations of real and $p$-adic groups.

On the other hand, the study of modular representation theory of $p$-adic groups originates in \cite{vigneras_gl_2} and was motivated by conjectures of Serre related to congruences between automorphic representations. It has been an active topic of research since then. We examine in this work the validity of a local theta correspondence in the modular setting, since it can lead to new arithmetic applications involving congruences or proving new instances of the local Langlands correspondence in the modular setting.

The local theta correspondence takes place in the following framework. We first need to introduce the Weil representation, which is at the heart of this correspondence.

\paragraph{The Weil representation.} Let $F$ be a $p$-adic field with residual cardinality $q$. The main body of the paper also deals with non-archimedean local fields of odd positive characteristic, but we focus in this introduction on the $p$-adic case for simplicity. Let $W$ be a symplectic space over $F$ of finite dimension. The metaplectic group $\textup{Mp}(W)$ is the unique non-trivial central extension of $\textup{Sp}(W)$ by $\{ \pm 1\}$.

Let $R$ be an algebraically closed field of characteristic $\ell \neq p$. The field $R$ will be our coefficient field in the $\ell$-modular setting -- we always exclude the case $\ell = p$ because it requires completely different techniques. Fix a non-trivial additive character $\psi : F \to R^\times$. We recall some facts about the Weil representation $\omega_{\psi}^R \in \textup{Rep}_R(\textup{Mp}(W))$ with coefficients in $R$ as defined in \cite{trias_theta1}. Over the complex numbers, the Weil representation is usually pinned down (see \cite[Th 3.5]{rao}) using its unitary structure and some positivity condition. Similarly, the complex Fourier transforms are canonically normalised thanks to some ambient positivity (\textit{e.g.} $\sqrt{q} \in \mathbb{R}_+$) that typically determines square roots of positive quantities. For positive characteristic fields, speaking of unitarity does not make sense as there is no meaningful sense for positivity. Moreover, there is no canonical normalisation of Fourier transforms, because we always have to make an arbitrary choice of sign when picking up square roots. In order to solve these issues, the construction in \cite{trias_theta1} provides a more natural normalisation for Fourier transforms by the interplay of the non-normalised Weil factor and builds the Weil representation in a very direct way. A special feature appears when $\ell = 2$ as $\omega_\psi^R \in \textup{Rep}_R(\textup{Sp}(W))$ in this case.

Let $(U(V_1),U(V_2))$ be a type I dual pair in $\textup{Sp}(W)$. Lifting $U(V_1) \times U(V_2) \to \textup{Sp}(W)$ to $H_1 \times H_2 \to \textup{Mp}(W)$ yields a pullback $\omega_{\psi,H_1,H_2}$ of the Weil representation to the dual pair. For the sake of exposition, we will assume the Weil representation is a representation of $U(V_1) \times U(V_2)$ and simply write $\omega_\psi$. This assumption is harmless if neither $H_1$ nor $H_2$ is the metaplectic group and it avoids lengthy technical considerations about splittings in the metaplectic group.

\paragraph{The bijective theta map.} Investigating the local theta correspondence in the modular setting is asking whether a certain set of statements, that we now define, are true or not. Let $\pi_1 \in \textup{Irr}_R(U(V_1))$ and let $(\omega_\psi)_{\pi_1}$ be the biggest $\pi_1$-isotypic quotient of the Weil representation. By definition $(\omega_\psi)_{\pi_1}$ factors any morphism $\omega_\psi \to \pi_1$ in $\textup{Rep}_R(U(V_1))$. This quotient still carries an action of $U(V_2)$ classically denoted by:
$$(\omega_\psi)_{\pi_1} \simeq \pi_1 \otimes_R \Theta(\pi_1) \textup{ where } \Theta(\pi_1) \in \textup{Rep}_R(U(V_2)).$$
We first consider the statement:
\begin{itemize}[align=left]
\item[(Fin)] $\Theta(\pi_1)$ has finite length;
\end{itemize}
If (Fin) holds, the maximal semisimple quotient $\theta(\pi_1)$ of $\Theta(\pi_1)$, also called cosocle, is well-defined. Assuming (Fin), we add the following two statements, where the notation $\theta(\pi_1) \cap \theta(\pi_1')$ means counting common irreducible factors with multiplicities:
\begin{itemize}[align=left]
\item[(Irr)]  \ $\theta(\pi_1)$ is irreducible or $0$;
\item[(Uni)] $\theta(\pi_1) \cap \theta(\pi_1') \neq 0$ implies $\pi_1 \simeq \pi_1'$.
\end{itemize}
We want to study the validity of (Fin)-(Irr)-(Uni) for all $\pi_1 \in \textup{Irr}_R(U(V_1))$ together with the other three reverse statements \reflectbox{(Fin)}-\reflectbox{(Irr)}-\reflectbox{(Uni)} going from $U(V_2)$ to $U(V_1)$. These six statements altogether are redundant though, since \reflectbox{(Irr)} and \reflectbox{(Uni)} can be deduced from the other four. We call $(\Theta / R)$ the collection of these six -- or four -- statements. When they are true, we say that the theta correspondence holds over $R$ and the set $\{ (\pi_1,\pi_2) \ | \ \omega_\psi \twoheadrightarrow \pi_1 \otimes_R \pi_2 \}$ is the graph of a bijection, denoted $\theta$ in both directions, on the subsets $\textup{Irr}_R^\theta(U(V_i)) = \{ \pi_i \ | \ \omega_\psi \twoheadrightarrow \pi_i\}$ of irreducible representations contributing:
$$\textup{Irr}_R^\theta(U(V_1)) \overset{\theta}{\simeq} \textup{Irr}_R^\theta(U(V_2)).$$

\paragraph{Banal theta correspondence.} In this paper, our main goal is to prove:

\begin{theorem} \label{banal_non_explicit_theta_thm_intro} We assume $(U(V_1),U(V_2))$ is not quaternionic. Then for all but finitely many $\ell$, the theta correspondence holds over $R$. \end{theorem}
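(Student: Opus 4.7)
The plan is to prove Theorem A by reduction to the theta correspondence over characteristic zero. When $R = \overline{\mathbb{F}_\ell}$, the representation theory of $U(V_1)$ and $U(V_2)$ with $R$-coefficients is close to the $\overline{\mathbb{Q}_\ell}$-theory precisely when $\ell$ is \emph{banal} with respect to the dual pair -- roughly, when $\ell$ does not divide the pro-orders of the finite reductive quotients of the parahoric subgroups of $U(V_1) \times U(V_2)$. For all but finitely many $\ell$ banality holds, so it suffices to transfer the correspondence by an integral lift argument.

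Concretely, I would first construct an integral form $\omega_\psi^\circ$ of the Weil representation over $\overline{\mathbb{Z}_\ell}$: the construction in \cite{trias_theta1} is functorial in the coefficient ring, and since $F$ is a $\mathbb{Q}_p$-vector space the image of the additive character $\psi$ consists of $p$-power roots of unity and hence lies in $\overline{\mathbb{Z}_\ell}^\times$. The non-quaternionic assumption splits the metaplectic cover over $U(V_1) \times U(V_2)$, yielding an $\overline{\mathbb{Z}_\ell}[U(V_1) \times U(V_2)]$-module whose base changes to $\overline{\mathbb{Q}_\ell}$ and to $R$ are the two Weil representations. For $\ell$ banal, every $\pi_1 \in \textup{Irr}_R(U(V_1))$ admits an irreducible $\overline{\mathbb{Q}_\ell}$-integral lift $\widetilde{\pi_1}$ reducing modulo $\ell$ to $\pi_1$. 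Applying the classical theta correspondence -- Howe duality, as established by Waldspurger for odd residual characteristic and by Gan--Takeda, Gan--Sun and others for the non-quaternionic cases in residue characteristic $2$ -- to $\widetilde{\pi_1}$ gives (Fin), (Irr) and (Uni) over $\overline{\mathbb{Q}_\ell}$. Forming the $\widetilde{\pi_1}$-isotypic quotient of $\omega_\psi^\circ$ and base-changing to $R$ should then transfer these statements to $\pi_1$, provided reduction modulo $\ell$ is compatible with the formation of both $\Theta(\widetilde{\pi_1})$ and its cosocle.

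The main obstacle lies in this last compatibility and involves two uniformity issues. First, the exceptional set of $\ell$ must not depend on $\pi_1$, whereas natural banality thresholds involve the depth or cuspidal support of $\pi_1$, which are a priori unbounded. I would address this by reducing via the Kudla filtration to the case where $\pi_1$ is cuspidal -- exploiting compatibility of $\Theta$ with Jacquet modules -- so that only banality thresholds for cuspidals of the ambient fixed groups enter the argument. Second, even granted an $\overline{\mathbb{Z}_\ell}$-lattice in $\widetilde{\pi_1} \otimes \Theta(\widetilde{\pi_1})$, reducing its cosocle need not yield the cosocle of the reduction unless one controls the endomorphism algebra of the isotypic quotient -- essentially, one must show that the big theta lift is admissible over the integral coefficient ring. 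This is where the non-quaternionic hypothesis plays a decisive role, through the explicit description of $\omega_\psi^\circ$, to guarantee the admissibility needed for cosocles to commute with reduction modulo $\ell$; the reverse statements then follow by interchanging the roles of $U(V_1)$ and $U(V_2)$.
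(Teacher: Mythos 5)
Your proposal takes a genuinely different route from the paper, and unfortunately the route does not work: the missing step you flag at the end is not a technical gap to be filled but the central obstruction, and admissibility of the big theta lift over the integral ring does not resolve it.

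The isotypic quotient $V_{\pi_1} = V/\bigcap_f \ker(f)$ is built from the set $\textup{Hom}_{U(V_1)}(V,\pi_1)$, and this set behaves poorly under specialisation: morphisms over $\overline{\mathbb{Z}_\ell}$ may vanish modulo $\ell$, and morphisms over $\overline{\mathbb{F}_\ell}$ need not lift. Consequently $\Theta^\circ(\widetilde{\pi_1})\otimes\overline{\mathbb{F}_\ell}$ and $\Theta(\pi_1)$ need not coincide, even granting a lattice $\omega_\psi^\circ$, and even granting that the passage $\widetilde{\pi_1}\leftrightarrow\pi_1$ is a bijection in the banal range. The counter-example in Section~\ref{counter_example_sec} (dual pair $(\textup{SL}_2, O_{2(m+2)})$, $q\equiv 1\bmod\ell$) is precisely a case where the complex lift $\Theta^{\mathbb{C}}(\chi_2,V_1)=\mathfrak{I}^{\mathbb{C}}(s)$ is irreducible yet its modular counterpart has length~$2$; that case is non-banal, but the point is that irreducibility does not transfer formally. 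Moreover (Uni) is not addressed at all: two non-isomorphic $\pi_1,\pi_1'$ with distinct characteristic-zero theta lifts could still produce a common constituent after reduction, and nothing in your outline forbids this. Finally, the non-quaternionic hypothesis does not enter to split the covers over $U(V_1)\times U(V_2)$ (it never splits when one factor is odd orthogonal, cf.\ Section~\ref{lifts_of_dual_paris_sec}); the paper uses it for the MVW involution, which your argument does not invoke.

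There is also a structural sign that something is missing. If banality alone sufficed, the theorem would come with an explicit bound on $\ell$ in terms of pro-orders. The introduction stresses that the bound in Theorem~\ref{banal_non_explicit_theta_thm_intro} is \emph{not} explicit because it relies on a generic-irreducibility argument: the paper never attempts a global transfer of the correspondence via reduction mod $\ell$. Instead, it runs the Gan--Takeda induction directly over $R$ (Sections~\ref{proof_modular_theta_sec}--\ref{proof_banal_theta_sec}), replacing second adjunction by Casselman duality and using the $(E,s,t)$-banality conditions to control the filtration of $\mathfrak{I}(u)$; the \emph{only} place where reduction mod $\ell$ appears is in Proposition~\ref{H_holds_for_almost_all_l_prop}, to show that the single representation $\Theta(\chi_2,V_1\oplus(-V_1))$ is irreducible for $\mathcal{P}$ in a Zariski-dense open set of $\textup{Spec}(\mathcal{A})$, by upper semi-continuity of the length of $L\otimes\overline{k_{\mathcal{P}}}$ on the spectrum. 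That generic irreducibility yields hypothesis (H) for all but finitely many $\ell$, which, combined with the conditional Theorem~\ref{conditional_banal_theta_thm} (valid when $\ell$ is strongly banal and (H) holds), gives Theorem~A. Your approach would need an analogue of this argument at the level of every $\Theta(\pi_1)$ simultaneously and would have to face the isotypic-quotient issue head-on, which is exactly what the paper's strategy is designed to avoid.
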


The case $\ell = 0$ corresponds to the classical theta correspondence and is therefore always included in this ``all but finitely many'' result. In the complex case, or more generally over characteristic zero fields, it has been a continued effort since the late 1970s \cite{kudla_invent,mvw,wald,gt,gan_sun} to prove this result for all type I dual pairs and all residue characteristics $p$. In the modular case, we have just stated that the correspondence is valid for almost all coefficient fields $R$. However, the theta correspondence does not hold for arbitrary $R$ as the following counter-example from Section \ref{counter_example_sec} shows.

\begin{theorem} Consider the dual pair $(\textup{SL}_2(F),O_{2(m+2)}(F))$ and suppose $\ell$ divides $q-1$. Then $\Theta(1_{U(V_2)}) \simeq \pi_1 \oplus \pi_1'$ has length $2$ with $\pi_1 \not\simeq \pi_1'$. \end{theorem}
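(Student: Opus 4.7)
The plan is to realise the Weil representation on the Schr\"odinger model, compute $\Theta(1_{O(V_2)}) = (\omega_\psi)_{O(V_2)}$ explicitly, and identify its $\textup{SL}_2(F)$-structure with a principal series whose behaviour modulo $\ell$ is controlled through the Iwahori--Hecke algebra.

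First, I would realise $\omega_\psi$ on $\mathcal{S}(V_2,R)$, with $O(V_2)$ acting by translation $g \cdot f(v) = f(g^{-1}v)$ and $\textup{SL}_2(F)$ through the standard formulas: the unipotent $u(x)$ multiplies by $\psi(xq(v))$, the diagonal $t(a)$ acts by $f(v) \mapsto |a|^{m+2} \chi_{V_2}(a) f(av)$, and the Weyl element acts by the normalised Fourier transform. By definition $\Theta(1_{O(V_2)})$ is the $O(V_2)$-coinvariants of this space with its residual $\textup{SL}_2(F)$-action. By Witt's theorem the $O(V_2)$-orbits on $V_2$ are parametrised by $q(v) \in F$, giving the $O(V_2)$-stable stratification $\{0\} \subset q^{-1}(0) \subset V_2$; computing the coinvariants stratum by stratum, the open stratum assembles via the pushforward along $q$ into a function space on $F^\times$ supporting the principal series $\textup{Ind}_B^{\textup{SL}_2}(\chi)$ with $\chi = |\cdot|^{m+1} \chi_{V_2}$, while the two closed strata contribute at most one-dimensional corrections that are absorbed into the final identification $\Theta(1) \simeq \textup{Ind}_B^{\textup{SL}_2}(\chi)$.

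Taking $V_2$ split with trivial discriminant so that $\chi_{V_2} = \mathbf{1}$, the hypothesis $\ell \mid q-1$ makes $|\cdot|$ trivial modulo $\ell$, whence $\chi \equiv \mathbf{1}$ and $\Theta(1) \simeq \textup{Ind}_B^{\textup{SL}_2}(\mathbf{1})$ over $R$. Its Iwahori-invariants form a two-dimensional module over the Iwahori--Hecke algebra $\mathcal{H}_I$ on which the quadratic relation $T_s^2 = (q-1)T_s + q$ specialises to $T_s^2 = 1$. For $\ell \neq 2$ the algebra $R[T_s]/(T_s^2 - 1) \simeq R[\mathbb{Z}/2]$ is semisimple, so the Iwahori-module splits as the direct sum of the two one-dimensional $\pm 1$-eigenspaces of $T_s$; by Borel--Casselman these correspond to two distinct irreducible representations $\pi_1, \pi_1'$ of $\textup{SL}_2(F)$, giving $\Theta(1) \simeq \pi_1 \oplus \pi_1'$ with $\pi_1 \not\simeq \pi_1'$ as claimed.

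The hard part will be the explicit identification of $\Theta(1)$ with the full principal series. The open stratum calculation is clean, but the $\textup{SL}_2(F)$-action does not preserve the orbit filtration (the Weyl element acts by Fourier transform and mixes the geometric pieces), so one must check through Jacquet modules along $B$ or through an intertwining argument that the closed-stratum contributions from $\{0\}$ and from the punctured isotropic cone $q^{-1}(0) \setminus \{0\}$ do not add extra irreducible constituents to the principal series. The final Hecke-algebra step is essentially standard but tacitly needs $\ell \neq 2$ to extract the direct sum from length two.
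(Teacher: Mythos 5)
Your proposed strategy is to compute $\Theta(1_{O(V_2)})$ directly as the $O(V_2)$-coinvariants of the Schrödinger model, identify it with the full principal series $\textup{Ind}_B^{\textup{SL}_2}(\chi)$, and then analyse that principal series via the Iwahori--Hecke algebra. The final Hecke-algebra step is fine (for $\ell \neq 2$), but the identification of $\Theta(1)$ with the \emph{full} principal series is where your argument has a genuine gap.

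The orbit-stratification you set up — $\{0\} \subset q^{-1}(0) \subset V_2$ with pushforward along $q$ on the open stratum — is essentially a re-derivation of the Rallis argument (Theorem \ref{Rallis_argument_thm} in the paper, here applied with the roles of $V_1$ and $V_2$ reversed since $V_1$ is split). What that argument actually produces is an \emph{injection} of the coinvariants into the degenerate principal series $\mathfrak{I}(s)$, not an isomorphism: the map $f \mapsto (h \mapsto (h\cdot f)(0))$ has image consisting of functions supported on the compact support of $f$ after pushforward, and nothing in the stratification argument rules out the image being a proper submodule. You yourself flag that the Weyl element does not preserve the orbit filtration, and "the closed strata contribute at most one-dimensional corrections that are absorbed" is precisely the step that needs a proof, not an assertion. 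This is the crux of the counter-example: in the modular setting $\mathfrak{I}^R(s)$ is reducible, and any proper submodule is a candidate for $\Theta(1)$, so you cannot simply identify the two.

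The paper closes this gap by a two-step argument that your proposal omits entirely. First, it invokes Kudla--Rallis to establish that $\mathfrak{I}^{\mathbb{C}}(s)$ is \emph{irreducible} in the stable range $m_2 \geq 2$, so over $\mathbb{C}$ the injection $\Theta^{\mathbb{C}} \hookrightarrow \mathfrak{I}^{\mathbb{C}}(s)$ is forced to be surjective. Second, it constructs an integral model $\Theta^{\mathcal{A}}$ over $\mathcal{A} = \mathbb{Z}[1/p,\zeta_{p^\infty}]$ (built in the proof of Proposition \ref{H_holds_for_almost_all_l_prop}) and uses the Brauer--Nesbitt principle to show that $\Theta^R$ and $\mathfrak{I}^R(s)$ have the same composition factors after reduction, whence $\Theta^R = \mathfrak{I}^R(s)$ by the Rallis embedding. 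Your argument would need to supply a replacement for both of these steps. The Iwahori--Hecke analysis that follows is in the same spirit as the paper's reduction to the $\textup{SL}_2(F)$ picture, and correctly shows length $2$ and inequivalence of the factors for $\ell \neq 2$ (the paper's case (2) with $\ell \neq 2$), though as you note the $\ell = 2$ case requires a separate argument since $R[T_s]/(T_s-1)^2$ is not semisimple — the paper handles this by noting the trivial representation splits off as a direct summand.
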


In this counter-example, both (Uni) and \reflectbox{(Irr)} are failing. It contradicts a key feature of the complex setting that takes place in the so-called stable range, that is when one group is much larger than the other. Roughly speaking, most theta lifts are irreducible in the stable range. This counter-example actually uses the knowledge we have over the complex numbers in the stable range about the reducibility points of parabolic induced representations, by comparing it to reducibility points in the modular setting. The fact that no explicit bound on $\ell$ appears in our Theorem \ref{banal_non_explicit_theta_thm_intro} is deeply related to these irreducibility questions in the stable range: we only obtain positive answers to them in Section \ref{proof_banal_theta_sec} by a generic irreducibility argument \textit{i.e.} over a non-empty (dense) open set of $\textup{Spec}(\mathbb{Z}[1/p])$. We can produce an explicit bound on $\ell$ if we assume some irreducibility in the stable range. We assume again that $(U(V_1),U(V_2))$ is not quaternionic and we denote by $V_1^\square$ the doubled space $V_1 \oplus (-V_1)$.

\begin{theorem} \label{banal_explicit_theta_thm_intro} Suppose $\Theta(1_{U(V_2)}) \in \textup{Rep}_R(U(V_1^\square))$ is irreducible. Then if the order of $q$ modulo $\ell$ is larger than $4 \ \textup{max}(\textup{dim}(V_1),\textup{dim}(V_2))$, the theta correspondence holds over $R$. \end{theorem}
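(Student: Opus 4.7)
My strategy is to import the doubling argument of Gan--Takeda into the modular setting, using the irreducibility of $\Theta(1_{U(V_2)})$ on $U(V_1^\square)$ as a substitute for the characteristic-zero stable-range input of Rallis, and using the banality bound on the order of $q$ modulo $\ell$ to keep all parabolic inductions and Jacquet modules well-behaved. The whole argument takes place inside the single irreducible $U(V_1^\square)$-module $\Theta(1_{U(V_2)})$, whose restriction to a Siegel-type Levi is used to compute Hom spaces between arbitrary big theta lifts.

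The algebraic core is the see-saw
\[
\begin{array}{ccc} U(V_1^\square) & & U(V_2) \times U(V_2) \\ | & \times & | \\ U(V_1) \times U(V_1) & & U(V_2) \end{array}
\]
coming from the decomposition $V_1^\square = V_1 \oplus (-V_1)$ and the diagonal embedding $U(V_2) \hookrightarrow U(V_2) \times U(V_2)$. Using the compatibility $\omega_{V_1^\square, V_2}|_{U(V_1) \times U(V_1) \times U(V_2)} \simeq \omega_{V_1,V_2} \otimes \omega_{V_1,V_2}^\vee$ (twisting by the sign $-V_1$ converts $\omega$ into $\omega^\vee$), the see-saw identity gives for all $\pi_1, \pi_1' \in \textup{Irr}_R(U(V_1))$
\[ \textup{Hom}_{U(V_1) \times U(V_1)}\bigl(\Theta(1_{U(V_2)}),\, \pi_1 \boxtimes (\pi_1')^\vee\bigr) \;\simeq\; \textup{Hom}_{U(V_2)}\bigl(\Theta(\pi_1),\, \Theta(\pi_1')\bigr). \]
From here the three statements are attacked in order. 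For (Fin), I would run Kudla's filtration on the Jacquet module of $\omega_{V_1,V_2}$ along each maximal parabolic and observe that, under the bound $\textup{ord}_\ell(q) > 4\max(\dim V_1,\dim V_2)$, every successive quotient is a parabolic induction of a product of a banal $\textup{GL}$-representation with a smaller Weil representation, which forces admissibility and finite length of $\Theta(\pi_1)$ by induction on $\dim V_2$. For (Uni), given a common irreducible constituent $\pi_2$ of $\theta(\pi_1) \cap \theta(\pi_1')$, combining the MVW-type involution supplied by \cite{trias_theta1} with contragredients produces a non-zero element of the right-hand Hom space; transported through the see-saw, it yields a non-zero $U(V_1) \times U(V_1)$-equivariant map from $\Theta(1_{U(V_2)})$ to $\pi_1 \boxtimes (\pi_1')^\vee$, and a Jacquet-module analysis of the irreducible $\Theta(1_{U(V_2)})$ along the Siegel-type parabolic with Levi $\textup{GL}(V_1)$ forces $\pi_1 \simeq \pi_1'$. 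Specialising to $\pi_1 = \pi_1'$ in the same identity gives $\dim \textup{End}_{U(V_2)}(\Theta(\pi_1)) \leq 1$ and hence (Irr).

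The main obstacle is exactly the Jacquet-module calculation of the irreducible $U(V_1^\square)$-module $\Theta(1_{U(V_2)})$ along $U(V_1) \times U(V_1)$: one needs the characteristic-zero multiplicity-one behaviour to persist modulo $\ell$. In the modular setting, parabolic induction can acquire extra irreducible constituents as soon as the order of $q$ mod $\ell$ drops below the degree of the inducing data, and the constant $4\max(\dim V_1,\dim V_2)$ is calibrated precisely to rule this out uniformly for every parabolic appearing in Kudla's filtration of $\omega_{V_1^\square,V_2}$ and in the descent from $U(V_1^\square)$ to $U(V_1) \times U(V_1)$. Verifying this bound, and checking that the MVW involution transports correctly through the $\ell$-modular normalisation of the Weil representation provided in \cite{trias_theta1}, is where the technical work will concentrate.
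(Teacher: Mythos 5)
Your proposal correctly identifies the two main inputs---the see-saw identity involving the irreducible $\Theta(1_{U(V_2)},V_1^\square)$, and the size restriction on $\textup{ord}_\ell(q)$---and these match the paper's: the irreducibility hypothesis yields (via Lemma~\ref{H_is_implied_by_irred_Theta_lem}) the surjection $\mathfrak{I}(-s) \twoheadrightarrow \Theta(1_{U(V_2)},V_1^\square)$, i.e.\ hypothesis (H), and the numerical bound guarantees the strong-banality condition used throughout Section~\ref{proof_modular_theta_sec}. However, your argument for (Irr) and (Uni) contains a genuine gap. You assert that the Jacquet-module calculation of the irreducible $U(V_1^\square)$-module $\Theta(1_{U(V_2)})$ along the Siegel parabolic forces $\pi_1 \simeq \pi_1'$. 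This is false, even over $\mathbb{C}$. Restricted to $U(V_1) \times U(V_1)$, the degenerate principal series $\mathfrak{I}(-s)$---and hence, under (H), the theta lift---carries the Kudla--Rallis filtration of Theorem~\ref{filtration_degenerate_principal_series_thm}. Only the bottom stratum $\mathfrak{I}_0$ is a generalised regular representation pairing $\pi_1$ with its canonical twist; the higher strata $\mathfrak{I}_t$, $t>0$, are parabolic inductions from smaller isometry groups and can perfectly well produce $U(V_1)\times U(V_1)$-quotients $\pi_1 \boxtimes \pi_1''$ with $\pi_1''$ unrelated to $\pi_1$. No bound on $\textup{ord}_\ell(q)$ removes these strata; they already exist over $\mathbb{C}$, where the multiplicity-one statement for the see-saw only holds after one has settled the correspondence for smaller dual pairs. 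Representations $\pi_1$ receiving a contribution from some $t>0$---those \emph{on the boundary} of $\mathfrak{I}(-s)$ in the sense of Section~\ref{boundary_of_I_u_sec}---cannot be disposed of by your single see-saw computation.

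The paper resolves this by a strong induction on the size of the dual pair, which is entirely absent from your proposal. Theorem~\ref{rep_outside_boundary_thm} handles representations not on the boundary essentially by the argument you describe. For boundary representations, Proposition~\ref{violating_theta_at_the_same_t_prop} shows that a failure of (Uni) forces both representations onto the boundary at the same index $t>0$; Lemma~\ref{irred_socle_criterion_lemma} and Proposition~\ref{theta_lifts_irreducible_socles_prop} then use the $(E,s,t)$-banal condition (this is where your bound $4\max(\dim V_1,\dim V_2)$ is actually consumed, not in a multiplicity-one persistence statement) to produce via a maximal-exponent socle analysis a unique descent of each boundary $\pi_1$ to a $\sigma_1$ on a smaller dual pair, in a way that preserves the relevant Hom spaces; the heredity results for irreducibility and uniqueness then reduce the claim to the induction hypothesis. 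On (Fin): your induction on $\dim V_2$ also needs a cuspidal base case, which the paper supplies via projectivity and injectivity of cuspidals in the banal range (Corollary~\ref{lifts_of_cuspidals_banal_setting_cor}) before propagating through Kudla's filtration (Proposition~\ref{finite_length_from_cuspidals_prop}).
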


This bound is not the sharpest as we want to avoid the introduction of cumbersome notation, but we refer the reader to Section \ref{proof_banal_theta_sec} for the sharper bound we obtain. The irreducibility assumption we introduced in the theorem is actually quite deep and resulted from a series of papers \cite{kudla_rallis,kudla_sweet,sweet,ban_jantzen,yamana}. It is beyond the scope of the present article to determine for which explicit $\ell$ this irreducibility assumption survives, even though we would expect it to be superfluous with the explicit bound we gave.

\paragraph{Dealing with $(\Theta/R)$.} We will now explain the methods we use in the article, as well as reviewing contributions in the classical setting, and explain which proofs can be carried out easily, and which proofs seriously break down in the modular setting and need replacing, pointing out similarities and obstacles along the way. 

\paragraph{Proving (Fin).} In the complex setting, this is obtained in \cite{kudla_invent} as a consequence of a very important result in the cuspidal case, which asserts $\Theta(\pi_1) = \theta(\pi_1)$ is irreducible or $0$ when $\pi_1$ is cuspidal. This is a crucial and unavoidable step in the proof of finite length over the complex numbers. Unfortunately, the argument relies on the projectivity/injectivity of cuspidal representations, which fails in general over $R$ without further assumptions. There are two ways to resolve this matter.

If the characteristic $\ell$ of $R$ is large enough compared to the size of the dual pair $((U(V_1),U(V_2))$, then cuspidal representations are projective/injective and we can still carry out the proofs from \cite{kudla_invent}. In this situation, we say $\ell$ is banal -- this only makes sense \textit{with respect to} the size of the dual pair -- and the bound is very explicit in terms of the pro-orders of the dual pair. In our Section \ref{Rallis_Kudla_filtrations_consequences_sec}, we make sure to push as far as we can the tools from \cite{kudla_invent} for general $R$, before stating sharper corollaries when $\ell$ is banal. This approach allows us to introduce Witt tower techniques, as well as first occurrence index, over any $R$.

When the characteristic $\ell$ is non-banal with respect to $(U(V_1),U(V_2))$, we describe new ways to obtain finite length. These appear in Appendix \ref{finite_length_app}, to avoid breaking the flow of reading. We actually give two different results and both rely on the so-called finiteness of Hecke algebras. However, this is currently unknown for the metaplectic group, even though our strategy would easily include it if this finiteness result was known, together with the depth decomposition.

The first proof uses the lattice model, therefore $p$ must be odd. We prove some finiteness properties for the Weil representation: it is (locally) finitely generated and admissible, not over $R$, but over larger rings such as the Bernstein centres of either of the two groups in the dual pair. This implies the finite length of isotpyic quotients. The second proof shows that $\Theta(\pi_1)$ has finite length if it is finitely generated. It uses the so-called generalised doubling method and there is no restriction on $p$. The idea is rather similar to the first proof by proving a certain representation is (locally) finitely generated and admissible over some Bernstein centre type rings.

\paragraph{About (Irr)+(Uni).} Following the discussion of the previous paragraph, there remain a few cases in which (Fin) is not known. Therefore, the statements (Irr) and (Uni) are not well-defined. We prefer to replace them by:
\begin{itemize}[align=left]
\item[(Irr')] \ if $\Theta(\pi_1) \twoheadrightarrow \pi_2$ with $\pi_2$ is semsimple, then $\pi_2$ has length at most $1$;
\item[(Uni')] if $\Theta(\pi_1) \twoheadrightarrow \pi_2 \twoheadleftarrow \Theta(\pi_1')$ with $\pi_2$ non-zero semisimple, then $\pi_1 \simeq \pi_1'$.
\end{itemize}
In order to study these statements, we follow \cite{gt} very closely. This approach excludes quaternionic dual pairs however -- due to the use of the MVW involution. Again, we push as much as we can their techniques over general $R$ \textit{i.e.} without assuming $\ell$ is banal. In various places though, their proofs need rewriting for at least two reasons. On the one hand, the lack of (Fin) is an obstacle to the existence of the cosocle. On the other hand, second adjunction is not known for metaplectic groups, so we have to take extra care and make sure second adjunction can be replaced, everywhere it appears, by Casselman duality, which is valid for all admissible representations. 

Our strategy is the same as \cite{gt}, which is a proof by strong induction, and occupies the whole of Section \ref{proof_modular_theta_sec}. The boundary of degenerate principal series, first introduced in \cite{kudla_rallis_boundary}, play a central role in the proof. We refer to Section \ref{boundary_of_I_u_sec} for the precise definition of the boundary.

The representations outside the boundary are the easiest to deal with. We study them in Theorem \ref{rep_outside_boundary_thm} provided an hypothesis related to the doubling method holds. We call this hypothesis (H) in the main body of the paper. It is always satisfied in the complex setting and Lemma \ref{H_is_implied_by_irred_Theta_lem} shows that (H) is weaker than the irreducibility hypothesis of Theorem \ref{banal_explicit_theta_thm_intro}. Regarding representations on the boundary, they are addressed thanks to the key Proposition \ref{theta_lifts_irreducible_socles_prop}, which allows to reduce the size of the dual pair and use the induction hypothesis to prove the correspondence. We show the induction steps for irreducibility and uniqueness in Section \ref{proof_banal_theta_sec}. The final form of our banal theta correspondence appears in two forms, in Theorem \ref{conditional_banal_theta_thm} and Theorem \ref{non_explicit_banal_theta_thm}. They have the same flavour as Theorem \ref{banal_explicit_theta_thm_intro} and Theorem \ref{banal_non_explicit_theta_thm_intro} \textit{i.e.} the first is conditional but explicit whereas the second is unconditional but not explicit.

\paragraph{Organisation of the paper.} The first section consists of preliminaries, recalling usual definitions and results in the framework of $\epsilon$-hermitian spaces, modular representation theory, covering groups and the Weil representation. We do our best to avoid talking about splittings of (the lifts of) dual pairs in the metaplectic group, as addressing precisely these questions requires a lot of technicality. We decide instead to rely on our preliminaries, especially the section on generalised (genuine) regular representations, as well as on our first appendix, to bypass these considerations. The second section focuses on proving the MVW involution exists in the modular setting. According to the early 2025 version of \cite{theta_book}, there might be a gap in the literature regarding the MVW involution for metaplectic groups over function fields. Our method is different from the complex setting though: we need to consider all conjugacy classes of trace characters due to the lack of Harish-Chandra's regularity theorem in the modular setting. We can prove the existence of the MVW involution under a certain tameness assumption, which means in the function field case that the characteristic has to be large enough. We hope our proofs help to partially fill in this potential gap, or to shed some light on the original strategy of \cite{mvw}, as they are valid over general coefficients fields. The third section introduces the classical tools appearing in \cite{kudla_invent} to study the correspondence, such as Witt towers, Rallis' argument, the two filtrations -- named after their authors in this manuscript -- as well as the generalised doubling method. The fourth section follows rather usual strategies based on the tools of the previous section. It deals with the persistence principle, the first occurrence index, the cuspidal case and the preservation of the cuspidal support. The last section is dedicated to the proof of the theta correspondence in the banal setting, including a counter-example in the non-banal setting. Appendix~\ref{finite_length_app} explores new strategies to prove finite length. We very briefly mentioned it, but the goal of Appendix \ref{compatibility_Weil_rep_app} is deeply related to the preliminaries about covering groups and generalised (genuine) regular representations, with the key results that allow us avoid talking about splittings of dual pairs.

\paragraph{Acknowledgement:}  I am very indebted to Alberto M\'inguez for introducing me to this topic during my PhD, and would like to express my gratitude to him, and also to Shaun Stevens, for their constant encouragement and for insightful feedback on earlier drafts. I am also grateful to Wee Teck Gan and Nadir Matringe for their thoughtful reviews of my PhD manuscript, which served as the foundation of this paper. I would also like to thank Raphaël Beuzart-Plessis, Jean-François Dat, Johannes Girsch, Guy Henniart, Rob Kurinczuk, Thomas Lanard and Vincent Sécherre, for useful discussions. This work has benefited from the support of the EPSRC grant EP/V061739/1.

\tableofcontents

\section{General preliminaries}

\subsection{Notations} \label{notations_section}

In the main body of the paper, we always denote by $F$ a non-archimedean local field of characteristic not 2, residual characteristic $p$ and residual cardinality $q$. The usual norm $| \cdot |_F = q^{-\textup{val}_F(\cdot)}$ on $F$ endows it with the locally profinite topology. The latter topology is equivalent to ``locally compact and totally disconnected''.

\paragraph{The coefficient field $R$.} We always denote by $R$ an algebraically closed field of characteristic $\ell \neq p$. So $\ell$ is either zero or a prime number different from $p$. In general, we have to choose a square root $q^{\frac{1}{2}}$ (of the image) of $q$ in $R$ -- mainly to normalise our induction functors. How we choose this square root of $q$ does not matter, but we have to bear in mind a choice has been made. In various places, we do not make a difference between $\{ \pm 1 \}$ and its image $\mu_2(R)$ in $R$ via $\mathbb{Z} \to R$. When $\ell = 2$, it is a small abuse of notations because $\mu_2(R) = \{ 1 \}$, but we prefer to avoid the bulky less explicit $\mu_2(R)$.

\paragraph{The character $\psi$.} A smooth character $F \to R^\times$ is a group morphism with open kernel. In the manuscript, we always denote by $\psi : F \to R^\times$ a non-trivial smooth character. One exists because $R$ has enough $p$-power roots of unity. 

\paragraph{Smooth representations.} Let $G$ be a locally profinite group. Let $A$ be a commutative unitary ring. An $A[G]$-module $V$ is said to be smooth if, for all $v \in V$, the stabiliser of $v$ is open in $G$. We denote by $\textup{Rep}_A(G)$ the category of smooth $A[G]$-modules. We may also call these smooth $A[G]$-modules, smooth representations of $G$, or simply representations. When $H$ is a closed subgroup of $G$, the induction functor $\textup{Ind}_H^G$ associates to $(\sigma,W) \in \textup{Rep}_A(H)$ the representation $\textup{Ind}_H^G(W) \in \textup{Rep}_A(G)$ of locally constant functions on $G$ taking values in $W$ and satisfying $f(h g) = \sigma(h) \cdot f(g)$ for all $g \in G$ and $h \in H$. The compact induction $\textup{ind}_H^G$ is the subfunctor of $\textup{Ind}_H^G$ made of functions compactly supported modulo $H$, that is the subspace of functions $f \in \textup{Ind}_H^G(W)$ such that the image of $\textup{supp}(f)$ in $H \backslash G$ is a compact set. A representation $V \in \textup{Rep}_A(G)$ is said to be admissible if, for all compact open subgroups $K$ in $G$, the set of $K$-invariants $V^K = \{ v \in V \ | \ K \cdot v = v \}$ is finitely generated as an $A$-module.

\paragraph{Haar measures and pro-order.} We continue with the $G$ and $A$ from the previous paragraph and recall notations from \cite[I.1 \& I.2]{vig_book}. The pro-order $|G|$ of $G$ is the least common multiple, in the sense of supernatural integers, of the orders of its open compact subgroups. To be more explicit, $|G|$ is a function $\mathcal{P} \to \mathbb{N} \cup {\infty}$ on the set of prime numbers $\mathcal{P}$. This pro-order $|G|$ decomposes into two parts with disjoint supports, namely the finite part $|G|_f$ and the infinite one $|G|_\infty$. The only situation occuring in the present work is $|G| = |G|_f \times |G|_\infty$ with $|G|_\infty = p^\infty$ and $|G|_f$ prime-to-$p$. As long as $p$ is invertible in $A$, there exists a Haar measure on $G$ with values in $A$ \textit{i.e.} a non-zero left $G$-equivariant morphism $C_c^\infty(G,A) \to 1_G^A$ where $C_c^\infty(G,A)$ is the space of locally constant and compactly supported functions in $G$ with values in $A$ and $(1_G^A,A)$ is the trivial representation.

\paragraph{The space $W$.} Let $(W,\langle \ , \ \rangle)$ be a symplectic vector space of finite dimension over $F$. Its isometry group is composed of the $F$-linear invertible endomorphisms preserving the form $\langle \ , \ \rangle$ and is classically denoted $\textup{Sp}(W)$. A vector subspace $X$ of $W$ is totally isotropic if $\forall x, x' \in X, \langle x,x' \rangle=0$. A Lagrangian in $W$ is a maximal totally isotropic subspace; lagrangians are exactly totally isotropic subspaces of dimension $m$ where $n=2m$ is the dimension of $W$. Any finite dimensional vector space over $F$ inherits the locally profinite topology coming from $F$. This is the case for $W$ and $\textup{End}_F(W)$. In particular, the symplectic group $\textup{Sp}(W)$ is locally profinite via the subspace topology coming from $\textup{End}_F(W)$. The stabiliser group $P(X)$ of a totally isotropic subspace $X$ in $W$ is a maximal parabolic subgroup of $\textup{Sp}(W)$.

\subsection{Largest $\pi$-isotypic quotient or $\pi$-coinvariants} \label{largest-isotypic-quotient-sec}

Let $G$ be a locally profinite group and $R$ be a field of characteristic $\ell$. We assume $G$ contains a compact open subgroup of invertible pro-order in $R$. We let $\pi \in \textup{Irr}_R(G)$ be admissible. We define the largest $\pi$-isotypic quotient -- also known as $\pi$-coinvariants -- of $V \in \textup{Rep}_R(G)$ as the quotient: 
$$V_\pi = V / V[\pi] \textup{ where } V[\pi] = \bigcap_{f \in \textup{Hom}_G(V,\pi)} \textup{ker}(f).$$
As the name suggests, $V_\pi$ is $\pi$-isotypic. Furthermore, this construction is compatible with other commuting actions in the following sense. Let $A$ be an $R$-algebra, not necessarily commutative, and assume $V$ has a structure of $A$-module commuting with the $G$-action \textit{i.e.} the $A$-module structure $A \to \textup{End}_k(V)$ on $V$ actually lands in $\textup{End}_G(V)$. Then $V_\pi$ is naturally an $A$-module.

Defining the contradregient $\pi^\vee$ of $\pi$, we alternatively have $V_\pi \simeq \pi \otimes_R (V \otimes \pi^\vee)_{1_G}$ and we can call $(V \otimes \pi^\vee)_{1_G}$ the multiplicty space of $V_\pi$. Note that $(V \otimes \pi^\vee)_{1_G}$ naturally inherits the structure of $A$-module if $V$ had one.

We are especially interested in the following situation. Assume $R$ is algebraically closed. Let $G$ and $H$ be two locally profinite groups admitting compact open subgroups of invertible pro-order in $R$. Let $V \in \textup{Rep}_R(G \times H)$ be a smooth representation and $\pi \in \textup{Irr}_R(G)$. Then there exists a unique (up to isomorphism) $\Theta(\pi) \in \textup{Rep}_R(H)$ such that $V_\pi \simeq \pi \otimes_R \Theta(\pi)$. Furthermore $\Theta(\pi) \simeq (V \otimes \pi^\vee)_{1_G}$.

\paragraph{Exactness.} The $\pi$-coinvariants gives an endofunctor $V \mapsto V_\pi$ of $\textup{Rep}_R(G)$, or $\textup{Rep}_A(G)$ taking into account an extra structure, that is easily seen to be right exact.

\begin{prop} The functor $V \mapsto V_\pi$ is exact if $\pi$ is an injective object in $\textup{Rep}_R(G)$. \end{prop}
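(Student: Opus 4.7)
The plan is to take a short exact sequence $0 \to V_1 \to V_2 \to V_3 \to 0$ in $\textup{Rep}_R(G)$ and, since right exactness of $(-)_\pi$ is already observed, show that the induced map $(V_1)_\pi \to (V_2)_\pi$ is injective whenever $\pi$ is injective. Writing $\iota : V_1 \hookrightarrow V_2$, the map $(V_1)_\pi \to (V_2)_\pi$ is the composition $V_1 \to V_2 \to V_2 / V_2[\pi]$ modulo $V_1[\pi]$, so it is injective if and only if $V_1[\pi] = V_1 \cap V_2[\pi]$.

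The first step is to unfold the definition:
\[ V_1 \cap V_2[\pi] \;=\; \bigcap_{g \in \textup{Hom}_G(V_2,\pi)} \iota^{-1}(\ker g) \;=\; \bigcap_{g \in \textup{Hom}_G(V_2,\pi)} \ker(g \circ \iota). \]
The inclusion $V_1[\pi] \subseteq V_1 \cap V_2[\pi]$ is then automatic, since each $g \circ \iota$ is an element of $\textup{Hom}_G(V_1,\pi)$. For the reverse inclusion, I would invoke the hypothesis that $\pi$ is injective in $\textup{Rep}_R(G)$: applying the exact functor $\textup{Hom}_G(-,\pi)$ to $0 \to V_1 \to V_2$ shows that the restriction map $\iota^* : \textup{Hom}_G(V_2,\pi) \to \textup{Hom}_G(V_1,\pi)$ is surjective. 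Hence every $f \in \textup{Hom}_G(V_1,\pi)$ is of the form $g \circ \iota$, so $\bigcap_{g} \ker(g \circ \iota) \subseteq \bigcap_{f} \ker f = V_1[\pi]$, establishing equality.

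Combining the two inclusions yields $V_1[\pi] = V_1 \cap V_2[\pi]$, hence $(V_1)_\pi \hookrightarrow (V_2)_\pi$, which together with the already known right exactness produces a short exact sequence $0 \to (V_1)_\pi \to (V_2)_\pi \to (V_3)_\pi \to 0$ and proves the proposition. There is no real obstacle here beyond carefully matching the two intersections of kernels; the injectivity of $\pi$ is used in exactly one place, to lift morphisms $V_1 \to \pi$ to morphisms $V_2 \to \pi$. I would also note that the same argument transports verbatim to the setting with an extra $A$-action, and — if one prefers a more conceptual formulation — one can equivalently phrase the whole proof via the isomorphism $V_\pi \simeq \pi \otimes_R (V \otimes \pi^\vee)_{1_G}$, since exactness of $(-)_\pi$ is then equivalent to exactness of $V \mapsto (V \otimes \pi^\vee)_{1_G}$, which follows from the same Hom-lifting property.
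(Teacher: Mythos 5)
Your argument is correct and is essentially the same as the paper's: both reduce exactness to preservation of injections, then use that injectivity of $\pi$ forces the restriction map $\textup{Hom}_G(V_2,\pi) \to \textup{Hom}_G(V_1,\pi)$ to be surjective, which identifies $V_1[\pi]$ with $\iota^{-1}(V_2[\pi])$ and hence gives the injection $(V_1)_\pi \hookrightarrow (V_2)_\pi$. The closing remarks about the $A$-action and the reformulation via $(V \otimes \pi^\vee)_{1_G}$ are extra observations not in the paper's proof, but they do not change the argument.
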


\begin{proof} Since this functor is right exact, we only have to check that it preserves injections. For $i : U \hookrightarrow V$ in $\textup{Rep}_R(G)$, we have $f \in \textup{Hom}_{R[G]}(V,\pi) \twoheadrightarrow f \circ i \in \textup{Hom}_{R[G]}(U,\pi)$ because $\pi$ is injective. Therefore:
$$U[\pi] = \bigcap_{f \in \textup{Hom}_G(U,\pi)} \textup{ker}(f) = \bigcap_{f \in \textup{Hom}_G(V,\pi)} \textup{ker}(f \circ i) = i^{-1}(V[\pi]).$$
In particular $U \to V_{\pi}$ has kernel $i^{-1}(i(U) \cap V[\pi]) = U[\pi]$, so $i_\pi : U_\pi \hookrightarrow V_\pi$. \end{proof}

\subsection{Projective vs. injective objects} \label{projective_vs_injective_objects_sec}

Let $R$ be an algebraically closed field and $G$ be a locally profinite group having a compact open subgroup of invertible pro-order. We want to understand how the notions of projective and injective representations compare to one another. In the classical theta correspondence, some proofs heavily rely on the exactness of the largest $\pi$-isotypic quotient, which is only valid when $\pi$ is an injective object.

\paragraph{Locally profinite groups.} We start by recalling the following result, which is a consequence of \cite[I.4.13 2) \& I.7.8 a) \& I.7.9]{vig_book}.

\begin{prop} \label{projective_V_V_vee_prop} Let $V \in \textup{Irr}_R(G)$ be admissible. The following are equivalent:
\begin{itemize}
\item $V$ is projective;
\item $V^\vee$ is projective.
\end{itemize}
Moreover in this case, both $V$ and $V^\vee$ are injective. \end{prop}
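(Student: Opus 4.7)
The plan is to chain together the three cited results of Vigneras. The key ingredients are (i) biduality for admissible representations, which ensures $V \simeq V^{\vee\vee}$ and so establishes a perfect symmetry between statements about $V$ and about $V^\vee$, and (ii) a contragredient duality on admissible irreducibles that swaps projectivity with injectivity, together with the coincidence of these two notions in that setting.

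Concretely, I would argue as follows. Assume $V$ is projective. By \cite[I.7.8 a)]{vig_book}, which should state that the smooth contragredient swaps projective with injective objects on admissible irreducibles, the dual $V^\vee$ is injective. Applying \cite[I.7.9]{vig_book}, which should state the equivalence injective $\Leftrightarrow$ projective for irreducible admissible representations, to $V^\vee$, we obtain that $V^\vee$ is also projective. Feeding this conclusion back into I.7.8 a) applied now to $V^\vee$, and invoking biduality \cite[I.4.13 2)]{vig_book} to identify $V^{\vee\vee}$ with $V$, we conclude that $V$ is itself injective. The symmetry $V \leftrightarrow V^\vee$ immediately handles the reverse implication in the equivalence and finishes the proof of the "moreover" part.

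The main obstacle I anticipate is not in the chaining, which is purely formal, but in checking that the ambient category is consistent across the three cited statements: projectivity and injectivity could a priori be taken in $\textup{Rep}_R(G)$ or in the full subcategory of admissible smooth representations. The assumption that $G$ admits a compact open subgroup of invertible pro-order is precisely what is needed to realise irreducible admissible representations as direct summands of compactly induced modules from finite-dimensional representations of compact open subgroups, and this is the mechanism I would expect Vigneras' results to leverage in order to bridge the two notions. Once this compatibility is granted, the proposition follows by the formal two-line chaining described above.
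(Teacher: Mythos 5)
Your chaining is formally coherent \emph{given} the statements you guess for the three citations, but the guess for \cite[I.7.9]{vig_book} is the load-bearing one and it is incorrect. You assert I.7.9 should state ``injective $\Leftrightarrow$ projective for irreducible admissible representations.'' If that were available, the rest of Section \ref{projective_vs_injective_objects_sec} would be pointless: the paper explicitly says, right after this proposition, that ``Going the other way round [from injectivity to projectivity], we need to introduce the concept of compact representations,'' and it then proves Proposition \ref{injective_compact_implies_projective_prop} (compact $+$ injective $\Rightarrow$ projective) as new content. Moreover, the paper later quotes I.7.9 itself as ``a projective irreducible admissible representation is necessarily compact,'' which is a one-way implication toward compactness, not the biconditional you need. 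So the middle step of your chain --- deducing that $V^\vee$ is projective from $V^\vee$ being injective --- rests on a result that does not exist in the cited form, and the argument is unsound at exactly that point.

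The missing idea is compactness, which your sketch never mentions. The intended route through the three citations is: $V$ projective (irreducible admissible) $\Rightarrow$ $V$ compact with nonzero formal degree (I.7.9, the actual statement); compactness and nonzero formal degree then give that $V$ embeds as a direct factor of the regular representation and that $V^\vee$ inherits the same compactness/formal-degree property, whence both $V$ and $V^\vee$ are projective \emph{and} injective (I.7.8~a)). Biduality \cite[I.4.13 2)]{vig_book} is what lets you run the identical argument starting from $V^\vee$ and land back on $V$, giving the converse implication of the equivalence. You correctly identify biduality as the symmetry mechanism, and your concern about consistency of the ambient category is well placed, but without passing through compactness there is no way to promote injectivity to projectivity at the step where you need it.
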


Therefore projectivity implies injectivity and ensures the exactness of the largest isotypic quotient. Going the other way round, we need to introduce the concept of compact representations. A representation $V$ is compact if its coefficients are compact \textit{i.e.} $c_{v \otimes_R v^\vee} : g \mapsto v^\vee(g v)$ has compact support for all $v \otimes_R v^\vee \in V \otimes_R V^\vee$. Equivalently, the coefficients $v \otimes_R v^\vee \mapsto c_{v \otimes_R v^\vee}$ induce a non-zero $(G \times G)$-morphism $V \otimes_R V^\vee \to C_c^\infty(G)$. Note that this morphism is injective when $V \otimes_R V^\vee \in \textup{Rep}_R(G)$ is irreducible. Recall a projective irreducible admissible representation is necessarily compact by \cite[I.7.9]{vig_book}.

\begin{prop} \label{injective_compact_implies_projective_prop} Let $V \in \textup{Irr}_R(G)$ be compact and injective. Then $V$ is projective. \end{prop}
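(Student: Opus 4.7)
The plan is to embed $V$ as a $G$-subrepresentation of a suitable projective object of $\textup{Rep}_R(G)$, and then use the injectivity hypothesis to realise $V$ as a direct summand of that projective object, forcing $V$ itself to be projective.

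First I would exploit compactness to produce an embedding of $V$ into $C_c^\infty(G)$. By assumption, the matrix coefficient map furnishes a non-zero $(G \times G)$-equivariant morphism $V \otimes_R V^\vee \to C_c^\infty(G)$, and since $V$ is irreducible and admissible over an algebraically closed field $R$ the tensor product $V \otimes_R V^\vee$ is itself $(G \times G)$-irreducible, so this map is injective. Fixing any non-zero $v^\vee \in V^\vee$ and restricting to $V \otimes_R R v^\vee$ yields a $G$-equivariant map $V \to C_c^\infty(G)$ (with target viewed as a representation via right translation), which is non-zero because $v^\vee$ is non-zero on some $v$, hence $c_{v \otimes_R v^\vee}(1) \neq 0$. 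Irreducibility of $V$ then promotes this to an inclusion $V \hookrightarrow C_c^\infty(G)$.

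Next I would verify that $C_c^\infty(G)$ is projective in $\textup{Rep}_R(G)$. Identifying it with $\textup{ind}_{\{1\}}^G(R)$, Frobenius reciprocity gives a natural isomorphism of functors $\textup{Hom}_{R[G]}(C_c^\infty(G), -) \simeq \textup{Hom}_R(R, -)$, i.e.\ the forgetful functor from $\textup{Rep}_R(G)$ to $R$-vector spaces, which is exact. Hence $C_c^\infty(G)$ is projective.

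Finally, since $V$ is injective, the inclusion $V \hookrightarrow C_c^\infty(G)$ admits a retraction, exhibiting $V$ as a direct summand of $C_c^\infty(G)$; direct summands of projective objects are projective, so $V$ is projective. The main thing to be careful about is the bookkeeping around which of the two regular actions on $C_c^\infty(G)$ is being used when restricting the matrix coefficient map to a single $G$-factor, and matching it with the convention used in compact induction from the trivial subgroup; but no deeper obstacle appears, as every step is essentially forced by the definitions once $V$ is known to be both compact and injective.
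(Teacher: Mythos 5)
Your overall strategy mirrors the paper's: embed $V$ into $C_c^\infty(G)$ using compactness, invoke projectivity of $C_c^\infty(G)$, and then use injectivity of $V$ to split off $V$ as a direct summand. Steps~1 and 3 are fine (your restriction of the matrix coefficient map to $V \otimes_R R v^\vee$ is a clean way to land a copy of $V$ inside $C_c^\infty(G)$ for the one-sided action, and the retraction argument is standard).

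The gap is in step~2. Frobenius reciprocity for compact induction --- the adjunction $\textup{Hom}_G(\textup{ind}_H^G \sigma, \pi) \simeq \textup{Hom}_H(\sigma, \pi|_H)$ --- holds only when $H$ is \emph{open} in $G$, because only then is $\textup{ind}_H^G$ a left adjoint of restriction. The trivial subgroup $\{1\}$ is not open in a non-discrete locally profinite group, so there is no natural isomorphism $\textup{Hom}_{R[G]}(\textup{ind}_{\{1\}}^G(R),-) \simeq \textup{Hom}_R(R,-)$. In fact, writing $C_c^\infty(G) = \varinjlim_K \textup{ind}_K^G(1_K)$ over compact open $K$ (where Frobenius reciprocity \emph{is} valid, $K$ being open), one finds
$$\textup{Hom}_G(C_c^\infty(G), V) \simeq \varprojlim_K V^K = \bar{V},$$
the $K$-adic completion of $V$, and $\bar{V}$ is generally a strict overmodule of $V$ (take $G$ compact and $V = C^\infty(G)$: then $\bar{V} \simeq \prod_K R^{G/K}$). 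So the functor $\textup{Hom}_G(C_c^\infty(G),-)$ is the completion functor, not the forgetful functor, and its exactness is a genuine theorem. The paper proves it (Lemma~\ref{regular_rep_is_projective_lem}) by observing that the transition maps $V^K \twoheadrightarrow V^{K'}$ (for $K \subset K'$ of invertible pro-order) are surjective, so the inverse systems are Mittag--Leffler and $\varprojlim$ is exact on short exact sequences of smooth reps. Your conclusion is correct, but it does not come for free from Frobenius reciprocity; you need this Mittag--Leffler argument (or Blanc's averaging argument, which works over a field).
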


Before proving the proposition, we want to highlight a lemma we use in the proof as it is interesting in its own right. We state it in the context of smooth representations, but the proof works for any category of non-degenerate modules $\mathcal{M}(\mathcal{A})$ over a ring with idempotents $\mathcal{A}$ to show $\textup{Hom}_{\mathcal{M}(\mathcal{A})}(A,-)$ is exact. We consider the regular representation as a $G$-module for either the left or right $G$-action.

\begin{lem} \label{regular_rep_is_projective_lem} Let $A$ be a commutative ring such that $G$ has an open compact subgroup of invertible pro-order. Then $C_c^\infty(G) \in \textup{Rep}_A(G)$ is projective. \end{lem}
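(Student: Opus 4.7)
The plan is to exhibit $C_c^\infty(G)$ as a direct sum of representations of the form $\textup{ind}_K^G(\mathbf{1})$ for compact open subgroups $K$ of invertible pro-order in $A$, each of which is projective by Frobenius reciprocity.

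First, I would fix a compact open subgroup $K_0 \leq G$ of invertible pro-order in $A$, as provided by hypothesis; any compact open $K \subset K_0$ then has invertible pro-order as well. For such a $K$, let $M_K \subset C_c^\infty(G)$ denote the subspace of right-$K$-invariant functions. Since the left and right translation actions of $G$ on itself commute, $M_K$ is a $G$-subrepresentation for the left translation action, and Frobenius reciprocity identifies $M_K \simeq \textup{ind}_K^G(\mathbf{1})$ with $\textup{Hom}_G(M_K, V) \simeq V^K$. The invertibility of $|K|$ in $A$ provides the averaging idempotent $e_K = \mathbf{1}_K / \textup{vol}(K)$, which makes $V \mapsto V^K$ exact; hence each $M_K$ is projective. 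Moreover every compactly supported locally constant function on $G$ is right-$K$-invariant for some $K \subset K_0$, so $C_c^\infty(G) = \bigcup_{K \subset K_0} M_K$.

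Next, for each pair $K' \subset K$ inside $K_0$, the right-$K$-averaging operator $r_K(f)(x) = \textup{vol}(K)^{-1} \int_K f(xk)\,dk$ is $G$-equivariant for the left action (since left and right translations commute) and restricts to a retraction of the inclusion $M_K \hookrightarrow M_{K'}$. This yields a $G$-stable direct-sum decomposition $M_{K'} = M_K \oplus N_{K,K'}$ in which $N_{K,K'} := \ker(r_K|_{M_{K'}})$ is itself projective as a summand of the projective $M_{K'}$. Choosing a decreasing cofinal chain $K_0 \supset K_1 \supset K_2 \supset \cdots$ with $\bigcap_n K_n = \{e\}$ and iterating these splittings then gives, with $N_n := \ker(r_{K_n}|_{M_{K_{n+1}}})$,
$$C_c^\infty(G) = \bigcup_n M_{K_n} = M_{K_0} \oplus \bigoplus_{n \geq 0} N_n,$$
a direct sum of projective $G$-representations, so $C_c^\infty(G)$ is projective.

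The main obstacle is arranging compatibility of the splittings across the entire directed family of compact open subgroups of $K_0$ rather than just along a chain: second countability of the locally profinite groups of interest here makes this immediate, and otherwise one invokes Zorn's lemma on partial compatible systems. A more intrinsic alternative, closer to the hint that the argument works for any ring with idempotents, is to compute
$$\textup{Hom}_G(C_c^\infty(G), V) \simeq \varprojlim_{K} V^K$$
with transitions $V^{K'} \to V^K$, $v \mapsto \sum_{g \in K/K'} g v$, and to observe that these are split surjective via $w \mapsto [K:K']^{-1} w$; the invertibility of every index $[K:K']$ in $A$ then forces exactness of this inverse limit functor, hence projectivity of $C_c^\infty(G)$.
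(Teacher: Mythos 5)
Your main argument is correct and takes a genuinely different route from the paper, while the closing "alternative" is essentially the paper's own proof. The paper does not decompose $C_c^\infty(G)$ into projective summands; it identifies $\textup{Hom}_G(C_c^\infty(G),-)$ with the completion functor $V \mapsto \varprojlim_K V^K$ and then invokes Mittag--Leffler, using that the transition maps $V^K \twoheadrightarrow V^{K'}$ for $K \subset K'$ of invertible pro-order are surjective, to conclude this inverse limit functor is exact. Your direct-sum decomposition is the new ingredient: you write $C_c^\infty(G) = M_{K_0} \oplus \bigoplus_n N_n$ by iterating the averaging retraction $r_{K_n}$ along a cofinal chain $K_0 \supset K_1 \supset \cdots$, and then deduce projectivity because each summand is a summand of some $\textup{ind}_{K_n}^G(\mathbf 1)$, which is projective since $(-)^{K_n}$ is exact. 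Both arguments use the idempotent $e_K$ at the same essential step, and both carry the same hidden hypothesis: the compact open subgroups must admit a countable cofinal chain — you need this so that $\bigcup_n M_{K_n}$ exhausts $C_c^\infty(G)$, and the paper needs it so that Mittag--Leffler actually forces $\varprojlim^1 = 0$ (surjective transitions do not suffice for an arbitrary directed set). This is automatic for the second-countable $p$-adic groups in play, as you observe. What each approach buys: yours is more elementary and constructive, producing an explicit presentation as a direct sum of compact inductions and avoiding the $\varprojlim^1$-vanishing theorem entirely; the paper's is a shorter functor-level computation that works uniformly and packages the argument as "the completion functor is exact." Both share the feature the paper insists on, namely that at no point does one choose an $A$-linear section of an arbitrary surjection — splittings only occur onto direct summands via the averaging idempotents — so both are valid over a commutative ring $A$ and not just over a field, unlike Blanc's original argument.
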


\begin{proof} Blanc gave a proof (see \cite[Th A.4]{casselman_shintani}) of this result when $A=\mathbb{C}$. His proof works verbatim over a coefficient field. Roughly speaking, it consists in taking a vector space section of a surjective $G$-morphism and averaging it over $G$ to make it $G$-equivariant. Since this proof breaks down over coefficient rings, because surjective maps do not necessarily have sections, we found it interesting to give a new argument.

We study the functor $\mathcal{F} = \textup{Hom}_G(C_c^\infty(G),-)$. As $C_c^\infty(G) \simeq \textup{ind}_1^G(1) = \underset{\to}{\textup{lim}} \ \textup{ind}_K^G(1_K)$:
$$\mathcal{F} \simeq \underset{\leftarrow}{\textup{lim}} \ \textup{Hom}_G(\textup{ind}_K^G(1_K),-) \simeq \underset{\leftarrow}{\textup{lim}} \ \textup{Hom}_K(1_K,-) = \underset{\leftarrow}{\textup{lim}} \ (-)^K.$$
The last functor here is also known as the completion functor $V \mapsto \bar{V}$, where the transition maps are $V^K \to V^{K'}$ if $K \subset K'$. Therefore the functor $F$ is isomorphic to the completion functor. It is enough to consider the system over compact open subgroups $K$ of invertible pro-orders, as they are cofinal in the set of compact open subgroups ordered for the reverse inclusion. Take an exact sequence:
$$0 \to U \to V \to W \to 0$$
then for all compact open subgroups $K$ of invertible pro-order we have:
$$0 \to U^K \to V^K \to W^K \to 0.$$
Since for $K \subset K'$ of invertible pro-orders the transition map $(-)^K \twoheadrightarrow (-)^{K'}$ is surjective, all these systems are Mittag-Leffler and we deduce:
$$0 \to \underset{\leftarrow}{\textup{lim}} \ U^K \to \underset{\leftarrow}{\textup{lim}} \ V^K \to \underset{\leftarrow}{\textup{lim}} \ W^K \to 0.$$
In particular we have proved that the completion functor $V \mapsto \bar{V}$ is exact. \end{proof}

\begin{rem} Even though $C_c^\infty(G)$ is projective for the left or right $G$-action, it is not necessarily projective for both these actions \textit{i.e.} $C_c^\infty(G) \in \textup{Rep}_A(G \times G)$ is not projective in general. \end{rem}

\begin{proof}[Proof of Proposition \ref{injective_compact_implies_projective_prop}.] Because $V$ is compact, it is admissible \cite[I.7.5]{vig_book} and $V^\vee$ is irreducible \cite[I.4.18]{vig_book}. We obtain $V \otimes_R V^\vee \hookrightarrow C_c^\infty(G)$. To show $V$ is projective, we show $V$ is a direct factor of the regular representation $C_c^\infty(G)$ and use Lemma \ref{regular_rep_is_projective_lem} to conclude. So we want to find a retract of $V \otimes_R V^\vee \hookrightarrow C_c^\infty(G)$. By injectivity of $V$, we can complete the diagram:
$$\xymatrix{
		V \otimes_R V^\vee \ar@{^{(}->}[r] \ar@{->}[d]^{\textup{id}} & \ar@{-->}[dl] C_c^\infty(G) \\
		V \otimes_R V^\vee & 
		}.$$
The dotted arrow is \textit{a priori} not $(G \times G)$-equivariant, but simply $G$-equivariant. This is sufficient to conclude that $V$ is a direct factor of $C_c^\infty(G)$. We simply point out that the dotted arrow map is actually realising the largest $V$-isotypic quotient, so it has to be $(G \times G)$-equivariant. In particular $V \otimes V^\vee$ is a direct factor of $C_c^\infty(G)$, which is a stronger statement as it happens in $\textup{Rep}_R(G \times G)$. \end{proof}

We finally point out a complementary lemma based on $\textup{Hom}_G(V,W^\vee) \simeq \textup{Hom}_G(W,V^\vee)$ and the fact that admissible representations are reflexive \textit{i.e.} $(V^\vee)^\vee=V$.

\begin{prop} Let $\pi \in \textup{Irr}_R(G)$ be injective. Then $\textup{Hom}_G(\pi,-)$ is exact on the full subcategory of admissible representations. \end{prop}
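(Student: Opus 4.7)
The plan is to use the duality identity $\textup{Hom}_G(V,W^\vee) \simeq \textup{Hom}_G(W,V^\vee)$ together with the reflexivity $(X^\vee)^\vee = X$ of admissible representations to transfer the exactness of $\textup{Hom}_G(-, \pi)$ -- which holds by the injectivity hypothesis -- across the contragredient functor, so as to obtain the exactness of $\textup{Hom}_G(\pi, -)$ on admissibles. Only right-exactness needs to be proved, left-exactness being automatic.

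Starting from a short exact sequence $0 \to U \to V \to W \to 0$ of admissible representations, the first step is to check that the contragredient is exact on the full subcategory of admissibles: for any compact open subgroup $K$ of invertible pro-order, $(-)^K$ is an exact functor, admissibility forces $V^K$ to be finite-dimensional, and under the identification $(V^\vee)^K \simeq (V^K)^*$ the contragredient reduces to linear duality on finite-dimensional spaces, which is exact. Hence $0 \to W^\vee \to V^\vee \to U^\vee \to 0$ is again a short exact sequence of admissibles. Applying the exact functor $\textup{Hom}_G(-, \pi)$ produces a short exact sequence of $R$-modules, and each term $\textup{Hom}_G(X^\vee, \pi)$ canonically identifies with $\textup{Hom}_G(\pi^\vee, X)$ via the duality identity together with the reflexivity of both $X$ and $\pi$. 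This yields the desired short exact sequence, at least after the swap $\pi \leftrightarrow \pi^\vee$ introduced by the duality.

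The main subtlety is precisely this swap: the direct argument delivers exactness of $\textup{Hom}_G(\pi^\vee, -)$ on admissibles, so concluding exactness for $\textup{Hom}_G(\pi, -)$ requires an analogous injectivity property of $\pi^\vee$. For irreducible admissibles this should be accessible either by running the symmetric version of the argument based on the identification $\textup{Hom}_G(\pi, X) \simeq \textup{Hom}_G(X^\vee, \pi^\vee)$ valid for admissible $X$ -- which rephrases the desired statement as an injectivity property of $\pi^\vee$ in the admissible subcategory -- or via the projective/injective dictionary captured in Proposition \ref{projective_V_V_vee_prop}, ensuring that the contragredient preserves the relevant exactness behaviour among irreducible admissibles.
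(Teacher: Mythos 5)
You follow the strategy the paper alludes to: exactness of the contragredient on admissibles, the duality identity $\textup{Hom}_G(V,W^\vee)\simeq\textup{Hom}_G(W,V^\vee)$, and reflexivity. You also correctly locate the crux: the natural chain of identifications yields exactness of $\textup{Hom}_G(\pi^\vee,-)$ on admissibles, and passing from this to exactness of $\textup{Hom}_G(\pi,-)$ amounts to knowing that $\pi^\vee$ is itself injective relative to the admissible subcategory. Your instinct to flag this and your tentativeness about the fix are both justified.

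The difficulty is that neither of your two proposed routes actually supplies that missing injectivity. The ``symmetric version'' via $\textup{Hom}_G(\pi,X)\simeq\textup{Hom}_G(X^\vee,\pi^\vee)$ merely rephrases the target -- exactness of $\textup{Hom}_G(\pi,-)$ on admissibles \emph{is} injectivity of $\pi^\vee$ on admissibles, since the contragredient is an exact involution there -- so the argument is circular. And Proposition~\ref{projective_V_V_vee_prop} does not transport injectivity across the contragredient: it asserts that $\pi$ is projective if and only if $\pi^\vee$ is projective, and that both are then injective, but it offers no converse from $\pi$ injective to $\pi$ projective, nor from $\pi$ injective to $\pi^\vee$ injective. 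To close the gap one genuinely needs an extra input. For instance, if $\pi$ is in addition compact, then Proposition~\ref{injective_compact_implies_projective_prop} upgrades injectivity to projectivity, $\pi^\vee$ is projective by Proposition~\ref{projective_V_V_vee_prop}, and the conclusion is immediate; alternatively, for reductive $G$, the four-way equivalence in the corollary at the end of this section gives $\pi^\vee$ injective directly. As stated, though, the proposition concerns a general locally profinite $G$, and the duality identity and reflexivity alone only prove exactness of $\textup{Hom}_G(\pi^\vee,-)$, not of $\textup{Hom}_G(\pi,-)$.
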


In order to lift projectivity from admissible representations to the whole category, we will need more precise results involving finiteness properties of Hecke algebras in the context of reductive groups.

\paragraph{Reductive groups.} Let $G$ be a reductive group over a non-archimedean local field $F$ of residual characteristic $p$. We consider $\textup{Rep}_R(G)$ the category of smooth representations with coefficients in $R$. We have the following nice lemma, inspired by \cite[Sec 8]{chan_savin} and obtained from the depth decomposition and the finitess results on relative Hecke algebras \cite{dhkm-finiteness} (they are finite over their centres and their centres are noetherian $R$-algebras).

\begin{lem} \label{projective_injective_chan_savin_lem} Let $P \in \textup{Rep}_R(G)$ be finitely generated. Then $P$ projective $\Leftrightarrow$ $P$ injective. \end{lem}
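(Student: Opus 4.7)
The plan is to turn the statement into a question about finitely generated modules over a Hecke algebra that is finite over a Noetherian centre, and then invoke a self-injective / Frobenius property of this algebra.

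First, I would apply the depth decomposition $\textup{Rep}_R(G) = \prod_r \textup{Rep}_R(G)_r$. A finitely generated $P$ has non-zero image in only finitely many depth components, so it decomposes as a finite direct sum $P = \bigoplus_i P_{r_i}$ with each $P_{r_i}$ itself finitely generated in $\textup{Rep}_R(G)_{r_i}$. Since finite direct sums preserve and reflect both projectivity and injectivity --- summands of projectives are projective, summands of injectives are injective --- it suffices to prove the equivalence within a single depth block $\textup{Rep}_R(G)_r$.

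Second, each depth component $\textup{Rep}_R(G)_r$ admits a finitely generated projective progenerator $\mathcal{P}_r$ (concretely, the depth-$r$ part of $\textup{ind}_K^G(\mathbf{1})$ for a sufficiently small Moy--Prasad subgroup $K$). Morita theory gives an equivalence
$$\textup{Rep}_R(G)_r \simeq \mathcal{H}_r\textup{-Mod}, \qquad \mathcal{H}_r := \textup{End}_G(\mathcal{P}_r)^{\textup{op}},$$
under which finite generation, projectivity, and injectivity all correspond. By \cite{dhkm-finiteness}, $\mathcal{H}_r$ is finite as a module over its centre $Z_r$, which is a Noetherian commutative $R$-algebra; hence $\mathcal{H}_r$ is two-sided Noetherian, and finitely generated $\mathcal{H}_r$-modules form an abelian subcategory closed under submodules and quotients.

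Third, one is reduced to showing that, for such an $\mathcal{H}_r$, the finitely generated projective modules coincide with the finitely generated injective modules. This would come from a Frobenius / self-injective structure of $\mathcal{H}_r$ relative to $Z_r$, i.e.\ the existence of a non-degenerate trace on the Hecke algebra: a finitely generated projective is a direct summand of some $\mathcal{H}_r^n$, which is itself injective by self-injectivity; conversely, the duality $\textup{Hom}_{Z_r}(-, Z_r)$ preserves finite generation and exchanges projectives with injectives on finitely generated modules, so a finitely generated injective embeds as a summand of a free $\mathcal{H}_r$-module and is therefore projective. The main obstacle is establishing this Frobenius structure in the modular setting --- in characteristic zero it goes back to Bernstein and collaborators --- and this is precisely the kind of input drawn from \cite{dhkm-finiteness} beyond bare finiteness, together with the strategy of \cite[Sec 8]{chan_savin}. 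Once that structural fact is in hand, the rest is formal Morita theory and a standard reduction through the depth decomposition.
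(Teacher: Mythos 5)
Your reduction via depth decomposition and Morita equivalence matches the paper's announced strategy: \cite{dhkm-finiteness} gives that each $\mathcal{H}_r$ is finite over its Noetherian centre $Z_r$, and this is indeed what makes the Morita translation behave well on finitely generated objects. The gap is in the third step, where you reduce the lemma to the ring-theoretic claim that $\mathcal{H}_r$ is self-injective (equivalently, that free $\mathcal{H}_r$-modules are injective), or that $\textup{Hom}_{Z_r}(-,Z_r)$ exchanges projectives and injectives among finitely generated $\mathcal{H}_r$-modules. Neither claim is among the results of \cite{dhkm-finiteness}, which only gives module-finiteness of $\mathcal{H}_r$ over $Z_r$ and noetherianity of $Z_r$, and neither claim can be true in the present setting: the Bernstein centre $Z_r$ has positive Krull dimension, while a commutative Noetherian ring is self-injective only if it is Artinian. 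Hence $Z_r$ is not self-injective, $\textup{Hom}_{Z_r}(-,Z_r)$ does not take projective $Z_r$-modules to injective ones, and a finite free $\mathcal{H}_r$-module is not injective even when $\mathcal{H}_r$ is Frobenius over $Z_r$ (for then $\mathcal{H}_r \cong \textup{Hom}_{Z_r}(\mathcal{H}_r,Z_r)$ would be injective only if $Z_r$ were). So the ``main obstacle'' you flag is not an unproved fact to be quoted but a dead end: a $Z_r$-linear duality is the wrong duality.

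The input the paper signals from \cite[Sec 8]{chan_savin} must instead be representation-theoretic, built on the smooth contragredient $(-)^\vee$, which sends the finitely generated projective progenerator $\textup{ind}_K^G(\mathbf{1})$ to the genuinely injective object $\textup{Ind}_K^G(\mathbf{1})$. That is the duality that actually relates projectives to injectives in $\textup{Rep}_R(G)$, and passing to the Hecke-algebra picture and then taking the $Z_r$-dual loses precisely this structure. Your plan therefore has a genuine gap: step three needs to be replaced by an argument on the smooth side, not by a Frobenius property of $\mathcal{H}_r$ over $Z_r$.
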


\begin{cor} Let $\pi \in \textup{Irr}_R(G)$ be irreducible. The following assertions are equivalent:
\begin{enumerate}[label=\textup{\alph*)}]
\item $\pi$ is projective;
\item $\pi$ is injective;
\item $\pi^\vee$ is projective;
\item $\pi^\vee$ is injective.
\end{enumerate} \end{cor}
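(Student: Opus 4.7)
The plan is to combine Proposition \ref{projective_V_V_vee_prop} with Lemma \ref{projective_injective_chan_savin_lem}. Since $\pi$ is irreducible, it is generated by any non-zero vector, hence finitely generated in $\textup{Rep}_R(G)$. Moreover, irreducible smooth representations of a $p$-adic reductive group are admissible (this is standard in characteristic $0$ and extends to the modular setting, as recalled earlier in the paper), so $\pi^\vee \in \textup{Irr}_R(G)$ is also irreducible — and in particular finitely generated.

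With these observations, each equivalence is now a direct quotation of an earlier result. First I would establish $(a) \Leftrightarrow (c)$, which is exactly the main statement of Proposition \ref{projective_V_V_vee_prop}. Next I would obtain $(a) \Leftrightarrow (b)$ by applying Lemma \ref{projective_injective_chan_savin_lem} to $P = \pi$, which is permissible since $\pi$ is finitely generated. Finally, I would get $(c) \Leftrightarrow (d)$ by applying the same lemma to $P = \pi^\vee$, using that $\pi^\vee$ is irreducible and hence finitely generated. Transitivity of these three equivalences yields the chain $(a) \Leftrightarrow (b) \Leftrightarrow (c) \Leftrightarrow (d)$.

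As a sanity check, note that the ``Moreover'' part of Proposition \ref{projective_V_V_vee_prop} already delivers the implications $(a) \Rightarrow (b)$ and $(c) \Rightarrow (d)$ without invoking Lemma \ref{projective_injective_chan_savin_lem}; the lemma is really only needed for the converse directions $(b) \Rightarrow (a)$ and $(d) \Rightarrow (c)$, where we genuinely need to lift injectivity to projectivity.

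The main subtlety — and the only point where the reductive-group hypothesis plays a role beyond what is formally cited — is the admissibility of $\pi$, which is what legitimises working with $\pi^\vee$ as an irreducible object. Once this is in hand, the proof is a short bookkeeping argument stringing together Proposition \ref{projective_V_V_vee_prop} and Lemma \ref{projective_injective_chan_savin_lem}; there is no further computation to carry out.
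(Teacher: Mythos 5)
Your proof is correct and follows essentially the same route as the paper: it combines Lemma \ref{projective_injective_chan_savin_lem} (for $(a)\Leftrightarrow(b)$ and $(c)\Leftrightarrow(d)$) with Proposition \ref{projective_V_V_vee_prop} (for $(a)\Leftrightarrow(c)$), using that irreducible representations are admissible, hence finitely generated and reflexive. You spell out the finite-generation and admissibility hypotheses a bit more explicitly than the paper does, but the key lemmas and their roles are identical.
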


\begin{proof} We have a) $\Leftrightarrow$ b) and c) $\Leftrightarrow$ d) by Lemma \ref{projective_injective_chan_savin_lem}. Since irreducible $\Rightarrow$ admissible by \cite[II.2.8]{vig_book}, we have a) $\Leftrightarrow$ c) by Propostion \ref{projective_V_V_vee_prop}. This proves all equivalences. \end{proof}

\paragraph{Projective-injective representations.} Let $G$ be a locally profinite group.

\begin{defi} We say $\pi \in \textup{Rep}_R(G)$ admissible is $p\&i$ if it is projective and injective. We denote by $\textup{Irr}_R^{p\&i}(G)$ the set of irreducible $p\&i$ representations. \end{defi}

\begin{rem} Let $G$ be a reductive group with compact centre and $\pi \in \textup{Irr}_R^{p \& i}(G)$. Then $\pi$ is compact \cite[I.7.9]{vig_book} and cuspidal \cite[II.2.7]{vig_book}. In particular $\pi$ must be supercuspidal and we have $\textup{Irr}_R^{p\&i}(G) \subset \textup{Irr}_R^{\textup{scusp}}(G)$. \end{rem}

\paragraph{Sum-up graph.} We sum up below the relations between the different concepts when $\pi \in \textup{Irr}_R(G)$ is admissible. The arrows with no label are valid for $G$ locally profinite.{\parfillskip0pt\par} 
\begin{wrapfigure}{rd}{0.4\textwidth}
    \scalebox{0.5}{\begin{tikzpicture}[every node/.style={align=center,anchor=base,text depth=.5ex,text height=2ex,text width=4em}]
	\node[state] (proj) {proj};
	\node[below = of proj] (belowproj) {};
	\node[right = of proj] (rightproj) {};
	\node[state, right = of rightproj] (form) {form deg};
	\node[state, below = of form] (comp-form) {+};
	\node[state, below = of comp-form] (comp) {comp};
	\node[state, left = of comp] (inj-comp) {+};
	\node[state, left = of inj-comp] (inj)  {inj};
	\node[right = of form] (rightform) {};
	\node[state, right = of comp-form] (cusp) {cusp};
	
	\draw[>=triangle 45, <->] (proj) -- (inj-comp);
	\draw[>=triangle 45, <->] (proj) -- (comp-form);
	\draw[-] (comp) -- (inj-comp);
	\draw[-] (inj) -- (inj-comp);
	\draw[-] (comp) -- (comp-form);
	\draw[-] (form) -- (comp-form);
	\draw[>=triangle 45, <->] (cusp) -- (comp) node[midway,sloped,above] {parab};	
	\draw[>=triangle 45, ->] (cusp) -- (form) node[midway,sloped,above] {ban};
	\draw[>=triangle 45, ->] (inj) -- (proj) node[midway,sloped,above] {fin};
\end{tikzpicture}} \end{wrapfigure}

\noindent The label ``parab'' means $G$ has parabolic theory \textit{i.e.} $G$ admits a set of parabolic subgroups and cuspidal representations can be defined. In this case all irreducible representations are admissible. The other two labels only occur when $G$ has parabolic theory: the label ``ban'' for banal \textit{i.e.} when the characteristic $\ell$ of $R$ does not divide the pro-order of any open subgroups in $G$; the label ``fin'' for finite \textit{i.e.} finiteness properties on Hecke algebras, which is only known over $R$ when $G$ is a reductive group. 

\subsection{Covering groups} \label{covering-groups-sec}

Let $G$ be a locally profinite group and let $A$ be a discrete abelian group, perhaps finite. A topological central extension of $G$ by $A$ is often called an $A$-cover, or a covering group more generally. Any covering group $\tilde{G}$ fits into an exact sequence of topological groups:
$$1 \to A \overset{i}{\to} \tilde{G} \overset{p}{\to} G \to 1$$
where $p$ has local trivialisations and $i$ has central image. In particular $\tilde{G}$ is a locally profinite group.

\begin{rem} Because of the totally disconnectedness of the topology, there exist global trivialisations for $p$. However, there is a rather confusing fact for covering groups: there exist sections $\sigma : G \to \tilde{G}$ of $p$ that are not continuous, such that the $A$-valued $2$-cocyle $c(g,g') = \sigma(g) \sigma(g') \sigma(gg')^{-1}$ is not continuous, and nevertheless the group $G \times_c A$ is still a topological group, where the topology on $G \times_c A$ is locally the product topology, but not globally. See \cite[Rem 2.1]{kaplan_szpruch} for Borel measurable cocycles.  \end{rem}

Let $R$ be an algebraically closed field of characteristic $\ell$ and assume $G$ contains a compact open subgroup of invertible pro-order in $R$. We suppose given a group morphism $\iota : A \to R^\times$ is given. We call a smooth representation $\pi \in \textup{Rep}_R(\tilde{G})$ genuine if $\pi(i(a)) = \iota(a) \textup{id}$ for all $a \in A$. We denote by $\textup{Rep}_R^{\textup{gen}}(\tilde{G})$. We very often drop the ``gen'' superscript and always assume the representations of covering groups to be genuine, unless otherwise stated. Therefore we will simply write $\textup{Rep}_R(\tilde{G})$ or $\textup{Irr}_R(\tilde{G})$ for all genuine smooth representations of $\tilde{G}$. The category of genuine smooth representations is abelian. In the main body of the paper, $A$ will be $\{ \pm 1 \}$ or $R^\times$ and the map $\iota$ will be the natural map to $R^\times$. We call a genuine representation projective (resp. injective) if it is a projective (resp. injective) object in the category of genuine smooth representations.

If $G$ is a connected reductive group over $F$, a parabolic subgroup $P$ in $G$ together with a Levi decomposition $P=MU$ gives rise to a parabolic $\tilde{P}$ in $\tilde{G}$ by taking the inverse image of $P$ and a Levi decomposition $\tilde{P} = \tilde{M} U$ where $\tilde{M}$ is the inverse image of $M$ and $U$ is uniquely embedded into $\tilde{G}$ -- indeed, there exists a unique embedding $\iota : U \hookrightarrow \tilde{G}$ that is $M$-equivariant \cite[Sec 2]{kaplan_szpruch} \textit{i.e.} $\tilde{m} \iota(n) \tilde{m}^{-1} = \iota(m n m^{-1})$ for $\tilde{m} \in \tilde{M}$ and $m=p(\tilde{m})$ and $n \in U$. 

In this case, cuspidal representations are the representations that are killed by all proper Jacquet functors. A representation that is projective must be cuspidal. When $A$ is finite and $p$ does not divide the order of $A$, Moy-Prasad theory makes sense as long as we can find a splitting of the Iwahori by \cite[Sec 3.6]{pan_preservation_depth}. Then the depth decomposition holds for $\tilde{G}$ and, similarly to \cite[II.5]{vig_book}, the parabolic restriction functor preserves finite length because finitely generated and admissible representations have finite length.

Alternatively, if $\ell$ does not divide the pro-order of $\tilde{G}$, parabolic restriction preserves finite length regardless of whether $p$ divides the order of $A$ or not.  The proof is the same as the complex setting \cite[Th 3.5]{kaplan_szpruch}.

We sum up our conclusions for the so-called metaplectic group:

\begin{prop} Let $\textup{Mp}(W)$ be the non-trivial $\{ \pm 1 \}$-cover of $\textup{Sp}(W)$. Assume $p \neq 2$ or $\ell$ does not divide the pro-order of $\tilde{G}$. Then parabolic restriction preserves finite length. \end{prop}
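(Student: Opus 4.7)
The plan is to directly specialise the two mechanisms set up in the preceding paragraphs to the case $\tilde{G}=\textup{Mp}(W)$, with $A=\{\pm 1\}$, treating the two disjuncts of the hypothesis separately. The statement is really an assembly result: no new idea is introduced, only verification that each disjunct triggers exactly one of the two available frameworks.

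First I would handle the case $p\neq 2$. Since $|A|=2$, the condition $p\neq 2$ is precisely the condition $p\nmid|A|$ needed to run Moy--Prasad theory on $\textup{Mp}(W)$. I would then cite the classical fact that the Iwahori of $\textup{Sp}(W)$ admits a splitting inside $\textup{Mp}(W)$ over a non-archimedean local field of odd residual characteristic; this goes back to Moore/Matsumoto and can be read off explicitly from a Kubota-type $2$-cocycle. With this splitting in hand, \cite[Sec 3.6]{pan_preservation_depth} yields the depth decomposition of the genuine category $\textup{Rep}_R^{\textup{gen}}(\textup{Mp}(W))$. I would then invoke the Vign\'eras-type argument of \cite[II.5]{vig_book}: parabolic restriction preserves admissibility via Jacquet's lemma (which is local along the unipotent radical and blind to the central cover), preserves finite generation essentially formally, and in the presence of a depth decomposition these two properties together imply finite length.

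For the alternative hypothesis $\ell\nmid|\textup{Mp}(W)|$, I would simply invoke the Kaplan--Szpruch argument \cite[Th 3.5]{kaplan_szpruch} verbatim: their complex proof only uses the existence of a Haar measure, Jacquet's lemma, and Bernstein-type idempotents, all of which remain available whenever the pro-order of $\tilde{G}$ is invertible in the coefficient field. The main obstacle, such as it is, lies in the first case: one must cite cleanly enough the Iwahori splitting in $\textup{Mp}(W)$ to feed it into the Moy--Prasad framework and ensure the resulting filtration is indeed by pro-$p$ subgroups of the cover. Once that input is granted the rest is formal; the second case is a direct transcription of the complex argument under the banality assumption.
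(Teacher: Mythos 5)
Your proposal matches the paper's argument essentially verbatim: the paper gives no separate proof, but the two paragraphs immediately preceding the proposition establish exactly the two cases you describe — the $p\neq 2$ case via the Iwahori splitting, \cite[Sec 3.6]{pan_preservation_depth}, depth decomposition, and the Vign\'eras-style finitely-generated-and-admissible argument of \cite[II.5]{vig_book}; and the banal case by transposing \cite[Th 3.5]{kaplan_szpruch}. Your slightly more explicit attribution of the Iwahori splitting to Moore/Matsumoto/Kubota cocycles is a helpful gloss but does not change the route.
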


\subsection{Generalised (genuine) regular representations} \label{generalised-regular-reps-sec}

Let $G$ be a locally profinite group. Let $R$ be an algebraically closed field of characteristic \nolinebreak $\ell$. Assume there exists a compact open subgroup of $G$ of invertible pro-order in $R$.

\paragraph{Regular representation.} The usual regular representation $C_c^\infty(G)$ is the space of locally constant compactly supported functions on $G$ together with the $(G \times G)$-action $((g,g') \cdot f )( x ) = f((g')^{-1} x g)$. We will denote by $\rho_l$ the action on the left (second copy of $G$) and $\rho_r$ the action on the right (first copy). We embed $G$ diagonally into $G \times G$ via $g \mapsto (g,g)$ and call this subgroup $G^\Delta$.

\begin{lem} Let $\textup{Reg}(G)=\textup{ind}_{G^\Delta}^{G \times G} (1_G)$. The map $f \mapsto f|_{G \times 1}$ gives an isomorphism of $(G \times G)$-representations $\textup{Reg}(G) \simeq C_c^\infty(G)$. \end{lem}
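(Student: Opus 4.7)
The plan is to exhibit an explicit two-sided inverse to the restriction map and then verify $(G \times G)$-equivariance. Nothing deep is needed: the whole statement is essentially an unwinding of definitions once one writes down the right parametrisation of $G^\Delta \backslash (G \times G)$.

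First I would unwrap what $\textup{Reg}(G)$ is: a function $f \in \textup{ind}_{G^\Delta}^{G \times G}(1_G)$ is a locally constant map $f : G \times G \to R$ satisfying $f(gg_1, gg_2) = f(g_1,g_2)$ for all $g \in G$, whose support is compact modulo $G^\Delta$, and the $(G \times G)$-action is by right translation $((h_1,h_2) \cdot f)(g_1,g_2) = f(g_1 h_1, g_2 h_2)$.

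Then I would define the candidate inverse $\Psi : C_c^\infty(G) \to \textup{Reg}(G)$ by $\Psi(\varphi)(g_1,g_2) = \varphi(g_2^{-1} g_1)$. Left $G^\Delta$-invariance is immediate: $\Psi(\varphi)(gg_1,gg_2) = \varphi((gg_2)^{-1} gg_1) = \varphi(g_2^{-1} g_1)$, and local constancy is inherited from $\varphi$. For the support condition, observe that $(g_1,g_2) \mapsto g_2^{-1} g_1$ induces a homeomorphism $G^\Delta \backslash (G \times G) \simeq G$, and under this identification the image of $\textup{supp}(\Psi(\varphi))$ is exactly $\textup{supp}(\varphi)$, which is compact.

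Next I would check that $\Psi$ is a two-sided inverse to restriction. On the one hand, $\Psi(\varphi)|_{G \times 1}(g) = \varphi(1^{-1} g) = \varphi(g)$. On the other hand, for $f \in \textup{Reg}(G)$, invariance under $(g_2,g_2) \in G^\Delta$ gives $f(g_1,g_2) = f(g_2^{-1} g_1, 1)$, so $\Psi(f|_{G \times 1})(g_1,g_2) = f(g_2^{-1} g_1, 1) = f(g_1,g_2)$. Finally, for equivariance I would compute, with $\varphi = f|_{G \times 1}$,
$$\bigl((h_1,h_2) \cdot f\bigr)\bigl|_{G \times 1}(g) \;=\; f(gh_1, h_2) \;=\; f(h_2^{-1} g h_1, 1) \;=\; \varphi(h_2^{-1} g h_1),$$
which matches the $(G \times G)$-action $((h_1,h_2)\cdot \varphi)(x) = \varphi(h_2^{-1} x h_1)$ on $C_c^\infty(G)$ coming from the conventions in the preceding paragraph.

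The only real obstacle is bookkeeping: one must be consistent about which copy of $G$ acts on the left versus the right, and about whether induction is realised via $f(hg)=\sigma(h)f(g)$ with $G$ acting by right translation. Once the conventions are fixed, the proof is purely formal.
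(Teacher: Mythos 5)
Your proof is correct and complete. The paper states this lemma without proof, treating it as an elementary unwinding of definitions; your verification supplies exactly the expected argument — the explicit inverse $\Psi(\varphi)(g_1,g_2) = \varphi(g_2^{-1} g_1)$, the identification $G^\Delta \backslash (G \times G) \simeq G$ via $(g_1,g_2) \mapsto g_2^{-1}g_1$ for the support condition, and the equivariance check against the paper's convention $((g,g')\cdot f)(x) = f((g')^{-1} x g)$ — all of which are consistent with the paper's setup.
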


\paragraph{Generalised regular representations.} Assume we are given an homeomorphism of topological groups $\iota : G \to H$ and a smooth character $\chi$ of $G$. We embed $G$ into $G \times H$ via $g \mapsto (g, \iota(g))$ and we denote $G^\iota$ this subgroup. We also see $\chi$ as a character of $G^\iota$ via $\chi(g,\iota(g))=\chi(g)$. Define the generalised regular representation:
$$\textup{Reg}^{\iota,\chi}(G,H) = \textup{ind}_{G^\iota}^{G \times H}(\chi) \in \textup{Rep}_R(G \times H).$$
Of course $\textup{Reg}^{\textup{id}_G,1_G}(G,G) = \textup{Reg}(G)$. We have some nice properties for these generalised regular representations. They are pullbacks of the regular representation of $G$ \textit{i.e.} $\textup{Reg}^{\iota,1_G}(G,H) \simeq {}^{\iota^{-1}} C_c^\infty(G) = \rho_r \otimes (\rho_l \circ \iota^{-1} )$ as $(G \times H)$-representations and it is easy to see their isomorphism classes only depend on $\iota$ modulo inner automorphisms of $G$ (and $H$). They can also absorb characters of the form $\mu^{-1} \otimes (\mu \circ \iota^{-1})$, because the regular representation itself absorbs $\mu^{-1} \otimes \mu$. Moreover $\textup{Reg}^{\iota,\chi}(G,H) \simeq \chi \otimes \textup{Reg}^{\iota,1_G}(G,H) \simeq \textup{Reg}^{\iota,1_G}(G,H) \otimes (\chi \circ \iota^{-1})$

If $\pi \in \textup{Irr}_R(G)$, then $C_c^\infty(G)_{\pi} \simeq \pi \otimes_R \pi^\vee$. We say it pairs together $\pi$ and $\pi^\vee$. Similarly, the generalised regular representation $\textup{Reg}^{\iota,\chi}(G,H)$ pairs together $\pi$ and $\pi^{[\chi,\iota]} \in \textup{Irr}_G(H)$ where $\pi^{[\chi,\iota]} \simeq (\chi \circ \iota^{-1})(\pi^\vee \circ \iota^{-1}) \in \textup{Irr}_R(H)$. If $\iota = \textup{id}_G$ we shorten it to $\pi^{[\chi]}$. Alternatively $\pi^{[\chi]}$ is the only irreducible representation $\pi' \in \textup{Irr}_R(G)$ such that $\textup{Hom}_{G^\Delta}(\pi \otimes \pi',\chi ) \neq 0$. Note that $\pi^{[1_G]} \simeq \pi^\vee$ and $\pi \mapsto \pi^{[\chi]}$ is an involution.

\paragraph{Genuine tensor products.} Let $\textbf{G} = G \times_c R^\times$ and $\textbf{H} = H \times_{c'} R^\times$ be central extensions such that $c$ and $c'$ are $\{ \pm 1 \}$-valued. We define the group $\textbf{G} \times_{\triangledown} \textbf{H}$ as the quotient of $\textbf{G} \times \textbf{H}$ by the subgroup $\{ ((1,\lambda),(1,\lambda^{-1})) \ | \ \lambda \in R^\times \}$. Therefore $\textbf{G} \times_{\triangledown} \textbf{H}$ is a central extension of $G \times H$ by $R^\times$ whose cocyle is $\{ \pm 1 \}$-valued. Furthermore the restriction to $\textbf{G}$ and $\textbf{H}$ are embeddings of central extensions into $\textbf{G} \times_{\triangledown} \textbf{H}$. Given representations $\pi \in \textup{Rep}_R^{\textup{gen}}(\textbf{G})$ and $\sigma \in \textup{Rep}_R^{\textup{gen}}(\textbf{H})$, the representation $\pi \otimes_R \sigma$ can be considered in an obvious way as a representation of $\textbf{G} \times_{\triangledown} \textbf{H}$. Conversely, a representation of $\textbf{G} \times_{\triangledown} \textbf{H}$ can be pulled back to $\textbf{G} \times \textbf{H}$ and each of the factors acts genuinely. Very often we will use this fact and switch from one point a view to the other. If we are in the situation where $G$ and $H$ are subgroups of $\textup{Sp}(W)$ that commute, then it is known their inverse images $\textbf{G}$ and $\textbf{H}$ in the metaplectic group commute. Moreover the group morphism $((g,\lambda),(h,\mu)) \mapsto (g,\lambda) \cdot (h,\mu)$ factors through $\textbf{G} \times_{\triangledown} \textbf{H}$ and has image $\textbf{GH}$ the inverse image of $G H$ in the metaplectic group. So we can alternatively view a genuine representation of $\textbf{GH}$ as a genuine representation on each factor of $\textbf{G} \times \textbf{H}$ via pullback.

\paragraph{Generalised genuine regular representations.} If $\phi : \textbf{G} \to \textbf{H}$ is an isomorphism of central extensions, then $\phi(g,\lambda) = (\iota(g),\gamma_g \lambda)$ where $\iota : G \overset{\sim}{\to} H$ and $\partial \gamma \cdot c = c'$. Write $\textbf{G}^{\phi}$ for the inverse image of $G^\iota$ under $\textbf{G} \times \textbf{H} \twoheadrightarrow G \times H$, so its elements are of the form $((g,\lambda),(\iota(g),\mu))$. It contains $\{(g,\lambda),(\iota(g),\gamma_g \lambda^{-1}))\}$ which becomes isomorphic to $G$ in $\textbf{G} \times_{\triangledown} \textbf{H}$. Let $\chi$ be a character of $G$ and extend it to a character of $\textbf{G}^\phi$ by setting $\chi((g,\lambda),(\iota(g),\mu)) = \lambda \mu \gamma_g^{-1} \chi(g)$. As earlier, we can define the generalised genuine regular representations as:
$$\textup{Reg}^{\phi,\chi}(\textbf{G},\textbf{H}) = \textup{ind}_{\textbf{G}^\phi}^{\textbf{G} \times \textbf{H}}(\chi).$$
The regular representation of $\textbf{G}$ will be $\textup{Reg}(\textbf{G}) =  \textup{Reg}^{\textup{id}_{\textbf{G}},1_G}(\textbf{G},\textbf{G})$. Note that this is different (and better for our purpose!) from the usual space of functions $f \in C_c^{\infty,\textup{gen}}(\textbf{G})$ that are locally constant compactly supported and transform via $f(g,\lambda) = \lambda f(g,1)$. Indeed, the natural $\textbf{G}$-action $\rho_r$ is genuine but $\rho_l$ in general fails to be genuine. We can define the (genuine) contragredient of $\pi \in \textup{Irr}_R^{\textup{gen}}(\textbf{G})$ as the unique $\pi^\vee \in \textup{Irr}_R^{\textup{gen}}(\textbf{G})$ such that $\textup{Reg}(\textbf{G})_\pi = \pi \otimes_R \pi^\vee$. More generally define $\pi^{[\phi,\chi]} \in \textup{Irr}_R^{\textup{gen}}(\textbf{H})$, or simply $\pi^{[\chi]}$ if $\phi = \textup{id}_{\textbf{G}}$, as the unique $\pi'$ such that $\textup{Reg}^{\phi,\chi}(\textbf{G},\textbf{H})_\pi = \pi \otimes_R \pi'$.

\begin{rem} We may use a variant of $\textup{Reg}^{\phi,\chi}(\textbf{G},\textbf{H})$ when $\iota : G \to H$ is an isomorphism and $\xi$ is a genuine character of $\textbf{G}^\iota = \{ (g,\lambda),(\iota(g),\mu) \}$. Because $\xi$ is genuine, its kernel is of the form $\{((g,\lambda),(\iota(g),\gamma_g \lambda^{-1}))\}$ and induces the isomorphism of central extensions $\phi : (g,\lambda) \mapsto (\iota(g),\gamma_g \lambda)$. We set $\textup{Reg}^{\iota,\xi}(\textbf{G},\textbf{H})=\textup{Reg}^{\phi,1}(\textbf{G},\textbf{H})$. \end{rem}

\section{On invariant distributions} \label{on_invariant_distributions_sec}

Let $X$ be a locally profinite space. Let $G$ be a locally profnite group acting continuously on $X$ \textit{i.e.} the action $G \times X \to X$ is a continuous map, or equivalently, it defines a group morphism $\gamma$ from $G$ to the homeomorphisms of $X$. All our actions are assumed to be continuous. Denote by $R_X^\gamma$ the graph of $\gamma$ \textit{i.e.} the subset $\{ (x, \gamma(g)(x)) \ | \ x \in X, g \in G \}$ of $X \times X$.

\subsection{Regular strata vs. constructive actions} \label{regular_vs_constructive_sec}

A partition of $X$ is a decomposition $X = \bigsqcup_{i \geq 0} X_i$ into countably many disjoint locally closed subsets $X_i$ of $X$. We say it is a $G$-stratification if all $X_i$ are $G$-stable. It is closed/open if all $X_{ \leq j} = \bigsqcup_{i \leq j} X_i$ are open/closed in $X$. In this case we call $(X_i)_{i \geq 0}$ the set of strata associated to the $G$-stratification. A $G$-stratification is finite if there are finitely many (non-empty) strata. Note that a finite closed $G$-stratification can always be turned into an open $G$-stratification in an obvious way.

\begin{defi} We say the action of $G$ on $X$ has regular strata if there exists a $G$-stratification $X=\bigsqcup_{i \geq 0} X_i$, either closed or open, such that $X_i / G$ is Hausdorff for all $i$. Similarly the action has local regular strata if $X_i / G$ is locally Hausdorff for all $i$. \end{defi}

We may add the word ``finite'' to mean the $G$-stratification can be chosen to be finite. Recall from \cite[6.6]{bz1} that the action is said to be constructive if $R_X^\gamma$ is constructive \textit{i.e.} $R_X^\gamma$ is a union of finitely many locally closed subsets of $X \times X$. We will show that constructive is equivalent to having finite local regular strata.

For $M$ a subset of a topological space $Y$, we set: 
$$U(M) = \{ y \in M \ | \ M \textup{ is closed in a neighbourhood of } y \} \textup{ and } M^1 = M \backslash U(M).$$
From \cite[6.7]{bz1}, the set $M^1$ is closed in $M$, in other words, $U(M)$ is open in $M$. Moreover, the set $U(M)$ is a locally closed subset of $Y$. Set $M^0 = M$ and define:
$$M^2 = (M^1)^1, M^3 = (M^2)^1, \dots, M^{k+1} = (M^k)^1.$$
According to \cite[Lem. 6.7]{bz1}, there exists, when $M$ is a non-empty constructive set, $k \in \mathbb{N}^*$ such that $M^k = \emptyset$. Moreover $U(M)$ is dense in $M$ so it is non-empty. Let $k$ be minimal such that $M^{k-1} \neq \emptyset$. We therefore get:
$$M = U(M) \sqcup U(M^1) \sqcup U(M^2) \sqcup \dots \sqcup U(M^{k-1}).$$
Applying it with $M=R_X^\gamma$ when the action is constructive, we easily get that the previous $(G \times G)$-filtration induces an open $G$-filtration of $X$ by projection on the first coordinate via $p_1 : X \times X \to X$ \textit{i.e.} $X_i = p_1(U(M^i))$ and $X = X_0 \sqcup X_1 \sqcup \dots \sqcup X_{k-1}$. We know by \cite[Prop 6.8 b)]{bz1} that $X_i / G$ is locally Hausdorff, therefore constructive implies finite local regular strata. Conversely, an action with finite local regular strata $X = \bigsqcup X_i$ will be constructive as $R_{X_i}^\gamma$ will be locally closed in $X \times X$. Therefore:

\begin{prop} \label{constructive_equiv_to_finite_local_regular_strata_prop} Constructive is equivalent to having finite local regular strata. Furthermore there is a canonical finite open $G$-stratification $X = \bigsqcup X_i$ induced by $U((R_X^\gamma)^i)$ and such that $X_i /G$ is locally Hausdorff. \end{prop}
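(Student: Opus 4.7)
The plan is to prove the two implications separately and read off the canonical stratification from one of them. For the forward direction, I take $M = R_X^\gamma \subset X \times X$ and invoke the Bernstein--Zelevinsky structure theorem \cite[Lem. 6.7]{bz1} that applies to any non-empty constructive set: there is a minimal $k$ with $M^{k-1} \neq \emptyset$ and $M^k = \emptyset$, and $M = U(M) \sqcup U(M^1) \sqcup \cdots \sqcup U(M^{k-1})$. Since $R_X^\gamma$ is $(G \times G)$-stable (the action by $G$ on both coordinates permutes pairs $(x,\gamma(g)(x))$), each layer $U(M^i)$ is also $(G \times G)$-stable; consequently its first-coordinate projection $X_i := p_1(U(M^i))$ is $G$-stable. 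One checks that this yields a finite open $G$-stratification $X = X_0 \sqcup X_1 \sqcup \cdots \sqcup X_{k-1}$, and \cite[Prop. 6.8 b)]{bz1} directly gives that each $X_i / G$ is locally Hausdorff. This simultaneously proves the forward implication and produces the canonical stratification claimed in the ``furthermore'' clause.

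For the reverse direction, I start from a finite $G$-stratification $X = \bigsqcup_i X_i$ with each $X_i / G$ locally Hausdorff and I argue that $R_X^\gamma = \bigsqcup_i R_{X_i}^\gamma$ is a finite disjoint union of locally closed subsets of $X \times X$. The key observation is that $R_{X_i}^\gamma$ is precisely the graph in $X_i \times X_i$ of the equivalence relation on $X_i$ whose quotient is $X_i / G$; the quotient $X_i / G$ being locally Hausdorff translates into this graph being locally closed in $X_i \times X_i$. Since $X_i$ is itself locally closed in $X$ (as a stratum of a closed/open partition), the product $X_i \times X_i$ is locally closed in $X \times X$, and transitivity of local closedness then yields $R_{X_i}^\gamma$ locally closed in $X \times X$. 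Summing over the finitely many strata establishes constructivity.

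The main obstacle is really the topological translation in the reverse direction: verifying cleanly that ``$X_i/G$ locally Hausdorff'' is equivalent to ``$R_{X_i}^\gamma$ locally closed in $X_i \times X_i$''. For a \emph{global} Hausdorff quotient of a topological group action this is a standard fact (graph of the equivalence relation is closed iff quotient is Hausdorff), and the localisation is essentially a point-by-point application, but one has to be careful because the local Hausdorff property is stated on the quotient whereas local closedness is stated upstairs. The cleanest way I would carry this out is to use the quotient map $X_i \to X_i/G$, which is open, to transport open Hausdorff neighbourhoods downstairs into open neighbourhoods upstairs where the fibre product identifies $R_{X_i}^\gamma$ with the pullback of the diagonal; closedness of the diagonal in a Hausdorff space then gives the required local closedness.
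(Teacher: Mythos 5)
Your proof is correct and follows essentially the same route as the paper: the forward direction applies the Bernstein--Zelevinsky structure theorem to $M = R_X^\gamma$, projects the $(G\times G)$-stable layers $U(M^i)$ along $p_1$ to get the canonical open $G$-stratification, and invokes \cite[Prop 6.8 b)]{bz1} for local Hausdorffness, which is exactly what the paper does. For the converse the paper merely asserts in one sentence that each $R_{X_i}^\gamma$ is locally closed in $X\times X$; your expansion via the open quotient map and the pullback of the diagonal is the natural way to justify that assertion and is compatible with the paper's intent.
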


Actions with finite regular strata can be obtained by considering a regular pair $(\textbf{G},\textbf{X})$ in the sense of \cite{gk}. For $F$ a non-archimedean local field, these pairs consist of an algebraic group $\textbf{G}$ defined over $F$ and an algebraic $F$-manifold $\textbf{X}$ -- \textit{i.e.} a smooth $F$-algebraic variety -- such that the action of $\textbf{G}$ on $\textbf{X}$ is regular. Examples of regular pairs \cite[Ex p.103]{gk} include the conjugation action for classical groups in characteristic $0$. In positive characteristic however, it is more restrictive \textit{e.g.} the conjugation action of $\textup{\textbf{SL}}_n$ is regular if $\textup{char}(F) \nmid n$ and that of $\textup{\textbf{Sp}}_{2n}$ is regular when $\textup{char}(F) \neq 2$.

\subsection{Invariant distributions: the vs. battle continues} \label{invariant_distrib_modular_sec}

\noindent Let $R$ be a field of characteristic $\ell$ and $C_c^\infty(X)$ be the space of $R$-valued functions that are locally constant and compactly supported. A distribution is a linear form $C_c^\infty(X) \to R$. When the context requires it, we may add a reference to the field by denoting $C_c^\infty(X,R)$ and using the word $R$-distributions. When $F$ is a closed subset of $X$ we can restrict the support of functions $\textup{res}_F : C_c^\infty(X) \to C_c^\infty(F)$ to $F$. Let $C_c^\infty(X)[G]$ be the $R$-subspace of $C_c^\infty(X)$ generated by functions of the form $g \cdot f -f$ where $g \in G$ and $f \in C_c^\infty(X)$. The quotient space $C_c^\infty(X) / C_c^\infty(X)[G]$ is called the $G$-coinvariants and is the largest quotient of $C_c^\infty(X)$ on which $G$ acts trivially.

\paragraph{Finite regular strata \`a la Gelfand-Kazhdan.} We can generalise \cite[Th 1]{gk} to the modular setting. We recall the details of the proofs for the reader's convenience.

\begin{lem} \label{stratification_G_conivariants_lemma} Let $X_0 \subset X$ be a closed $G$-stable subset. Then:
\begin{itemize}
\item $\displaystyle \textup{res}_{X_0}^{-1} (C_c^\infty(X_0)[G]) = C_c^\infty(X)[G] + C_c^\infty( X \backslash X_0)$.
\end{itemize} 

\noindent Furthemore, if $X_0 / G$ is Hausdorff (in particular all $G$-orbits $\mathcal{O} \subset X_0$ are closed in $X_0$ and the restriction map $\textup{res}_\mathcal{O} : C_c^\infty(X_0) \to C_c^\infty(\mathcal{O})$ is well-defined) we have:
\begin{itemize} \item $\displaystyle C_c^\infty(X_0)[G] = \bigcap_{\mathcal{O} \in X_0 / G} \textup{res}_{\mathcal{O}}^{-1} (C_c^\infty(\mathcal{O})[G])$. \end{itemize}  \end{lem}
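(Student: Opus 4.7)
My plan is to prove the two equalities separately, relying on the locally profinite structure throughout.

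\emph{First equality.} The inclusion $\supseteq$ is immediate: the map $\textup{res}_{X_0}$ is $G$-equivariant, hence sends $C_c^\infty(X)[G]$ into $C_c^\infty(X_0)[G]$, and it annihilates $C_c^\infty(X \setminus X_0)$. For $\subseteq$, the key input is an extension lemma: because $X_0 \subset X$ is a closed embedding of locally profinite spaces, every $f_0 \in C_c^\infty(X_0)$ admits a (non-canonical) lift $\tilde{f}_0 \in C_c^\infty(X)$ with $\textup{res}_{X_0}(\tilde{f}_0) = f_0$. Given $f \in C_c^\infty(X)$ with $\textup{res}_{X_0}(f) = \sum_i (g_i \cdot f_i - f_i)$, I choose lifts $\tilde{f}_i$, form $h = \sum_i (g_i \cdot \tilde{f}_i - \tilde{f}_i) \in C_c^\infty(X)[G]$, and observe that $f - h$ restricts to zero on the closed subset $X_0$; its support therefore lies in the open complement, so $f - h \in C_c^\infty(X \setminus X_0)$.

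\emph{Second equality.} The inclusion $\subseteq$ is again immediate from $G$-equivariance. For $\supseteq$, let $f$ lie in the intersection, and write $q : X_0 \to X_0/G$. Since $X_0/G$ is Hausdorff and locally profinite (this last property inherited from $X_0$ through $q$), the compact image $K = q(\textup{supp}(f))$ is covered by a finite pairwise disjoint family $\{U_i\}$ of compact open subsets of $X_0/G$; pulling back gives $G$-stable compact open subsets $q^{-1}(U_i)$ of $X_0$, and the decomposition $f = \sum_i f \cdot \mathbf{1}_{q^{-1}(U_i)}$ keeps each summand in the intersection. I may therefore assume $K \subset U$ for a single compact open $U \subset X_0/G$ and work inside the $G$-stable compact open $V = q^{-1}(U)$.

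The heart of the argument is a local-to-global step. For each $u \in K$, apply the first equality to $V$ and its closed subspace $\mathcal{O}_u$ (which is closed in $V$ exactly because $X_0/G$ is Hausdorff): write $f = h_u + r_u$ with $h_u \in C_c^\infty(V)[G]$ and $\textup{supp}(r_u) \subset V \setminus \mathcal{O}_u$ compact. Then $q(\textup{supp}(r_u))$ is a compact subset of the Hausdorff space $U$ avoiding $u$, so I can choose a compact open neighborhood $U_u$ of $u$ in $U$ disjoint from it; consequently $r_u$ vanishes on $q^{-1}(U_u)$. Because $q^{-1}(U_u)$ is $G$-stable its characteristic function is $G$-invariant, so multiplication by it preserves $C_c^\infty(V)[G]$, and I deduce $f \cdot \mathbf{1}_{q^{-1}(U_u)} = h_u \cdot \mathbf{1}_{q^{-1}(U_u)} \in C_c^\infty(V)[G]$. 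By compactness of $K$, finitely many $U_{u_1}, \dots, U_{u_m}$ cover $K$; using the Boolean algebra of compact open subsets of $U$, I refine them to a pairwise disjoint compact open partition $U_1', \dots, U_m'$ with each $U_j' \subset U_{u_j}$ and $\bigcup_j U_j' \supset K$. Summing $f = \sum_j f \cdot \mathbf{1}_{q^{-1}(U_j')}$ then exhibits $f$ as a finite sum of elements of $C_c^\infty(V)[G] \subset C_c^\infty(X_0)[G]$.

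The main obstacle is the combinatorial step combining the local peel-offs: a single application of the first equality only gives information near one orbit, and a priori $\textup{supp}(f)$ may meet infinitely many orbits. The Hausdorff hypothesis on $X_0/G$ is crucial precisely because it turns the quotient into a locally profinite Hausdorff space whose compact open subsets form a Boolean algebra, so that compactness of $K$ together with the $G$-stability of pullbacks converts the finite open cover into a finite disjoint partition on which each local decomposition assembles cleanly into a global identity.
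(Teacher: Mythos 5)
Your argument follows essentially the same route as the paper: for each orbit you choose a local $C_c^\infty(\cdot)[G]$-correction (via the first equality, which the paper does implicitly through its disagreement set $V'(\mathcal{O})$), use the Hausdorffness of $X_0/G$ together with openness of the quotient map to produce a saturated open--closed neighbourhood of the orbit on which $f$ coincides with that correction, and then patch finitely many such neighbourhoods after disjointifying, using compactness of $q(\textup{supp}(f))$. Two cosmetic slips do not affect validity: $q^{-1}(U)$ need not be compact since individual $G$-orbits need not be compact, so $V$ is merely a $G$-stable open closed subset on which $C_c^\infty(V)$ still makes sense; and the preliminary reduction to a single $V = q^{-1}(U)$ is superfluous, as the same patching argument runs directly on $X_0$.
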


\begin{proof} The inclusion $\supseteq$ holds because $\textup{res}_{X_0} (\sum g \cdot f - f) = \sum g \cdot \textup{res}_{X_0}(f) - \textup{res}_{X_0}(f)$ for $\sum g \cdot f -f \in C_c^\infty(X)[G]$ and $\textup{res}_{X_0} (h) = 0$  for $h \in C_c^\infty(X \backslash X_0)$. To check the opposite inclusion, note that for $\textup{res}_{X_0}(f) = \sum g \cdot h_0 - h_0 \in C_c^\infty(X_0)[G]$, if we lift each $h_0 \in C_c^\infty(X_0)$ in the sum to some arbitrary $h \in C_c^\infty(X)$, we obtain $\textup{res}_{X_0}\big(f - (\sum g \cdot h -h) \big) = 0$. As a result $f \in C_c^\infty(X)[G] + C_c^\infty(X \backslash X_0)$.

Regarding the second claim, as $\textup{res}_{\mathcal{O}}(\sum g \cdot f_0 - f_0) = \sum g \cdot \textup{res}_{\mathcal{O}}(f_0) - \textup{res}_{\mathcal{O}}(f_0)$ for any $G$-orbit $\mathcal{O}$ in $X_0$, the inclusion $\subseteq$ holds. The remainder of the proof focuses on proving the reverse inclusion following ideas of \cite{gk}. 

Let $f \in C_c^\infty(X_0)$ such that $\textup{res}_\mathcal{O}(f) \in C_c^\infty( \mathcal{O})[G]$ for all $G$-orbits $\mathcal{O}$ in $X_0$. For each orbit $\mathcal{O}$, choose a decomposition $\textup{res}_{\mathcal{O}}(f) = \sum g \cdot h_\mathcal{O} - h_\mathcal{O}$. Then choose an arbitrary lift for each $h_\mathcal{O}$, say $h_\mathcal{O}'$, and define:
$$V'(\mathcal{O}) = \{ x \in X_0 \ | \ f(x) \neq \sum g \cdot h_\mathcal{O}'(x) - h_\mathcal{O}'(x) \}.$$
The set $V'(\mathcal{O})$ is closed open compact in $X_0$ and has empty intersection with $\mathcal{O}$. Take the saturated set associated to $V'(\mathcal{O})$ and call it $V(\mathcal{O})$. It is the smallest $G$-stable set containing $V'(\mathcal{O})$, or equivalently, the set $p^{-1} ( p (V'(\mathcal{O})))$ if $p : X_0 \to X_0 / G$ denotes the quotient map. The set $V(\mathcal{O})$ is open because the quotient map $p$ is open. It is closed because $p (V'(\mathcal{O}))$ itself is closed, indeed, it is the image of a compact by a continuous map (here we have made use of the fact that $X_0 / G$ is compact). Moreover it is a proper subset of $X_0$ as it does not contain $\mathcal{O}$. Name the complement $U(\mathcal{O}) = X_0 \backslash V(\mathcal{O})$, which is a proper open closed $G$-stable subset of $X_0$ that contains $\mathcal{O}$.

Considering for all $G$-orbits the sets $U(\mathcal{O})$, one has an open covering of $X_0$. As $f$ has compact support, we choose finitely many such sets that cover the support of $f$. Say this finite cover is associated to a finite family of $G$-orbits $(\mathcal{O}_i)_{i \in I}$ where $I$ is a finite set. We can build a finer covering $(U_k)_{k \in K}$ from $(U(\mathcal{O}_i))_{i \in I}$ satisfying the following: $K$ is a finite set; each $U_k$ is saturated open closed; the $U_k$ are mutually disjoint; there exists a map $\alpha : K \to I$ such that $U_k \subset U(\mathcal{O}_{\alpha(k)})$ for all $k \in K$. Now define $f_k \in C_c^\infty(X_0)$ by:
$$f_k(x) = \left\{ \begin{array}{cc}
\sum g \cdot h_{\mathcal{O}_{\alpha(k)}}'(x) - h_{\mathcal{O}_{\alpha(k)}}'(x) & \textup{ if } x \in U_k; \\
 & \\
0 & \textup{ otherwise.}
\end{array} \right.$$
By definition, one has $f_k = f$ on $U_k$, and each $f_k \in C_c^\infty(X_0)[G]$. Therefore the opposite inclusion holds because $f = \sum f_k \in C_c^\infty(X_0)[G]$.
\end{proof}

\begin{prop} \label{invariance_a_la_GK_prop} Let $\gamma$ be an action of $G$ on $X$ with finite regular strata. Suppose that $\sigma : X \to X$ is a homeomorphism satisfying:
\begin{itemize}
\item if there exists a non-trivial $G$-invariant distribution $T$ on an orbit $S$, then $\sigma (S) = S$ and $\sigma \cdot T = T$.
\end{itemize}
Then any $G$-invariant distribution on $C_c^\infty(X)$ is $\sigma$-invariant.
\end{prop}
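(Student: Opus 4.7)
The plan is to prove the equivalent assertion that $\sigma \cdot f - f$ lies in $C_c^\infty(X)[G]$ for every $f \in C_c^\infty(X)$. Since the $G$-invariant distributions on $X$ are exactly the linear functionals on $C_c^\infty(X)/C_c^\infty(X)[G]$, this membership forces every such distribution to annihilate $\sigma \cdot f - f$, which is precisely $\sigma$-invariance. The main ingredient is a strengthening of the second part of Lemma~\ref{stratification_G_conivariants_lemma} valid for any finite regular $G$-stratification, namely
$$C_c^\infty(X)[G] = \bigcap_{\mathcal{O}} \textup{res}_\mathcal{O}^{-1}(C_c^\infty(\mathcal{O})[G]),$$
where $\mathcal{O}$ ranges over all $G$-orbits of $X$. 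Regularity of the strata ensures each orbit is locally closed, so the restriction maps $\textup{res}_\mathcal{O} : C_c^\infty(X) \to C_c^\infty(\mathcal{O})$ are well-defined in the locally profinite setting.

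I would establish this strengthened formula by induction on the number $n$ of strata; the case $n=1$ is the second part of Lemma~\ref{stratification_G_conivariants_lemma}. For the inductive step, let $X_0$ be the closed Hausdorff stratum and $U = X \setminus X_0$ its open complement, which inherits a finite regular $G$-stratification with $n-1$ strata. Given $f$ in the right-hand intersection, the second part of the lemma applied to $X_0$ gives $\textup{res}_{X_0}(f) \in C_c^\infty(X_0)[G]$, and the first part then yields a decomposition $f = \phi + \psi$ with $\phi \in C_c^\infty(X)[G]$ and $\psi \in C_c^\infty(U)$. All orbit-restrictions of $\phi$ lie in $C_c^\infty(\mathcal{O})[G]$, hence so do those of $\psi$ for orbits $\mathcal{O} \subseteq U$; the induction hypothesis applied to $U$ then gives $\psi \in C_c^\infty(U)[G]$, and extension by zero embeds $C_c^\infty(U)[G]$ into $C_c^\infty(X)[G]$ (because $U$ is $G$-stable), so $f = \phi + \psi \in C_c^\infty(X)[G]$.

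Granted the strengthened formula, the proof of the proposition reduces to checking orbit by orbit that $\textup{res}_\mathcal{O}(\sigma \cdot f - f) \in C_c^\infty(\mathcal{O})[G]$. If $\mathcal{O}$ supports no non-trivial $G$-invariant distribution, then its annihilator in the dual of $C_c^\infty(\mathcal{O})$ is zero, so $C_c^\infty(\mathcal{O})[G] = C_c^\infty(\mathcal{O})$ by double annihilator, and the condition is automatic. Otherwise the hypothesis gives $\sigma(\mathcal{O}) = \mathcal{O}$, whence $\textup{res}_\mathcal{O}(\sigma \cdot f) = \sigma|_\mathcal{O} \cdot f|_\mathcal{O}$; moreover every $G$-invariant distribution $T_\mathcal{O}$ on $\mathcal{O}$ satisfies $T_\mathcal{O}(\sigma|_\mathcal{O} \cdot h - h) = 0$ for all $h \in C_c^\infty(\mathcal{O})$, so the double-annihilator identity again places $\sigma|_\mathcal{O} \cdot f|_\mathcal{O} - f|_\mathcal{O}$ in $C_c^\infty(\mathcal{O})[G]$. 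The main obstacle will be the inductive proof of the strengthened intersection formula, which requires careful bookkeeping for restrictions to the locally-closed orbits sitting in higher strata; in the locally profinite setting, however, the availability of a basis of compact-open subsets ensures that these restrictions always land in $C_c^\infty(\mathcal{O})$ and interact compatibly with the filtration.
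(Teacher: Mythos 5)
Your plan rests on the strengthened intersection formula
$$C_c^\infty(X)[G] = \bigcap_{\mathcal{O}} \textup{res}_\mathcal{O}^{-1}\big(C_c^\infty(\mathcal{O})[G]\big),$$
the intersection ranging over \emph{all} $G$-orbits of $X$, together with the claim that $\textup{res}_\mathcal{O}: C_c^\infty(X) \to C_c^\infty(\mathcal{O})$ is well-defined for every locally closed orbit $\mathcal{O}$. Both assertions fail. A restriction to a locally closed but non-closed subset need not land in $C_c^\infty(\mathcal{O})$, and more importantly the inclusion $C_c^\infty(X)[G] \subseteq \bigcap_\mathcal{O} \textup{res}_\mathcal{O}^{-1}(C_c^\infty(\mathcal{O})[G])$ is simply false. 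Take $X = F$, $G = F^\times$ acting by multiplication, with the finite regular closed $G$-stratification $X_0 = \{0\}$, $X_1 = F^\times$. For $\mathcal{O} = X_1 = F^\times$, $h = 1_{\mathcal{O}_F}$ and $g = \varpi_F$ a uniformiser, the $G$-coboundary $g \cdot h - h = -1_{\mathcal{O}_F^\times}$ restricts to $\mathcal{O}$ as $-1_{\mathcal{O}_F^\times}$, which is compactly supported there but has non-zero integral against the Haar measure of $F^\times$. Since that Haar measure is (up to scalar) the only $G$-invariant distribution on $\mathcal{O}$ and $C_c^\infty(\mathcal{O})[G]$ is exactly its kernel, the coboundary is \emph{not} in $\textup{res}_\mathcal{O}^{-1}(C_c^\infty(\mathcal{O})[G])$. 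This is the well-known failure of orbital integrals along non-closed orbits to be invariant under the global group action: pushing the Haar measure of $\mathcal{O}$ forward does not give a $G$-invariant distribution on $X$ because the boundary contributions do not cancel.

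The spot in your argument where the false inclusion is used is the inductive step: after writing $f = \phi + \psi$ with $\phi \in C_c^\infty(X)[G]$ and $\psi \in C_c^\infty(U)$, you assert that ``all orbit-restrictions of $\phi$ lie in $C_c^\infty(\mathcal{O})[G]$'' and deduce the same for $\psi$ on orbits of $U$ by subtraction. For orbits in higher strata this assertion is precisely the false direction of your formula, so the induction does not go through. The paper sidesteps this entirely by \emph{never} restricting from $C_c^\infty(X)$ to an orbit in a higher stratum: at stage $i$ the function under consideration already lies in $C_c^\infty(X \setminus X_{<i})$ (an open $G$-stable set), and one restricts only to orbits of $X_i$, which are closed in $X \setminus X_{<i}$; Lemma~\ref{stratification_G_conivariants_lemma} is then invoked inside $X \setminus X_{<i}$. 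There is no clean global intersection formula to factor out here — the condition at level $i$ genuinely depends on the decomposition produced at levels $< i$, so the iterative peeling-off structure of the paper's proof is essential and cannot be replaced by your proposed intersection over all orbits.
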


\begin{proof} We would like to prove that $C_c^\infty(X)[G] \supseteq C_c^\infty(X)[\sigma]$. To do so, we will consider a finite regular closed $G$-stratification $X = \sqcup_{i=0}^r X_i$ and proceed step by step going up in the stratification. Let $f = \sum \sigma \cdot h - h \in C_c^\infty(X)[\sigma]$ and let $\mathcal{O} \subset X_0$ be a (closed) $G$-orbit. We claim that $\textup{res}_\mathcal{O}(f) \in C_c^\infty(\mathcal{
O})[G]$. Indeed, $\textup{res}_\mathcal{O}(f) = \sum \textup{res}_\mathcal{O}(\sigma \cdot h) - \textup{res}_\mathcal{O}(h)$. There are two cases now. Either there is no local non-trivial $G$-invariant distribituion, \textit{i.e.} $C_c^\infty(\mathcal{O}) = C_c^\infty(\mathcal{O})[G]$, and therefore $\textup{res}_\mathcal{O}(f) \in C_c^\infty(\mathcal{O}) [G]$. Or there does exist a non-trivial $G$-invariant distribution, but the assumption on local distributions implies that $\textup{res}_\mathcal{O}(\sigma \cdot h) = \sigma \cdot\textup{res}_\mathcal{O}(h)$ as $\sigma(\mathcal{O}) = \mathcal{O}$ and $\textup{res}_\mathcal{O}(f) \in C_c^\infty(\mathcal{O})[\sigma] \subset C_c^\infty(\mathcal{O})[G]$ as local $G$-invariant distributions are $\sigma$-invariant. Using Lemma \ref{stratification_G_conivariants_lemma}, this proves that $f \in C_c^\infty(X)[G] + C_c^\infty(X \backslash X_0)$. Now choose $h \in C_c^\infty(X)[G]$ such that $f-h \in C_c^\infty(X \backslash X_0)$ and set $X' = X \backslash X_0$. We can apply the same argument to $X_1$ in $X'$ to get that $f -h \in C_c^\infty(X')[G] + C_c^\infty(X' \backslash X_1)$. In particular $f$ belongs to $C_c^\infty(X)[G] + C_c^\infty(X \backslash X_{\leq 1})$. A finite induction proves that $f \in C_c^\infty(X)[G]$, the last step involving $X \backslash X_{\leq r} = \emptyset$. \end{proof}

\paragraph{Finite local regular strata \`a la Bernstein-Zelevinski.} From Proposition \ref{constructive_equiv_to_finite_local_regular_strata_prop}, having finite local regular strata and being constructive are equivalent. When $G$ is countable at infinity, a homogeneous space $X$ is homeomorphic to a quotient $G/H$  \textit{e.g.} choosing a base point $x \in X$ we can take $H=\textup{Stab}_G(x)$. That way a $G$-invariant distribution on $X$ is simply the quotient Haar measure $\mu_{G\cdot x}$ when it exists \textit{i.e.} when $C_c^\infty(X)[G] \varsubsetneq C_c^\infty(X)$. The existence is also equivalent to the equality of modulus characters ${\delta_G|}_H = \delta_H$. We recall that our coefficient field $R$ has characteristic $\ell$. 

\begin{prop} \label{invariance_a_la_BZ_prop} Suppose that $G$ is countable at infinity. Let $\gamma$ be a constructible action of $G$ on $X$. Suppose that $\sigma : X \to X$ is a homeomorphism satisfying:
\begin{itemize}
\item for all $g \in G$, there exists $g_\sigma \in G$ such that $\gamma(g) \circ \sigma = \sigma \circ \gamma(g_\sigma)$;
\item there exists $n \in \mathbb{N}^*$ and $g_0 \in G$ such that $\sigma^n = \gamma(g_0)$;
\item if there exists a non-zero $G$-invariant distribution $T$ on a $G$-orbit $S$, then $\sigma(S) = S$ and $\sigma \cdot T = T$.
\end{itemize}
If $\ell \nmid  n$, then all $G$-invariant distributions on $X$ are $\sigma$-invariant.
\end{prop}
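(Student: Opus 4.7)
The plan is to work at the level of $G$-invariant distributions, using the eigenspace decomposition provided by the relation $\sigma^n = \gamma(g_0)$ and the hypothesis $\ell \nmid n$ to reduce to showing that the non-trivial $\sigma$-eigenspaces vanish. The twist relation ensures $\sigma \cdot T := T \circ \sigma^{-1}$ preserves $G$-invariance, and the cyclic relation combined with $G$-invariance of $T$ gives $\sigma^n \cdot T = T$. Since $\ell \nmid n$, the space of $G$-invariant distributions on $X$ decomposes under $\sigma$ into eigenspaces indexed by the $n$-th roots of unity $\zeta \in R$, so it suffices to show the $\zeta$-eigenspace vanishes for each $\zeta \neq 1$. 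Suppose by contradiction there exists a non-zero $G$-invariant $T$ with $\sigma \cdot T = \zeta T$ for some such $\zeta$.

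Apply Proposition~\ref{constructive_equiv_to_finite_local_regular_strata_prop} to obtain the canonical finite open $G$-stratification $X = \bigsqcup_{i=0}^r X_i$ with each $X_i/G$ locally Hausdorff. Since $T \neq 0$, choose a saturated open subset $\tilde U$ of some stratum with $\tilde U/G$ Hausdorff and $T|_{\tilde U} \neq 0$. By the dual of Lemma~\ref{stratification_G_conivariants_lemma}(ii), the distribution $T|_{\tilde U}$ is determined by an orbit-wise family of coefficients $(c_{\mathcal{O}'})_{\mathcal{O}'}$ against non-zero $G$-invariants $\mu_{\mathcal{O}'}$ on those orbits $\mathcal{O}'$ that support such distributions. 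Pick $\mathcal{O}'_0$ with $c_{\mathcal{O}'_0} \neq 0$; by hypothesis~(iii), $\mathcal{O}'_0$ is $\sigma$-stable. Set $\tilde V := \bigcap_{k=0}^{n-1}\sigma^k(\tilde U)$: using $\sigma^n = \gamma(g_0)$ and $G$-saturation of $\tilde U$, this is a saturated open set with Hausdorff quotient, is $\sigma$-stable, and contains $\mathcal{O}'_0$ since $\sigma^k \mathcal{O}'_0 = \mathcal{O}'_0 \subseteq \tilde U$ for all $k$.

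On $\tilde V$, every orbit appearing in the decomposition of $T|_{\tilde V}$ supports a non-zero $G$-invariant distribution, hence by~(iii) is $\sigma$-stable with $\sigma$ acting trivially on its one-dimensional space of $G$-invariants. Consequently $\sigma \cdot T|_{\tilde V} = T|_{\tilde V}$, while the eigenvalue relation together with $\sigma$-stability of $\tilde V$ give $\sigma \cdot T|_{\tilde V} = \zeta T|_{\tilde V}$. Thus $(1 - \zeta)T|_{\tilde V} = 0$, and since $1 - \zeta \in R^\times$ we obtain $T|_{\tilde V} = 0$, contradicting $c_{\mathcal{O}'_0} \neq 0$. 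The hardest part will be the precise dualisation of Lemma~\ref{stratification_G_conivariants_lemma}(ii) to the orbit-wise decomposition of $G$-invariant distributions used above, and verification that this decomposition commutes with restriction to the $\sigma$-stable Hausdorff subset $\tilde V$; the finiteness of the intersection defining $\tilde V$ is precisely what the cyclicity $\sigma^n = \gamma(g_0)$ buys.
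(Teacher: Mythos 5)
Your proposal essentially reconstructs the Bernstein--Zelevinsky argument that the paper invokes directly (\cite[Thm 6.10]{bz1}), so the underlying approach coincides with the paper's. Two points to tighten. First, the eigenspace decomposition into $n$-th roots of unity $\zeta \in R$ requires that $R$ contain these roots; the paper handles a general field $R$ by first proving the result over $\bar{R}$ and descending via the scalar-extension compatibility $C_c^\infty(X,R) \otimes_R \bar{R} = C_c^\infty(X,\bar{R})$, and you should build in this step, since this is the only $R$-specific content the paper adds beyond the citation. Second, ``choose a saturated open subset $\tilde U$ of some stratum with $T|_{\tilde U} \neq 0$'' needs a preliminary descent: if $T$ vanishes on the open stratum $X_0$, it factors through $C_c^\infty(X \setminus X_0)$ by the exact sequence $0 \to C_c^\infty(X_0) \to C_c^\infty(X) \to C_c^\infty(X \setminus X_0) \to 0$, and one repeats inside the closed complement; iterating locates the first stratum where $T$ is non-zero, and only there can local Hausdorffness be exploited to produce a genuinely saturated open $\tilde{U}$. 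These are exactly the technicalities you flag at the end, and the argument you sketch is the intended one. In particular, you rightly exploit $\sigma^n = \gamma(g_0)$ both to make $\tilde V$ a finite intersection and, tacitly, to ensure $\sigma$ permutes $G$-orbits; the latter is the point of the remark immediately following the proposition, which observes that the twist relation alone does not guarantee orbit-permutation.
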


\begin{proof} If $R$ is algebraically closed, the assumption $\ell \nmid n$ allows us to use the same proof as in \cite[Thm 6.10]{bz1}. If $R$ is not algebraically closed, the compatibility $C_c^\infty(X,R) \otimes_R \bar{R} = C_c^\infty(X,\bar{R})$ for an algebraic closure $\bar{R}$ of $R$ yields the result by scalar extension as a $G$-invariant $R$-distribution on $X$ extends to a $G$-invariant $\bar{R}$-distribution on $X$. \end{proof}

\begin{rem} Strictly speaking, one of the intermediate claims in \cite[Th 6.10]{bz1} is not right as such. Indeed, if $\gamma$ is a constructive action of $G$ on $X$ and $\sigma$ is such that for all $g \in G$, there exists $g_\sigma \in G$ such that $\gamma(g) \circ \sigma = \sigma \circ \gamma(g_\sigma)$, then it is not necessarily true that $\sigma$ rearranges orbits. Take $\mathbb{Z}$ with the discrete topology and $\gamma$ the natural action by translation on itself. Define:
$$\begin{array}{cccl} s : & \mathbb{Z} & \to & \mathbb{Z} \sqcup \mathbb{Z} \\
 & n & \mapsto & \left\{ \begin{array}{cl}
n/2 & \textup{ in the first copy if } n \textup{ is even} \\
(n-1)/2 & \textup{ in the second copy if } n \textup{ is odd}
\end{array} \right. \end{array}$$
Then $\gamma(1) \circ s = s \circ \gamma(2)$. Let $X = \bigsqcup_{\mathbb{N}} \mathbb{Z}$ with the action of $\mathbb{Z}$ on each separate factor and define $\sigma : X \to X$ by sending the copy $i \in \mathbb{N}$ on copies $2i$ and $2i+1$ via $s$. Then $\sigma$ is not orbit-preserving. However, if we were assuming that $g \mapsto g_\sigma$ is a bijection, or $\sigma^{-1}$ satisfies a similar statement -- which holds if $\sigma$ has finite order or some power of $\sigma$ acts through $\gamma$, then $\sigma$ would induce a permutation on orbits.  \end{rem}
,

\paragraph{Comparison of the two methods} We consider and compare Propositions \ref{invariance_a_la_GK_prop} and \ref{invariance_a_la_BZ_prop}. In the first case, we can deduce the $\sigma$-invariance under quite limited assumptions on $\sigma$ and no assumptions at all on $R$ precisely because the action of $G$ on $X$ has finite regular strata. In the second case, the action only admits finite local regular strata, which is more general, but we then have to add more properties on $\sigma$ and $R$ for the result to work. 

As discussed after Proposition \ref{constructive_equiv_to_finite_local_regular_strata_prop}, a regular pair $(\textbf{G},\textbf{X})$ provides an action of $G=\textbf{G}(F)$ on $X=\textbf{X}(F)$ with finite regular strata. The following cases are not covered by these so-called regular pairs and are traditionally considered as constructive actions:
\begin{itemize}
\item $F^\times$ acting on $F^\times$ via $\lambda \cdot x = \lambda^2 x$ when $F$ has characteristic $2$;
\item $SL_2(F)$ acting on itself by conjugation when $F$ has characteristic $2$.
\end{itemize}
Nevertheless, the first action has finite regular strata even though it is coming from an algebraic action $(\textbf{G}_m,\textbf{G}_m)$ that does not form a regular pair. Indeed, it is an elementary exercise to check that the quotient space $X/G$ is Hausdorff. Regarding the second action, it would be interesting to know whether it admits finite regular strata as well -- we would expect it does. Therefore many usual constructive actions (or, equivalently, actions with finite local regular strata) considered in the literature may actually admit finite regular strata. This remark is not so important when the characteristic of $R$ is zero, but it is more meaningful for positive characteristic coefficient fields.

For instance, if one uses finite regular strata actions, it is possible to prove that the transpose-inverse for $\textup{GL}_n(F)$ realises the contragredient on irreducible representations when the coefficient field $R$ has arbitrary characteristic $\ell$. By considering constructive actions, or equivalently, actions with finite local regular strata, one would have to exclude the case $R$ of characteristic $2$.

\subsection{A new approach: orbit-packaging and Matryoshka regularity} \label{Matryoshka_sec}

We can sometimes package orbits in a Hausdorff fashion, even though we do not know whether the action is finite regular or even constructive. This approach will prove to be useful when we deal with the metaplectic and the MVW-involution in Section \ref{MVW_involution_sec}. These techniques may also apply to other covering groups, so we have aimed to present them within the most general framework possible. Before we state the precise definitions, we start by illustrating orbit-packaging techniques for conjugacy classes of covering groups.

\paragraph{Covering groups.} Let $1 \to A \to \tilde{G} \to G \to 1$ be an $A$-cover of a (connected) reductive group $G$. Then the $G$-conjugation action is always constructive. In the previous section, we listed a few groups forming a regular pair and having finite regular strata for the conjugation action.

The following question seems difficult to address: since we know the $G$-conjugation action is constructive, is the $\tilde{G}$-conjugation action constructive as well? Likewise, the same question with finite regular strata seems equally hard. Instead of trying to answer these questions, we propose to construct ``packets'' of orbits in $\tilde{G}$ that are easier to deal with. For instance, a $G$-stratification of $X=G=\sqcup_i X_i$ lifts to a $\tilde{G}$-stratification $\tilde{X} = \tilde{G} = \sqcup_i \tilde{X}_i$ where $\tilde{X}_i$ is the pre-image of $X_i$. However, it seems hard to determine whether $\tilde{X}_i / \tilde{G}$ is (locally) Hausdorff when $X_i/G$ is (locally) Hausdorff, as more orbits are now nested in the pre-image of a $G$-conjugacy class. Nevertheless, if $A$ is finite, we can apply \cite[Prop 1.5]{bz1} to deduce that any $\tilde{G}$-conjugacy class $C_{\tilde{g}}$ is closed in the pre-image $\tilde{C}_g$ of the $G$-conjugacy class $C_g$. As we now explain, this will be enough to deal with invariant distribution.

\paragraph{Matryoshka regular strata.} Given an equivalence relation $\sim$ on $X$, we call $\mathcal{P} \subset X$ a $\sim$-packet if it consists of a single $\sim$-equivalence class and we denote by $X/\sim$ the set of $\sim$-packets in $X$. If $X$ is a topological space, then $X / \sim$ is naturally the quotient space of $X$ and can be endowed with the quotient topology.

\begin{defi} We say the action of $G$ on $X$ is Matryoshka\footnote{the term \textit{Matryoshka} comes from the russian dolls that are nested in one another.} regular if there exists a finite sequence of refined regular equivalence relations $(\sim_i)_{1,\dots,n}$, that is:
\begin{itemize}[label=$\bullet$]
\item $\sim_j$ is a refinement of $\sim_i$ for all $i \leq j$;
\item $\sim_n$-packets are precisely $G$-orbits;
\item $\mathcal{P}_i/\sim_{i+1}$ is Hausdorff for all $\sim_i$-packets $\mathcal{P}_i$.
\end{itemize} \end{defi}

\noindent It is easy to see that $X/G$ is Hausdorff if and only if the action of $G$ on $X$ is Matryoshka regular for $n=1$. These finite sequences of refined regular equivalence relations are just focusing on the behaviour of orbit-packaging on a single stratum. This definition also implies that any $\sim_i$-packet is closed; in particular $G$-orbits must be closed as well. 

\begin{prop} Assume there exists an equivalence relation $\sim$ on $X$ such that the quotient space $X/\sim$ is Hausdorff. Then:
$$C_c^\infty(X)[G] = \bigcap_{\mathcal{O} \in X/\sim} \textup{res}_{\mathcal{O}}^{-1} (C_c^\infty(\mathcal{O})[G]).$$ \end{prop}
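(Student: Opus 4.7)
The proposition is the direct analogue, with $G$-orbits replaced by $\sim$-packets, of the second bullet of Lemma \ref{stratification_G_conivariants_lemma}, and the strategy is to adapt the Gelfand-Kazhdan argument given there. Note that for the right-hand side to make sense, every $\sim$-packet $\mathcal{O}$ must be $G$-stable (implicitly assumed, e.g. whenever $\sim$ coarsens the $G$-orbit equivalence) and closed in $X$ (automatic, since $X/\sim$ is Hausdorff).

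The inclusion $\subseteq$ is formal: if $f = \sum_i (g_i \cdot h - h) \in C_c^\infty(X)[G]$, then $\textup{res}_\mathcal{O}(f) = \sum_i (g_i \cdot \textup{res}_\mathcal{O}(h) - \textup{res}_\mathcal{O}(h))$ lies in $C_c^\infty(\mathcal{O})[G]$ by $G$-equivariance of $\textup{res}_\mathcal{O}$.

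For the reverse inclusion, given $f$ in the right-hand side, I would, for each $\sim$-packet $\mathcal{O}$, choose a presentation $\textup{res}_\mathcal{O}(f) = \sum_i (g_i \cdot h_\mathcal{O} - h_\mathcal{O})$ and lift each $h_\mathcal{O}$ arbitrarily to some $h'_\mathcal{O} \in C_c^\infty(X)$. The discrepancy set
$$V'(\mathcal{O}) = \bigl\{ x \in X : f(x) \neq \textstyle\sum_i g_i \cdot h'_\mathcal{O}(x) - h'_\mathcal{O}(x) \bigr\}$$
is open compact in $X$ and disjoint from $\mathcal{O}$. Its $\sim$-saturation $V(\mathcal{O}) = p^{-1}(p(V'(\mathcal{O})))$ is closed in $X$ because $p(V'(\mathcal{O}))$ is compact, hence closed, in the Hausdorff quotient $X/\sim$. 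The complement $U(\mathcal{O}) = X \setminus V(\mathcal{O})$ is then an open saturated neighbourhood of $\mathcal{O}$ on which $f$ coincides with the $G$-coboundary $\sum_i g_i \cdot h'_\mathcal{O} - h'_\mathcal{O}$.

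Using compactness of $\textup{supp}(f)$ to reduce to finitely many packets $\mathcal{O}_1, \dots, \mathcal{O}_r$, refining the corresponding cover $(U(\mathcal{O}_j))_j$ into a pairwise disjoint saturated clopen partition $(U_k)_k$, and setting $f_k = \mathbf{1}_{U_k} \cdot \bigl( \sum_i g_i \cdot h'_{\mathcal{O}_{\alpha(k)}} - h'_{\mathcal{O}_{\alpha(k)}} \bigr)$, one obtains $f = \sum_k f_k$ with each $f_k \in C_c^\infty(X)[G]$. I expect the main technical point to be the existence of this saturated clopen refinement: $V(\mathcal{O})$ is closed by the Hausdorff hypothesis alone, but verifying that it is also open -- so that $U(\mathcal{O})$ is clopen and admits a partition refinement -- requires the openness of the quotient map $p$. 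In the intended applications where $\sim$ coarsens the $G$-orbit equivalence, this openness is inherited from openness of $X \to X/G$ (automatic for continuous group actions), so the Gelfand-Kazhdan argument transports with no essential modification.
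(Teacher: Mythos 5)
Your proof is the same as the paper's, which simply declares the argument of Lemma \ref{stratification_G_conivariants_lemma} to transfer verbatim once $G$-orbits are replaced by $\sim$-packets. Your caution about the openness of the quotient map $p : X \to X/\sim$ is well-placed and in fact more careful than the paper's one-line justification: the Hausdorff hypothesis supplies the \emph{closedness} of the saturation $V(\mathcal{O}) = p^{-1}(p(V'(\mathcal{O})))$, but its \emph{openness} (hence clopen-ness of $U(\mathcal{O})$, needed to build the disjoint saturated clopen refinement) does genuinely require $p$ to be open, and this is a hidden hypothesis. Be aware, however, that your suggested heredity is not valid in general: a quotient coarser than an open one need not itself be open. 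For instance, take $G$ trivial on $\mathbb{Z}_p$ and let $\sim$ identify $0$ with $1$; the quotient is Hausdorff, yet the saturation of a small ball around $0$ is a ball together with the isolated point $1$, which is not open, so $p$ is not open even though $X \to X/G$ is the identity. In the paper's actual applications (Section \ref{Matryoshka_sec} and Theorem \ref{MVW_involution_metaplectic_thm}), the relevant quotient maps are open for a more concrete reason: $\sim_1$ on $\widehat{X}_i$ is the pullback of the $\textup{Sp}(W)$-orbit relation along the covering projection, so $\widehat{X}_i \to \widehat{X}_i/\sim_1$ factors as the (open) covering projection $\widehat{\textup{Sp}}(W) \twoheadrightarrow \textup{Sp}(W)$ followed by the (open) orbit quotient, while at the next stage $\sim_2$ is simply the $\widehat{\textup{Sp}}(W)$-orbit relation, whose quotient map is open as for any continuous group action.
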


The proof is the same as in Lemma \ref{stratification_G_conivariants_lemma} since we only need $X/\sim$ is Hausdorff to build closed open $\sim$-saturated sets. Applying the proposition in stages gives:

\begin{cor} Assume the action of $G$ on $X$ is Matryoshka regular. Then:
$$C_c^\infty(X)[G] = \bigcap_{\mathcal{O} \in X/G} \textup{res}_{\mathcal{O}}^{-1} (C_c^\infty(\mathcal{O})[G]).$$ \end{cor}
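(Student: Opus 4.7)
The plan is to iterate the preceding Proposition along the chain of equivalence relations $\sim_1 \leftsquigarrow \sim_2 \leftsquigarrow \cdots \leftsquigarrow \sim_n$, each refinement shaving off one more layer of the Matryoshka.

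The inclusion $\subseteq$ is immediate: for any $\mathcal{O} \in X/G$ and any generator $g \cdot f - f \in C_c^\infty(X)[G]$, we have $\textup{res}_\mathcal{O}(g \cdot f - f) = g \cdot \textup{res}_\mathcal{O}(f) - \textup{res}_\mathcal{O}(f) \in C_c^\infty(\mathcal{O})[G]$, since $\mathcal{O}$ is $G$-stable. Before proving the reverse inclusion, I would first verify two structural facts that make the iteration well-posed. First, since $\sim_n$ refines $\sim_i$ and $\sim_n$-packets are $G$-orbits, each $\sim_i$-packet $\mathcal{P}_i$ is a union of $G$-orbits, hence $G$-stable; consequently $C_c^\infty(\mathcal{P}_i)[G]$ is meaningful. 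Second, as noted in the text after the definition, each $\sim_i$-packet is closed in $X$, so restriction $\textup{res}_{\mathcal{P}_i} : C_c^\infty(X) \to C_c^\infty(\mathcal{P}_i)$ is well-defined and surjective, and one has the composition identity $\textup{res}_{\mathcal{P}_j} = \textup{res}_{\mathcal{P}_j} \circ \textup{res}_{\mathcal{P}_i}$ whenever $\mathcal{P}_j \subseteq \mathcal{P}_i$.

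The core of the argument is an induction on $n$. Applying the preceding Proposition to $X$ with $\sim = \sim_1$ (which we may do because $X / \sim_1$ is Hausdorff by the third Matryoshka axiom with $\mathcal{P}_0 = X$) yields
$$C_c^\infty(X)[G] = \bigcap_{\mathcal{P}_1 \in X/\sim_1} \textup{res}_{\mathcal{P}_1}^{-1}\bigl(C_c^\infty(\mathcal{P}_1)[G]\bigr).$$
Now for each fixed $\sim_1$-packet $\mathcal{P}_1$, the relation $\sim_2$ restricts to an equivalence on $\mathcal{P}_1$ such that $\mathcal{P}_1 / \sim_2$ is Hausdorff, so the Proposition applies again inside $\mathcal{P}_1$:
$$C_c^\infty(\mathcal{P}_1)[G] = \bigcap_{\mathcal{P}_2 \in \mathcal{P}_1/\sim_2} \textup{res}_{\mathcal{P}_2}^{-1}\bigl(C_c^\infty(\mathcal{P}_2)[G]\bigr).$$
Substituting via the compositional identity and taking the union of indexing sets over all $\mathcal{P}_1$ gives $C_c^\infty(X)[G] = \bigcap_{\mathcal{P}_2 \in X/\sim_2} \textup{res}_{\mathcal{P}_2}^{-1}(C_c^\infty(\mathcal{P}_2)[G])$. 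Iterating this procedure $n$ times and invoking the axiom $X/\sim_n = X/G$ yields the stated equality.

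The main obstacle, such as it is, is purely bookkeeping: keeping track of the telescoping of nested preimages under the restriction maps and ensuring that each application of the preceding Proposition is legitimate, i.e.\ that the $\sim_i$-packets are closed and $G$-stable so that both sides of the intermediate equalities make sense. There is no genuinely new analytic input; the whole statement amounts to propagating the one-step Hausdorff result through the finite tower of refinements guaranteed by Matryoshka regularity.
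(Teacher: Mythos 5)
Your proposal is correct and is exactly the argument the paper has in mind when it says the corollary follows from "applying the proposition in stages": you iterate the one-step Proposition along the tower $\sim_1, \dots, \sim_n$, using that packets are closed and $G$-stable so each application is legitimate, and then telescope the restriction maps. The paper omits the details, so your write-up, including the verification that $X/\sim_1$ is Hausdorff via the $\mathcal{P}_0 = X$ convention and that the nested preimages compose correctly, is a faithful filling-in of the same proof.
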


Since all proofs are the same as in the previous section, we simply omit them. Finally, we can state a more general version of Proposition \ref{invariance_a_la_GK_prop} in this context:

\begin{prop} \label{invariance_Matryoshka_stratif_a_la_GK_prop} Let $\gamma$ be an action of $G$ on $X$ with finite Matryoshka regular strata. Suppose that $\sigma : X \to X$ is a homeomorphism satisfying:
\begin{itemize}
\item if there exists a non-trivial $G$-invariant distribution $T$ on a packet $S$, then $\sigma (S) = S$ and $\sigma \cdot T = T$.
\end{itemize}
Then any $G$-invariant distribution on $C_c^\infty(X)$ is $\sigma$-invariant.
\end{prop}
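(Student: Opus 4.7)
The plan is to combine the proof scheme of Proposition \ref{invariance_a_la_GK_prop} with an inner induction over the levels of the Matryoshka nesting. Let $f = \sum \sigma \cdot h - h \in C_c^\infty(X)[\sigma]$. The outer part of the argument is identical to the proof of Proposition \ref{invariance_a_la_GK_prop}: starting from the finite closed $G$-stratification $X = \bigsqcup_{i=0}^r X_i$, Lemma \ref{stratification_G_conivariants_lemma} reduces the problem to proving, stratum by stratum, that $\textup{res}_{X_0}(f) \in C_c^\infty(X_0)[G]$, then replacing $X$ by $X \setminus X_0$ and iterating until $X \setminus X_{\leq r} = \emptyset$. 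Each $X_i$, by restriction, carries a Matryoshka regular $G$-action, so it remains to prove a single-stratum statement.

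The heart of the proof is thus the following claim: if $Y$ is a space carrying a Matryoshka regular $G$-action of depth $n$ and $\sigma : Y \to Y$ satisfies the hypothesis of the proposition, then $C_c^\infty(Y)[\sigma] \subseteq C_c^\infty(Y)[G]$. I would prove this by induction on the nesting depth $n$. The base case $n=1$ is exactly Proposition \ref{invariance_a_la_GK_prop}, since a depth-$1$ Matryoshka regular action is a regular action with Hausdorff quotient. For the inductive step, consider the coarsest relation $\sim_1$: the quotient $Y/\sim_1$ is Hausdorff by definition, and the Corollary stated after Proposition \ref{invariance_a_la_GK_prop}'s generalization yields
\[ C_c^\infty(Y)[G] \;=\; \bigcap_{\mathcal{P} \in Y/\sim_1} \textup{res}_\mathcal{P}^{-1}\bigl(C_c^\infty(\mathcal{P})[G]\bigr). \]
Thus, for $h \in C_c^\infty(Y)[\sigma]$, it is enough to check that $\textup{res}_\mathcal{P}(h) \in C_c^\infty(\mathcal{P})[G]$ for each $\sim_1$-packet $\mathcal{P}$. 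Each such $\mathcal{P}$ inherits from the tail $(\sim_2, \ldots, \sim_n)$ a Matryoshka regular $G$-action of depth $n-1$. We split into the usual two cases: if $\mathcal{P}$ supports no non-zero $G$-invariant distribution, then $C_c^\infty(\mathcal{P}) = C_c^\infty(\mathcal{P})[G]$ and the restriction lies in the desired subspace trivially; otherwise the hypothesis on $\sigma$ forces $\sigma(\mathcal{P}) = \mathcal{P}$, so $\textup{res}_\mathcal{P}(h) \in C_c^\infty(\mathcal{P})[\sigma]$, and the induction hypothesis applied to $\mathcal{P}$ concludes.

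The main (and essentially only) delicate point is to make sure that the hypothesis on $\sigma$ applies uniformly at every level of the nesting. As stated, the word \emph{packet} should be read as ranging over every $\sim_i$-packet in the Matryoshka sequence, and not only over the innermost packets (\emph{i.e.}\ the $G$-orbits), so that it can be invoked at the coarser level $\sim_1$ during the inductive step. This is the natural strengthening suggested by the definition of Matryoshka regularity, and is exactly how the hypothesis must be understood for the induction to close; concretely, a non-zero $G$-invariant distribution on a $\sim_1$-packet $\mathcal{P}$ restricts non-trivially to some $G$-orbit therein, so preservation of the innermost orbits together with closedness of packets forces preservation of $\mathcal{P}$ itself. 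Once this bookkeeping is in place, the remainder of the argument is a direct stacking of the Gelfand--Kazhdan step of Proposition \ref{invariance_a_la_GK_prop} along the finite Matryoshka nesting, and no further ideas are needed.
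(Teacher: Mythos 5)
Your overall outline is workable, but the route you take is not the paper's and creates an unnecessary gap. The paper's intended proof (which it declares "the same as in the previous section") relies on the Corollary stated just above the proposition: Matryoshka regularity already gives, in one stroke,
\[
C_c^\infty(X_i)[G] \;=\; \bigcap_{\mathcal{O}\in X_i/G}\textup{res}_{\mathcal{O}}^{-1}\bigl(C_c^\infty(\mathcal{O})[G]\bigr)
\]
on each stratum $X_i$, so the proof of Proposition \ref{invariance_a_la_GK_prop} carries over verbatim with the Corollary replacing the Hausdorff part of Lemma \ref{stratification_G_conivariants_lemma}. One restricts $f\in C_c^\infty(X)[\sigma]$ directly to $G$-orbits, runs the two-case dichotomy there, and the only invocation of the $\sigma$-hypothesis is at the orbit level. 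There is no nested induction over the Matryoshka depth, and consequently no need to interpret "packet" more broadly than "$G$-orbit" (indeed this is the reading used when the paper applies the proposition in the proof of Theorem \ref{MVW_involution_metaplectic_thm}, where $\sigma$-invariance is verified only for Haar measures on single conjugacy classes).

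Your substitute — stacking the Gelfand--Kazhdan step once per nesting level — is a legitimate alternative route, and it does prove the statement, \emph{provided} the hypothesis on $\sigma$ holds for every $\sim_i$-packet, as you note. The genuine flaw is the sentence where you try to recover that strengthened hypothesis from the orbit-level one: "a non-zero $G$-invariant distribution on a $\sim_1$-packet $\mathcal{P}$ restricts non-trivially to some $G$-orbit therein." This is false in general. In the Matryoshka definition the orbits ($\sim_n$-packets) are merely \emph{closed} in the enclosing packets, not open, so a distribution on $\mathcal{P}$ has no canonical non-zero restriction to an individual orbit; a $G$-invariant distribution can be supported "across" infinitely many thin orbits without factoring through any one of them (think of Haar measure on a totally disconnected group acted upon trivially: orbits are points, each of empty interior, and the restriction of Haar measure to any single point is zero). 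So your reduction from the orbit-level hypothesis to the all-levels hypothesis does not close, and as written your proof only establishes the proposition under a strictly stronger hypothesis than what the paper actually verifies in its application. The fix is simply to use the Corollary, which you already have in hand, to descend to orbits directly and dispense with the nested induction altogether.
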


\section{Preliminaries for the theta correspondence}

\subsection{The metaplectic group and the Weil representation}

\paragraph{Metaplectic group} The cohomology group $H^2(\textup{Sp}(W),\{\pm 1 \})$ has order $2$, it parametrises the cohomology classses of topological $2$-cocyles $\textup{Sp}(W) \times \textup{Sp}(W) \to \{ \pm 1 \}$. Therefore, the isomorphism classes of topological central extensions of $\textup{Sp}(W)$ by $\{ \pm 1 \}$ solely consists in two kinds: the trivial and the non-trivial.

Let $X$ be a Lagrangian in $W$. There is an explicit $\{ \pm 1 \}$-valued $2$-cocycle $c_X$ whose cohomology class is non-trivial \cite{rao,trias_theta1}. Composing with $\{ \pm 1 \} \to R^\times$, we get a $2$-cocyle still denoted $c_X$. Note that $\{ \pm 1 \} \to R^\times$ may fail to be injective! Indeed, when the characteristic of $R$ is $2$, the $2$-cocycle $c_X$ actually becomes the trivial $2$-cocycle.

Define $\textup{Mp}^{c_X}(W) = \textup{Sp}(W) \times_{c_X} R^\times$ the associated group. It is a locally profinite group. However, its topology is not the product topology. Nevertheless, the topology near the identity $(\textup{id}_W,1)$ is the product topology. Indeed $c_X$ is not continuous on $\textup{Sp}(W) \times \textup{Sp}(W)$, but it is continuous in a neighbourhood of $(\textup{id}_W,\textup{id}_W)$. 

\paragraph{Weil representation} Let $H= W \times F$ be the Heisenberg group where the group operation is $(w,t) \cdot (w',t') = (w+w',t+t'+\frac{1}{2}\langle w ,w' \rangle)$. In \cite{trias_theta1}, we defined the Heisenberg representation of $H$ with coefficients in $R$ in the following way: choose a Lagrangian $X$ in $W$ and set $S_{\psi,X} = \textup{ind}_{X \times F}^{W \times F}(\psi_X)$ where $\psi_X$ extends $\psi$ trivially to $X$ \textit{i.e.} $\psi_X(x,t) = \psi(t)$. Because the natural action of $\textup{Sp}(W)$ on $H$ preserves the centre, this action preserves the central character $\psi$ of $S_{\psi,X}$ and the Stone-von Neumann Theorem provides a projective representation $\sigma_{\psi,X} : \textup{Sp}(W) \to \textup{PGL}(S_{\psi,X})$ that can be lifted to a representation of a central extension of $\textup{Sp}(W)$ by $R^\times$. This central extension is canonically isomorphic to the $R$-metaplectic group $\textup{Mp}^{c_X}(W) = \textup{Sp}(W) \times_{c_X} R^\times$. In \cite{trias_theta1}, we defined the Schr\"odinger model of the Weil representation: 
$$\omega_{\psi,X} : (g,\lambda) \in \textup{Mp}^{c_X}(W) \to \textup{GL}(S_X).$$
Note that the construction in \cite{trias_theta1} actually does not depend on a choice of a symplectic basis $\mathcal{B}_X = \{ e_1, \dots, e_m, f_m, \dots, f_1 \}$ where $\{e_1, \dots e_m\}$ generates $X$, so the construction of the Weil representation only depends on $X$ and $\psi$ as the notation suggests. Classically, which means over the complex numbers, the Weil representation is constructed as a unitary representation. In the modular setting, these considerations do not make sense, so our definition of $\omega_{\psi,X}$ is slightly different: it is more direct but relies heavily on new quantities introduced in \cite{trias_theta1}, such as the non-normalised Weil factor.

\paragraph{Making $\omega_{\psi,X}$ explicit.} We make the definition of the Weil representation more explicit. The action of $g \in \textup{Sp}(W)$ on $H$ induces a morphism $S_{\psi,X} \to S_{\psi,g \cdot X}$. Now $S_{\psi,g \cdot X}$ and $S_{\psi,X}$ have the same central character, so the Stone-von Neumann Theorem ensures there is an intertwining operator $I_{X,gX,\mu_g} : S_{\psi,g \cdot X} \to S_{\psi,X}$ which is unique up to scalar -- this scalar dependence is contained in the choice of Haar measure $\mu_g$ on $(gX \cap X) \backslash X$. Then:
$$\omega_{\psi,X}(g,1) = I_{X,gX,\mu_g} \circ I_g$$
for a very specific choice of $\mu_g$ for each $g \in \textup{Sp}(W)$. The main technical difficulty in defining the Weil representation lies in making this choice right or explicit. We won't recall the explicit construction of $\mu_g$ but rather give $\omega_{\psi,X}$ on some specific elements generating $\textup{Sp}(W)$.

First of all, for $p \in P(X)$, the stabiliser of $X$ in $\textup{Sp}(W)$, we have:
$$\omega_{\psi,X}(p,1) \cdot f =  \Omega_{1,\textup{det}_X(p)} \times (I_p \cdot f)$$
where $\Omega_{1,\textup{det}_X(p)} \in R^\times$ is the non-normalised Weil factor and $(I_p \cdot f)(w,1) = f(p^{-1}w,1)$.
Next, as $\omega_{\psi,X}$ is a group morphism, for all $p_1, p_2 \in P(X)$ and $g \in \textup{Sp}(W)$ we have:
$$\omega_{\psi,X}(p_1,1) \omega_{\psi,X}(g,1) \omega_{\psi,X}(p_2,1) = c_X(p_1,g) c_X(p_1 g,p_2) \times  \omega_{\psi,X}(p_1 g p_2,1).$$
It only remains to describe $\omega_{\psi,X}$ on representatives of double cosets $P(X) \backslash \textup{Sp}(W) / P(X)$. Pick a symplectic basis $\mathcal{B}_X$. Then for a subset $S \subset \{1, \dots, m\}$ we have an element $w_S \in \textup{Sp}(W)$ and a choice of measure $\mu_{w_S}$ described in \cite{trias_theta1}. It turns out this choice of measure does not depend on the choice of the basis $\mathcal{B}_X$ \textit{i.e.} the measure $\mu_{w_S}'$ for another choice of basis $\mathcal{B}_X'$ is equal to $\mu_{w_S}$. We then set:
$$\omega_{\psi,X}(w_S,1) = I_{X,w_S X,\mu_{w_S}} \circ I_{w_S}.$$

\paragraph{Changing the Lagrangian.} If we choose two Lagrangians $X$ and $X'$ in $W$, the groups $\textup{Mp}^{c_X}(W)$ and $\textup{Mp}^{c_{X'}}(W)$ are canonically isomorphic as central extensions, so there exists a unique map $\gamma = \gamma_{X',X} : \textup{Mp}^{c_X}(W) \to \textup{Mp}^{c_{X'}}(W)$ written $(g,\lambda) \mapsto (g,\lambda \gamma_g)$, where:
$$\partial \gamma \cdot c_{X'} = c_X \textit{ i.e. } \gamma_g \gamma_{g'} c_{X'}(g,g') = \gamma_{g g'} c_X(g,g').$$
Moreover such a $g \mapsto \gamma_g$ is $\{\pm 1\}$-valued and this canonical isomorphism gives rise to a commutative diagram:
$$\xymatrix@R+1pc@C+1pc{
\textup{Mp}^{c_X}(W) \ar@{->}[r]^{\gamma_{X',X}} \ar@{->}[d]^{\omega_{\psi,X}} &  \textup{Mp}^{c_{X'}}(W) \ar@{->}[d]^{\omega_{\psi,X'}} \\
\textup{GL}(S_{\psi,X}) \ar@{->}[r]^{\mathfrak{conj}_{X',X}} & \textup{GL}(S_{\psi,X'})
}$$
where $\mathfrak{conj}_{X',X}$ is the conjugation $M \mapsto I_{X',X} \circ M \circ (I_{X',X})^{-1}$ by any intertwining operator $I_{X',X} : S_{\psi,X} \overset{\sim}{\to} S_{\psi,X'}$. Note that $\mathfrak{conj}_{X',X}$ is well-defined since these operators are unique up to scalars. Therefore, for $g \in \textup{Sp}(W)$ the commutativity of the diagram reads:
$$I_{X',X} \circ \omega_{\psi,X'}(g,1) \circ (I_{X',X})^{-1} = \gamma_g \times \omega_{\psi,X}(g,1).$$

\subsection{$\epsilon$-hermitian spaces and dual pairs} \label{hermitian_spaces_sec}

\paragraph{$\epsilon$-hermitian spaces.} Let $D$ be a division algebra of finite dimension over its centre and endowed with an involution $\tau$ \textit{i.e.} an anti-automorphism of $D$. Let $V$ be a right $D$-vector space of finite dimension. Let $\epsilon$ in the centre of $D$ be such that $\epsilon \tau(\epsilon) = 1$. An $\epsilon$-hermitian product $\langle \ , \ \rangle$ is a non-degenerate sesquilinear form $V \times V \to D$, \textit{i.e.} $\langle v d , v' d' \rangle = \tau(d) \langle v,v' \rangle d$, satisfiying $\langle v',v \rangle = \epsilon \ \tau( \langle v,v' \rangle)$. Then $(V,\langle \ , \ \rangle)$, or simply $V$ for short, is an $\epsilon$-hermitian space. Two $\epsilon$-hermitian spaces $V$ and $V'$ are isometric if there exists an isomorphism of right $D$-vector spaces $\varphi : V \to V'$ preserving the hermitian products \textit{i.e.} $\langle \varphi(v) , \varphi(v') \rangle_{V'} =  \langle v,v' \rangle_V$. We write $V \simeq V'$. An element $g \in \textup{GL}_D(V)$ is called an isometry if it makes $V$ isometric to itself. The group of isometries of $V$, also called the isometry group, is denoted by $U(V)$. When $D$ is commutative and $\tau = \textup{id}_D$, the space $V$ is orthogonal when $\epsilon=1$ and symplectic when $\epsilon=-1$. These two isometry groups are usually denoted by $O(V)$ and $\textup{Sp}(V)$.

We say a vector subspace $X$ in $V$ is totally isotropic if the restriction of the $\epsilon$-hermitian product to $X \times X$ is zero. A vector $v \in V$ is isotropic if $\langle v , v \rangle = 0$. Equivalently, the $D$-subspace generated by $v$ is totally isotropic. We say $V$ is anisotropic if it does not contain any non-trivial isotropic vector. The $\epsilon$-hermitian space $V$ is said to be split if there exist two totally isotropic subspaces $X$ and $Y$ in $V$ such that $V = X \oplus Y$. In this case, the totally isotropic subspaces $X$ and $Y$ have the same dimension, that is half the dimension of $V$. We call a subspace such as $X$ or $Y$ a Lagrangian and the decomposition $V=X \oplus Y$ a complete polarisation of $V$. Therefore, we allow ourselves to talk about Lagrangians only when $V$ is split. Remark that for split spaces, Lagrangians are exactly maximal totally isotropic subspaces. For arbitrary $V$, we can still consider maximal totally isotropic subspaces and Witt's Theorem ensures they all have the same dimension, which is called the Witt index of $V$.

For $W$ a $D$-vector subspace of $V$, one can define the orthogonal of $W$ in $V$ by $W^\perp = \{ v \in V \ | \ \langle v , W \rangle = 0\}$. An $\epsilon$-hermitian subspace $W$ of $V$ is a $D$-vector subspace such that the restriction of $\langle \ , \ \rangle$ to $W$ is non-degenerate. In this case, we have an orthogonal decomposition $V = W \oplus W^\perp$.

The $\epsilon$-hermitian plane $\mathbb{H} = D \oplus D$ is the split $\epsilon$-hermitian space of dimension $2$ associated to the $\epsilon$-hermitian product: 
$$\langle (d_1,d_2) , (d_1',d_2') \rangle = \tau(d_1) d_2' + \epsilon \tau(d_2) d_1'.$$
It is easy to see that $\mathbb{H} = (D,0) \oplus (0,D)$ is a complete polarisation of $\mathbb{H}$. We recall the following classical result:

\begin{lem} Assume $V$ has a non-zero isotropic vector $v$. Then there exists $v' \in V$ such that $W = v D \oplus v' D \simeq \mathbb{H}$ and $V = W \oplus W^\perp$. \end{lem}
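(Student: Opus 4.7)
The strategy is the standard construction of a hyperbolic pair, implemented carefully so that the resulting basis produces exactly the Gram matrix of $\mathbb{H}$. First, by non-degeneracy of $\langle\ ,\ \rangle$ there exists $u \in V$ with $\langle v, u\rangle \neq 0$; replacing $u$ by $u \cdot \langle v, u\rangle^{-1}$ and using sesquilinearity, I may assume $\langle v, u\rangle = 1$. I then look for $v'$ inside the plane spanned by $v$ and $u$, writing $v' = u - v\cdot d$ for some $d \in D$ to be chosen.

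The pairing $\langle v, v'\rangle$ is automatically $1$ since $v$ is isotropic. The remaining condition $\langle v', v'\rangle = 0$ expands, using the $\epsilon$-hermitian symmetry $\langle v', v\rangle = \epsilon \tau(\langle v, v'\rangle)$ and $\langle v, v\rangle = 0$, to the equation
$$d + \tau(d)\epsilon = \langle u, u\rangle.$$
Using that $\epsilon$ lies in the centre, that $\epsilon\tau(\epsilon) = 1$, and the hermitian identity $\langle u, u\rangle = \epsilon \tau(\langle u, u\rangle)$, one checks directly that $d = \tfrac{1}{2}\langle u, u\rangle$ is a solution; here I use that $F$ has characteristic different from $2$, so $\tfrac{1}{2}$ is a central element of $D$.

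With this choice of $v'$, the family $\{v, v'\}$ is $D$-linearly independent (since $\langle v, v'\rangle = 1 \neq 0$ prevents any relation), and its Gram matrix in the order $(v, v')$ is $\bigl(\begin{smallmatrix}0 & 1 \\ \epsilon & 0\end{smallmatrix}\bigr)$, matching the definition of $\mathbb{H}$ by sending $v \mapsto (1,0)$ and $v' \mapsto (0,1)$. Since this Gram matrix is invertible, $W = vD \oplus v' D$ is non-degenerate, which yields the orthogonal decomposition $V = W \oplus W^\perp$ in the usual way (any $x \in V$ decomposes as $x = w + (x-w)$ where $w$ is the unique element of $W$ matching $x$ on $\langle v, -\rangle$ and $\langle v', -\rangle$).

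\textbf{Main obstacle.} The only genuine subtlety is solving $d + \tau(d)\epsilon = \langle u, u\rangle$: this is an inhomogeneous equation in the trace-like map $d \mapsto d + \tau(d)\epsilon$, and the choice $d = \tfrac{1}{2}\langle u, u\rangle$ works precisely because $\tfrac{1}{2} \in D$ and because the constraint $\langle u, u\rangle = \epsilon\tau(\langle u, u\rangle)$ forces $\langle u, u\rangle$ to lie in the image of this trace map. Everything else is bookkeeping with the sesquilinearity convention used in Section~\ref{hermitian_spaces_sec}.
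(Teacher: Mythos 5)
Your strategy — take $u$ with $\langle v, u\rangle = 1$ and correct it by a multiple of $v$ to make $v'$ isotropic — is the standard one and is sound. However, the computation of $\langle v', v'\rangle$ has a side error. With the convention of Section~\ref{hermitian_spaces_sec} ($\tau$-linear in the first argument, $D$-linear in the second), one gets
\[
\langle u - vd,\, u - vd\rangle = \langle u, u\rangle - \langle u, v\rangle d - \tau(d)\langle v, u\rangle = \langle u, u\rangle - \epsilon d - \tau(d),
\]
because $\langle u, v\rangle = \epsilon\,\tau(\langle v, u\rangle) = \epsilon$ while $\langle v, u\rangle = 1$. So the equation to solve is $\epsilon d + \tau(d) = \langle u, u\rangle$, not $d + \tau(d)\epsilon = \langle u, u\rangle$; you have $\epsilon$ attached to the wrong term. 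As a consequence, $d = \tfrac{1}{2}\langle u, u\rangle$ solves your equation but not the correct one: if $\epsilon = -1$ and $\tau$ is non-trivial (the skew-hermitian member of a unitary or quaternionic dual pair), then $\tau(\langle u, u\rangle) = -\langle u, u\rangle$, and one finds $\epsilon d + \tau(d) = -\langle u, u\rangle$, which is not $\langle u, u\rangle$ unless $\langle u,u\rangle = 0$. The correct choice is $d = \tfrac{1}{2}\tau(\langle u, u\rangle)$: then $\epsilon d = \tfrac{1}{2}\epsilon\tau(\langle u, u\rangle) = \tfrac{1}{2}\langle u, u\rangle$ by the $\epsilon$-hermitian identity applied to $u$ itself, and $\tau(d) = \tfrac{1}{2}\langle u, u\rangle$ since $\tau^2 = \mathrm{id}$, so $\epsilon d + \tau(d) = \langle u, u\rangle$ as required. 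Everything else in your argument — linear independence of $\{v,v'\}$ from $\langle v, v'\rangle = 1 \neq 0$, the Gram matrix identification with $\mathbb{H}$, and non-degeneracy of $W$ giving $V = W \oplus W^\perp$ — is correct.
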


\paragraph{Type I dual pairs.} Let $D$ be a division algebra of finite dimension over its centre $E$ and assume $F \subset E$. Assume $d=\textup{dim}_F(D)$ divides $4$ and we are in one of the three cases:
\begin{itemize}[label=$\bullet$]
\item $d=1$, $D=E=F$ and $\tau = \textup{id}_D$;
\item $d=2$, $D=E$ is quadratic over $F$ and $\tau$ generates $\textup{Gal}(E/F)$;
\item $d=4$, $D$ is quaternionic over $E=F$ and $\tau$ is the canonical\footnote{$\tau$ is unique up to inner automorphisms of $D$.} involution.
\end{itemize}
Of course, when $D$ is commutative, left vector spaces over $D$ are also right vector spaces, and conversely. Let $V_1$ be an $\epsilon_1$-hermitian space of dimension $n_1$ over $D$ and let $V_2$ be an $\epsilon_2$-hermitian of dimension $n_2$. We define the tensor product $W = V_1 \otimes_D V_2$ where we consider $V_2$ as a left $D$-vector space via $d \cdot v_2 = v_2 \tau(d)$, so it is actually changing $V_2$ not only in the quaternionic case but also in the quadratic one. Assume $\epsilon_1 \epsilon_2=-1$. Then $W$ is an $F$-vector space of dimension $d n_1 n_2$ and we endow it with the symplectic product induced by:
$$\langle v_1 \otimes v_2 , v_1' \otimes v_2' \rangle = \textup{tr}_{D/F} \big( \langle v_1 ,v_1' \rangle_1 \times \tau(\langle v_2 , v_2' \rangle_2) \big).$$
We can naturally embed $U(V_1)$ and $U(V_2)$ as subgroups of $\textup{Sp}(W)$ via:
$$u_1 \mapsto u_1 \otimes_D \textup{id}_{V_2} \textup{ and } u_2 \mapsto \textup{id}_{V_1} \otimes _D u_2.$$
These are mutual centralisers in $\textup{Sp}(W)$, meaning:
$$C_{\textup{Sp}(W)}(U(V_1)) = U(V_2) \textup{ and } C_{\textup{Sp}(W)}(U(V_2)) = U(V_1),$$
and we say $(U(V_1),U(V_2))$ is a type I dual pair in $\textup{Sp}(W)$.

\begin{rem} There is a more general definition for type I reductive dual pairs. The ones we have just given are exactly the irreducible ones, and all type I reductive dual pairs are obtained as products of irreducible ones. In the theta correspondence, having Howe duality for irreducible pairs should be sufficient to build up the general case. \end{rem}

\paragraph{Modulus character.} We write $\textup{det}_F(x)$ for the determinant of $x \in D$ over $F$.

If $d \in \{1 , 2 \}$, then $D=E$ and we have $\textup{det}_F(x) = \textup{Nrd}_{D/F}(x)$ where $\textup{Nrd}_{D/F}$ is the (reduced) norm associated to the extension $E/F$. For $a \in \textup{GL}_n(E)$, we write $\textup{det}_F(a)$ (resp. $\textup{Nrd}_{D/F}(a)$) to mean the composition of the determinant $\textup{GL}_k(E) \to E^\times$ with $\textup{det}_F$ (resp. $\textup{Nrd}_{D/F}$). Therefore $\textup{det}_F = \textup{Nrd}_{D/F}$ in this situation

In the quaternionic case, we have $\textup{det}_F(x) = \textup{Nrd}_{D/F}(x)^2$ where $\textup{Nrd}_{D/F}$ is the reduced norm of the central simple algebra $D/F$. Since $F$ has not characteristic $2$, we have $[D^\times,D^\times] = D^1$ where $D^1$ is the kernel of $\textup{Nrd}_D$ by \cite[Chap I, Prop 3.5]{vig_quat}. Therefore, if $a \in \textup{GL}_k(D)$, we write $\textup{det}_F(a)$ (resp. $\textup{Nrd}_{D/F}(a)$) to mean the composition of the determinant $\textup{GL}_k(D) \to D^\times/[D^\times,D^\times] = D^\times / D^1$ with $\textup{det}_F$ (resp. $\textup{Nrd}_{D/F}$). Unlike the previous situation, we obtain $\textup{det}_F = \textup{Nrd}_{D/F}^2$ in the quaternionic case.

If $X$ is a vector space of finite dimension over $D$, we can easily make sense of $\textup{det}_F(a)$ for $a \in \textup{GL}_D(X)$. Let $V$ is an $\epsilon$-hermitian space of dimension $n$ over $D$ and $X$ a totally isotropic subspace of $V$ of dimension $k$. We give the modulus character of the stabiliser $P(X)$ of $X$ in $U(V)$. For $p \in P(X)$ and $a = p|_X \in \textup{GL}_D(X)$, we set $\textup{det}_X(p) = \textup{det}_F(a)$.

\begin{prop} \label{modulus_character_parabolic_prop} The modulus of $P(X)$ is:
\begin{itemize}
\item[] \textup{[$D=F$]} \ $\delta_N(p) = \textup{Nrd}_{D/F}(a)^{n - k - \epsilon}$;
\item[] \textup{[$D=E$]} \ $\delta_N(p) = \textup{Nrd}_{D/F}(a)^{n - k}$;
\item[] \textup{[$D$ quat]} \ $\delta_N(p) = \textup{Nrd}_{D/F}(a)^{2n - 2k + \epsilon}$.
\end{itemize}
We can sum up all cases setting $\eta = \epsilon, 0, - \frac{\epsilon}{2}$ to obtain $\delta_N(p) = \textup{det}_X(p)^{n-k-\eta}$. \end{prop}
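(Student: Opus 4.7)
The plan is to compute $\delta_N(p)$ as the $F$-determinant of the adjoint action of the Levi on the Lie algebra of the unipotent radical $N$, organised via the standard two-step filtration. First I would complete the polarisation: by Witt's theorem there exists a totally isotropic $Y$ with $\dim_D Y = k$ and $X + Y$ non-degenerate, so that $V = X \oplus V_0 \oplus Y$ with $V_0 = (X \oplus Y)^\perp$ an $\epsilon$-hermitian space of $D$-dimension $n - 2k$. Relative to this decomposition the Levi is $M \simeq \textup{GL}_D(X) \times U(V_0)$, with $(a, u)$ acting on $X$ as $a$, on $V_0$ as $u$ and on $Y \simeq X^*$ (identified via the form) as the adjoint-inverse $a^{*-1}$. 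A direct matrix computation shows $N$ fits into an exact sequence $1 \to N' \to N \to \textup{Hom}_D(V_0, X) \to 1$ with $N'$ central, and the isometry relation $\langle n v, n v' \rangle = \langle v, v' \rangle$ identifies $N'$ at the level of the Lie algebra with the $F$-subspace $\textup{Herm}_{-\epsilon}(X) \subset \textup{Hom}_D(Y, X)$ of maps satisfying the linearised condition $\beta^* = -\epsilon \beta$.

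Then I would compute the $F$-determinant of the adjoint action on each graded piece and multiply. On $\textup{Hom}_D(V_0, X)$, the action $\alpha \mapsto a \circ \alpha \circ u^{-1}$ has $F$-determinant $\textup{det}_F(a)^{n - 2k} \textup{det}_F(u)^{-k}$; since $u$ is an isometry we have $|\textup{det}_F(u)|_F = 1$, so the contribution is $|\textup{det}_F(a)|_F^{n - 2k}$. On $\textup{Herm}_{-\epsilon}(X)$, the action $\beta \mapsto a \beta a^*$ is analysed case by case. For $D = F$, the subspace equals $\Lambda^2_F X$ in the orthogonal case and $\textup{Sym}^2_F X$ in the symplectic case, giving $\textup{det}_F(a)^{k - \epsilon}$ by the standard Schur--Weyl formula. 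For $D = E$, multiplication by any element $i \in E$ with $\tau(i) = -i$ exchanges $\textup{Herm}_{\pm 1}$ and commutes with the action, so the $F$-determinant on each piece is half of that on $M_k(E)$; the latter reduces to $N_{E/F}(\textup{det}_E(a))^k$ over $E$, hence $\textup{det}_F(a)^{2k}$ over $F$, yielding a contribution of $\textup{det}_F(a)^k$. For $D$ quaternion, I would extend scalars to a splitting field $\bar F$, so that $M_k(D) \otimes \bar F \simeq M_{2k}(\bar F)$ and the involution $\beta \mapsto \tau(\beta^t)$ becomes the symplectic involution $\beta \mapsto J \beta^t J^{-1}$. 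The substitution $\gamma = \beta J$ identifies $\textup{Herm}_{-\epsilon}(X) \otimes \bar F$ with $\textup{Sym}^2 \bar F^{2k}$ (when $\epsilon = 1$) or $\Lambda^2 \bar F^{2k}$ (when $\epsilon = -1$), and converts $\beta \mapsto a \beta a^*$ into $\gamma \mapsto a \gamma a^t$, whose $\bar F$-determinant is $\det(a)^{2k + \epsilon} = \textup{Nrd}_{D/F}(a)^{2k + \epsilon}$.

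Finally, summing the two contributions and using the definitions $\textup{det}_X(p) = \textup{Nrd}_{D/F}(a)$ when $d \in \{1, 2\}$ and $\textup{det}_X(p) = \textup{Nrd}_{D/F}(a)^2$ in the quaternionic case yields the three claimed formulas, and the unified expression $\delta_N(p) = \textup{det}_X(p)^{n - k - \eta}$ follows by matching exponents. The main obstacle I anticipate is the quaternionic computation in the previous paragraph: $\textup{Herm}_{-\epsilon}(X)$ is not naturally a symmetric or exterior power over $D$, so the cleanest route passes through a splitting field, with the $J$-twist translating the symplectic involution into ordinary matrix transposition. A sanity check is that the two quaternionic contributions ($\epsilon = \pm 1$) must sum to the full $F$-determinant on $M_k(D)$, i.e.\ $\textup{Nrd}_{D/F}(a)^{4k}$, which is automatic from the separate left- and right-multiplication computations.
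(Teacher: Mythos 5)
The paper states Proposition~\ref{modulus_character_parabolic_prop} without proof, so there is no internal argument to compare against; this is a standard fact left to the reader. Your computation correctly fills the gap. The strategy is the expected one: compute $\delta_N(p)$ as $|\det_F(\mathrm{Ad}(p))|_F$ on $\mathrm{Lie}(N)$, organised by the two-step filtration $1 \to N' \to N \to \textup{Hom}_D(V_0,X) \to 1$ with $N'$ identified with $(-\epsilon)$-hermitian maps $Y\to X$. The three cases are handled cleanly: the $\Lambda^2/\textup{Sym}^2$ dichotomy for $D=F$ gives $k-\epsilon$; the multiplication-by-$i$ symmetry for $D=E$ gives $k$ (as half the determinant on all of $\textup{Hom}_D(Y,X)$); and the base change to a splitting field with the $J$-twist converts the quaternionic hermitian condition into $\gamma^T=\epsilon\gamma$ and yields $2k+\epsilon$. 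Combining with the $\textup{Hom}_D(V_0,X)$ contribution $\det_F(a)^{n-2k}$ and translating into $\textup{Nrd}_{D/F}$ matches the stated formulas in all three cases, and the final sanity check — the two quaternionic pieces multiply to $\textup{Nrd}_{D/F}(a)^{4k}$, the full determinant on $M_k(D)$ — is a sound consistency verification.
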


\subsection{Lifts of dual pairs} \label{lifts_of_dual_paris_sec}

Let $\textup{Mp}(W) = \textup{Sp}(W) \times_c R^\times$ be the $R$-metaplectic group, where we recall the cocycle $c$ takes values in $\{ \pm 1 \}$. For $G$ a subgroup in $\textup{Sp}(W)$, consider $\textbf{G}$ the inverse image of $G$ in $\textup{Mp}(W)$. We can see $\textbf{G} = G \times_c R^\times$ by restricting the cocyle $c$ in an obvious way. If $H$ is a subgroup of $\textup{Sp}(W)$ commuting with $G$, then if $\textbf{GH}$ is the inverse image of $GH$ in $\textup{Mp}(W)$ we have a natural group morphism into the metaplectic group $\textbf{G} \times \textbf{H} \to \textbf{GH}$.

\begin{rem} The group morphism $\textbf{G} \times \textbf{H} \to \textbf{GH}$ can be written in several forms since it is $((g,\lambda),(h,\mu)) \mapsto (g,\lambda) \cdot (h,\mu) = (gh,\lambda \mu \hspace{1mm} c(g,h))=(hg,\lambda \mu \hspace{1mm} c(h,g))$. \end{rem}

\paragraph{Parabolics.} If $\Phi$ is a totally isotropic flag in $W$, its stabiliser $P(\Phi)$ in $\textup{Sp}(W)$ lifts to the covering group $P_\Phi = P(\Phi) \times_c R^\times$ in $\textup{Mp}(W)$ and there exists a unique splitting $n \in N(\Phi) \to (n,\gamma_n) \in \textup{Mp}(W)$ of the unipotent radical $N(\Phi)$ of $P(\Phi)$ normalised by $P(\Phi)$ \textit{i.e.} for all $p \in P(\Phi)$ and all $n \in N(\Phi)$, we have $(p,1)^{-1} (n,\gamma_n) (p,1) = (p^{-1} n p , \gamma_{p^{-1} n p})$. The mixed Schr\"odinger model shows such a splitting always exist. It is unique because two splittings $\gamma$ and $\gamma'$ differ by a character $\chi$ where $\chi(p^{-1} n p) = \chi(n)$ for all $n \in N(\Phi)$ and all $p \in P(\Phi)$, which implies $\chi$ is the trivial character \textit{i.e.} the uniqueness of the splitting. By choosing a dual flag $\Phi^\vee$ of $\Phi$, we have a Levi decomposition $P(\Phi) = M(\Phi) \ltimes N(\Phi)$ and an isomorphism $((m,\lambda),n) \in M_\Phi \ltimes N(\Phi) \overset{\sim}{\to} (m,\lambda) \cdot (n,\gamma_n) \in P_\Phi$. If $c$ and $\Phi$ are chosen nicely, in the sense that there exists a Lagrangian $X$ in $W$ such that $c=c_X$ and $P(\Phi) \subset P(X)$, then the splitting of $N(\Phi)$ is $n \mapsto (n,1)$. If we decompose $M(\Phi)$ further as $G(\Phi) \times U(\Phi)$ according to its GL-part and its isometric part, we can define the lifts $G_\Phi$ and $M_\Phi$ as well as the surjective group morphism $G_\Phi \times U_\Phi \twoheadrightarrow M_\Phi$ via $((g,\lambda),(u,\mu)) \mapsto (g,\lambda) \cdot (u,\mu)$. Therefore, the full Levi decomposition reads as the surjection $(G_\Phi \times U_\Phi) \ltimes N(\Phi) \twoheadrightarrow P_\Phi$.

\paragraph{Lifts of dual pairs.} When $(U(V_1),U(V_2))$ is a dual pair, the two groups are naturally subgroups of $\textup{Sp}(W)$ and they lift to a dual pair in $\textup{Mp}(W)$. We set $H_1 = U(V_1) \times_{c_1} R^\times$ and $H_2 = U(V_2) \times_{c_2} R^\times$ where $c_1$ and $c_2$ are obtained from $c$ by restriction. We have a group morphism:
$$\begin{array}{ccc}
 H_1 \times H_2 & \to & \textup{Mp}(W) \\
 ((u_1,\lambda_1),(u_2,\lambda_2)) & \mapsto & (u_1 \otimes_D \textup{id}_{V_2},\lambda_1) \cdot (\textup{id}_{V_1} \otimes_D u_2,\lambda_2) \end{array}$$
For $\Phi_1$ a totally isotropic flag in $V_1$, consider the parabolic subgroup $P(\Phi_1)$ in $U(V_1)$ stabilising it. We can create in an obvious way a totally isotropic flag $\Phi_1 \otimes_D V_2$ in $W$ out of $\Phi_1$. Then the splitting $n \in N(\Phi_1 \otimes_D V_2) \mapsto (n,\gamma_n) \in \textup{Mp}(W)$ induces, by setting $\gamma_{n_1} = \gamma_{n_1 \otimes_D \textup{id}_{V_2}}$, a splitting $n_1 \mapsto (n_1,\gamma_{n_1})$ of $N(\Phi_1)$ in $P_{\phi_1} = P(\Phi_1) \times_{c_1} R^\times$. Similarly, if $\Phi_1^\vee$ is a dual flag, then $P_{\Phi_1} = M_{\Phi_1} \ltimes N(\Phi_1)$ is compatible to the Levi decomposition of $P_{\Phi_1 \otimes_D V_2}$ previously obtained \textit{i.e.} $(m,\lambda) \in M_{\Phi_1} \mapsto (m \otimes_D \textup{id}_{V_2},\lambda) \in M_{\Phi_1 \otimes_D V_2}$ and we can push it even further to $(G_{\Phi_1} \times U_{\Phi_1} )\ltimes N(\Phi_1) \twoheadrightarrow P_{\Phi_1}$. The same applies for $V_2$.

\paragraph{Splitting.} The covering groups $H_1$ and $H_2$ are always split over $U(V_1)$ and $U(V_2)$ \textit{i.e.} there exist group sections $U(V_1) \to H_1$ and $U(V_2) \to H_2$ of the natural projections, except if one of $V_1$ or $V_2$ is odd orthogonal. In the latter case, for instance if $V_1$ is odd orthogonal, then $H_1$ is split over $U(V_1)=O(V_1)$ but $H_2 = \textup{Mp}(V_2)$ is the metaplectic group, which is not split over $U(V_2) = \textup{Sp}(V_2)$.

\subsection{The MVW-involution} \label{MVW_involution_sec}

Let $G$ be a locally profinite group and assume there exists a compact open subgroup of $G$ of invertible pro-order in $R$. We suppose $G$ is unimodular \textit{i.e.} any left Haar measure on $G$ is also right invariant. Let $(\pi,V) \in \textup{Irr}_R(G)$ be irreducible. We define the trace-character associated to $\pi$ which is a distribution $\textup{tr}_\pi : C_c^\infty(G) \to R$ defined in the following way. For $f \in C_c^\infty(G)$, there exists $K$ open compact subgroup such that $f$ is bi-$K$-invariant. Then the endomorphism $\pi(f) = \int_G f(g) \pi(g) d \mu(g)$ of $V$ factors through an endomorphism $V^K \to V^K$, still denoted $\pi(f)$, and by admissibility of $\pi$, we can take its trace $\textup{tr}_\pi(f) \in R$, which does not depend on the choice of $K$. Moreover, the trace-character is invariant by conjugation \textit{i.e.} if we denote by $\rho_l$ and $\rho_r$ the natural $G$-actions, we have $\textup{tr}_\pi(\rho_r(g) \rho_l(g) f) = \textup{tr}_\pi(f)$ for all $f \in C_c^\infty(G)$ and all $g \in G$. We now denote by $\gamma$ the conjugation action.

Let $(V,\langle \ , \ \rangle)$ be an $\epsilon$-hermitian space over $D=E$ \textit{i.e.} $U(V)$ is symplectic, orthogonal or unitary, but not unitary quaternionic. We fix $\delta \in \textup{GL}_F(V)$ such that $\langle \delta v , \delta v' \rangle = \langle v' , v \rangle$ for all $v, v'$ in $V$. Such an element $\delta$ exists by virtue of \cite[Chap 4, I.2]{mvw} and $\delta$ is necessarily $\tau$-linear. Moreover $g \mapsto \delta g \delta^{-1}$ defines an automorphism of $U(V)$ as well as a covariant endofunctor $\pi \mapsto \pi^\delta$ of $\textup{Rep}_R(U(V))$. Recall \cite[Chap 4, I.2 Prop]{mvw}:

%We define the similitude group $GU(V)$ as the group of elements $g \in \textup{End}_E(V)$ such that there exists $\lambda \in E^\times$ for which $\langle g v , g v' \rangle = \lambda \langle v, v'\rangle$ for all $v, v' \in V$. Taking the similitude factor $g \in GU(V) \mapsto \lambda_g \in E^\times$ defines a group morphism. Note that $U(V)$ is exactly the kernel of this group morphism, so that all elements $g \in GU(V)$ of a given similitude factor $\lambda$ are translate under $U(V)$ \textit{i.e.} $\{ g \in GU(V) \ | \ \lambda_g = \lambda \} = U(V) \cdot g_0 = g_0 \cdot U(V)$ for any $g_0$ of similitude factor $\lambda$.  Let $\delta \in GU(V)$ be an element of similitude factor $-1$.

\begin{lem} \label{conjugacy_g_g_inverse_delta_lem} For all $g \in U(V)$, the elements $\delta g \delta^{-1}$ and $g^{-1}$ are conjugate in $U(V)$. \end{lem}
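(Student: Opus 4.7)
The plan is to reformulate the conclusion and then reduce to a block decomposition of $V$ under $g$. An element $h \in U(V)$ satisfies $h (\delta g \delta^{-1}) h^{-1} = g^{-1}$ if and only if $\mu := h\delta$ is a $\tau$-semilinear bijection of $V$ fulfilling both
$$\mu g \mu^{-1} = g^{-1} \quad \textup{and} \quad \langle \mu v, \mu v' \rangle = \langle v', v \rangle$$
for all $v, v' \in V$. The second identity precisely encodes $h = \mu \delta^{-1} \in U(V)$ after applying the defining property of $\delta$. The task thus amounts to constructing a ``$\delta$-type'' element that additionally intertwines $g$ with $g^{-1}$.

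First I would exploit the $F[T]$-module structure on $V$ with $T$ acting as $g$. Let $V = \bigoplus_P V_P$ be the primary decomposition indexed by the distinct irreducible factors $P$ of the minimal polynomial of $g$, and set $P^\vee(T) = T^{\deg P} P(T^{-1})/P(0)$, again irreducible. Because $g$ is an isometry, $\langle g v, v' \rangle = \langle v, g^{-1} v' \rangle$, which forces $V_P$ to be orthogonal to $V_Q$ whenever $Q \neq P^\vee$ and to pair non-degenerately with $V_{P^\vee}$. Hence $V$ splits orthogonally into blocks of two kinds: split blocks $V_P \oplus V_{P^\vee}$ when $P \neq P^\vee$, each summand being totally isotropic and in perfect duality with its partner; and self-reciprocal blocks $V_P$ when $P = P^\vee$. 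Both $g$ and $g^{-1}$ preserve this decomposition, so it suffices to construct $\mu$ one block at a time.

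On a split block, $(V_P, g)$ and $(V_{P^\vee}, g^{-1})$ are isomorphic as $F[T]$-modules; I would pick a $\tau$-semilinear bijection $\alpha : V_P \to V_{P^\vee}$ intertwining $g$ with $g^{-1}$ and then extend it to $V_{P^\vee} \to V_P$ by setting $\mu|_{V_{P^\vee}}$ to be the $\langle \, , \, \rangle$-adjoint of $\alpha$, which makes the swap-of-form relation automatic. On a self-reciprocal block $V_P$, the modules $(V_P, g)$ and $(V_P, g^{-1})$ are $F[T]$-isomorphic, so there exists a $D$-linear bijection of $V_P$ conjugating $g$ to $g^{-1}$; combining it with any fixed $\tau$-semilinear bijection of $V_P$ intertwining $g$ with itself yields a $\tau$-semilinear candidate for $\mu|_{V_P}$, which I would then adjust by a suitable element of the commutant $\textup{End}_{F[g]}(V_P)^\times$ to enforce the swap-of-form identity.

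The hard step is the self-reciprocal case, where realising the swap-of-form condition reduces to a classification problem for sesquilinear forms on the local modules $F[T]_{(P)}/(P^k)$ constituting $V_P$, equivariantly for a canonical involution induced by $P = P^\vee$. This is the classical content of \cite[Chap 4, I.2 Prop]{mvw}; crucially the argument is purely about the abstract group $U(V)$, so it is independent of the coefficient field $R$ of representations and transfers verbatim to the modular setting. The hypothesis excluding quaternionic $U(V)$ matches precisely the cases treated in that reference.
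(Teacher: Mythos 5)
The paper does not prove this lemma; it is stated as a direct recall of \cite[Chap 4, I.2 Prop]{mvw}, so there is no internal proof to compare against. Your proposal is a reasonable reconstruction of the skeleton of the MVW argument, but it circles back to citing the same reference for the crux (the self-reciprocal blocks), so it is not a genuinely different route. Your observation that the statement is purely group-theoretic and therefore independent of the coefficient field $R$ is correct and worth making explicit: it is precisely the reason the paper can simply recall the lemma without re-proving it in the modular setting.

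Two technical remarks on the sketch. First, in the unitary case $E \neq F$ the reciprocal $P^\vee$ must also apply the Galois conjugation $\tau$ to the coefficients of $P$, i.e.\ $P^\vee(T) = T^{\deg P}\,\bar{P}(T^{-1})/\bar{P}(0)$ where $\bar{P}$ is $P$ with $\tau$-conjugated coefficients; otherwise the orthogonality relation $V_P \perp V_Q$ for $Q \neq P^\vee$ fails, since the form is $\tau$-sesquilinear. Your formula omits the conjugation and so only covers the orthogonal/symplectic case. Second, in the self-reciprocal case your assertion that a $\tau$-semilinear bijection of $V_P$ intertwining $g$ with itself exists is not automatic when $\tau \neq \textup{id}$: it amounts to $(V_P,g)$ being $E[T]$-isomorphic to its Galois twist, which does hold once $P^\vee$ is defined with the conjugation built in — so the two gaps are really one and the same. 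With that correction the structure of your argument matches MVW's, whose classification of sesquilinear forms on the local modules $E[T]_{(P)}/(P^k)$ you would then need in full to finish the self-reciprocal blocks.
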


In the context of regular actions \`a la Gelfand-Kazhdan appearing in Section \ref{on_invariant_distributions_sec}, we have a very clean result. Recall that the conjugation action for isometry groups always has finite regular strata when the characteristic of $F$ is zero. When $F$ has positive (odd) characteristic, the action has finite regular strata in the symplectic case \cite[Ex p.103]{gk}. We do not try to determine whether other isometry groups have also this property.

\begin{theo} \label{MVW_involution_regular_action_thm} Let $R$ be a field of characteristic $\ell \neq p$. Assume the conjugation action of $U(W)$ on itself has finite regular strata. Then for all $\pi \in \textup{Irr}_R(U(W))$, one has $\pi^\delta \simeq \pi^\vee$. \end{theo}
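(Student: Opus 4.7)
The strategy is the classical MVW one: show that $\pi^\delta$ and $\pi^\vee$ have the same trace character, then conclude via linear independence of irreducible characters (valid over any field). What is new in the modular setting is that Harish-Chandra's regularity is unavailable, so the equality of characters must be established purely distributionally; Proposition~\ref{invariance_a_la_GK_prop} is tailored for exactly this, and the hypothesis of finite regular strata is in place to feed into it.

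After unfolding, $\chi_{\pi^\vee}(f) = \chi_\pi(f\circ\alpha)$ with $\alpha(g)=g^{-1}$, and $\chi_{\pi^\delta}(f) = \chi_\pi(f\circ\beta^{-1})$ with $\beta(g)=\delta g\delta^{-1}$ (an automorphism of $U(W)$ by construction), up to a modulus of $\beta$ against the Haar measure of $U(W)$ that one expects to be trivial. The desired equality $\chi_{\pi^\delta}=\chi_{\pi^\vee}$ thus amounts to $\sigma$-invariance of $\chi_\pi$ as a distribution, where $\sigma:=\beta^{-1}\circ\alpha$, i.e. $\sigma(g)=\delta^{-1}g^{-1}\delta$. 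To apply Proposition~\ref{invariance_a_la_GK_prop} I must verify that $\sigma$ preserves every $U(W)$-conjugacy class and fixes every $U(W)$-invariant distribution on it.

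The first check is immediate from Lemma~\ref{conjugacy_g_g_inverse_delta_lem}: writing $\delta g\delta^{-1}=h_g g^{-1}h_g^{-1}$ and conjugating by $\delta^{-1}$ yields $\sigma(g)=\beta^{-1}(h_g)^{-1}g\,\beta^{-1}(h_g)$, with $\beta^{-1}(h_g)\in U(W)$ since $\beta$ is an automorphism. For the second check, the space of $U(W)$-invariant distributions on a class $C$ is at most one-dimensional, spanned by the quotient Haar measure on $C\simeq U(W)/C_{U(W)}(g_0)$; under this parametrisation $\sigma$ is induced by $h\mapsto \beta^{-1}(h)k_0$ on $U(W)$, a composition of right translation with the automorphism $\beta^{-1}$. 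A useful sanity check is $\sigma^2=\textup{conj}(\delta^{-2})$, which is inner (since $\delta^2\in U(W)$), so $\sigma$ already acts on each distribution line by a sign, giving an a priori $\pm 1$ ambiguity.

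The main obstacle is ruling out the sign $-1$. This reduces to the assertion that $\beta=\textup{conj}(\delta)$ preserves Haar measure on $U(W)$, i.e. has trivial modulus as a continuous automorphism. For reductive isometry groups one expects this by structural considerations on $U(W)$, but its precise verification is where the \emph{tameness} restriction mentioned in the introduction is expected to enter; note that the issue disappears entirely when $\textup{char}(R)=2$. Once the sign question is settled, Proposition~\ref{invariance_a_la_GK_prop} delivers the $\sigma$-invariance of every $U(W)$-invariant distribution, in particular of $\chi_\pi$, hence $\chi_{\pi^\delta}=\chi_{\pi^\vee}$, and linear independence of characters finishes the proof.
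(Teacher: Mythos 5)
Your overall strategy is exactly the paper's: reduce to $\sigma$-invariance of the trace-character distribution with $\sigma(g)=\delta g^{-1}\delta^{-1}$ (your $\delta^{-1}g^{-1}\delta$ differs by conjugation by $\delta^2\in U(W)$, which is harmless since trace-characters are conjugation-invariant), feed this into Proposition~\ref{invariance_a_la_GK_prop}, and check its two hypotheses. Your first check, via Lemma~\ref{conjugacy_g_g_inverse_delta_lem}, is correct and is what the paper does.

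The gap is in the second check. You correctly identify the invariant distribution on an orbit as a quotient Haar measure and correctly compute $\sigma^2=\textup{conj}(\delta^{\mp 2})$, inner. But you then treat the resulting $\pm 1$ eigenvalue as a genuine ambiguity — a ``main obstacle'' whose resolution you defer to a vague tameness assumption — whereas it is not an ambiguity at all. The scalar by which $\sigma$ acts on the line of invariant distributions is the \emph{modulus} of the homeomorphism $\sigma$ relative to the quotient Haar measure; on a locally profinite homogeneous space this modulus is always the image in $R$ of a positive rational number (a ratio of indices of compact open subgroups), not an arbitrary square root of unity in $R^\times$. Since $\sigma^2$ is inner with modulus $1$, the positive rational squares to $1$, hence equals $1$, hence its image in $R$ is $1$ for every $\ell\ne p$. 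The paper disposes of this in one line: ``$\sigma^2$ is the conjugation by $\delta^2\in U(V)$, so the modulus of $\sigma$ is $1$.'' Your proposal would fail to prove the theorem as stated, because you neither settle the sign nor realise it is free, and the extra hypothesis you gesture at is not available.

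You have also misattributed the tameness restriction and the role of $\ell=2$. The tameness (``$p>\dim(W)/2$'' for $F$ of positive characteristic) appears only in the metaplectic case, Theorem~\ref{MVW_involution_metaplectic_thm}, and concerns separability of minimal polynomials for the Jordan-decomposition argument — it has nothing to do with moduli or with $\textup{char}(R)$. The constraint $\ell\nmid 2$ belongs to the Bernstein--Zelevinsky route (Proposition~\ref{invariance_a_la_BZ_prop} and Theorem~\ref{MVW_involution_constructive_action_thm}), where one averages over a $\sigma$-orbit of length $n=2$ and must invert $n$. Proposition~\ref{invariance_a_la_GK_prop} does no averaging — that is precisely the payoff of assuming finite regular strata — so the theorem under discussion holds for all $\ell\neq p$, including $\ell=2$, with no additional hypothesis.
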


\begin{proof} Recall all irreducible representations of $U(V)$ are admissible, so considering the trace-character makes sense. Two irreducible representations are isomorphic if and only if they have the same trace-character \cite[Th 1.1.3.7]{trias_thesis}. We want to prove $\textup{tr}_{\pi^\vee} = \textup{tr}_{\pi^\delta}$. First, because $U(V)$ is unimodular, we have $\textup{tr}_{\pi^\vee}(f) = \textup{tr}_\pi(f^\vee)$ where $f^\vee(g) = f(g^{-1})$ and $\textup{tr}_{\pi^\delta}(f) = \textup{tr}_\pi(f^\delta)$. Therefore we should prove that $\textup{tr}_\pi((f^\vee)^\delta) = \textup{tr}_\pi(f)$, or in other words, the distribution $\textup{tr}_\pi$ is invariant under $\sigma : g \mapsto \delta g^{-1} \delta^{-1}$. To do so, we will use Proposition \ref{invariance_a_la_GK_prop}, so we now check this proposition applies. The map $\sigma$ preserves conjugacy classes thanks to Lemma \ref{conjugacy_g_g_inverse_delta_lem} and it preserves local $U(V)$-invariant distributions: such an invariant distribution on an orbit $S = U(V) \cdot g$ is simply a quotient Haar measure of $U(V)$ by the stabiliser -- or centraliser, here -- of $g$, and $\sigma^2$ is the conjugation by $\delta^2 \in U(V)$, so the modulus of $\sigma$ is $1$. Therefore the proposition applies and $\textup{tr}_\pi$ is $\sigma$-invariant \textit{i.e.} $(\pi^\vee)^\delta \simeq \pi$ or equivalently $\pi^\vee \simeq \pi^\delta$. \end{proof}

The $U(V)$-conjugation is always a constructive action \`a la Bersntein-Zelevinski. Again, these actions appeared in Section \ref{on_invariant_distributions_sec}. In the proof of Theorem \ref{MVW_involution_regular_action_thm}, we can replace the use of Proposition \ref{invariance_a_la_GK_prop} by Proposition \ref{invariance_a_la_BZ_prop} to obtain a similar result for all isometry groups, at the cost of a small restriction on the characteristic of the coefficient field:

\begin{theo} \label{MVW_involution_constructive_action_thm} Let $R$ be a field of characteristic $\ell \nmid 2  p$. Then for all $\pi \in \textup{Irr}_R(U(W))$, one has $\pi^\delta \simeq \pi^\vee$. \end{theo}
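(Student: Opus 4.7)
The plan is to repeat the strategy of the proof of Theorem \ref{MVW_involution_regular_action_thm} almost verbatim, but replacing the application of Proposition \ref{invariance_a_la_GK_prop} by Proposition \ref{invariance_a_la_BZ_prop}; this is the remark the authors already hint at. The underlying reduction is the same: two admissible irreducible representations are isomorphic if and only if they share the same trace-character, and since $U(V)$ is unimodular we have $\textup{tr}_{\pi^\vee}(f) = \textup{tr}_\pi(f^\vee)$ and $\textup{tr}_{\pi^\delta}(f) = \textup{tr}_\pi(f^\delta)$. So everything boils down to showing that the distribution $\textup{tr}_\pi$ on $C_c^\infty(U(V))$ is $\sigma$-invariant for the self-homeomorphism $\sigma : g \mapsto \delta g^{-1} \delta^{-1}$ of $U(V)$, where the conjugation action $\gamma$ of $U(V)$ on itself is the ambient $G$-action.

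The action $\gamma$ is constructive \textit{\`a la} Bernstein–Zelevinski (this is the classical result recalled in Section \ref{MVW_involution_sec}) and $U(V)$ is $\sigma$-compact, so it is countable at infinity. Thus Proposition \ref{invariance_a_la_BZ_prop} is applicable provided its three hypotheses on $\sigma$ hold. For the first one, a direct computation shows that $\gamma(g) \circ \sigma = \sigma \circ \gamma(g_\sigma)$ with $g_\sigma = \delta^{-1} g \delta$, and $g_\sigma$ lies in $U(V)$ because $g \mapsto \delta^{-1} g \delta$ is the automorphism of $U(V)$ inverse to the one recalled in the text. For the second, since $\delta$ is $\tau$-linear we have $\delta^2 \in \textup{GL}_F(V)$ and applying the relation $\langle \delta v , \delta v'\rangle = \langle v',v\rangle$ twice yields $\delta^2 \in U(V)$; then a trivial computation gives $\sigma^2 = \gamma(\delta^2)$, so one can take $n = 2$ and $g_0 = \delta^2$, and the hypothesis $\ell \nmid 2p$ precisely ensures both that $\ell \nmid n = 2$ and that the compact open subgroups of $U(V)$ have invertible pro-order in $R$. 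For the third hypothesis, $\sigma$ preserves every $U(V)$-conjugacy class by Lemma \ref{conjugacy_g_g_inverse_delta_lem}; and the unique-up-to-scalar invariant distribution on such a class is a quotient Haar measure whose scaling under $\sigma$ squares to the one induced by $\gamma(\delta^2)$, which is $1$ by unimodularity, and hence is itself $1$ in $R$ since $\ell \neq 2$.

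Granted all three hypotheses, Proposition \ref{invariance_a_la_BZ_prop} yields that every $U(V)$-invariant distribution on $C_c^\infty(U(V))$ is $\sigma$-invariant. In particular $\textup{tr}_\pi$ is $\sigma$-invariant, so $\textup{tr}_{\pi^\vee} = \textup{tr}_{\pi^\delta}$ and therefore $\pi^\delta \simeq \pi^\vee$.

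The main obstacle, as in the proof of Theorem \ref{MVW_involution_regular_action_thm}, is verifying the third hypothesis for the modular setting. In the complex case the sign issue $\textup{mod}(\sigma)^2 = 1$ would have the obvious resolution; here we must make sure that the value $\textup{mod}(\sigma) \in R^\times$, which is a priori only known to be a square root of unity, is forced to equal $1$. This is where the restriction $\ell \neq 2$ in the hypothesis $\ell \nmid 2p$ plays a role complementary to its role in the $\ell \nmid n$ clause of Proposition \ref{invariance_a_la_BZ_prop}, and is the only essential ingredient beyond what was already available in the regular strata setting.
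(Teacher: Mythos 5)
Your proposal follows exactly the route the paper indicates: replace Proposition \ref{invariance_a_la_GK_prop} by Proposition \ref{invariance_a_la_BZ_prop} in the proof of Theorem \ref{MVW_involution_regular_action_thm}, checking the extra hypotheses on $\sigma$. The verifications of the first two hypotheses are correct ($g_\sigma = \delta^{-1} g \delta$, $\sigma^2 = \gamma(\delta^2)$ with $\delta^2 \in U(V)$, so $n=2$), as is the use of Lemma \ref{conjugacy_g_g_inverse_delta_lem} for $\sigma(S)=S$.

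However, your final paragraph misattributes the role of $\ell \neq 2$. You claim the restriction is needed to force the scaling factor $c = \textup{mod}(\sigma)$ from $c^2 = 1$ to $c = 1$; but in a field of characteristic $\neq 2$, $c^2 = 1$ only yields $c \in \{\pm 1\}$ — that deduction would only be automatic in characteristic $2$, the opposite of what you want. The correct resolution is the one already implicit in the proof of Theorem \ref{MVW_involution_regular_action_thm}, which works for all $\ell \neq p$: the modulus of $\sigma$ is a positive quantity determined over $\mathbb{Z}[1/p]$ (it compares moduli of stabilisers), so $|\sigma|^2 = 1$ together with positivity gives $|\sigma| = 1$ before reduction mod $\ell$. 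Thus the third hypothesis holds for every $\ell \neq p$, and the sole reason for excluding $\ell = 2$ is the $\ell \nmid n$ clause of Proposition \ref{invariance_a_la_BZ_prop} with $n = 2$. Aside from this misreading, the argument is sound.
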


Nevertheless, the situtation for the metaplectic group will be similar to the regular action picture. We first have to explain what the conjugation by $\delta$ means in this context. If $\widehat{\textup{Sp}}(W) = \textup{Sp}(W) \times_c \{ \pm 1 \}$ is the metaplectic group, then there exists a unique lift of the conjugation by $\delta$ on $\textup{Sp}(W)$ \textit{i.e.} there exists a unique $\gamma_\delta : \textup{Sp}(W) \to \{ \pm 1 \}$ such that:
$$\delta : (g, \epsilon) \mapsto (\delta g \delta^{-1},\epsilon \gamma_\delta(g))$$
is an automorphism of central extensions of $\widehat{\textup{Sp}}(W)$. We still call it the conjugation by $\delta$ and alternatively denote $(\delta g \delta^{-1},\epsilon \gamma_\delta(g))$ by $\delta (g,\epsilon) \delta^{-1}$.

We want to be able to use Jordan decomposition for elements of $\textup{Sp}(W)$, therefore we restrict ourselves to the separable case when $F$ is a function field.

\begin{theo} \label{MVW_involution_metaplectic_thm} Let $R$ be a field of characteristic $\ell \neq p$. If $\textup{char}(F) = p > 0$, we assume that $p > \textup{dim}(W)/2$. Then for all $\pi \in \textup{Irr}_R(\widehat{\textup{Sp}}(W))$, one has $\pi^\delta \simeq \pi^\vee$. \end{theo}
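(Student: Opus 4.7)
My plan is to reduce the statement to an invariance property of distributions, then apply the Matryoshka framework from Section~\ref{Matryoshka_sec}. Since $\widehat{\textup{Sp}}(W)$ is unimodular and its irreducible representations are admissible, two such representations are isomorphic iff they have the same trace-character \cite[Th 1.1.3.7]{trias_thesis}. Using $\textup{tr}_{\pi^\vee}(f) = \textup{tr}_\pi(f^\vee)$ and $\textup{tr}_{\pi^\delta}(f) = \textup{tr}_\pi(f^\delta)$, the statement $\pi^\delta \simeq \pi^\vee$ is equivalent to $\textup{tr}_\pi \in C_c^\infty(\widehat{\textup{Sp}}(W))^*$ being invariant under $\sigma : \tilde g \mapsto \delta \tilde g^{-1} \delta^{-1}$.

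Although the $\textup{Sp}(W)$-conjugation action on itself has finite regular strata whenever $\textup{char}(F) \neq 2$, it is not evident that the corresponding action of $\widehat{\textup{Sp}}(W)$ does, so Proposition~\ref{invariance_a_la_GK_prop} does not apply directly; nor can I use Proposition~\ref{invariance_a_la_BZ_prop}, since $\sigma^2$ is inner of order two and the theorem must cover $\ell=2$. Instead I would apply Proposition~\ref{invariance_Matryoshka_stratif_a_la_GK_prop} with the two-step Matryoshka structure $\sim_1 \supseteq \sim_2$, where $\tilde g \sim_1 \tilde g'$ iff their projections to $\textup{Sp}(W)$ are $\textup{Sp}(W)$-conjugate, and $\sim_2$ is $\widehat{\textup{Sp}}(W)$-conjugacy. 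On each stratum arising from the finite regular stratification of $\textup{Sp}(W)/\textup{conj}$, the quotient by $\sim_1$ is Hausdorff by construction. Within any $\sim_1$-packet --- the pre-image $\tilde C_g$ of a single conjugacy class $C_g$ --- there are at most two $\widehat{\textup{Sp}}(W)$-conjugacy classes since the cover is by $\{\pm 1\}$, and each is closed in $\tilde C_g$ by \cite[Prop 1.5]{bz1}, giving a discrete Hausdorff quotient $\tilde C_g/\sim_2$.

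It remains to verify the hypothesis of Proposition~\ref{invariance_Matryoshka_stratif_a_la_GK_prop} on $\sigma$: any $\sim_2$-packet $S$ admitting a non-zero $\widehat{\textup{Sp}}(W)$-invariant distribution must satisfy $\sigma(S)=S$ and $\sigma\cdot T=T$. The second condition follows once the first is established, because $\sigma^2$ is the inner conjugation by the canonical lift of $\delta^2 \in \textup{Sp}(W)$ to $\widehat{\textup{Sp}}(W)$, and hence has trivial modulus on each orbit. The projection of $S$ to $\textup{Sp}(W)$ is preserved by $\sigma$ thanks to Lemma~\ref{conjugacy_g_g_inverse_delta_lem}. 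The delicate point is that $\sigma$ should not swap the two $\widehat{\textup{Sp}}(W)$-orbits sitting in the pre-image. I would handle this by Jordan decomposition, writing $g = g_s g_u$ in $\textup{Sp}(W)$, combining the canonical unipotent splitting $n \mapsto (n,\gamma_n)$ from Section~\ref{lifts_of_dual_paris_sec} with a careful choice of the conjugator $h \in \textup{Sp}(W)$ (provided by Lemma~\ref{conjugacy_g_g_inverse_delta_lem}) inside the reductive centralizer of $g_s$. The hypothesis $p > \textup{dim}(W)/2$ enters precisely here: it guarantees that Jordan decomposition is well-behaved in $\textup{Sp}(W)$ in positive characteristic, and that the unipotent elements involved admit an exponential-type description on which the cocycle can be computed.

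The main technical obstacle I anticipate is this final cocycle bookkeeping. Given $h \in \textup{Sp}(W)$ satisfying $hgh^{-1} = \delta g^{-1}\delta^{-1}$, any lift $\tilde h$ gives $\tilde h \tilde g \tilde h^{-1} = (hgh^{-1},\,\epsilon\cdot\textup{cocycle terms})$, which matches $\sigma(\tilde g)$ in $\textup{Sp}(W)$ but only up to a potential sign; since replacing $\tilde h$ by $-\tilde h$ does not change the conjugation, the sign cannot be corrected trivially. The content of the Jordan-decomposition argument, and the reason for the restriction on $p$, is to show that no obstruction sign actually appears --- equivalently, to exhibit a canonical lift $\tilde h$ for which the two factors coincide.
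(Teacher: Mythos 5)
Your reduction to the $\sigma$-invariance of $\textup{tr}_\pi$, the observation that Proposition~\ref{invariance_a_la_BZ_prop} is unavailable at $\ell=2$, the two-step Matryoshka structure (lift a regular $\textup{Sp}(W)$-stratification, then the discrete fibres $\widehat{C}_g/\widehat{\textup{Sp}}(W)$), and the identification that everything hinges on $\sigma$ preserving each of the two $\widehat{\textup{Sp}}(W)$-orbits sitting over a single $\textup{Sp}(W)$-conjugacy class: all of this matches the structure of the paper's proof. The modulus argument reducing $\sigma\cdot T = T$ to $\sigma(S)=S$ is also correct.

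The gap is in the last paragraph: you correctly isolate the key lemma (that $\delta\widehat{g}\delta^{-1}$ and $\widehat{g}^{-1}$ are $\widehat{\textup{Sp}}(W)$-conjugate), but you do not prove it, and the strategy you sketch for it would not go through as stated. You propose to split $g = g_s g_u$, use the unipotent splitting $n\mapsto(n,\gamma_n)$ of Section~\ref{lifts_of_dual_paris_sec}, and pick the conjugator $h$ inside the reductive centralizer of $g_s$. But the splitting $n\mapsto(n,\gamma_n)$ is a splitting of the unipotent radical $N(\Phi)$ of a fixed parabolic, normalised only by that parabolic; it is not a canonical $\textup{Sp}(W)$-equivariant splitting over all unipotent elements, and the unipotent factor $g_u$ of an arbitrary $g$ does not live in a distinguished $N(\Phi)$. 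There is also no argument offered that the element $h$ of Lemma~\ref{conjugacy_g_g_inverse_delta_lem} can be arranged to lie in the centralizer of $g_s$ and to behave well after lifting. The paper's actual proof of the lemma is an induction on $\dim(W)$ organised by the structure of the minimal polynomial of $g$: the orthogonal decomposition of $W$ into $g$-stable symplectic subspaces reduces to the cases $P^d$ and $P^dQ^d$; the second is dispatched by an explicit cocycle computation analogous to $\dim W = 2$; in the first, Jordan decomposition (where $p > \dim(W)/2$ guarantees separability of $P$) builds an extension $K$ so that $W$ becomes a symplectic $K_0$-space of smaller $F$-dimension, allowing the induction to run via restriction of scalars --- except in the residual base cases $K_0 = F$, which are $P = Z\pm 1$ (handled by an explicit unipotent computation inside $P(X)$) and $P$ quadratic (handled by the known splitting of $U(V)$ in $\textup{Mp}^8(W)$ and a closure argument on semisimple parts). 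Your top-down "$g = g_sg_u$ plus unipotent splitting" plan does not reproduce this and would need to be reworked from scratch; in its current form the proof is incomplete at exactly the point you flag as the "main technical obstacle."
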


\begin{proof} We recall briefly why all irreducible representations are admissibile in the context of connected reductive groups. This is a consequence of the existence of cuspidal support \cite[II.2.4]{vig_book} and the fact that parabolic induction preserves admissibility \cite[II.2.1]{vig_book}, so it is enough to justify it for cuspidals. This fact is known for cuspidals as they are compact representations \cite[II.2.7]{vig_book}. Since we obtained in  Section \ref{lifts_of_dual_paris_sec} a theory of parabolic subgroups for the metaplectic group, all these facts apply to the metaplectic group equally.

Unfortunately, we do not know whether the conjugation action of the metaplectic group on itself has finite regular strata. However, the best we can prove in this direction is obtained by lifting a finite regular $\textup{Sp}(W)$-stratification $\textup{Sp}(W) = \bigsqcup X_i$ to a finite $\widehat{\textup{Sp}}(W)$-stratification $\widehat{\textup{Sp}}(W) = \bigsqcup \widehat{X}_i$ and noticing that:
\begin{itemize}[$\bullet$]
\item the inverse image $\widehat{C}_g$ of a conjugacy class $C_g$ consists of two orbits $C_{(g,1)}$ and $C_{(g,-1)}$ because $(u,1)$ commutes with $(g,1)$ when $u$ commutes with $g$;
\item the points $C_{(g,1)}$ and $C_{(g,-1)}$ are closed in $\widehat{X}_i/\widehat{\textup{Sp}}(W)$ because $\widehat{C}_g$ is locally profinite and it has two homeomorphic orbits \cite[Prop 1.5]{bz1}.
\end{itemize}
The fact that points are closed is necessary for $\widehat{X}_i/\widehat{\textup{Sp}}(W)$ to be Hausdorff, but not sufficient. For instance, the line with a double point is locally Hausdorff not Hausdorff, so all its points are closed.  Nevertheless, if we closely examine the proof of Lemma \ref{stratification_G_conivariants_lemma}, we had a quotient map $p$ and we can replace it here by $\widehat{X}_0 \to X_0/\textup{Sp}(W)$ to obtain the same conclusion as in the lemma (we still need to know that each orbit in $\widehat{X}_0$ is closed, but this is fine as we have just discussed). 

For the metaplectic group, each stratum $\widehat{X}_i$ is Matryoshka regular, in the sense of Section \ref{Matryoshka_sec}. Indeed, let $\sim_2$ be the equivalence relation given by the $\widehat{\textup{Sp}}(W)$-orbit relation and $\sim_1$ be the projection to the $\textup{Sp}(W)$-orbit relation \textit{i.e.} $\widehat{X}_i/\sim_2 = \bigsqcup_{(g,\epsilon)} C_{(g,\epsilon)}$ and $\widehat{X}_i/\sim_1 = \bigsqcup_g \widehat{C}_g$. As $\widehat{X}_i / \sim_1 = X_i / \textup{Sp}(W)$, it is Hausdorff and, for all $g \in \textup{Sp}(W)$:
$$\widehat{C}_g / \sim_2 = \widehat{C}_g / \widehat{\textup{Sp}}(W) = C_{(g,1)} \sqcup C_{(g,-1)}$$
is Hausdorff as well (it is even discrete). So $\widehat{X}_i$ is Matryoshka regular.

Let $\sigma : \widehat{\textup{Sp}}(W) \to \widehat{\textup{Sp}}(W)$ be the homeomorphism defined by $\sigma(\widehat{g}) = \delta \widehat{g}^{-1} \delta^{-1}$. Because the metaplectic group is unimodular, the proof will go the same way as in Theorem \ref{MVW_involution_regular_action_thm}. In other words, we will prove the $\sigma$-invariance of $\textup{tr}_\pi$ using Proposition \ref{invariance_Matryoshka_stratif_a_la_GK_prop}. When $\widehat{g} = (g,\epsilon)$, it is known that $\sigma$ preserves semisimple conjugacy classes \textit{i.e.} $\sigma(C_{(g,\epsilon)})=C_{(g,\epsilon)}$ when $g$ is semisimple  \cite[Chap 4, I.8 Prop]{mvw}. We first extend this result:

\begin{lem} For $\widehat{g} \in \widehat{\textup{Sp}}(W)$, the elements $\delta \widehat{g} \delta^{-1}$ and $\hat{g}^{-1}$ are conjugate in $\widehat{\textup{Sp}}(W)$. \end{lem}

\begin{proof} We use an induction argument to generalise \cite[Chap 4, I.8 Prop]{mvw}.

First of all, the explicit formulas for the metaplectic cocycle \cite[Chap 4, I.11]{mvw}, which we recall below, easily prove the lemma when $W$ has dimension $n=2$. Indeed, as $\textup{Sp}(W) \simeq \textup{SL}_2(F)$, we have:
$$\delta(g,1)\delta^{-1} = (\delta g \delta^{-1},\gamma_\delta(g)) \textup{ for } \delta = \left[ \begin{array}{cc}
1 & 0   \\
  0 & -1
\end{array} \right] \textup{ and } g = \left[ \begin{array}{cc}
 a & b \\
 c & d 
\end{array} \right]$$
with $\gamma_\delta(g) = 1$ if $c \neq 0$ and $\gamma_\delta (g) = (d,-1)$ if $c=0$. When $g$ is semsimple, the result is already known. When $g$ is not semisimple, we can assume, up to conjugating, that $c=0$ and $a=d=\pm 1$ and $b \neq 0$. Since $(g,1)^{-1} = (g^{-1},(a,a))$ and $\delta(g,1)\delta^{-1}=(g^{-1},(d,-1))$, the result follows in dimension $2$.

Let $(g,\lambda) \in \textup{Mp}(W)$. There exists \cite[Chap 4, I.3]{mvw} an orthogonal decomposition into non-zero symplectic subspaces $W = \oplus_{i \in I} W_i$ stabilised by $g$. In particular:
$$g = \oplus_{i \in I} g_i = (g_i)_{i \in I} \in \prod_{i \in I} \textup{Sp}(W_i).$$
We want to apply the induction hypothesis thanks to the so-called diagonal ``embedding'' $\prod_{i \in I} \textup{Mp}(W_i) \to \textup{Mp}(W)$ of metaplectic groups \cite[Chap 4, I.9]{mvw}. We can therefore use the induction hypothesis as long as $I$ has cardinality at least $2$.

In the remaining cases \cite[Chap 4, I.3]{mvw}, the minimal polynomial of $g$ is either of the form $P^d$ with $P \in F[Z^{\pm 1}]$ irreducible, which we refer to as (I), or $P^d Q^d$ with $P$ and $Q$ irreducible and coprime, which we refer to as (II). In both cases, we have an involution $\tau$ on $F[Z^{\pm 1}]$ such that $\tau(P) \in P F[Z^{\pm 1}]$ for (I) or $\tau(P) \in Q F[Z^{\pm 1}]$ for (II). All the claims below find their justification in \cite[Chap 4, I.3--I.7]{mvw}.

We first consider the second case, which is easier, because there exists a complete polarisation $W=X \oplus Y$ such that the action of $F[Z^{\pm 1}]$ factors through $A_P = F[Z^{\pm 1}]/(P^d)$ on $X$ and $A_Q = F[Z^{\pm 1}]/(Q^d)$ on $Y$, and the action of $A_P$ and $A_Q$ respectively are faithful. The metaplectic cocycle is then very similar to the dimension $2$ because:
$$\delta(g,1)\delta^{-1} = (g,(a,-1)^m) \textup{ for } \delta = \left[ \begin{array}{cc}
\textup{id}_X & 0   \\
  0 & -\textup{id}_Y
\end{array} \right] \textup{ and } g = \left[ \begin{array}{cc}
g_X & 0   \\
  0 & g_Y
\end{array} \right].$$
Choosing a symplectic basis, we have:
$$w (g,1) w^{-1}=(g^{-1},1) = (g,(a,a)^m)^{-1} \textup{ where } w=\left[ \begin{array}{cc}
0 & -J   \\
  J & 0
\end{array} \right].$$
Therefore the result follows in this case.

Now assume we are in case (I) and set $A = F[Z^{\pm 1}]/(P^d)$ with $d \geq 2$. Note that $P$ is a separable polynomial. Indeed, inseparability problems can only occur if $\textup{char}(F) > 0$, but $\textup{char}(F) = p > n / 2$, and as the degree of $P$ is at most $n / 2$, the polynomial $P$ must be separable. Therefore we can use Jordan decomposition for $g$. We interpret it in the following: there exists $Q \in A$ such that the map $F[Y^{\pm 1}] \to F[Q] \subseteq A$ sending $Y$ to $Q$ factors through an isomorphism $F[Y^{\pm 1}]/(P) \overset{\sim}{\to} F[Q] \subseteq A$. If we set $K = F[Y^{\pm 1}]/(P)$, then $A$ is a $K$-algebra. Note that $Q$ will be corresponding to the semisimple part in Jordan decomposition. The involution $\tau$ acts on $K$, and we denote its fixed points by $K_0$. Then $W$ is an anti-hermitian space over $K$, therefore it is a symplectic space over $K_0$ and the element $g \in \textup{Sp}(W_{K_0})$ by  \cite[Chap 4, I.12]{mvw}. Moreover, the element $(g,1) \in \textup{Mp}(W_{K_0})$ and if $\textup{dim}_F(K_0) > 1$, we can apply the induction hypothesis as $\textup{dim}_{K_0}(W) < \textup{dim}_F(W)$. Therefore the lemma holds.

There only remains to check the last case $K_0 = F$ \textit{i.e.} $P$ has degree $1$ and $K=F$ or $P$ has degree $2$ and $K/F$ is quadratic. If $P$ has degree $1$, it must be of the form $Z \pm 1$. We treat this case first. We start by $P=Z-1$. In particular $g$ is a unipotent element and there exists a complete polarisation $W=X \oplus Y$ and $g \in P(X)$ is unipotent. Write:
$$g=\left[ \begin{array}{cc}
u_X & u_{XY}   \\
  0 & u_Y
\end{array} \right] \textup{ and } \delta = \left[ \begin{array}{cc}
\textup{id}_X & 0   \\
  0 & \textup{id}_Y
\end{array} \right].$$
Then: 
$$\delta(g,1)\delta^{-1} = (\delta g \delta^{-1},1) = \left( \left[ \begin{array}{cc}
u_X & - u_{XY}   \\
  0 & u_Y
\end{array} \right],1 \right).$$
Moreover the conjugation by an element $p \in P(X)$ preserves unipotent elements \textit{i.e.} $p(u,1)p^{-1}=(pup^{-1},1)$ for all $u \in P(X)$ unipotent. Therefore $\delta(g,1)\delta^{-1} = (\delta g \delta^{-1},1)$ and $(g,1)^{-1} = (g^{-1},1)$ are conjugate. When $P=Z+1$, we show the lemma is a consequence of the previous case. We have: 
$$(g,1)(-\textup{id}_W,1) = (-g,(-1,-1)^m).$$
Therefore $\delta(-g,1)\delta^{-1}$ and $(-g,1)^{-1}$ are conjugate since $-a=1$. From: 
$$\delta(-\textup{id}_W,1)\delta^{-1}  = (-\textup{id}_W,(-1,-1)^m),$$
we obtain:
$$\delta(g,1)\delta^{-1} = (-\textup{id}_W,(-1,-1)^m) \cdot \delta(-g,1)\delta^{-1}.$$
Let $h \in \textup{Sp}(W)$ such that $h(\delta(-g,1)\delta^{-1})h^{-1} = (-g,1)^{-1} = (-g^{-1},1)$. Because $-\textup{id}_W$ is central, it commutes with $h$ and we have $h(-\textup{id}_W,\epsilon)h^{-1} =  (-\textup{id}_W,\epsilon)$. This implies: 
$$h(\delta(g,1)\delta^{-1})h^{-1} = (-\textup{id}_W,(-1,-1)^m)(-g^{-1},1) = (g^{-1},(-1,-1)^m) =(g,1)^{-1}.$$
This finishes the case $P$ has degree $1$.

If $P = Z^2 + bZ +c$, then $\tau(P) = Z^{-2} + b Z^{-1} + c \in P F[Z^{\pm1}]$. Therefore writing $P = (Z-\lambda)(Z-\mu)$ for $\lambda, \mu \in K$, the roots of $\tau(P)$ must be $\lambda^{-1}$ and $\mu^{-1}$, so for $P$ to be irreducible we must have $\lambda = \mu^{-1}$ \textit{i.e.} $c=1$ and $P = Z^2 +bZ +1$. Let $E = F[Z^{\pm 1}] /(P)$. Because the characteristic of $F$ is not $2$, we can write the Jordan decomposition and construct an anti-hermitian form \cite[Chap 4, I.5 \& I.12]{mvw} over $E$ that restricts to our symplectic form over $F$ by restriction of scalars. We denote by $V = W$ the space thus obtained and $U(V)$ its isometry group. Then $g \in U(V)$. 

Moreover, the inverse image $\widetilde{U(V)}$ of $U(V)$ in $\textup{Mp}^8(W) = \textup{Sp}(W) \times_c \mu_8$ is split \textit{i.e.} there exists an homeomorphism of central extensions:
$$\begin{array}{ccc} U(V) \times \mu_8 & \overset{\sim}{\rightarrow} & \widetilde{U(V)} \\ 
(u,\lambda) & \mapsto & \lambda \cdot \iota(u) \end{array}.$$
Let $\delta \in \textup{GL}_F(V)$ be $E$-sesquilinear as in the unitary case. The conjugation by $\delta$ on $\widetilde{U(V)}$ induces an automorphism of central extensions $(g,\lambda) \mapsto (\delta g \delta^{-1}, \lambda \lambda_\delta(g))$ of $U(V) \times \mu_8$. The $U(V)$-conjugacy class of $\lambda \iota(g)$ corresponds to the $U(V)$-conjugacy class of $(g,\lambda)$. We want to show $\delta \lambda \iota(g) \delta^{-1}$ and $(\lambda \iota(g))^{-1}$ belong to the same $U(V)$-conjugacy class if $(g,\lambda) \in \textup{Mp}(W)$ \textit{i.e.} $(\delta g \delta^{-1},\lambda \lambda_\delta(g))$ and $(g,\lambda)^{-1} = (g^{-1},\lambda^{-1})$ are $U(V)$-conjugate. Note that $g^{-1}$ is $U(V)$-conjugacy to $\delta g \delta^{-1}$ by Lemma \ref{conjugacy_g_g_inverse_delta_lem}. Let $g_{\textup{ss}} \in U(V)$ be an element in the unique semisimple conjugacy class in the $U(V)$-closure of the conjugacy class of $g$. 

Because the lemma is true in the semisimple case and $\delta$ defines, by restriction of scalars, an element of similitude $-1$, the elements $\delta \lambda \iota(g_{\textup{ss}}) \delta^{-1}$ and $(\lambda \iota(g_{\textup{ss}}))^{-1}$ of $\textup{Mp}(W)$ belong to the same $\textup{Sp}(W)$-conjugacy class, and so do $(\delta g_{\textup{ss}} \delta^{-1},\lambda \lambda_\delta(g))$ and $(g_{\textup{ss}}^{-1},\lambda^{-1})$, but the first is $U(V)$-conjugate to $(g_{\textup{ss}}^{-1},\lambda \lambda_\delta(g))$ by Lemma \ref{conjugacy_g_g_inverse_delta_lem}. Therefore $\lambda^2 \lambda_\delta(g) = 1$.

Consider the $\widetilde{U(V)}$-closure $\overline{C_{(g,\lambda)}}$ of the $U(V)$-conjugacy class $C_{(g,\lambda)}$ of $(g,\lambda)$. In particular $(g_{\textup{ss}},\lambda) \in \overline{C_{(g,\lambda)}}$. By continuity, we also have: 
$$(g_{\textup{ss}}^{-1},\lambda^{-1}) \in \overline{C_{(g^{-1},\lambda^{-1})}} \textup{ and } (\delta g_{\textup{ss}} \delta^{-1},\lambda \lambda_\delta(g_{\textup{ss}})) \in \overline{C_{(\delta g \delta^{-1},\lambda \lambda_\delta(g))}},$$
therefore their intersection is non-empty. But the right-hand side is also $\overline{C_{(g^{-1},\lambda \lambda_\delta(g))}}$, where $(g^{-1},\lambda \lambda_\delta(g))$ differs from $(g^{-1},\lambda^{-1})$ by a scalar. Because of the splitting, the closures $\overline{C_{(u,\lambda)}}$ and $\overline{C_{(u,\lambda')}}$ are disjoint if $\lambda \neq \lambda'$, so $(g^{-1},\lambda^{-1}) \in C_{(\delta g \delta^{-1},\lambda \lambda_\delta(g))}$ \textit{i.e.} $\delta \lambda \iota(g) \delta^{-1}$ and $(\lambda \iota(g))^{-1}$ are $U(V)$-conjugate, so $\textup{Sp}(W)$-conjugate by restriction of scalars. \end{proof}

Therefore, the homeomorphism $\sigma$ preserves conjugacy classes \textit{i.e.} $\widehat{\textup{Sp}}(W)$-orbits when $\gamma$ is the conjugation action. As a result, the condition of Proposition \ref{invariance_Matryoshka_stratif_a_la_GK_prop} holds as $\sigma$ has modulus $|\sigma|=1$ and distributions on an orbit $S$ are quotient Haar measures. It implies that $\textup{tr}_\pi$ is $\sigma$-invariant, or equivalently $(\pi^\vee)^\delta \simeq \pi$ \textit{i.e.} $\pi^\vee \simeq \pi^\delta$. \end{proof}

\subsection{Witt towers and dual pairs} \label{witt_towers_dual_pairs_type_I_section}

\paragraph{Witt towers.} Let $V^0$ be an anisotropic $\varepsilon$-hermitian space. Recall that $\mathbb{H}$ denotes the $\varepsilon$-hermitian plane. We say an $\varepsilon$-hermitian space $V$ is in the Witt series of $V^0$ if $V$ is isometric to $V^0 \oplus m \mathbb{H}$ for some non-negative integer $m$. The Witt tower of $V^0$ will be the isometry class of $\{ V^0 \oplus m \mathbb{H} \ | \ m \in \mathbb{N} \}$. The Witt index of $V^0 \oplus m \mathbb{H}$ is exactly $m$ and its dimension is $\textup{dim}_D(V^0) + 2 m$. Given an $\varepsilon$-hermitian space $V$, there exists an anisotrpic $V^0$ such that $V$ is isometric to $V^0 \oplus m \mathbb{H}$ where $m$ is the Witt index of $V$. Moreover $V^0$ is unique up to isometry. Note that the anisotropic part is uniquely determined as a subspace of $V$ if we fix a maximal split space $m \mathbb{H} \hookrightarrow V$ because $V^0 = (m \mathbb{H})^\perp$ in $V$.

\paragraph{Type I dual pairs.} Let $V_1$ be an $\varepsilon_1$-hermitian space over $D$ and let $V_2$ be $\varepsilon_2$-hermitian. Assume $\varepsilon_1 = - \varepsilon_2$. The tensor product $W = V_1 \otimes_D V_2$ from Section \ref{hermitian_spaces_sec} is a symplectic space over $F$ and we have natural embeddings of $U(V_1)$ and $U(V_2)$ into $\textup{Sp}(W)$ via $u_1 \mapsto u_1 \otimes_D \textup{id}_{V_2}$ and $u_2 \mapsto \textup{id}_{V_1} \otimes_D u_2$. As these groups are mutual centralisers, we get a group morphism, sometimes called an embedding even though it may fail to be injective:
$$\begin{array}{ccc}
 U(V_1) \times U(V_2) & \to & \textup{Sp}(W) \\
 (u_1,u_2) & \mapsto & u_1 \otimes_D u_2 \end{array}.$$

\paragraph{Dual pairs for Witt towers.} Consider the Witt tower $\{ V_2^0 \oplus m \mathbb{H} \ | \ m \in \mathbb{N}\}$ and endow it with a totally ordered set structure by fixing succesive inclusions:
$$V_2^0 \subset V_2^0 \oplus \mathbb{H} \subset V_2^0 \oplus 2 \mathbb{H} \subset \cdots.$$
This gives rise in an obvious way to the totally ordered set $\{ U(V_2^0 \oplus m \mathbb{H}) \ | \ m \in \mathbb{N} \}$. We call it a Witt tower of groups. In addition, the pair $(U(V_1),U(V_2^0 \oplus m \mathbb{H}))$ naturally is a dual pair in $\textup{Sp}(V_1 \otimes_D (V_2^0 \oplus m \mathbb{H}))$. We introduce below the notion of seesaw dual pairs which keeps track of dual pair properties when the index $m$ is varying, that is when going up and down in the tower.

\paragraph{Seesaw dual pairs.} Let $\dot{V}_2$ and $\ddot{V}_2$ be two $\varepsilon_2$-hermitian spaces. Tensoring with $V_1$, they produce two spaces $\dot{W} = V_1 \otimes_D \dot{V}_2$ and $\ddot{W} = V_1 \otimes_D \ddot{V}_2$ as well as their sum $W=\dot{W} \oplus \ddot{W}$. Setting $V_2 = \dot{V}_2 \oplus \ddot{V}_2$ we also have $W = V_1 \otimes_D V_2$. We can see the groups $U(V_1) \times  U(V_1)$, $U(\dot{V}_2) \times U(\ddot{V}_2')$ and $U(V_2)$ as natural subgroups of the symplectic group $\textup{Sp}(W)$. In this situation, one has a diagram:
$$\xymatrix{
		U(V_1) \times U(V_1) \ar@{-}[rd] \ar@{-}[d] &  U(V_2) \ar@{-}[d] \\
		U(V_1) \ar@{-}[ru] & U(\dot{V}_2) \times U(\ddot{V}_2)
		}.$$
where vertical edges stand for a natural inclusion relation -- here the left vertical arrow is the diagonal embedding of $U(V_1)$ into the product -- and diagonal ones relate to groups forming a dual pair in $\textup{Sp}(W)$. This diagram is quite convenient to sum up the situation for seesaw dual pairs. To be more explicit, the dual pair embeddings are given by the identification $W = \dot{W} \oplus \ddot{W} = (V_1 \otimes \dot{V}_2) \oplus (V_1 \otimes \ddot{V}_2)$:
$$((u_1,u_1'),(\dot{u}_2,\ddot{u}_2)) \in (U(V_1) \times U(V_1)) \times (U(\dot{V}_2) \times U(\ddot{V}_2)) \mapsto (u_1 \otimes_D \dot{u}_2) \oplus (u_1' \otimes_D \ddot{u}_2) \in \textup{Sp}(W)$$
and the identification $W = V_1 \otimes V_2$:
$$(u_1,u_2) \in U(V_1) \times U(V_2) \mapsto u_1 \otimes_D u_2 \in \textup{Sp}(W).$$

\paragraph{Weil representation.} Let $W$ be a symplectic space. The Weil representation associated to $\psi$ and $W$ is an isomorphism class $\omega_\psi$ of smooth representations of the metaplectic group $\textup{Mp}(W)$ and, by extension, we also use the same denomination for any of its representatives. There are several famous models of the Weil representation: the so-called Schrödinger model $\omega_{\psi,X}$ is associated to a Lagrangian $X$ in $W$; whereas the lattice model $\omega_{\psi,L}$ is associated to a self-dual lattice $L$ in $W$. Encompassing the previous two, any self-dual subgroup $A$ in $W$ gives rise to a model $\omega_{\psi,A}$.

\paragraph{Diagonal metaplectic embedding.} Let $\dot{X}$ be a Lagrangian in $\dot{W}$. We have a realisation of the metaplectic group $\textup{Mp}^{c_{\dot{X}}}(\dot{W})$ associated to this Lagrangian together with its Schrödinger model $(\omega_{\psi,\dot{X}},S_{\psi,\dot{X}})$. Similarly for another $\ddot{W}$ and $\ddot{X}$, consider the associated group $\textup{Mp}^{c_{\ddot{X}}}(\ddot{W})$ and representation $(\omega_{\psi,\ddot{X}},S_{\psi,\ddot{X}})$. Set $W=\dot{W} \oplus \ddot{W}$ and $X=\dot{X} \oplus \ddot{X}$. We have a morphism of central extensions:
$$\begin{array}{ccc}
 \textup{Mp}^{c_{\dot{X}}}(\dot{W}) \times  \textup{Mp}^{c_{\ddot{X}}}(\ddot{W})  & \to & \textup{Mp}^{c_X}(W) \\
 ((\dot{g},\dot{\lambda}),(\ddot{g},\ddot{\lambda})) & \mapsto & (\dot{g} \oplus \textup{id}_{\ddot{W}},\dot{\lambda}) \cdot (\textup{id}_{\dot{W}} \oplus \ddot{g},\ddot{\lambda}) \end{array}$$
compatible to the Weil representations \textit{i.e.} $\omega_{\psi,\dot{X}} \otimes \omega_{\psi,\ddot{X}} \simeq \omega_{\psi,X}$ via pullback. We refer to the previous morphism as the diagonal embedding of metaplectic groups, even though it is not an actual embedding as it is not injective.

\paragraph{The mixed model.} Consider the totally isotropic flag $\Phi = \{ \dot{X} \}$ with the notations of the previous paragraph. The mixed model Schr\"odinger model is a situation in-between the diagonal embedding and the Schr\"odinger model. It describes the action of the parabolic $P_\Phi \subset \textup{Mp}^{c_X}(W)$ ``through'' the diagonal embedding. Let $P_\Phi = (G_\Phi \times U_\Phi ) \ltimes N(\Phi)$ be the Levi decomposition where $U_\Phi = \textup{Mp}^{c_{\ddot{X}}}(W)$ and $G_\Phi \subset \textup{Mp}^{c_{\dot{X}}}$. For $n \in N(\Phi)$, there exists $h_n \in \textup{Hom}_F(\ddot{W},\dot{X})$ such that $n(\ddot{w}) = \ddot{w} + h_n(\ddot{w})$ for all $\ddot{w} \in \ddot{W}$. We can define:
$$N(\Phi)_1 = \{ n \in N(\Phi) \ | \ h_n=0 \}.$$
This is a normal subgroup of $N(\Phi)$ and $G_\Phi \ltimes N(\Phi)_1$ is the Siegel parabolic in $\textup{Mp}^{c_{\dot{X}}}(\dot{W})$. The group $(G_\Phi \ltimes N(\Phi)_1) \times U_\Phi$ acts via the diagonal embedding on $\omega_{\psi,X} \simeq \omega_{\psi,\dot{X}} \otimes \omega_{\psi,\ddot{X}}$. The whole group $N(\Phi) \subset \textup{Mp}^{c_X}(W)$ acts on $\omega_{\psi,X}$, but only $N(\Phi)_1$ is acting through the diagonal embedding \textit{i.e.} separately on each piece of the tensor product. We can describe the action of $N(\Phi)$ on this tensor product, to obtain the formulas for the so-called mixed Schr\"odinger model. The group morphism $n \mapsto h_n$ factors through a group isomorphism $N(\Phi) / N(\Phi)_1 \simeq \textup{Hom}_F(\ddot{W},\dot{X})$. For $h \in \textup{Hom}_F(\ddot{W},\dot{X})$, let $n_h \in N(\Phi)$ be the element $n_h(\ddot{w}) = \ddot{w} + h(\ddot{w})$ and $n_h(\dot{y}) = -\frac{1}{2} h h^*(\dot{y}) - h^*(\dot{y}) + \dot{y}$, or equivalently with respect to the decomposition $W = \dot{X} \oplus \ddot{W} \oplus \dot{Y}$, the matrix representation of this element is:
$$n_h = \left[ \begin{array}{ccc}
 1 & h &  -\frac{h h^*}{2}   \\
   & 1 & -h^* \\
   &   & 1
\end{array} \right] \in \textup{Sp}(W).$$
In general the map $h \mapsto n_h$ is a section of $n \mapsto h$ between sets and not between groups. However we have $n_h^{-1} = n_{-h}$. Therefore $n_h^{-1} (\dot{w} + \ddot{w}) = \dot{w} - \frac{1}{2} h h^*(\dot{y})+ \ddot{w} + h(\ddot{w}) - h^*(\dot{y})$, so in the Heisenberg group: 
$$\bigg(n_h^{-1}(\dot{w} + \ddot{w}),1\bigg) =\bigg(h(\ddot{w}) - \frac{1}{2} h h^*(\dot{y}),\frac{1}{2}\langle \dot{y} , h(\ddot{w}) \rangle \bigg) \bigg(\dot{w} + \ddot{w} - h^*(\dot{y}),1 \bigg),$$
with the first factor in $\dot{X} \times F$.

If $f=\dot{f} \otimes \ddot{f}$ is an elementary tensor via the isomorphism $S_{\psi,X} \simeq S_{\psi,\dot{X}} \otimes S_{\psi,\ddot{X}}$, then $f (n_h^{-1}(\dot{w} + \ddot{w}),1) = \psi(\frac{1}{2}\langle \dot{y} , h(\ddot{w}) \rangle) \times \dot{f}(\dot{w},1) \times \ddot{f}(\ddot{w} - h^*(\dot{y}),1))$ and we obtain:
$$(\omega_{\psi,X}(n_h,1) \cdot f)(\dot{w} + \ddot{w}) = \dot{f}(\dot{w},1) \times \rho_{\psi,\ddot{X}}(-h^*(\dot{y}),1) \ddot{f}(\ddot{w},1).$$

\paragraph{Weil representation in Witt towers} We start with $W = V_1 \otimes_D V_2$. We assume given decompositions $V_1 = X_1 \oplus V_1^0 \oplus Y_1$ and $V_2 = X_2 \oplus V_2^0 \oplus Y_2$ where $V_1^0$ and $V_2^0$ are anisotropic spaces. Set $V_1^{\mathbb{H}} = X_1 \oplus Y_1$ and $V_2^{\mathbb{H}} = X_2 \oplus Y_2$ for the split spaces obtained.

We have a first orthogonal direct sum decomposition:
$$\begin{array}{ccccc}
W & = & (V_1 \otimes V_2^0) & \oplus & (V_1 \otimes V_2^{\mathbb{H}}) \\
& = & W^0 & \oplus & W^{\mathbb{H}}
\end{array}$$
Let $X^0$ be a Lagrangian in $W^0$. If $V_2^0 \neq 0$, let $H_1 = U(V_1) \times_{c_1} R^\times$ be the inverse image of $U(V_1)$ in $\textup{Mp}^{c_{X^0}}(W^0)$. Otherwise we set $H_1 = U(V_1) \times R^\times$ and in this case $c_1$ is the trivial cocycle. Define $X^{\mathbb{H}} = V_1 \otimes X_2$. Then $X^W = X^0 \oplus X^{\mathbb{H}}$ is a lagragian in $W$ and we can consider the composition of the group morphism:
$$\begin{array}{ccc}
H_1 & \to & \textup{Mp}^{c_{X^0}}(W^0) \times \textup{Mp}^{c_{X^{\mathbb{H}}}}(W^{\mathbb{H}}) \\
(u_1,\lambda_1) & \mapsto & ((u_1 \otimes_D \textup{id}_{V_2^0},\lambda_1),(u_1\otimes \textup{id}_{V_2^\mathbb{H}},\Omega_{1,\textup{det}_F(u_1)^{m_2}}))
\end{array}$$
with the diagonal embedding to obtain $\iota_{\psi,X^W} : H_1 \to \textup{Mp}^{c_{X^W}}(W)$. The Weil representation associated to $\psi$ and $X^W$ is $(\omega_{\psi,{X^W}} \circ \iota_{\psi,X^W}, S_{\psi,X^W}) \in \textup{Rep}_R(H_1)$. If $X$ is another Lagrangian in $W$, we can realise it over $S_{\psi,X}$ via $(\omega_{\psi,X} \circ \gamma_{X^W,X} \circ \iota_{\psi,X^W},S_{\psi,X}) \in \textup{Rep}_R(H_1)$.

We can apply the same procedure the other way round considering the decomposition:
$$\begin{array}{ccccc}
W & = & (V_1^0 \otimes V_2) & \oplus & (V_1^\mathbb{H} \otimes V_2) \\
& = & {}^0 W & \oplus & {}^\mathbb{H} W
\end{array}$$
to define $H_2$ and obtain $(\omega_{\psi,{}^W X} \circ \iota_{\psi,{}^W X},S_{\psi,{}^W X}) \in \textup{Rep}_R(H_2)$ with ${}^W X = {}^0 X \oplus {}^\mathbb{H} X$. Likewise we have a well-defined isomorphism class $S_{\psi,X} \in \textup{Rep}_R(H_2)$ for $X$ a Lagrangian in $W$ and the action of $H_1$ commutes with this $H_2$-action.

\begin{defi} The Weil representation $\omega_{\psi,H_1,H_2} \in \textup{Rep}_R(H_1 \times H_2)$ is the isomorphism class $(S_{\psi,X})_{X \in \Omega(W)}$ coming from the (commuting) actions of $H_1$ and $H_2$ defined above. By a usual abuse of language, an explicit model $S_{\psi,X}$ of this isomorphism class is still called the Weil representation. \end{defi}

\begin{rem} \ \begin{enumerate}[label=\textup{\alph*)}]
\item The Lagrangian ${}^W X$ above is different from $X^W$, so the natural group or model in which $H_2$ lives is in general different from that associated to $H_1$. However, the canonical isomorphisms of central extensions $\gamma$ allow to embed them in the same group and realise them over the same space $S_{\psi,X}$.
\item The inverse image $U(V_1) \times_{c_1^W} R^\times$ of $U(V_1)$ in $\textup{Mp}^{c_X}(W)$ satisfies $c_1^W=c_1$ if $U(V_1)$ is not orthogonal. So this group is $H_1$. If $U(V_1)$ is orthogonal the cocycle only depends on the parity of the Witt index $m_2 = \textup{dim}(X_2)$. It is trivial for even $m_2$, so the previous group is $H_1$ itself; it is the composition of the determinant and the Hilbert symbol otherwise, so the group is isomorphic to $H_1$ via $\iota_{\psi,X}$. Therefore $H_1$ has the advantage of being the same for all $m_2$, unlike the inverse image of $U(V_1)$.
\item If $V_2 =X_2 \oplus Y_2$ is split, then $H_1 = U(V_1) \times R^\times$ acts on $S_{\psi,X^\mathbb{H}} \simeq C_c^\infty(V_1 \otimes Y_2)$ via the geometric action on $V_1$ \textit{i.e.} $(u_1,\lambda_1) \cdot f = \lambda_1 \times \left( f \circ (u_1^{-1} \otimes \textup{id}_{Y_2}) \right)$. \end{enumerate}
\end{rem}

\paragraph{Theta lifts.} According to Sections \ref{largest-isotypic-quotient-sec} and \ref{covering-groups-sec}, we can define, for $\pi_1 \in \textup{Irr}_R(H_1)$ a genuine representation, the largest $\pi_1$-isotypic of the Weil representation $\omega_{\psi,H_1,H_2}$, and there exists a genuine representation $\Theta(\pi_1,V_2) \in \textup{Rep}_R(H_2)$ such that: 
$$(\omega_{\psi,H_1,H_2})_{\pi_1} \simeq \pi_1 \otimes \Theta(\pi_1,V_2).$$
Unless otherwise stated, all our representations of covering groups are genuine. If we go the other way round, we will write $\Theta(\pi_2,V_1) \in \textup{Rep}_R(H_1)$ where $\pi_2 \in \textup{Irr}_R(H_2)$.

\section{Type I dual pairs}

\subsection{Filtrations on degenerate principal series} \label{Rallis_filtration_sec}

Let $V_1$ be a right $\varepsilon_1$-hermitian space and $V_2$ be a left $\varepsilon_2$-hermitian space, with $\varepsilon_1 = - \varepsilon_2$ and respective dimensions $n_1$ and $n_2$. Assume furthermore that $V_2$ is a split space \textit{i.e.} it is isometric to $m_2 \mathbb{H}$. In particular the Witt index of $V_2$ is $m_2 = \frac{1}{2} n_2$. We fix a complete polarisation $V_2 = X_2 \oplus Y_2$. It induces a complete polarisation on $W = V_1 \otimes V_2$ by setting $X=V_1 \otimes X_2$ and $Y= V_1 \otimes Y_2$. Let $i_1$ and $i_2$ be the embeddings induced by $i : U(V_1) \times U(V_2) \to \textup{Sp}(W)$ on each of the two factors.

Let $H_2 = U(V_2) \times_{c_2} R^\times$ where $c_2$ is the $\{ \pm 1 \}$-valued cocycle obtained from ${}^0 X$ as in the previous section. We can assume ${}^0 X = V_1^0 \otimes X_2$. Then $P_2 = P(X_2) \times_{c_2} R^\times$ is a subgroup of $H_2$ and for $p$ and $p'$ in $P(X_2)$, the $2$-cocycle is: 
$$c_2(p,p') = (\textup{det}_{{}^0 X}(i_2(p)),\textup{det}_{{}^0 X}(i_2(p')))_F = (\textup{det}_{X_2}(p)^{n_1^0} , \textup{det}_{X_2}(p')^{n_1^0})_F$$
where $\textup{det}_{X_2}(p_2) = \textup{det}_F(p_2|_{X_2})$ and $p_2|_{X_2} \in \textup{GL}_D(X_2) \simeq \textup{GL}_{m_2}(D)$. As $n_1$ and $n_1^0$ have same parity:
 $$c_2(p,p') = (\textup{det}_{X_2}(p) , \textup{det}_{X_2}(p'))_F^{n_1^2} =  (\textup{det}_{X_2}(p) , \textup{det}_{X_2}(p'))_F^{n_1^{ \ }}.$$
Define the smooth character $\nu_{X_2,n_1} : P_2 \to R^\times$ by: 
$$\nu_{X_2,n_1}(p,\lambda) = \lambda \times \Omega_{1,\textup{det}_X(i_2(p))} = \lambda \times \Omega_{1,\textup{det}_{X_2}(p)^{n_1}} $$ According to the properties of the non-normalised Weil factor \cite[Prop. 1.5 g)]{trias_theta1}:
$$\nu_{X_2,n_1}(p,1) = (-1,\textup{det}_{X_2}(p))_F^{\frac{n_1(n_1-1)}{2}} \times (\Omega_{1,\textup{det}_{X_2}(p)})^{n_1};$$
alternatively, if we write $n_1=2 k_1 + j_1$ with $k_1$ and $j_1$ integers:
$$\nu_{X_2,n_1}(p,1) = \nu_{X_2,2k_1+j_1}(p,1) = | \textup{det}_{X_2}(p)|_F^{k_1} \times \Omega_{1,\textup{det}_{X_2}(p)}^{j_1}$$
so $\nu_{X_2,2 k_1 + j_1} = |\textup{det}_{X_2}|_F^{k_1} \times \nu_{X_2,j_1}$. Also $(\nu_{X_2,n_1}(p,1))^2 = (-1,\textup{det}_{X_2}(p))^{n_1} |\textup{det}_{X_2}(p)|_F^{n_1}$.

We consider the Weil representation $S_{\psi,X} \in \textup{Rep}_R(H_1 \times H_2)$ where $H_1 = U(V_1) \times R^\times$. The group $U(V_1)$ acts geometrically, \textit{i.e.} via $I_p \cdot f = f \circ p^{-1}$ for $p \in P(X)$ and $f \in S_{\psi,X}$, on the Weil representation via $u_1 \mapsto I_{u_1 \otimes \textup{id}_{V_2}}$. The coinvariant representation $(S_{\psi,X})_{U(V_1)}$ can be considered as a subrepresentation of some explicit parabolically induced representation. This is originally attributed to Rallis.

\begin{theo} \label{Rallis_argument_thm} There exists an injective $H_2$-morphism
$$(S_{\psi,X})_{U(V_1)} \hookrightarrow \textup{Ind}_{P_2}^{H_2}(\nu_{X_2,n_1}).$$
In other words $(\omega_{\psi,X})_{U(V_1)}$ is isomorphic to a subrepresentation of $\textup{Ind}_{P_2}^{H_2}(\nu_{X_2,n_1})$.
\end{theo}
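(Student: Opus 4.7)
The plan is to build the embedding by evaluation at the origin in the Schr\"odinger model, push it to an induced representation via Frobenius reciprocity, and then reduce injectivity to a Fourier-theoretic statement combined with the $U(V_1)$-orbit structure on $V_1 \otimes Y_2$.

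\emph{Construction of the map.} Realise $S_{\psi,X}$ as functions on the transverse Lagrangian $Y = V_1 \otimes Y_2$ and let $\textup{ev}_0 : S_{\psi,X} \to R$ be evaluation at $0 \in Y$. By the excerpt, the $U(V_1)$-factor of $H_1 = U(V_1) \times R^\times$ acts geometrically via $u_1 \mapsto I_{u_1 \otimes \textup{id}_{V_2}}$, and since $u_1 \otimes \textup{id}_{V_2}$ fixes $0 \in Y$, the functional $\textup{ev}_0$ is $U(V_1)$-invariant. For $(p_2,\lambda) \in P_2$, the element $i_2(p_2) = \textup{id}_{V_1} \otimes p_2$ lies in $P(X)$ and fixes $0$; combining the Schr\"odinger formula
\[
\omega_{\psi,X}(i_2(p_2),\lambda)\cdot f \;=\; \lambda\,\Omega_{1,\,\textup{det}_X(i_2(p_2))}\,(I_{i_2(p_2)} \cdot f)
\]
with the identity $\textup{det}_X(i_2(p_2)) = \textup{det}_{X_2}(p_2)^{n_1}$ shows that $\textup{ev}_0$ is $P_2$-equivariant with character exactly $\nu_{X_2,n_1}$. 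Frobenius reciprocity then promotes $\textup{ev}_0$ to an $H_2$-morphism
\[
\Phi : S_{\psi,X} \longrightarrow \textup{Ind}_{P_2}^{H_2}(\nu_{X_2,n_1}), \qquad \Phi(f)(h_2) = (h_2 \cdot f)(0),
\]
and this factors through $(S_{\psi,X})_{U(V_1)}$ because the images of $U(V_1)$ and $H_2$ commute in $\textup{Mp}(W)$.

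\emph{Reducing injectivity to Fourier analysis.} The substantive step is that the induced map $\bar\Phi : (S_{\psi,X})_{U(V_1)} \to \textup{Ind}_{P_2}^{H_2}(\nu_{X_2,n_1})$ has trivial kernel, equivalently: the vanishing of $(h_2 \cdot f)(0)$ for all $h_2 \in H_2$ forces $f \in S_{\psi,X}[U(V_1)]$. The plan is to test $f$ along the Bruhat decomposition $H_2 = \bigsqcup_w P_2 w P_2$: by the mixed Schr\"odinger model, the action of $w$ is a partial Fourier transform on $C_c^\infty(V_1 \otimes Y_2)$, while that of $p_2 \in P_2$ is a geometric translation on $V_1 \otimes Y_2 \simeq V_1^{m_2}$ twisted by $\nu_{X_2,n_1}$, with unipotent elements contributing multiplication by characters of the form $\psi(b\,Q(\cdot))$ where $Q$ is the quadratic form built from $\langle\,,\,\rangle_1$. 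The vanishing of $(h_2 \cdot f)(0)$ on each Bruhat cell therefore translates into the vanishing of partial Fourier transforms of $f$ twisted by such characters at $0$; by Fourier inversion this determines $f$ up to averaging against $U(V_1)$-invariant distributions, and matching against the $U(V_1)$-orbit decomposition of $V_1^{m_2}$ (which is constructive in the sense of Section~\ref{regular_vs_constructive_sec}, with orbits classified by Gram matrices and rank via Witt's theorem) yields $f \in S_{\psi,X}[U(V_1)]$ stratum by stratum.

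\emph{Main obstacle.} The hard part is the injectivity step. Working stratum by stratum through the Bruhat decomposition requires keeping careful track of the non-normalised Weil factors appearing in every partial Fourier transform: in the classical complex setting these are absorbed by the unitarity of $\omega_\psi$, but over $R$ of characteristic $\ell \neq p$ one must instead rely on the explicit formulas of the preceding subsection and on the regularity of the $U(V_1)$-action (Section~\ref{regular_vs_constructive_sec}) to ensure that no partial Fourier transform or orbit integral vanishes spuriously in positive characteristic, so that the stratum-by-stratum comparison remains valid uniformly in $\ell$.
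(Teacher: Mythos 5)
Your construction of the map $\Phi$ by evaluation at the origin, the check that its image lands in $\textup{Ind}_{P_2}^{H_2}(\nu_{X_2,n_1})$, and the factorisation through $U(V_1)$-coinvariants all match the paper's proof exactly. Where the proposal diverges is the injectivity argument, and there it is both differently organised and materially incomplete.

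The paper does not argue Bruhat cell by Bruhat cell. Instead it fixes the rank stratification $Y=\bigsqcup_{i=0}^r Z_i$ of $Y\simeq\textup{Hom}(Y_2^*,V_1)$ (finite regular strata for the $U(V_1)$-action), applies Lemma \ref{stratification_G_conivariants_lemma} to reduce membership in $C_c^\infty(Y)[U(V_1)]$ to vanishing of orbital integrals stratum by stratum, and for each rank $k$ tests against a one-parameter family of elements $g=1\otimes h_k(d)$ with $d$ ranging over $(-\epsilon_2)$-hermitian matrices. The resulting integral $\int\psi([z,zd])f(z)\,dz=0$ is then pushed forward along the Gram-matrix map $\textup{Gr}:\textup{Hom}({}_kY_2,V_1)\cap Z_k\to\mathcal{M}^{\epsilon_2}_{k,k}(D)$, which is simultaneously a submersion and the quotient map for the $U(V_1)$-action; Harish-Chandra's submersion principle (which, as the paper notes, survives intact in the modular setting) identifies the pushforward with a Fourier transform on the quotient, and the non-vanishing of orbital integrals is read off from $f\mapsto\phi_f(\mathcal{O})$ being a nonzero multiple of $\mu_\mathcal{O}\circ\textup{res}_\mathcal{O}$.

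Your sketch gestures at all of these pieces (Fourier transforms, the $U(V_1)$-orbit structure, Gram matrices and Witt's theorem) but never supplies the mechanism that actually closes the gap: you need a precise, support-compatible transfer $\phi:C_c^\infty(M)\to C_c^\infty(N)$ from the open stratum $M$ to the quotient $N$, so that Fourier-transform vanishing on $N$ becomes orbital-integral vanishing on $M$. Without Harish-Chandra's submersion principle, ``by Fourier inversion this determines $f$ up to averaging against $U(V_1)$-invariant distributions'' is an assertion, not an argument. Moreover the obstacle you flag as the hard part --- bookkeeping of non-normalised Weil factors across the Bruhat decomposition --- is not where the difficulty lives: in the paper's route the Weil factors never intervene in the injectivity step, because the test elements $h_k(d)$ are deliberately chosen so that the Schr\"odinger formula collapses to a single oscillatory integral, and the modular-theoretic content is entirely carried by the submersion principle and by Lemma \ref{stratification_G_conivariants_lemma} (validity of the stratum-by-stratum reduction over $R$). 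You should replace the Bruhat-cell heuristic with the rank stratification of $Y$, and build the proof around the Gram quotient and Harish-Chandra's pushforward.
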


\begin{proof} Define the map: 
$$\begin{array}{cccl}
\Phi :& C_c^\infty(Y) & \to & C^\infty(H_2) \\
 & f & \mapsto & \Phi(f) : h_2 \mapsto \omega_{\psi,X}(\textup{Id}_{V_1},h_2) f(0)  \end{array}.$$
First of all, the image of $\Phi$ lies in $\textup{Ind}_{P_2}^{H_2}(\nu_{X_2,n_1})$ because, thanks to the formula for the Schrödinger model, we have $\Phi(f)(p_2 h_2) = \nu_{X_2,n_1}(p_2) \times \Phi(f)(h_2)$ for all $p_2 \in P_2$, $h_2 \in H_2$ and $f \in C_c^\infty(Y)$. Now as $H_1$ acts geometrically on $Y$, the map $\Phi$ is $H_1$-invariant \textit{i.e.} $\Phi(\omega_{\psi,X}(u_1,(\textup{Id}_{V_2},1))f) = \Phi(f)$ for all $f \in C_c^\infty(Y)$ and $u_1 \in H_1$. In particular $\Phi$ factorises through the biggest trivial isotypic quotient \textit{i.e.} factorises through some $(C_c^\infty(Y))_{H_1} \to C^\infty(H_2)$.

We will prove that $\textup{Ker}(\Phi) = C_c^\infty(Y)[H_1]$, which amounts to the injectivity of the previous map. The inclusion $\supseteq$ holds as the map $\Phi$ is $H_1$-invariant, so we will show the opposite inclusion in the remainder of the proof.

Recall that $Y = W_1 \otimes Y_2 \simeq \textup{Hom}(Y_2^*,W_1)$ and decompose $Y = \sqcup_{i=0}^r Z_i$ according to the rank in $\textup{Hom}(Y_2^*,W_1)$, with $r = \textup{min}(m_2,n_1)$. This is a closed $H_1$-stratification of $Y$ with finite regular strata by \cite[Chap 3, V.4]{mvw}. We now explain the finite induction argument we use to show the remaining inclusion $\textup{Ker}(\Phi) \subset C_c^\infty(Y)[H_1]$. Let $f \in \textup{Ker}(\Phi)$. First of all we claim that $f \in C_c^\infty(Y)[H_1] + C_c^\infty(Y \backslash Z_0)$. Indeed as $f \in \textup{Ker}(\Phi)$ we have:
$$\Phi(f)((\textup{Id}_{V_2},\lambda)) = \lambda \times f(0) = 0.$$
In this particular situation, note that $Z_0 = \{ 0 \}$ itself is a single $H_1$-orbit. The local $H_1$-invariant measure $\mu_\mathcal{O}$ on $C_c^\infty(\mathcal{O})$ is, up to a scalar, the counting measure \textit{i.e.} the evaluation at $0$. So Lemma \ref{stratification_G_conivariants_lemma} applies and $f \in C_c^\infty(Y)[H_1] + C_c^\infty(Y \backslash Z_0)$.

As a result, we can choose $h \in C_c^\infty(Y)[H_1]$ such that $f-h \in C_c^\infty(Y \backslash Z_0)$. The latter element $f - h$ still belongs to $\textup{Ker}(\Phi)$. Assume the following fact for the moment: the orbital integrals of $f-h$ vanish on $Z_1$ \textit{i.e.} for all $H_1$-orbits $\mathcal{O}$ in $Z_1$ and for all $H_1$-invariant measures $\mu_\mathcal{O}$ on $C_c^\infty(\mathcal{O})$ we have $\mu_\mathcal{O}(f-h) = 0$. Again, by Lemma \ref{stratification_G_conivariants_lemma}, there exists $h' \in C_c^\infty(Y)[H_1]$ such that $f-h-h' \in \textup{Ker}(\Phi) \cap C_c^\infty(Y \backslash (\sqcup_{i=0}^1 Z_i))$. The finite induction argument will run over the rank, eliminating terms in the stratification as long as orbital integrals vanish. Therefore we simply have to prove this vanishing result of orbital integrals for the finite induction to work:

\begin{lem} Let $f \in \textup{Ker}(\Phi) \cap C_c^\infty(Y \backslash (\sqcup_{i=0}^{k-1} Z_i))$ where $k$ is an integer. Then for all $H_1$-orbits $\mathcal{O}$ in $Z_k$, we have  $\mu_\mathcal{O}( \textup{res}_\mathcal{O}(f)) = 0$ \textit{i.e.} $\textup{res}_\mathcal{O}(f) \in C_c^\infty(\mathcal{O})[H_1]$. \end{lem}

\begin{proof} First of all, it is enough to test it for the Haar measure $\mu_\mathcal{O}$ on $C_c^\infty(\mathcal{O})$ because all other $H_1$-invariant measures are scalar multiple of $\mu_\mathcal{O}$. Take $y \in Z_k$. Through the canonical isomorphism $Y \simeq \textup{Hom}(Y_2^*,W_1)$, the element $y$ can be seen as a rank $k$ morphism in $\textup{Hom}(Y_2^*,W_1)$. Write $\textup{Ker}(y)$ to mean the kernel of the latter rank $k$ morphism and consider the set: 
$$Z_{\textup{Ker}(y)} = \{ y' \in Z_k \ | \ \textup{Ker}(y') = \textup{Ker}(y) \}.$$
Then $Z_{\textup{Ker}(y)}$ is an $H_1$-stable closed subset of $Z_k$.

When $g =\left[ \begin{array}{cc}
a & b \\
c & d
\end{array} \right] \in \textup{Sp}(W)$ with $c \in \textup{Hom}(X,Y)$, the formula for the Schrödinger model reads:
$$\big(\omega_{\psi,X}((g,\lambda)) f\big) (0) = \lambda \times \int_{X / \textup{ker}(c^*)} \psi \bigg( \frac{\langle c^* x , d^* x \rangle}{2} \bigg) f(c^*x) d\mu_g(x)$$
for some Haar measure $\mu_g$ of $X / \textup{Ker}(c^*)$. We choose $c$ of co-rank $k$ in the following way. Let $(e_i)$ be a basis of $X_2$ and $(f_i)$ be its dual basis in $Y_2$ so that $((e_i),(f_j))$ is a symplectic basis of $V_2$. Set ${}_k X_2 = \langle e_1 , \cdots, e_k \rangle$ and ${}_k Y_2 = \langle f_1 , \cdots, f_k \rangle$ and let ${}_k V_2$ be their symplectic complement \textit{i.e.} $V_2 = ({}_k X_2 \oplus {}_k Y_2) \oplus {}_k V_2$. Define $h_k(d) \in U(V_2)$ in the decomposition $V_2 = {}_k X_2 \oplus {}_k V_2 \oplus {}_k Y_2$, where $d \in \textup{End}_D({}_k Y_2)$ is $(-\epsilon_2)$-hermitian, by the matrix:
$$\left[ \begin{array}{ccc}
0 & 0 & \epsilon_2 \\
0 & 1 & 0 \\
1 & 0 & d
\end{array} \right].$$
In the decomoposition $W = (V_1 \otimes {}_k X_2) \oplus (V_1 \otimes {}_k V_2) \oplus (V_1 \otimes {}_k Y_2)$, it induces an element $g = 1 \otimes h_k(d) \in \textup{Sp}(W)$ of the form:
$$g =  \left[ \begin{array}{cc}
a & b \\
c & d
\end{array} \right] = \left[ \begin{array}{cccc}
 &  &  & \epsilon_2 \\
 & 1 &  &  \\
 &  & 1 & \\
1 &  &  & d \otimes 1
\end{array} \right].$$
The morphism $c^*$ induces an isomorphism:
$$(c^*)' : v_1 \otimes e_i \in X / \textup{ker}(c^*) \mapsto \langle e_i , \bullet \rangle \cdot v_1 \in \textup{Hom}({}_k Y_2,V_1).$$
Rewriting the integral, we obtain:
$$\int_{\textup{Hom}({}_k Y_2,V_1)} \psi ([z,zd]) f(z) dz = 0$$
where $zd = z \circ d$ and $[z,z'] = \langle z, ((c^*)')^{-1} z' \rangle$ is non-degenerate. Furthermore $f$ is zero on $\sqcup_{i=0}^{k-1} Z_i$ so the previous integral can be taken over $\textup{Hom}({}_k Y_2,V_1) \cap Z_k$ as elements have at most rank $k$.

Now the Gram matrix gives a surjective map to $\epsilon_2$-hermitian matrices:
$$\begin{array}{cccl}
\textup{Gr} : & \textup{Hom}({}_k Y_2,V_1) \cap Z_k & \to & \mathcal{M}_{k,k}^{\epsilon_2}(D) \\
 & z & \mapsto & \textup{Gr}(z) = \bigg( {\langle z f_i, z f_j \rangle}_1 \bigg)_{i, j}
\end{array}$$
that is a submersion of analytic varieties in the sense of \cite[V.2]{hc} (in matrix notations $\textup{Gr}(z) = {}^{\tau t} z J_1 z$ where $J_1$ is the $\epsilon_1$-matrix defining the form on $V_1$). Moreover $\textup{Gr}$ is a quotient map for the $H_1$-action by \cite[Chap 1, I.9]{mvw} so $\mathcal{M}_{k,k}^{\epsilon_2}(D)$ is the quotient space. Note that the map $z \mapsto \psi([z,zd])$ is constant on $H_1$-orbits and $[z,zd] = t(\textup{Gr}(z)d)$ where $t = \textup{tr}_{D/F} \circ \textup{tr}_{\mathcal{M}_k(D)}$.

Write $\textup{Gr} : M \to N$ with $M = \textup{Hom}({}_k Y_2,V_1) \cap Z_k$ and $N=\mathcal{M}_{k,k}^{\epsilon_2}(D) = M / H_1$. Let $\mu_N$ be the Haar measure of $N$ and $\mu_M$ be the restriction of the Haar measure of $\textup{Hom}({}_k Y_2,V_1)$ to the open subset $M$. By \cite[V.2]{hc}, which is still valid in our modular setting, there exists a surjective morphism $\phi : f \in C_c^\infty(M) \mapsto \phi_f \in C_c^\infty(N)$ of $R$-vector spaces, compatible with support of functions in the sense that $\textup{supp}(\phi_f) \subset \textup{Gr}(\textup{supp}(f))$, such that $\mu_M = \mu_N \circ \phi$ and for all $d \in \mathcal{M}_{k,k}^{\epsilon_2}(D)$:
$$\int_M \psi([z,z d]) f(z) d \mu_M(z) = \int_N \psi(t(n d)) \phi_f(n) d \mu_N(n) = 0.$$
The latter integral is the Fourier transform of $\phi_f$ on $N$ and therefore it implies that $\phi_f$ is identically zero. The support preservation of $\phi$ and its surjectivity implies that $f \mapsto \phi_f(\mathcal{O})$ is a non-trivial scalar multiple of $\mu_{\mathcal{O}} \circ \textup{res}_{\mathcal{O}}$. This proves the lemma. \end{proof}

Now applying the previous lemma and Lemma \ref{stratification_G_conivariants_lemma}, there exists $h \in C_c^\infty(Y \backslash (\sqcup_{i=0}^{k-1} Z_i))$ such that $f -h \in \textup{Ker}(\Phi) \cap C_c^\infty(Y \backslash (\sqcup_{i=0}^k Z_i))$.  We carry on until we run into the empty set $Y \backslash (\sqcup_{i=0}^r Z_i)$, therefore $\textup{Ker} (\Phi) \subset C_c^\infty(Y)[H_1]$. \end{proof}

The induced representation $\textup{Ind}_{P_2}^{H_2}(\nu_{X_2,n_1})$ has a remarkable filtration, which appears in \cite{kudla_rallis_boundary}. We consider a slightly more general situation by setting $I(s)=\textup{Ind}_{P_2}^{H_2}(\nu_{X_2,s})$ for $s \in \mathbb{Z}$ if $D$ is commutative, and $s \in \frac{1}{2} \mathbb{Z}$ if $D$ is quaternionic. We allow half integers in the last case as $\textup{det}_{X_2}(p_2)$ is actually the square of a reduced norm. Moreover in this case $P_2 = P(X_2) \times R^\times$ as $c_2$ must be trivial on $P(X_2)$. It is implicit in other cases, but the group law of $P_2$ depends on the parity of $s$.

Let $\dot{V}_2$ and $\ddot{V}_2$ be $\epsilon_2$-hermitian spaces in opposite Witt series so that $V_2 = \dot{V}_2 \oplus \ddot{V}_2$ is split. We assume their Witt indices satisfy $\dot{m}_2 \leq \ddot{m}_2$ and set $\delta = \ddot{m}_2 - \dot{m}_2$. In this case $\dot{V}_2$ is anti-isometric to a subspace of $\ddot{V}_2$ and we choose such an anti-isometric embedding $\alpha^- : \dot{V}_2 \hookrightarrow \ddot{V}_2$. We fix two complete isotropic flags $(\dot{X}_2^t)_{0 \leq t \leq \dot{m}_2}$ in $\dot{V}_2$ and $(\ddot{X}_2^t)_{0 \leq t \leq \ddot{m}_2}$ in $\ddot{V}_2$ and suppose they are compatible with $\alpha$ in the sense that $\alpha^-(\dot{X}_2^t) = {}^\delta \ddot{X}_2^t$ for all $t$. We also fix the anisotropic part $\dot{V}_2^0$ of $\dot{V}_2$ and define $\ddot{V}_2^0 = \alpha^-(\dot{V}_2^0)$. Set $X_2 = \dot{X}_2 \oplus \ddot{X}_2 \oplus \Delta_{\alpha^-}$ where $\Delta_{\alpha^-} = \{ (\dot{v}_2,\alpha^-(\dot{v}_2)) \ | \ \dot{v}_2 \in \dot{V}_2^0 \}$. This is a Lagrangian of $V_2$ and we can define $P_2$ in $H_2$ as earlier. We recall the notations $X=V_1 \otimes X_2$ is a Lagrangian of $W = V_1 \otimes V_2$ and $H_2 = H_2^{c_2}$ where $c_2$ is defined thanks to ${}^0 X = V_1^0 \otimes X_2$. 

Let $\dot{H}_2 = \dot{H}_2^{\dot{c}_2}$ and $\ddot{H}_2 = \ddot{H}_2^{\ddot{c}_2}$ be the groups defined by the two Lagrangians:
\begin{itemize}[label=$\bullet$]
\item ${}^0 \dot{X} = (V_1^0 \otimes \dot{X}_2) \oplus X_{V_1^0 \otimes \dot{V}_2^0} = {}^0 \dot{X}^\mathbb{H} \oplus {}^0 \dot{X}^0$;
\item ${}^0 \ddot{X} = (V_1^0 \otimes \ddot{X}_2) \oplus \alpha^-(^0\dot{X}^0) = {}^0 \ddot{X}^\mathbb{H} \oplus {}^0 \ddot{X}^0$.
\end{itemize}
In particular, ${}^0 \ddot{X} = (V_1 \otimes \ddot{X}_2^\delta) \oplus (\textup{id} \otimes \alpha^-)({}^0 \dot{X})$. We now specify the lift $\dot{H}_2 \times \ddot{H}_2 \to H_2$ of $U(\dot{V}_2) \times U(\ddot{V}_2) \hookrightarrow U(V_2)$ we consider in order to pullback $I(s)$ as a representation of $\dot{H}_2 \times \ddot{H}_2$. Note that all three cocycles above are trivial if $V_1$ is split and, in this case, the map is simply:
$$((\dot{u}_2,\dot{\lambda}_2),(\ddot{u}_2,\ddot{\lambda}_2)) \mapsto (\dot{u}_2 \oplus \ddot{u}_2,\dot{\lambda}_2 \ddot{\lambda}_2).$$
When $V_1^0 \neq 0$, the map is obtained by completing the following diagram:
$$\xymatrix{ \dot{H}_2 \times \ddot{H}_2 \ar@{-->}[rr] \ar@{->}[d] & & H_2 \ar@{->}[d] \\
\textup{Mp}^{c_{{}^0 \dot{X}}} \times \textup{Mp}^{c_{{}^0 \ddot{X}}} \ar@{->}[r] & \textup{Mp}^{c_{{}^0 \dot{X} \oplus {}^0 \ddot{X}}} \ar@{->}[r] & \textup{Mp}^{c_{{}^0 X}}
}.$$
To be more explicit, this map is:
$$((\dot{u}_2,\dot{\lambda}_2),(\ddot{u}_2,\ddot{\lambda}_2)) \mapsto (\dot{u}_2 \oplus \textup{id}_{\ddot{V}_2},\dot{\lambda}_2 \gamma(\dot{u}_2 \oplus \textup{id}_{\ddot{V}_2})) \cdot  (\textup{id}_{\dot{V}_2} \oplus \ddot{u}_2,\ddot{\lambda}_2 \gamma(\textup{id}_{\dot{V}_2} \oplus \ddot{u}_2))$$
where $\gamma$ comes from the canonical isomorphism $\textup{Mp}^{c_{{}^0 \dot{X} \oplus {}^0 \ddot{X}}} \to \textup{Mp}^{c_{{}^0 X}}$ of central extensions.

Choosing $X_1$ maximal totally isotropic in $V_1$, let $\dot{X} = {}^\mathbb{H} \dot{X} \oplus {}^0 \dot{X}$ and $\ddot{X} = {}^\mathbb{H} \ddot{X} \oplus {}^0 \ddot{X}$. The following diagram commutes:
$$\xymatrix{ \dot{H}_2 \times \ddot{H}_2 \ar@{->}[rr] \ar@{->}[d]^{\iota_{\psi,\dot{X}} \times \iota_{\psi,\ddot{X}}} & & H_2 \ar@{->}[d]^{\iota_{\psi,X}} \\
\textup{Mp}^{c_{\dot{X}}} \times \textup{Mp}^{c_{\ddot{X}}} \ar@{->}[r] & \textup{Mp}^{c_{\dot{X} \oplus \ddot{X}}} \ar@{->}[r] & \textup{Mp}^{c_X}
}.$$
Therefore $\omega_{\psi,H_2} \simeq \omega_{\psi,\dot{H}_2} \otimes \omega_{\psi,\ddot{H}_2}$ via pullback. We will study $I(s)$ via this pullback. 

We denote by $\dot{P}_t$ the parabolic in $\dot{H}_2$ that is the inverse image of $P(\dot{X}_2^t)$. As the embedding $\dot{n}_t \in N({}_t \dot{X}_2) \mapsto (\dot{n}_t,1) \in \dot{P}_t$ defines a normal subgroup, its quotient Levi is:
$$\dot{G}_t \times \dot{U}_t \twoheadrightarrow \dot{P}_t / N({}_t \dot{X}_2).$$
We use similar notations for $\ddot{H}_2$.

\begin{theo} \label{filtration_degenerate_principal_series_thm} As an $(\dot{H}_2 \times \ddot{H}_2)$-representation, the induced representation $I(s)$ has a filtration:
$$0 \subset I_0(s) \subset I_1(s) \subset \cdots \subset I_{\dot{m}_2}(s) = I(s)$$
whose subquotients are:
$$J_t(s) = I_t(s) / I_{t-1}(s) \simeq \textup{Ind}_{\dot{P}_t \times \ddot{P}_{t+\delta}}^{\dot{H}_2 \times \ddot{H}_2} \bigg( \nu_{{}_t \dot{X}_2,s} \otimes \nu_{{}_{t+\delta} \ddot{X}_2,s} \otimes \textup{Reg}^{\alpha_t^-,\xi_t^-}(\dot{U}_t, \ddot{U}_{t+\delta}) \bigg)$$
where $\alpha_t^- : \dot{U}_t \overset{\sim}{\to} \ddot{U}_t$ is induced by the anti-isometry $\alpha^-$ and $\xi_t^-$ is a genuine character of $\dot{U}_t^{\alpha_t^-} = \{ ((\dot{u}_2,\dot{\lambda}),(\dot{u}_2^{\alpha_t^-}, \ddot{\lambda})) \}$ determined by $I(s)$. We do not need to determine $\xi_t^-$ explicitly, but we remark that:
$$\xi_t^- ((\dot{u}_2,\dot{\lambda}),(\dot{u}_2^{\alpha_t} , \ddot{\lambda})) = \pm \dot{\lambda} \ \ddot{\lambda} \ \Omega_{1,\textup{det}_F(\dot{u}_2)^s}.$$ \end{theo}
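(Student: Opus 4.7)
The plan is to apply the Bernstein--Zelevinski geometric lemma to the induction $I(s) = \textup{Ind}_{P_2}^{H_2}(\nu_{X_2,s})$, viewed as an $(\dot{H}_2 \times \ddot{H}_2)$-representation via the lift $\dot{H}_2 \times \ddot{H}_2 \to H_2$ specified by the commutative diagram above. First I would identify $H_2 / P_2$ with the Lagrangian Grassmannian $\textup{Lag}(V_2)$ and decompose it into its $(\dot{U}(V_2) \times \ddot{U}(V_2))$-orbits. By Witt's theorem applied to the $\epsilon_2$-hermitian space $V_2$, these orbits are classified by the single invariant $t(L) = \textup{dim}_D(L \cap \dot{V}_2)$ taking values in $\{0, 1, \ldots, \dot{m}_2\}$. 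Canonical representatives are built from the compatible flags $(\dot{X}_2^\bullet), (\ddot{X}_2^\bullet)$ and the anti-isometry $\alpha^-$, namely $L_t = \dot{X}_2^t \oplus \ddot{X}_2^{t+\delta} \oplus \Delta_{\alpha_t^-}$ where $\Delta_{\alpha_t^-}$ is the graph of the anti-isometry induced by $\alpha^-$ on the anisotropic quotient $({}_t \dot{X}_2)^\perp / {}_t \dot{X}_2$; in particular $L_{\dot{m}_2} = X_2$. The standard Schubert-type closure relations $\overline{O_t} = \bigsqcup_{t' \geq t} O_{t'}$ produce a finite stratification with locally closed strata.

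Second, the orbit stratification induces a filtration on $I(s)$ by setting $I_t(s)$ to be the sections vanishing on the closed subset $\overline{O_{t+1}}$; this is well-defined in $\textup{Rep}_R(\dot{H}_2 \times \ddot{H}_2)$ since $p$ is invertible in $R$ and the underlying support arguments are the ones of Lemma \ref{stratification_G_conivariants_lemma}. The geometric lemma identifies $J_t(s) = I_t(s)/I_{t-1}(s)$, which encodes the sections supported on the orbit $O_t$, with $\textup{Ind}_{\textup{Stab}(L_t)}^{\dot{H}_2 \times \ddot{H}_2}$ of the restriction of $\nu_{X_2, s}$ twisted by the appropriate modular-character comparison. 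Since $L_t$ contains the totally isotropic subspaces ${}_t\dot{X}_2$ and ${}_{t+\delta}\ddot{X}_2$, its stabiliser sits inside $\dot{P}_t \times \ddot{P}_{t+\delta}$ and absorbs both full unipotent radicals; modulo these, the stabiliser consists of pairs of Levi elements whose $\dot{G}_t \times \ddot{G}_{t+\delta}$-part is unconstrained and whose $\dot{U}_t \times \ddot{U}_{t+\delta}$-part is constrained to the diagonal $\dot{U}_t^{\alpha_t^-}$ cut out by the graph $\Delta_{\alpha_t^-}$.

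Third, by induction in stages $J_t(s)$ takes the form $\textup{Ind}_{\dot{P}_t \times \ddot{P}_{t+\delta}}^{\dot{H}_2 \times \ddot{H}_2}$ of a representation of the Levi whose $\dot{G}_t \times \ddot{G}_{t+\delta}$-part is $\nu_{{}_t\dot{X}_2,s} \otimes \nu_{{}_{t+\delta}\ddot{X}_2,s}$ (via direct restriction of $\nu_{X_2,s}$ combined with the modular-character comparisons of Proposition \ref{modulus_character_parabolic_prop}) and whose $\dot{U}_t \times \ddot{U}_{t+\delta}$-part is the compactly induced representation $\textup{ind}_{\dot{U}_t^{\alpha_t^-}}^{\dot{U}_t \times \ddot{U}_{t+\delta}}(\xi_t^-) = \textup{Reg}^{\alpha_t^-, \xi_t^-}(\dot{U}_t, \ddot{U}_{t+\delta})$ for a suitable genuine character $\xi_t^-$ of $\dot{U}_t^{\alpha_t^-}$.

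The main obstacle is the explicit determination of $\xi_t^-$. On the underlying isometry groups the restriction of $\nu_{X_2,s}$ factors through $\textup{det}_F$ of the $\dot{U}_t$-component and produces the factor $\Omega_{1, \textup{det}_F(\dot{u}_2)^s}$ predicted by the non-normalised Weil factor formulas. On the central $R^\times$-variables of the covers however, $\xi_t^-$ encodes the discrepancy between the $\{\pm 1\}$-valued cocycle $c_2$ defining $H_2$ and the product cocycle on $\dot{H}_2 \times \ddot{H}_2$, mediated by the canonical isomorphism $\gamma$ of central extensions $\textup{Mp}^{c_{{}^0\dot{X} \oplus {}^0\ddot{X}}} \overset{\sim}{\to} \textup{Mp}^{c_{{}^0 X}}$ appearing in the diagram defining the lift. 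The sign ambiguity $\pm$ reflects a Hilbert-symbol correction depending on the parity of $n_1^0$ and on $t$. Since the precise value of $\xi_t^-$ will not be needed in later applications, it suffices to verify that $\xi_t^-$ is genuine and has the stated scalar dependence on $\dot{\lambda}, \ddot{\lambda}$ and $\textup{det}_F(\dot{u}_2)^s$.
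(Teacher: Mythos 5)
Your proposal is correct and follows essentially the same route as the paper: identify $P(X_2)\backslash U(V_2)$ with the Lagrangian Grassmannian, stratify by the intersection dimension $t = \dim(L \cap \dot{V}_2)$, take the induced Mackey-type filtration on $I(s)$, compute the stabiliser of the representative $\dot{X}_2^t \oplus \ddot{X}_2^{t+\delta} \oplus \Delta_{\alpha_t^-}$, and decompose by induction in stages into the $\nu$-characters on the $\textup{GL}$-factors and $\textup{Reg}^{\alpha_t^-,\xi_t^-}$ on the isometry factors. The one point you pass over quickly, and where the paper does real work, is verifying that the cocycle correction $\gamma_{w_t}\circ\gamma$ coming from the metaplectic cover is trivial on $\dot{G}_t$ and $\ddot{G}_{t+\delta}$ (because these commute with $w_t$ and stabilise the relevant Lagrangians with norm-one action on $X / X\cap(\dot{X}\oplus\ddot{X})$), which is exactly what justifies that the $\textup{GL}$-characters are the undecorated $\nu_{{}_t \dot{X}_2,s}$, $\nu_{{}_{t+\delta}\ddot{X}_2,s}$ and that the whole twist concentrates in $\xi_t^-$; also, the citation of Lemma \ref{stratification_G_conivariants_lemma} for the filtration is misplaced (that lemma treats coinvariants, whereas here the support filtration for compact induction is the standard geometric lemma mechanism).
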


\begin{proof} The map $g \mapsto g^{-1}(X_2)$ induces a homeomorphism $P(X_2) \backslash U(V_2) \simeq \Omega(V_2)$ between cosets and Lagrangians. There is a well-known open stratification \cite[Cor 2.1.0.21]{trias_thesis} on $P(X_2) \backslash U(V_2)$ for the right-action of $U(\dot{V}_2) \times U(\ddot{V}_2)$ defined by:
$$P(X_2) \backslash U(V_2) = \bigsqcup_{t=0}^{\dot{m}_2} \mathcal{O}_t$$
where $\mathcal{O}_t = \{ Y \in \Omega(V_2) \ | \ \textup{dim}( Y \cap \dot{V}_2) = t \} = \{ Y \in \Omega(V_2) \ | \ \textup{dim}( Y \cap \ddot{V}_2) = t + \delta_m \}$.

Let $(\dot{X}_2^t)^\perp$ be the orthogonal of $\dot{X}_2^t$ in $\dot{V}_2$ and similarly for $\ddot{X}_2^{t+\delta}$ in $\ddot{V}_2$. The spaces $(\dot{X}_2^t)^\perp / \dot{X}_2^t$ and $(\ddot{X}_2^{t+\delta})^\perp / \ddot{X}_2^{t + \delta}$ are non-degenerate and the anti-isometry $\alpha^-$ induces an anti-isometry $\alpha_t^-$ between these two spaces. If we fix a dual flag $(\dot{Y}_1^t)$ of $(\dot{X}_2^t)$ then the orthogonal $\dot{V}_2^t$ of $\dot{X}_1^t \oplus \dot{Y}_1^t$ is well-defined and naturally isomorphic to $(\dot{X}_2^t)^\perp / \dot{X}_2^t$. Moreover $\alpha$ allows to define $\ddot{V}_2^{t+\delta}$ in $\ddot{V}_2$ as the image of $\dot{V}_2^t$.

Set ${\Delta}_{\alpha^-}^t = \{ (\dot{v}_2,\alpha^-(\ddot{v}_2) \ | \ v_2 \in \dot{V}_2^t \}$. Then $Y = \dot{X}_2^t \oplus \ddot{X}_2^{t+\delta} \oplus \Delta_{\alpha^-}^t$ is a Lagrangian of $V_2$. The stabiliser of $Y \in \Omega(V_2)$ in $U(\dot{V}_2) \times U(\ddot{V}_2)$ is \cite[Prop 2.1.0.20]{trias_thesis} the kernel of:
$$(\dot{p},\ddot{p}) \in P(\dot{X}_2^t) \times P(\ddot{X}_2^{t+\delta})\mapsto (\dot{u} \cdot (\alpha_t^-)^{-1} \ddot{u}^{-1} \alpha_t^-) \in U(\dot{V}_2^t)$$
where $\dot{p} \mapsto \dot{u}$ and $\ddot{p} \mapsto \ddot{u}$ are coming from projections on the isometric factor of each Levi. Denote by $\textup{St}_t$ the inverse image of this stabiliser in $\dot{H}_2 \times \ddot{H}_2$. 

Let $w_t \in U(V_2)$ be such that $Y = w_t^{-1}(X_2)$. The pre-image of $\mathcal{O}_t$ in $U(V_2)$ is the double coset $\mathcal{O}_t' = P(X_2) w_t (U(\dot{V}_2) \times U(\ddot{V}_2))$ and therefore the stabiliser of $Y$ can be rewritten as $(U(\dot{V}_2) \times U(\ddot{V}_2)) \cap (w_t^{-1} P(X_2) w_t)$. Recall $\dot{P}_t$ is the inverse image in $\dot{H}_2$ of $P(\dot{X}_2^t) \subset U(\dot{V}_2)$ and $\ddot{P}_{t+\delta}$ is the inverse image in $\ddot{H}_2$ of $P(\ddot{X}_2^{t + \delta}) \subset U(\ddot{V}_2)$. Then the stratification $U(V_2) = \sqcup \mathcal{O}_t'$ induces a filtration $(I_t(s))_t$ on $I(s)$ where $I_t(s)$ is made of functions supported on the inverse image of $\mathcal{O}_{\leq t}'$. We obtain:
$$J_t(s) = I_t(s)/I_{t-1}(s)  = \textup{ind}_{\textup{St}_t}^{\dot{H}_2 \times \ddot{H}_2}(\nu_{X_2,s} \circ \gamma_{w_t} \circ \gamma).$$
Here for $((\dot{p},\dot{\lambda}),(\ddot{p},\ddot{\lambda})) \in \textup{St}_t$ we have:
$$\gamma_{w_t} \circ  \gamma \bigg( ( \dot{p} \oplus \textup{id}_{\ddot{V}_2} , \dot{\lambda}) ( \textup{id}_{\dot{V}_2} \oplus \ddot{p} , \ddot{\lambda}) \bigg) \in \{ (w_t (\dot{p} \oplus \ddot{p})w_t^{-1},\pm \dot{\lambda} \ddot{\lambda}) \}$$
where $(\dot{p},\ddot{p})$ is in the stabiliser $(U(\dot{V}_2) \times U(\ddot{V}_2)) \cap (w_t^{-1} P(X_2) w_t)$ and is identified with $\dot{p} \oplus \ddot{p}$ via the embedding into $U(V_2)$. Therefore we can express $\nu_{X_2,s} \circ \gamma_{w_t} \circ  \gamma$ as a tensor product $\dot{\chi} \otimes \ddot{\chi} \otimes \overset{\dots}{\chi}$ of characters of $\dot{G}_t \times \ddot{G}_{t+\delta} \times \dot{U}_t^{\alpha_t^-}$. 

We can determine these characters. First of all, elements of $\dot{G}_t$ and $\ddot{G}_{t+\delta}$ commute with $(w_t,1)$. They also stabilise both $\dot{X} \oplus \ddot{X}$ and $X$ and induce norm $1$ morphisms on $X/X \cap (\dot{X} \oplus \ddot{X})$. Therefore $\gamma_{w_t} \circ  \gamma$ is the identity on these groups. We obtain $\dot{\chi} = \nu_{{}_t \dot{X}_2,s}$ and $\ddot{\chi} = \nu_{{}_{t+\delta} \ddot{X}_2,s}$ and we set $\xi_t^- = \overset{\dots}{\chi}$. Since $\textup{det}_{X_2}(w_t(\dot{u}_2 \oplus \dot{u}_2^{\alpha_t^-})w_t^{-1}) = \textup{det}_F(\dot{u}_2)$, there exists a quadratic character $\epsilon$ of $U(\dot{V}_2^t)$ such that:
$$\xi_t^- ((\dot{u}_2,\dot{\lambda}),(\dot{u}_2^{\alpha_t^-} , \ddot{\lambda})) = \epsilon(\dot{u}_2) \dot{\lambda} \ \ddot{\lambda} \ \Omega_{1,\textup{det}_F(\dot{u}_2)^s}.$$
We obtain the final form of $J_t(s)$ by transitivity of induction. \end{proof}

\begin{rem} We don't really need to make the character $\xi_t^-$ more explicit as knowing it up to a sign will be sufficient for our purpose. \end{rem}

\begin{rem} The case $\dot{m}_2 \geq \ddot{m}_2$ is similar as we can simply reverse the roles of the two groups in the induction. \end{rem}

\subsection{Filtrations on Jacquet functors (Kudla)}

Let $(U(V_1),U(V_2))$ be a dual pair of type I in $\textup{Sp}(W)$ with $W = V_1 \otimes_D V_2$. Let $(V_1^k)_{k \in \mathbb{N}}$ be the Witt tower constructed on $V_1^0$ and such that $V_1 = V_1^{m_1}$. Let $(X_1^k)_{0 \leq k \leq m_1}$ be a complete isotropic flag in $V_1$ and choose a dual complete isotropic flag $(Y_1^k)_{0 \leq k \leq m_1}$ \textit{i.e.} $X_1^k \oplus Y_1^k$ is a split $\epsilon_1$-hermitian space of dimension $k$. Its complement ${}^k V_1^{m_1}$ such that $V_1 = V_1^k \oplus {}^k V_1^{m_1} = (X_1^k \oplus Y_1^k) \oplus {}^k V_1^{m_1}$ is well-defined.

For $k \leq k'$, we set ${}^k X_1^{k'} = X_1^{k'} \cap {}^k V_1^{m_1}$. This provides a complement of $X_1^k$ in $X_1^{k'}$ \textit{i.e.} $X_1^{k'} = X_1^{k} \oplus {}^k X_1^{k'}$. In particular $X_1^k = {}^0 X_1^k$ as $X_1^0=0$. Note that the choice of a complete isotropic flag determines a set of standard parabolics; and adding the choice of a dual flag determines a set of standard Levi subgroups. In particular it gives a standard decomposition $P(X_1^k) = (\textup{GL}(X_1^k) \times U(V_1^k)) \ltimes N(X_1^k)$. Furthermore the unipotent radical sits into an exact sequence:
$$1 \to N_1(X_1^k) \to N(X_1^k) \to \textup{Hom}_D({}^k V_1^{m_1},X_1^k) \to 1$$
where $N_1(X_1^k) \subset N(X_1^k)$ is central and the quotient $\textup{Hom}_D({}^k V_1^{m_1},X_1^k)$ is abelian, so the unipotent radical $N(X_1^k)$ has a two-step filtration by abelian groups.

For $X_1' \subset X_1^k$, we denote by $Q_k(X_1')$ the parabolic subgroup of $\textup{GL}(X_1^k)$ stabilising $X_1'$. We are going to use the previous two-step filtration on $N(X_1^k)$ to compute the normalised Jacquet functor $\mathfrak{r}_{X_1^k}$ with respect to $H_1$ of the Weil representation:

\begin{theo} \label{kudla_filtration_thm} Let $0 \leq k \leq m_1$ and set $r = \textup{min}(k,m_2)$. Then $\mathfrak{r}_{X_1^k}(\omega_{m_1,m_2})$ has a filtration:
$$\mathfrak{r}_{X_1^k}(\omega_{m_1,m_2}) = R^0 \supset R^1 \supset \cdots \supset R^r \supset 0$$
whose subquotients are given by:
$$J_t = R^t / R^{t+1} \simeq \mathfrak{i}_{Q_{{}^t X_1^k} \times {}^k H_1^{m_1} \times P_t^{m_2}}^{G_{X_1^k} \phantom{k} \times {}^k H_1^{m_1} \times H_2}(\delta_{{}^t X_1^k,n_2} |\textup{det}_{{}^t X_1^k}|^{\frac{s+k-t}{2}}  \otimes \textup{Reg}^{\alpha_t,\xi_t}(G_1^t,G_2^t) \otimes \omega_{m_1-k,m_2-t}).$$
where $s=n_2-n_1+\eta_1$ with $\eta_1$ defined in Proposition \ref{modulus_character_parabolic_prop} and:
\begin{itemize}[label=$\bullet$]
\item $Q_{{}^t X_1^k}$ is the parabolic of $G_{X_1^k}$ that is the inverse image of $Q_k({}^t X_1^k)$ in $H_1$;
\item the genuine character $\delta_{{}^t X_1^k,n_2}$ of $G_{{}^t X_1^k}$ is $\nu_{{}^t X_1^k,n_2} |\textup{det}_{{}^t X_1^k}|^{-\frac{n_2}{2}}$, which only depends on the parity of $n_2$;
\item $\alpha_t : X_1^k = X_1^t \oplus {}^t X_1^k \to X_1^t \overset{\sim}{\to} X_2^t$ induces $\textup{GL}(X_1^t) \simeq \textup{GL}(X_2^t)$ and $\xi_t$ is the genuine character of $G_1^{\alpha_t} = \{ ((g_1,\lambda),(g_1^{\alpha_t},\mu)) \}$ given on $((g_1,1),(g_1^{\alpha_t},1))$ by:
\begin{itemize}[label=$\circ$]
\item $1$, if $n_1$ and $n_2$ are both even;
\item $(-1,\textup{det}_{X_1^t}(g_1))_F$, if $n_1$ and $n_2$ are both odd;
\item $\delta_{X_1^k,1}(g_1,1) =\omega_{1,\textup{det}_{X_1^t}(g_1)} \in \mu_4(R)$, otherwise. 
\end{itemize}
\end{itemize} \end{theo}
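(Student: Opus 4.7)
The plan is to compute $\mathfrak{r}_{X_1^k}(\omega_{m_1,m_2})$ in a mixed Schr\"odinger model adapted to the parabolic $P(X_1^k) \subset U(V_1)$, and then extract the filtration from a rank stratification on the resulting space of functions. Decompose $W$ orthogonally as $W = W_a \oplus W_b$ with $W_a = (X_1^k \oplus Y_1^k) \otimes_D V_2$ hyperbolic and $W_b = {}^k V_1^{m_1} \otimes_D V_2$. The diagonal embedding of metaplectic groups from Section \ref{witt_towers_dual_pairs_type_I_section} factors the Weil representation as $\omega_{m_1,m_2} \simeq \omega_{\psi,a} \otimes \omega_{\psi,b}$, with $\omega_{\psi,b}$ canonically identified as $\omega_{m_1-k,m_2}$ for the smaller dual pair. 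Realise $\omega_{\psi,a}$ on $C_c^\infty(Y_1^k \otimes_D V_2)$ against the Lagrangian $X_1^k \otimes_D V_2$ of $W_a$; the Levi $G_{X_1^k} \times {}^k H_1^{m_1}$ of the lifted parabolic, and the unipotent radical $N(X_1^k)$ (via its unique splitting into $\textup{Mp}(W)$), then act through explicit mixed Schr\"odinger formulas.

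Compute $N(X_1^k)$-coinvariants in two stages along the central extension
$$1 \to N_1(X_1^k) \to N(X_1^k) \to \textup{Hom}_D({}^k V_1^{m_1}, X_1^k) \to 1.$$
The centre $N_1(X_1^k)$ consists of those $u \otimes \textup{id}_{V_2}$ whose image lies in $W_a$; it acts on $\omega_{\psi,a}$ via Heisenberg-type characters encoded in the Gram form of $y \otimes v \in Y_1^k \otimes V_2$, and trivially on $\omega_{\psi,b}$. Its coinvariants force support on the closed subset of $Y_1^k \otimes V_2 \simeq \textup{Hom}_D(Y_1^{k,*}, V_2)$ cut out by the vanishing of the Gram form, by exactly the vanishing-of-Fourier argument used in the proof of Theorem \ref{Rallis_argument_thm}. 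The abelian quotient $\textup{Hom}_D({}^k V_1^{m_1}, X_1^k)$ then acts on these coinvariants through a character built from the natural pairing with $\omega_{\psi,b}$.

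The filtration arises from stratifying the resulting space by the rank of the isotropic morphism $Y_1^{k,*} \to V_2$, giving a closed $\textup{GL}(X_1^k) \times U(V_2)$-stratification $\bigsqcup_{t=0}^r Z_t$ with $r = \min(k, m_2)$ and with finite regular strata. Lemma \ref{stratification_G_conivariants_lemma} then yields a filtration on the coinvariant space, and the $t$-th subquotient is supported on the open rank-$t$ stratum. The stabiliser of a chosen rank-$t$ point in $\textup{GL}(X_1^k) \times H_2$ glues the $\textup{GL}$-factors $\textup{GL}(X_1^t)$ and $\textup{GL}(X_2^t)$ via $\alpha_t$; by orbit analysis and transitivity of compact induction, this subquotient rewrites as induction from $Q_{{}^t X_1^k} \times {}^k H_1^{m_1} \times P_t^{m_2}$ of the tensor product $\delta_{{}^t X_1^k,n_2} \, |\textup{det}_{{}^t X_1^k}|^{(s+k-t)/2} \otimes \textup{Reg}^{\alpha_t,\xi_t}(G_1^t, G_2^t) \otimes \omega_{m_1-k, m_2-t}$. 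The exponent $(s+k-t)/2$ combines the normalisation $\delta_{P(X_1^k)}^{1/2}$ from Proposition \ref{modulus_character_parabolic_prop} with the $|\textup{det}|^{k_1}$ contribution from the $\nu$-type factors tracked through the stabiliser identification.

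The main obstacle is the explicit identification of the genuine character $\xi_t$: its three cases according to the parities of $n_1$ and $n_2$ arise from how the non-normalised Weil factor $\Omega_{1,\textup{det}_{X_1^t}(g_1)}$ restricts through the identification $\alpha_t$, corrected by Hilbert symbol contributions $(-1, \textup{det}_{X_1^t}(g_1))_F$ coming from the metaplectic cocycle $c_{X_1^k \otimes V_2}$ restricted to the stabiliser. These signs are pinned down by the multiplicativity properties of $\Omega_{1, \cdot}$ from \cite{trias_theta1} and the cocycle manipulations already deployed in the proof of Theorem \ref{filtration_degenerate_principal_series_thm}. Once $\xi_t$ is fixed, the stratification and Mackey-induction arguments extend the classical Kudla filtration of \cite{kudla_invent} to arbitrary coefficient fields of characteristic $\ell \neq p$, since all relevant spaces are finite-dimensional $D$-vector spaces with finite regular strata.
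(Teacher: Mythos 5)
Your plan follows the paper's own route essentially step by step: mixed Schr\"odinger model adapted to $P(X_1^k)$, two-stage coinvariants first by the central $N_1(X_1^k)$ and then by the abelian quotient $\textup{Hom}_D({}^k V_1^{m_1},X_1^k)$, rank stratification of the isotropic locus $Z$ with stabiliser/Mackey analysis, and a final normalisation of the characters. However, the second coinvariants stage hides a real step that your plan does not supply. The abelian quotient does not act on the fibre $S_k$ ``through a character'': at $x_t$ its action is $\rho_\psi^k(-x_t\circ h,0)$, the Heisenberg representation of $H(W_{X_k})$ applied to a group element, which is not one-dimensional. To extract a character and take coinvariants one must first restrict to the rank-$t$ stratum and then realise $S_k$ in a \emph{second} mixed Schr\"odinger model adapted to the polarisation $V_2 = X_2^t + {}^t V_2^{m_2} + Y_2^t$; only on that second model does the action become the character $\psi(\langle y, x_t\circ h\rangle)$ (depending on $y \in Y_{k,t}$), whose locus of triviality is $\{0\}$, collapsing $S_k$ to $S_{k,t} = \nu_{X_2^t,n_1-2k}\otimes\omega_{m_1-k,m_2-t}$. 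Your proposal takes the abelian quotient step before stratifying and treats the action as a character from the start, so the appearance of $\omega_{m_1-k,m_2-t}$ is asserted rather than derived.

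Two minor attribution points. The $N_1(X_1^k)$-coinvariants support claim is not the orbital-integral Fourier vanishing of Theorem \ref{Rallis_argument_thm}; the paper instead invokes the general lemma \cite[Chap 3, V.1]{mvw}, which says that when $N_1$ acts trivially on the base and fibrewise by characters, the coinvariants of $C_c^\infty$ live on the locus of fibre-triviality. Likewise Lemma \ref{stratification_G_conivariants_lemma} concerns $G$-coinvariants; the filtration of $C_c^\infty(Z,S_k)$ by support that you need comes from the ordinary open/closed short exact sequences for $C_c^\infty$, a different and simpler fact.
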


\begin{proof} Let $(\omega,S)$ be the mixed model of Section \ref{witt_towers_dual_pairs_type_I_section} associated to the decomposition:
$$W = V_1 \otimes_D V_2 =  X_1^k \otimes_D V_2  + {}^k V_1^{m_1} \otimes_D V_2 + Y_1^k \otimes_D V_2 =  X_k  +  W_{X_k}  +  Y_k.$$
Let $(\rho_\psi^k,S_k)$ be the metaplectic representation of $H(W_{X_k})$. We identify $C_c^\infty(Y_k) \otimes S_k$ with $C_c^\infty(Y_k,S_k)$, as well as $Y_k$ with $\textup{Hom}_D(X_1^k,V_2)$ via the symplectic form. As a result:
$$S \simeq C_c^\infty(\textup{Hom}_D(X_1^k,V_2),S_k).$$

\begin{lem} \label{kudla_filtration_intermediate_lem} We have an isomorphism of $(P_k^{m_1} \times H_2)$-modules:
$$S_{N_1(X_1^k)} \overset{\sim}{\longrightarrow} C_c^\infty(Z,S_k)$$
where $Z$ is the closed subset of $\textup{Hom}_D(X_1^k,V_2)$ made of morphisms whose image is a totally isotropic subspace of $V_2$.

Furthermore, the action of $P_k^{m_1} \times H_2$ on $Z$ has finite regular strata induced by the rank $Z = \sqcup_{t=0}^r Z_t$, where $Z_t$ are morphisms of rank $t$. It induces a filtration:
$$C_c^\infty(Z,S_k) = T^{(0)} \supset T^{(1)} \supset \cdots \supset T^{(r)} \supset 0$$
where the subquotients $T_t = T^{(t)} / T^{(t+1)}$ are isomorphic to:
$$\textup{ind}_{Q_t^k \times {}^k H_1^{m_1} \times P_t^{m_2}}^{G_k^{m_1} \times {}^k H_1^{m_1} \times H_2}(\nu_{{}^t X_1^k,n_2} \otimes \nu_{X_1^t,n_2} \otimes C_c^\infty(\textup{Iso}_D(X_1^t,X_2^t)) \otimes \omega_{m_1-k,m_2}).$$ \end{lem}

\begin{proof} We can apply \cite[Chap 3, V.1]{mvw} which relates coinvariants with the locus $Z$ where the action on the fibre is trivial. More prosaically, the group $N_1(X_1^k)$ acts trivially on $\textup{Hom}_D(X_1^k,V_2)$ and for each $x \in \textup{Hom}_D(X_1^k,V_2)$ the action of the group $N_1(X_1^k)$ on the fibre $S_k$ is given by $n(s) \mapsto \psi(\frac{\langle x \circ s , x \rangle}{2})$ where we have used the identification between $\mathfrak{n}_1 = \{s \in \textup{Hom}_D(Y_1^k,X_1^k) \ | \ s^* = -s \}$ and $N_1(X_1^k)= \{ \textup{Id} + s \ | \ s \in \mathfrak{n}_1 \}$. Therefore the previous locus will be $Z = \{ x \in \textup{Hom}_D(X_1^k,V_2) \ | \ \psi(\frac{\langle x \circ s , x \rangle}{2}) = 1 \textup{ for all } s \in \mathfrak{n}_1\}$. The latter condition implies that $x \in Z$ if and only if its image is totally isotropic. This finishes the first part of the statement.

Regarding the filtration, the stratification on $Z$ clearly has finite regular strata because the action is transitive on each stratum. The action of $P_t^{m_1} \times H_2$ factors through $\textup{GL}(X_1^k) \times U(V_2)$. Choose $x_t \in \textup{Hom}_D(X_1^k,V_2)$ with kernel ${}^t X_1^k$ and image $X_2^t$ and assume $x_t$ is adapted to the flags \textit{i.e.} $x_t(X_1^{t'}) \subset X_2^{t'}$ for all $t'$. As already mentioned, the orbit of $x_t$ is $Z_t$ and its stabiliser is contained in $Q_k({}^t X_1^k) \times P(X_2^t)$. The action on $x_t$ factors through $\textup{GL}(X_1^t) \times \textup{GL}(X_2^t)$ via $(\alpha,\beta) \cdot x_t = \beta \circ x_t \circ \alpha^{-1}$. The stabiliser $\textup{St}_{x_t} = \{ (\alpha,\beta) \ | \ \beta x_t \alpha^{-1} = x_t\}$ is isomorphic to both $\textup{GL}(X_1^t)$ and $\textup{GL}(X_2^t)$ via the first and second projections. Therefore: 
$$C_c^\infty(Z_t,S_k) \simeq \textup{ind}_{\textup{St}_{x_t}}^{M_k^{m_1} \times H_2}(S_k|_{x_t})$$
where $(S_k|_{x_t},\rho) \in \textup{Rep}_R(\textup{St}_{x_t})$ is the representation on the fibre $S_k$ at $x_t$. In more concrete terms, it means $(s \cdot f)(x_t) = \rho(s) f(x_t)$ for $s \in \textup{St}_{x_t}$ and $f \in C_c^\infty(Z_t,S_k)$.

Therefore $T_t \simeq \textup{ind}_{\textup{St}_t}^{M_k^{m_1} \times H_2}(S_k|_{x_t})$. In the mixed Schr\"odinger model $(\omega,S)$, the group $M_k^{m_1} \times H_2$ acts directly on $S_k$ via $\nu_{X_1^k,n_2} \otimes \omega_{m_1-k,m_2}$. Moreover $h \in \textup{Hom}_D({}^k V_1^{m_1},X_1^k)$ acts on $S_k|_{x_t}$ at $x_t \in \textup{Hom}_D(X_1^k,V_2) \simeq Y_k$, so the action is:
$$h \mapsto \rho_\psi^k(- (h \otimes_D \textup{id}_{V_2})^*(x_t),0)) = \rho_\psi^k((-x_t \circ h,0)).$$
Because $S_k|_{x_t}$ extends as the Weil representation $\omega_{m_1-k,m_2}$ on the group $P_k^{m_1} \times H_2$, it extends in particular to $Q_t^k \times {}^k H_1^{m_1} \times P_t^{m_2}$ where $Q_t^k = Q_k({}^t X_1^k) \times R^\times \subset G_k^{m_1}$. We obtain by transitivity of induction:
$$T_t \simeq \textup{ind}_{Q_t^k \times H_1^{m_1-k} \times P_t^{m_2}}^{G_k^{m_1} \times {}^k H_1^{m_1} \times H_2} ( \textup{ind}_{\textup{St}_{x_t}}^{Q_t^k \times G_t^{m_2}}(1) \otimes S_k).$$
Eventually we get the following induced representation:
$$T_t \simeq  \textup{ind}_{Q_t^k \times H_1^{m_1-k} \times P_t^{m_2}}^{G_k^{m_1} \times H_1^{m_1-k} \times H_2}( \nu_{{}^t X_1^k,n_2} \otimes \nu_{X_1^t,n_2} \otimes C_c^\infty(\textup{Iso}_D(X_1^t,X_2^t)) \otimes \omega_{m_1-k,m_2})$$
where the induction with respect to $H_2$ is not necessarily the parabolic induction because the unipotent radical of $P_t^{m_2}$ acts in general non-trivially on the Weil representation. \end{proof}

If $N_1(X_1^k)=N(X_1^k)$, then the filtration of the lemma yields the result. Otherwise we can realise $T_t$ thanks to a new mixed Schr\"odinger model to determine $(\omega_{m_1,m_2})_{N(X_1^k)}$. We realise $S_k$ as the mixed Schr\"odinger model induced by $V_2 = X_2^t + {}^t V_2^{m_2} + Y_2^t$ and associated to the decomposition:
$$W_{X_k} = {}^k V_1^{m_1} \otimes_D X_2^t + {}^k V_1^{m_1} \otimes_D {}^t V_2^{m_2} + {}^k V_1^{m_1} \otimes_D Y_2^t = X_{k,t} + W_{X_{k,t}} + Y_{k,t}.$$  
As earlier, we can realise $(\omega_{m_1-k,m_2},S_k)$ as the mixed model $C_c^\infty(\textup{Hom}_D(X_2^t,{}^k V_1^{m_1}),S_{k,t})$ where $(\rho_\psi^{k,t},S_{k,t})$ is the metaplectic representation of $H(W_{X_{k,t}})$. On this mixed model, the action of $h \in \textup{Hom}_D({}^k V_1^{m_1},X_1^k) = N(X_1^k) / N_1(X_1^k)$ at $y \in \textup{Hom}_D(X_2^t,{}^k V_1^{m_1}) = Y_{k,t}$ is given by $h \mapsto \psi(\langle y , x_t \circ h \rangle)$ where our previous $x_t$ is seen as an element of $\textup{Hom}_D(X_1^k,X_2^t)$ and where ${}^k V_1^{m_1} \otimes X_2^t = X_{k,t}$ is identified to $\textup{Hom}_D({}^k V_1^{m_1},X_2^t)$ via the symplectic product. The locus of $y$ where this character of $\textup{Hom}_D({}^k V_1^{m_1},X_1^k))$ is trivial is $\{ 0 \}$. Therefore:
$$(\omega_{m_1-k,m_2})_{\textup{Hom}_D({}^k V_1^{m_1},X_1^k)} = C_c^\infty(\{0\},S_{k,t}) = S_{k,t} = \nu_{X_2^t,n_1-2k} \otimes \omega_{m_1-k,m_2-t}$$
and together with Lemma \ref{kudla_filtration_intermediate_lem}, this implies:
$$(\omega_{m_1,m_2})_{N(X_1^k)} \simeq \textup{ind}_{Q_t^k \times {}^k H_1^{m_1} \times P_t^{m_2}}^{G_k^{m_1} \times {}^k H_1^{m_1} \times H_2}( C_c^\infty(\textup{Iso}_D(X_1^t,X_2^t)) \otimes \nu' \otimes \omega_{m_1-k,m_2-t})$$
where $\nu' = \nu_{{}^t X_1^k,n_2} \otimes \nu_{X_1^t,n_2} \otimes \nu_{X_2^t,n_1-2k}$. Normalising the $N(X_1^k)$-coinvariants and the parabolic induction, thanks to Proposition \ref{modulus_character_parabolic_prop}, replaces $\nu'$ by $\nu = {}^t \nu_{1,s}^k \otimes \nu_1^t \otimes \nu_2^t$, where:
\begin{itemize}[label=$\bullet$]
\item ${}^t \nu_{1,s}^k = \nu_{{}^t X_1^k,n_2}|\textup{det}_{{}^t X_1^k}|^{\frac{-n_1+\eta_1}{2}} |\textup{det}_{{}^t X_1^k}|^{\frac{k-t}{2}}$;
\item $\nu_1^t = \nu_{X_1^t,n_2} |\textup{det}_{X_1^t}|^{\frac{-n_1+2k-t+\eta_1}{2}}$;
\item $\nu_2^t = \nu_{X_2^t,n_1-2k} |\textup{det}_{X_2^t}|^{\frac{-n_2+t-\eta_1}{2}}$.
\end{itemize}
In particular $\nu_1^t C_c^\infty(\textup{Iso}_D(X_1^t,X_2^t)) \nu_2^t = \textup{Reg}^{\alpha_t,\xi_t}(G_1^t,G_2^t)$ where $\xi_t= \nu_1^t \otimes \nu_2^t$ only depends on the parities of $n_1$ and $n_2$ because $\nu_{X,a+2b} = \nu_{X,a} |\textup{det}_X|^b$ for any $X$. \end{proof}

\subsection{Generalised doubling method and seesaw identity} \label{generalised-doubling-method-sec}

\paragraph{Generalised doubling method.} Let $\dot{V}_2^0$ and $\ddot{V}_2^0$ be two anisotropic $\epsilon_2$-hermitian spaces in opposite Witt series \textit{i.e.} $\dot{V}_2^0 \oplus \ddot{V}_2^0$ is split. Let $\dot{V}_2$ and $\ddot{V}_2$ be in the same Witt series as $\dot{V}_2^0$ and $\ddot{V}_2^0$ respectively. We set $V_2=\dot{V}_2 \oplus \ddot{V}_2$ that is split by assumption. Tensoring with $V_1$ we can form three symplectic spaces $\dot{W}, \ddot{W}, W$ and their associated dual pairs:
\begin{itemize}[label=$\bullet$]
\item $(\dot{H}_1,\dot{H}_2)$ the lift of $(U(V_1),U(\dot{V}_2))$ to $\textup{Mp}(\dot{W})$;
\item $(\ddot{H}_1,\ddot{H}_2)$ the lift of $(U(V_1),U(\ddot{V}_2))$ to $\textup{Mp}(\ddot{W})$;
\item $(H_1^\triangle,H_2)$ the lift of $(U(V_1),U(V_2))$ to $\textup{Mp}(W)$.
\end{itemize}
Because of the so-called diagonal embedding $\textup{Mp}(\dot{W}) \times \textup{Mp}(\ddot{W}) \to \textup{Mp}(W)$ there are embeddings between these groups such as:
\begin{itemize}
\item $H_1^\triangle$ is diagonally contained in the image of $\dot{H}_1 \times \ddot{H}_1$ in $\textup{Mp}(W)$ namely:
\begin{itemize}[label=$\circ$]
\item $\dot{H}_1 \times \ddot{H}_1 \to \textup{Mp}(W)$ factors through the quotient by $\{((\textup{id}_{V_1},\lambda),(\textup{id}_{V_1},\lambda^{-1}))\}$;
\item the image of $\{((u_1,\lambda),(u_1,\lambda'))\}$ in $\textup{Mp}(W)$ is naturally isomorphic to $H_1^\triangle$.
\end{itemize}
\item the image of $\dot{H}_2 \times \ddot{H}_2 \to \textup{Mp}(W)$ lies in $H_2$. \end{itemize}
In the literature, the situation is usually summed up in a see-saw diagram where vertical arrows accounts for embeddings and diagonal ones for being a dual pair:
$$\xymatrix{ \dot{H}_1 \times \ddot{H}_1 \ar@{-}[dr] \ar@{-}[d] & H_2 \ar@{-}[d] \\
H_1^\triangle \ar@{-}[ur] & \dot{H}_2 \times \ddot{H}_2
}$$

Because $H_1^\triangle$ is known to be split, this proves in particular that the cocycles defining $\dot{H}_1$ and $\ddot{H}_1$ are cohomologous. As a result, these two groups are isomorphic to a common group we denote $H_1$. Let $\chi_1$ be the character of $H_1^\triangle$ associated to the coinvariants for the geometric action of $U(V_1)$ \textit{i.e.} $\chi_1(u_1,\lambda) = \lambda \lambda_{u_1}^{-1}$ where $\iota : u_1 \in U(V_1) \mapsto (u_1,\lambda_{u_1}) \in H_1^\triangle$ is the geometric embedding. For $\pi_1 \in \textup{Irr}_R(H_1)$, we define the $\chi_1$-contragredient of $\pi_1$ as the unique $\pi_1^{[\chi_1]} \in \textup{Irr}_R(H_1)$ such that $\textup{Hom}_{H_1^\triangle}(\pi_1 \otimes \pi_1^{[\chi_1]},\chi_1) \neq 0$.

\begin{lem} \label{generalised_doubling_theta_lem} We have an $(\dot{H}_2 \times \ddot{H}_2)$-surjection:
$$\Theta(\chi_1,\dot{V}_2 \oplus \ddot{V}_2) \twoheadrightarrow \Theta(\pi_1,\dot{V}_2) \otimes \Theta(\pi_1^{[\chi_1]},\ddot{V}_2).$$ \end{lem}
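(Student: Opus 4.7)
The plan is to exploit the compatibility of the Weil representation with the diagonal embedding of metaplectic groups (Section \ref{witt_towers_dual_pairs_type_I_section}) and then to reduce the ``big'' $\chi_1$-coinvariants to the two smaller theta lifts via the seesaw.

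First, I would use the diagonal embedding $\textup{Mp}(\dot{W}) \times \textup{Mp}(\ddot{W}) \to \textup{Mp}(W)$ together with the compatibility $\omega_{\psi,\dot{X}} \otimes \omega_{\psi,\ddot{X}} \simeq \omega_{\psi,\dot{X} \oplus \ddot{X}}$ to identify, as a $(\dot{H}_1 \times \ddot{H}_1) \times (\dot{H}_2 \times \ddot{H}_2)$-representation pulled back from the seesaw,
$$\omega_{\psi,H_1^\triangle,H_2} \simeq \omega_{\psi,\dot{H}_1,\dot{H}_2} \otimes \omega_{\psi,\ddot{H}_1,\ddot{H}_2},$$
where both $\dot{H}_1$ and $\ddot{H}_1$ are identified with $H_1$ as explained just before the statement.

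Next, taking $\pi_1$-coinvariants on the first tensor factor and $\pi_1^{[\chi_1]}$-coinvariants on the second factor yields a $(\dot{H}_1 \times \ddot{H}_1) \times (\dot{H}_2 \times \ddot{H}_2)$-surjection
$$\omega_{\psi,H_1^\triangle,H_2} \twoheadrightarrow \bigl(\pi_1 \otimes \pi_1^{[\chi_1]}\bigr) \otimes \bigl(\Theta(\pi_1,\dot{V}_2) \otimes \Theta(\pi_1^{[\chi_1]},\ddot{V}_2)\bigr).$$
I then restrict the action of the first factor to the diagonal subgroup $H_1^\triangle$. By definition of $\pi_1^{[\chi_1]}$, there exists a non-zero element of $\textup{Hom}_{H_1^\triangle}(\pi_1 \otimes \pi_1^{[\chi_1]},\chi_1)$, which is automatically surjective since $\chi_1$ is a character. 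Composing with this map produces an $H_1^\triangle \times (\dot{H}_2 \times \ddot{H}_2)$-surjection
$$\omega_{\psi,H_1^\triangle,H_2} \twoheadrightarrow \chi_1 \otimes \Theta(\pi_1,\dot{V}_2) \otimes \Theta(\pi_1^{[\chi_1]},\ddot{V}_2).$$
The universal property of the $\chi_1$-coinvariants from Section \ref{largest-isotypic-quotient-sec} forces this surjection to factor through $(\omega_{\psi,H_1^\triangle,H_2})_{\chi_1} \simeq \chi_1 \otimes \Theta(\chi_1,\dot{V}_2 \oplus \ddot{V}_2)$, and cancelling $\chi_1$ delivers the stated $(\dot{H}_2 \times \ddot{H}_2)$-surjection.

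The main subtlety I anticipate is the genuine bookkeeping in the covering groups: checking that the identification of $H_1^\triangle$ inside the image of $\dot{H}_1 \times \ddot{H}_1$ in $\textup{Mp}(W)$ is compatible with the cocycle defining $\chi_1$, so that the two ways of viewing the action on the left-hand side above (through the diagonal of the cover, and through the geometric splitting used to define $\chi_1$) coincide. This cocycle comparison is exactly what the formalism of generalised genuine regular representations in Section \ref{generalised-regular-reps-sec} is set up to encode, so the verification should reduce to unwinding definitions.
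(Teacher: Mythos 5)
Your proposal is correct and follows essentially the same route as the paper's own proof: decompose the Weil representation as a tensor product across the seesaw, take the $(\pi_1 \otimes \pi_1^{[\chi_1]})$-isotypic quotient, compose with the $\chi_1$-trace, and invoke the universal property of $\chi_1$-coinvariants to factor through $\Theta(\chi_1,\dot{V}_2 \oplus \ddot{V}_2)$. The only (welcome) addition is your explicit remark about the cocycle bookkeeping for $H_1^\triangle$, which the paper leaves implicit.
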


\begin{proof} We have $\omega_{V_1,\dot{V}_2 \oplus \ddot{V}_2} \simeq \omega_{V_1,\dot{V}_2} \otimes \omega_{V_1,\ddot{V}_2}$. On the right-hand side we can consider the biggest $(\pi_1 \otimes \pi_1^{[\chi_1]})$-isotypic quotient:
$$\omega_{V_1,\dot{V}_2} \otimes \omega_{V_1,\ddot{V}_2} \twoheadrightarrow (\pi_1 \otimes  \Theta(\pi_1,\dot{V}_2)) \otimes (\pi_1^{[\chi_1]} \otimes \Theta(\pi_1^{[\chi_1]},\ddot{V}_2)).$$
and composing with the $\chi_1$-trace $\pi_1 \otimes \pi_1^{[\chi_1]} \to \chi_1$ we obtain a surjection:
$$\omega_{V_1,\dot{V}_2} \otimes \omega_{V_1,\ddot{V}_2} \twoheadrightarrow \Theta(\pi_1,\dot{V}_2) \otimes \Theta(\pi_1^{[\chi_1]},\ddot{V}_2).$$
As this map is $H_1^\triangle$-equivariant it factors through $\Theta(\chi_1,\dot{V}_2 \oplus \ddot{V}_2)$. \end{proof}

\paragraph{Seesaw identity.} The seesaw identity is deeply related to the generalised doubling method and the surjection we obtained. It is obtained by factorising in two different ways the coinvariants for $\chi_1 \otimes (\dot{\pi}_2 \otimes \ddot{\pi}_2)$ for $\dot{\pi}_2 \in \textup{Irr}_R(\dot{H}_2)$ and $\ddot{\pi}_2 \in \textup{Irr}_R(\ddot{H}_2)$.

\begin{lem} \label{seesaw_identity_isotypic_chi1_pi2_lem} $\Theta(\chi_1,\dot{V}_2 \oplus \ddot{V}_2)_{\dot{\pi}_2 \otimes \ddot{\pi}_2} = (\Theta(\dot{\pi}_2,V_1) \otimes \Theta(\ddot{\pi}_2,V_1))_{\chi_1} \otimes (\dot{\pi}_2 \otimes \ddot{\pi}_2)$. \end{lem}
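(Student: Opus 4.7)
The plan is to compute the iterated largest isotypic quotient of the Weil representation $\omega := \omega_{V_1, \dot{V}_2 \oplus \ddot{V}_2}$ with respect to the commuting actions of $H_1^\triangle$ and $\dot{H}_2 \times \ddot{H}_2$, in the two possible orders, and to compare the results. First I would observe that these two actions commute (by construction of the seesaw), so by the functoriality discussion of Section \ref{largest-isotypic-quotient-sec} (where $\pi$-coinvariants applied to $A \otimes V$ with a commuting $A$-action preserves the $A$-structure), taking $\chi_1$-coinvariants for $H_1^\triangle$ and $(\dot{\pi}_2 \otimes \ddot{\pi}_2)$-coinvariants for $\dot{H}_2 \times \ddot{H}_2$ is independent of the order.

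In the first order, I would first take $\chi_1$-coinvariants. By the very definition of $\Theta(\chi_1, \dot{V}_2 \oplus \ddot{V}_2) \in \textup{Rep}_R(H_2)$, pulled back along $\dot{H}_2 \times \ddot{H}_2 \to H_2$, we get
$$\omega_{\chi_1} \simeq \chi_1 \otimes \Theta(\chi_1, \dot{V}_2 \oplus \ddot{V}_2).$$
Taking $(\dot{\pi}_2 \otimes \ddot{\pi}_2)$-coinvariants of the right-hand side (the character $\chi_1$ is unaffected as it concerns only $H_1^\triangle$) yields
$$\omega_{\chi_1, \, \dot{\pi}_2 \otimes \ddot{\pi}_2} \simeq \chi_1 \otimes \Theta(\chi_1, \dot{V}_2 \oplus \ddot{V}_2)_{\dot{\pi}_2 \otimes \ddot{\pi}_2}.$$

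In the second order, I would use the diagonal-embedding factorisation $\omega \simeq \omega_{V_1, \dot{V}_2} \otimes \omega_{V_1, \ddot{V}_2}$ (Section \ref{witt_towers_dual_pairs_type_I_section}), where $\dot{H}_2$ acts only on the first factor and $\ddot{H}_2$ only on the second, whereas $H_1^\triangle$ acts diagonally through its image in $\dot{H}_1 \times \ddot{H}_1$. Taking $(\dot{\pi}_2 \otimes \ddot{\pi}_2)$-coinvariants separately on each tensor factor gives
$$\omega_{\dot{\pi}_2 \otimes \ddot{\pi}_2} \simeq \bigl(\dot{\pi}_2 \otimes \Theta(\dot{\pi}_2, V_1)\bigr) \otimes \bigl(\ddot{\pi}_2 \otimes \Theta(\ddot{\pi}_2, V_1)\bigr) \simeq (\dot{\pi}_2 \otimes \ddot{\pi}_2) \otimes \bigl(\Theta(\dot{\pi}_2, V_1) \otimes \Theta(\ddot{\pi}_2, V_1)\bigr),$$
on which $H_1^\triangle$ still acts through the diagonal on the last factor. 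Then taking $\chi_1$-coinvariants for $H_1^\triangle$ yields
$$\omega_{\dot{\pi}_2 \otimes \ddot{\pi}_2, \, \chi_1} \simeq \chi_1 \otimes (\dot{\pi}_2 \otimes \ddot{\pi}_2) \otimes \bigl(\Theta(\dot{\pi}_2, V_1) \otimes \Theta(\ddot{\pi}_2, V_1)\bigr)_{\chi_1}.$$
Equating the two expressions and cancelling the common one-dimensional factor $\chi_1$ would give the claimed identity.

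The main obstacle, and really the only one that requires attention, is bookkeeping around the covering groups: one must make sure that the various pullbacks $\dot{H}_2 \times \ddot{H}_2 \to H_2$ and $H_1^\triangle \hookrightarrow \dot{H}_1 \times \ddot{H}_1 / \{((\textup{id},\lambda),(\textup{id},\lambda^{-1}))\}$ are compatible with the identification $\omega \simeq \omega_{V_1,\dot{V}_2} \otimes \omega_{V_1,\ddot{V}_2}$, and that the character $\chi_1$ exactly records the discrepancy between the diagonal geometric embedding $\iota$ and the product of the two individual embeddings — this is the same compatibility that makes Lemma \ref{generalised_doubling_theta_lem} work, so no new input is required.
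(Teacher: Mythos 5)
Your proof is correct and follows precisely the approach the paper itself sketches in the sentence preceding the lemma (``factorising in two different ways the coinvariants for $\chi_1 \otimes (\dot{\pi}_2 \otimes \ddot{\pi}_2)$''); you are simply filling in the details the paper leaves implicit. The one step that deserves a sentence of its own — rather than an appeal to ``the functoriality discussion of Section~\ref{largest-isotypic-quotient-sec}'' — is the commutation of the two iterated coinvariants: the cleanest justification is to observe that both $((-)_{\chi_1})_{\dot{\pi}_2 \otimes \ddot{\pi}_2}$ and $((-)_{\dot{\pi}_2 \otimes \ddot{\pi}_2})_{\chi_1}$ compute the largest quotient that is $\chi_1 \otimes \dot{\pi}_2 \otimes \ddot{\pi}_2$-isotypic for the product group, so they coincide.
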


We can rephrase it as $(\Theta(\chi_1,\dot{V}_2 \oplus \ddot{V}_2) \otimes \dot{\pi}_2^\vee \otimes \ddot{\pi}_2^\vee)_{U(\dot{V}_2) \times U(\ddot{V}_2)} = (\Theta(\dot{\pi}_2,V_1) \otimes \Theta(\ddot{\pi}_2,V_1))_{\chi_1}$, which is an equality of vector spaces controlling the multiplicities of $\dot{\pi}_2 \otimes \ddot{\pi}_2$ in each term.

\begin{rem} The literature usually consider homomorphism spaces instead of isotypic quotients \textit{e.g.} $\textup{Hom}_{\dot{H}_2 \times \ddot{H}_2}(\Theta(\chi_1,\dot{V}_2 \oplus \ddot{V}_2),\dot{\pi}_2 \otimes \ddot{\pi}_2)$ instead of $\Theta(\chi_1,\dot{V}_2 \oplus \ddot{V}_2)_{\dot{\pi}_2 \otimes \ddot{\pi}_2}$. Note that factorising the former leads to $\textup{Hom}_R((\Theta(\chi_1,\dot{V}_2 \oplus \ddot{V}_2) \otimes \dot{\pi}_2^\vee \otimes \ddot{\pi}_2^\vee)_{U(\dot{V}_2) \times U(\ddot{V}_2)}, 1)$ which is the algebraic dual of linear forms on $(\Theta(\chi_1,\dot{V}_2 \oplus \ddot{V}_2) \otimes \dot{\pi}_2^\vee \otimes \ddot{\pi}_2^\vee)_{U(\dot{V}_2) \times U(\ddot{V}_2)}$. Since these vector spaces are infinite dimensional, our way of writing these identities is more intrinsic. \end{rem}

\begin{rem} \label{doubling_method_from_minus_to_psi_inverse_rem} Let $(U(V_1),U(V_2))$ be a type I dual pair in $\textup{Sp}(W)$. We set $\dot{V}_2 = V_2^{m_2}$ and $\ddot{V}_2 = - V_2^{m_2'}$ and $V_2^\square = V_2^{m_2} \oplus (-V_2^{m_2'})$. We rewrite the seesaw diagram above as:
$$\xymatrix{ H_1 \times H_1^- \ar@{-}[dr] \ar@{-}[d] & H_2^\square \ar@{-}[d] \\
H_1^\triangle \ar@{-}[ur] & H_2 \times H_2^{-,m_2'}
}.$$
Using Remark \ref{pullback_for_-1_remark_annex} in the appendix, the pullback of the Weil representaion $\omega_{\psi,H_1^-,H_2^{-,m_2'}}$ via $\gamma_-$ is $\omega_{\psi^{-1},H_1,H_2^{m_2'}}$. Thanks to this pullback, we obtain the identity:
$$\Theta_\psi(\chi_1,V_2^\square)_{\pi_2 \otimes \pi_2'} = ( \Theta_\psi(\pi_2,V_1) \otimes \Theta_{\psi^{-1}}(\pi_2',V_1) )_{\chi_1} \otimes \pi_2 \otimes \pi_2'.$$
Note that $\pi_2'$ is a representation of $H_2^{m_2'}$ and the character is $\psi^{-1}$, whereas in the see-saw identity of Lemma \ref{seesaw_identity_isotypic_chi1_pi2_lem}, the group is $H_2^{-,m_2'}$ and the character is $\psi$. \end{rem}

\section{Consequences of Rallis' and Kudla's filtrations} \label{Rallis_Kudla_filtrations_consequences_sec}

We denote the Weil representation $\omega_{\psi,H_1^{m_1},H_2^{m_2}}$ in Witt towers by $\omega_{m_1,m_2}$ as a shorthand. Recall $(\omega_{m_1,m_2})_{\pi_1} \simeq \pi_1 \otimes \Theta_\psi(\pi_1,V_2^{m_2})$ for $\pi_1 \in \textup{Irr}_R(H_1^{m_1})$. In this section, we study the Weil representation and its theta lifts varying in Witt towers \textit{i.e.} we fix $m_1$ whereas $m_2$ varies. So we wil simply write $H_1$ instead of $H_1^{m_1}$ when the context is clear.

\subsection{First occurrence index}

Let $V_1$ be an $\epsilon_1$-hermitian space of dimension $n_1$ and Witt index $m_1$. Even though $m_1$ is fixed, it is sometimes useful to write $V_1 = V_1^{m_1}$ and $H_1 = H_1^{m_1}$. Let $V_2^0$ be an $\epsilon_2$-hermitian anisotropic space (possibly zero) and $V_2^{m_2}$ the $\epsilon_2$-hermitian space of Witt index $m_2$ in the same series as $V_2^0$. We suppose $\epsilon _1 \epsilon_2 = - 1$ and recall that $(U(V_1),U(V_2^{m_2}))$ is a dual pair in $\textup{Sp}(V_1 \otimes V_2^{m_2})$.

\begin{theo} \label{first_occurrence_index_thm} Let $\pi_1 \in \textup{Irr}_R(H_1)$. There exists an integer $m_2(\pi_1) \leq n_1$ such that:
$$\Theta(\pi_1,W_2^{m_2}) \neq 0 \Leftrightarrow m_2 \geq m_2(\pi_1).$$ \end{theo}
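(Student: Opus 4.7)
The theorem combines two independent assertions: a monotonicity statement (if $\Theta(\pi_1, V_2^{m_2}) \neq 0$ then $\Theta(\pi_1, V_2^{m_2+1}) \neq 0$) and non-vanishing at some level bounded by $n_1$. Together they give the threshold $m_2(\pi_1)$ with the stated property. I would treat monotonicity by contraposition using Kudla's filtration, and the bound $m_2(\pi_1) \leq n_1$ by extracting the generalised regular-representation factor from the deepest subquotient in Kudla's filtration.

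For monotonicity, suppose $(\omega_{m_1, m_2+1})_{\pi_1} = 0$. Decompose $V_2^{m_2+1} = X_2^1 \oplus V_2^{m_2} \oplus Y_2^1$ and form the normalised Jacquet functor $\mathfrak{r}_{X_2^1}$ for the maximal parabolic $P \subset H_2^{m_2+1}$ stabilising $X_2^1$. Applying the variant of Theorem \ref{kudla_filtration_thm} with the roles of $V_1$ and $V_2$ swapped, the top subquotient $J_0 = R^0 / R^1$ of the resulting filtration specialises to $\delta \otimes \omega_{m_1, m_2}$ for an explicit character $\delta$, since at $t = 0$ the map $\alpha_0$ is trivial, the GL-factors $G_1^0, G_2^0$ vanish, and the induction is trivial. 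This provides a surjection
\[
\mathfrak{r}_{X_2^1}(\omega_{m_1, m_2+1}) \twoheadrightarrow \delta \otimes \omega_{m_1, m_2}.
\]
The Jacquet functor is exact and commutes with $\pi_1$-coinvariants, because $H_1$ commutes with both the Levi and the unipotent radical of $P$. Applying $(-)_{\pi_1}$ to the above display therefore yields
\[
0 \;=\; \bigl(\mathfrak{r}_{X_2^1}(\omega_{m_1, m_2+1})\bigr)_{\pi_1} \;\twoheadrightarrow\; \delta \otimes (\omega_{m_1, m_2})_{\pi_1},
\]
forcing $(\omega_{m_1, m_2})_{\pi_1} = 0$, i.e.\ $\Theta(\pi_1, V_2^{m_2}) = 0$.

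For the bound $m_2(\pi_1) \leq n_1$, I would establish the classical stable-range non-vanishing: $m_2 \geq n_1$ implies $\Theta(\pi_1, V_2^{m_2}) \neq 0$ for every $\pi_1$. Apply Theorem \ref{kudla_filtration_thm} with $k = m_1$ and zoom in on the deepest subquotient $J_{m_1}$ (valid because $m_1 \leq m_2$ whenever $m_2 \geq n_1 \geq m_1$). This subquotient contains the generalised regular representation $\textup{Reg}^{\alpha_{m_1}, \xi_{m_1}}(G_1^{m_1}, G_2^{m_1})$ as a tensor factor. Since every irreducible representation of the GL-factor $G_{X_1^{m_1}}$ appears as a quotient of the regular representation (Lemma \ref{regular_rep_is_projective_lem}), one can match the cuspidal support of $\pi_1$ against this factor and use Frobenius reciprocity, together with the residual Weil representation $\omega_{m_1-k, m_2-t}$ pairing with the anisotropic part of $V_1$, to produce a non-zero $H_1$-equivariant map $\omega_{m_1, m_2} \twoheadrightarrow \pi_1 \otimes \sigma$ for some $\sigma \in \textup{Rep}_R(H_2^{m_2})$. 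Combined with persistence, this gives the bound.

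\textbf{Main obstacle.} Monotonicity is essentially formal once the top piece of the Jacquet filtration is identified, so the real difficulty lies in the stable-range non-vanishing. Classical proofs lean on compactly-supported matrix coefficients or unitarity, neither of which is available over a field of positive characteristic. The regular-representation factor in Kudla's filtration is the natural algebraic replacement, but exploiting it demands careful matching of cuspidal supports against the GL-factor and, when one of the groups in the dual pair is metaplectic, additional bookkeeping of the genuine-character twists from Section \ref{generalised-regular-reps-sec} to ensure the surjection descends to a genuine map on the covering group.
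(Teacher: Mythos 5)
Your monotonicity step is correct, but takes a longer route than the paper. You pass through Kudla's filtration of $\mathfrak{r}_{X_2^1}(\omega_{m_1,m_2+1})$ and use exactness plus commutativity with $\pi_1$-coinvariants to deduce vanishing; this works, but Kudla's filtration is itself built from mixed-model orbit restrictions, and the paper short-circuits all of that by observing directly in the mixed model $\omega_{m_1,m_2}\simeq C_c^\infty(\textup{Hom}(X_2^k,V_1))\otimes\omega_{m_1,m_2-k}$ that restriction of functions to the closed fixed point $\{0\}\subset\textup{Hom}(X_2^k,V_1)$ gives an $H_1$-equivariant surjection $\omega_{m_1,m_2}\twoheadrightarrow\omega_{m_1,m_2-k}$. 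That already kills the monotonicity question with no Jacquet functors and no reference to the filtration theorem.

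For the bound $m_2(\pi_1)\leq n_1$ there is a genuine gap. You want to extract the generalised regular representation from the deepest piece $J_{m_1}$ of Kudla's filtration of $\mathfrak{r}_{X_1^{m_1}}(\omega_{m_1,m_2})$, match the cuspidal support of $\pi_1$ against the GL-factor, and conclude by Frobenius reciprocity. But $J_{m_1}=R^{m_1}$ is the \emph{bottom} of the filtration $R^0\supset R^1\supset\cdots\supset R^{m_1}$, hence a subrepresentation of the Jacquet module, not a quotient of it, so a surjection out of $J_{m_1}$ does not give a surjection out of $\mathfrak{r}_{X_1^{m_1}}(\omega_{m_1,m_2})$. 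Worse, even a surjection $\mathfrak{r}_{X_1^{m_1}}(\omega_{m_1,m_2})\twoheadrightarrow\tau$ only yields a non-zero map $\omega_{m_1,m_2}\to\mathfrak{i}_{P}(\tau)$ by Frobenius reciprocity; that map need not be surjective, and $\pi_1$ is generically a \emph{submodule} rather than a quotient of the induced representation, so you do not obtain the required surjection $\omega_{m_1,m_2}\twoheadrightarrow\pi_1\otimes\sigma$. Moreover the regular-representation factor in $J_{m_1}$ lives on the GL-part $G_1^{m_1}$ of a Levi, not on $H_1$ itself, so ``every irreducible is a quotient'' does not transfer to $H_1$. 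The paper avoids all of this by staying in the mixed model: when $m_2=n_1$, the space $\textup{Hom}(X_2^{m_2},V_1)$ contains isomorphisms $X_2^{m_2}\simeq V_1$, on which $U(V_1)$ acts simply transitively, so restriction to that (closed) orbit gives $\omega_{m_1,n_1}\twoheadrightarrow\textup{ind}_1^{U(V_1)}(1)\otimes\omega_{m_1,0}\simeq\textup{ind}_{R^\times}^{H_1}(\omega_{m_1,0}|_1)$, which is a direct sum of copies of the regular representation of $H_1$ and therefore has every irreducible genuine representation as a quotient. That is the concrete replacement for the classical matrix-coefficient/unitarity argument, and it requires no cuspidal-support bookkeeping.
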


\begin{proof} Let $V_1 \otimes V_2^{m_2} = V_1 \otimes X_2^k + V_1 \otimes V_2^{m_2-k} + V_1 \otimes Y_2^k$. The mixed Schr\"odinger model is $\omega_{m_1,m_2} = C_c^\infty(\textup{Hom}(X_2^k,V_1)) \otimes \omega_{m_1,m_2-k}$. Consider the natural action of $U(V_1)$ on $\textup{Hom}(X_2^k,V_1)$. The orbit associated to an element $x \in \textup{Hom}(X_2^k,V_1)$ such that its image is non-degenerate is closed. Through restriction to $U(V_1) \cdot x$ we get an $H_1$-surjection:
$$\omega_{m_1,m_2} \twoheadrightarrow \textup{ind}_{\textup{St}_x}^{U(V_1)}(1) \otimes \omega_{m_1,m_2-k}.$$
Choosing $x=0$, we get an $H_1$-surjection $\omega_{m_1,m_2} \twoheadrightarrow \omega_{m_1,m_2-k}$. Therefore: 
$$(\omega_{m_1,m_2-k})_{\pi_1} \neq 0 \Rightarrow (\omega_{m_1,m_2})_{\pi_1} \neq 0.$$
It remains to show it is non-zero for some $m_2 \in \mathbb{N}$. Actually we have $(\omega_{m_1,n_1})_{\pi_1} \neq 0$ for all $\pi_1$. Indeed if $m_2 = n_1$, we can choose $x$ with image a hermitian basis of $V_1$ and take $k=m_2$, therefore the stabiliser of $x$ in $U(V_1)$ is trivial and we obtain an $H_1$-surjection: 
$$\omega_{m_1,n_1} \twoheadrightarrow \textup{ind}_1^{U(V_1)}(1) \otimes \omega_{m_1,0} \simeq \textup{ind}_{R^\times}^{H_1}(\omega_{m_1,0}|_1)$$
where $\omega_{m_1,0}|_1$ is simply a vector space. But the right-hand side is just copies of the regular representation of $H_1$ which admits any irreducible representation as a quotient. \end{proof}
 
\subsection{Cuspidal representations $\pi_1$}

In the cuspidal case, we can relate Jacquet functors of big theta lifts in a given tower and show the first occurrence index is a cuspidal representation. We recall that $s=n_2-n_1+\eta_1$ where $\eta_1$ is defined in Proposition \ref{modulus_character_parabolic_prop}. To simplify notations we set from now on:
$$\nu_{k,s} = \delta_{X_2^k,n_1} |\textup{det}_{X_2^k}|^{\frac{-s + k}{2}}.$$
This character also is $\nu_{X_2^k,n_1} |\textup{det}_{X_2^k}|^{\frac{-(n_2 - k - \eta_2)}{2}}$ \textit{i.e.} it is made of $\nu_{X_2^k,n_1}$ and some modulus character coming from normalisation as $\eta_2 = -\eta_1$.

\begin{lem} \label{cuspidal_lift_jacquet_functor_inductive_formula_lem} Set $s=n_2 - n_1 + \eta_1$. Let $\pi_1 \in \textup{Irr}_R^{\textup{cusp}}(H_1)$. The Jacquet functor $\mathfrak{r}_{X_2^k}$ induces a natural $(G_k \times H_2^{m_2-k})$-isomorphism:
$$\mathfrak{r}_{X_2^k}(\Theta(\pi_1,V_2^{m_2})) \simeq \nu_{k,s} \otimes \Theta(\pi_1,V_2^{m_2-k}).$$
In particular $\Theta(\pi_1,V_2^{m_2(\pi_1)})$ is cuspidal. \end{lem}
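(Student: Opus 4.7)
The plan is to apply Kudla's filtration (Theorem \ref{kudla_filtration_thm}) with the roles of $V_1$ and $V_2$ exchanged, in order to compute the Jacquet functor $\mathfrak{r}_{X_2^k}(\omega_{m_1,m_2})$ as a module over $H_1 \times G_k \times {}^k H_2^{m_2}$. This gives a filtration $R^0 \supset R^1 \supset \cdots \supset R^r \supset 0$ with $r=\min(k,m_1)$, whose subquotients $J_t$ are parabolically induced from the parabolic $P_t^{m_1}$ on the $H_1$-side. I would then take $\pi_1$-coinvariants. Since $\pi_1$ is cuspidal and $P_t^{m_1}$ is a proper parabolic of $H_1$ for $t \geq 1$, first adjunction forces $\textup{Hom}_{H_1}(J_t,\pi_1)=0$, hence $(J_t)_{\pi_1}=0$. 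A descending induction using right exactness of $(-)_{\pi_1}$ on the short exact sequences $0 \to R^{t+1} \to R^t \to J_t \to 0$ then yields $(\mathfrak{r}_{X_2^k}(\omega_{m_1,m_2}))_{\pi_1} \simeq (J_0)_{\pi_1}$.

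Next I would identify $J_0$ concretely. At $t=0$, the groups $G_1^0$ and $G_2^0$ are trivial, so the generalised regular representation piece $\textup{Reg}^{\alpha_0,\xi_0}$ collapses; and the parabolic $Q_{{}^0 X_2^k}$ equals all of $G_{X_2^k}$, so the induction functor is the identity. After translating Kudla's conventions through the role-swap (replacing $s$ by $s'=n_1-n_2+\eta_2=-s$, using $\eta_2=-\eta_1$), the character surviving at $t=0$ is precisely $\delta_{X_2^k,n_1}|\textup{det}_{X_2^k}|^{(k-s)/2} = \nu_{k,s}$, whence $J_0 \simeq \nu_{k,s} \otimes \omega_{m_1,m_2-k}$ and $(J_0)_{\pi_1} \simeq \nu_{k,s} \otimes \pi_1 \otimes \Theta(\pi_1,V_2^{m_2-k})$.

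Finally, since $\mathfrak{r}_{X_2^k}$ is a functor on the $H_2$-side while $(-)_{\pi_1}$ is a functor on the $H_1$-side, and both are quotients by subspaces invariant under the respective mutually commuting actions, the two functors commute. Concretely, $V[\pi_1]$ is $N(X_2^k)$-stable and $V[N(X_2^k)]$ is $H_1$-stable, so $V/(V[\pi_1]+V[N(X_2^k)])$ can be computed in either order. Applying this to $\omega_{m_1,m_2}$ gives
$$\pi_1 \otimes \mathfrak{r}_{X_2^k}(\Theta(\pi_1,V_2^{m_2})) \simeq \nu_{k,s} \otimes \pi_1 \otimes \Theta(\pi_1,V_2^{m_2-k}),$$
and passing to multiplicity spaces yields the claimed isomorphism. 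For the cuspidality statement, set $m_2 = m_2(\pi_1)$; by the definition of the first occurrence index, $\Theta(\pi_1,V_2^{m_2(\pi_1)-k})=0$ for every $k \geq 1$, so the isomorphism forces $\mathfrak{r}_{X_2^k}(\Theta(\pi_1,V_2^{m_2(\pi_1)}))=0$ for every proper standard parabolic, and $\Theta(\pi_1,V_2^{m_2(\pi_1)})$ is cuspidal.

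The main obstacle is the bookkeeping in the second paragraph: tracking the normalisation twists coming from the normalised Jacquet functor, the modulus character $\delta_{{}^t X_2^k,n_1}$, and the sign flip on $s$ induced by the role-swap, so as to confirm the surviving character at $t=0$ is unambiguously $\nu_{k,s}$ with the sign conventions fixed at the start of Section \ref{Rallis_filtration_sec}. One minor point to verify carefully is that the vanishing $(J_t)_{\pi_1}=0$ indeed follows from first adjunction even in the modular setting — which it does, since $\mathfrak{r}_P(\pi_1)=0$ by cuspidality and first adjunction is a general categorical fact independent of the characteristic of $R$.
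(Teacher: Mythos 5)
Your proof matches the paper's argument step for step: swap the roles of $V_1$ and $V_2$ in Kudla's filtration to get a filtration of $\mathfrak{r}_{X_2^k}(\omega_{m_1,m_2})$ whose subquotients $J_t$ for $t>0$ are parabolically induced from proper parabolics of $H_1$; kill them with $\pi_1$-coinvariants by cuspidality; identify $J_0 \simeq \nu_{k,s}\otimes \omega_{m_1,m_2-k}$; commute the Jacquet functor past the $\pi_1$-coinvariants using that the two actions commute and the Jacquet functor is exact; deduce cuspidality of the first occurrence from the vanishing $\Theta(\pi_1,V_2^{m_2(\pi_1)-k})=0$. The only loose phrasing is ``first adjunction forces $\textup{Hom}_{H_1}(J_t,\pi_1)=0$'': that Hom is \emph{out of} a parabolic induction, so it needs either second adjunction or the dualisation trick $\textup{Hom}_{H_1}(\mathfrak{i}_P\sigma,\pi_1)\simeq\textup{Hom}_{H_1}(\pi_1^\vee,\mathfrak{i}_{\bar P}\sigma^\vee)$ followed by Frobenius reciprocity and cuspidality of $\pi_1^\vee$; the paper is deliberately careful about this distinction because second adjunction is not known for the metaplectic group, so you should spell out the dualisation (or the Casselman-duality variant) rather than appeal to first adjunction alone.
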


\begin{proof} First of all, we have $\mathfrak{r}_{X_2^k}((\omega_{m_1,m_2}))_{\pi_1} = \mathfrak{r}_{X_2^k}((\omega_{m_1,m_2})_{\pi_1})$ because the Jacquet functor $\mathfrak{r}_{X_2^k}$ is exact and the actions of $H_1$ and $H_2$ commmute. Considering the filtration $(R^t)$ of $R^0 = \mathfrak{r}_{X_2^k}(\omega_{m_1,m_2})$ with subquotients $(J_t)$ from Theorem \ref{kudla_filtration_thm}, we obtain: 
$$(J_t)_{\pi_1} = (\mathfrak{i}_{P_t^{m_1}}^{H_1} (-))_{\pi_1} = 0 \textup{ for all } t > 0.$$
So $(R^1)_{\pi_1} = 0$. Applying this to the exact sequence $0 \to R^1 \to R^0 \to J_0 \to 0$ the biggest $\pi_1$-isotypic quotient yields the result $(R^0)_{\pi_1} \simeq (J_0)_{\pi_1} = \nu_{k,s} \otimes (\omega_{m_1,m_2-k})_{\pi_1}$.

For the last part of the statement, note that it is enough to be killed by Jacquet functors of all proper maximal standard parabolic subgroups to be cuspidal. By the existence of the first occurrence index, we have $\Theta(\pi_1,V_2^{m_2(\pi_1)-k})=0$ for all $k > 0$. Therefore $\mathfrak{r}_{X_2^k}(\Theta(\pi_1,V_2^{m_2(\pi_1)})=0$ for all $k >0$. Because $P(X_2^k)$ runs over all maximal standard parabolic subgroups, the first occurrence is cuspidal. \end{proof}

In the following situation, the first occurrence index is irreducible and we can tell that $\Theta(\pi_1,V_2^{m_2})$ has no irreducible supercuspidal subquotients and no irreducible cuspidal subrepresentations unless $m_2=m_2(\pi_1)$. Normalising inductions in Rallis' filtration, the character appearing instead of $\nu_{{}_{t+\delta} \ddot{X}_2,s} $ is $\nu_{t+\delta,s}$. We recall the notation $\textup{Irr}_R^{p\&i}(G)$ from Section \ref{projective_vs_injective_objects_sec} for isomorphism classes of irreducible projective and injective representations.

In the notations of Section \ref{generalised-doubling-method-sec}, we will consider two opposite Witt towers $\dot{V}_2 = V_2^{m_2}$ and $\ddot{V}_2 = - V_2^{m_2'}$ along with the associated lifts of dual pairs $H_1$, $H_2^{m_2}$ and $H_2^{-,m_2'}$. In particular the identity map is an anti-isometry between $V_2^{m_2}$ and $-V_2^{m_2}$. In this case we write $\textup{Reg}^{\alpha_0^-,\xi_0^-}(H_2^{m_2},H_2^{-,m_2})$ from Theorem \ref{filtration_degenerate_principal_series_thm} is obtained with $\alpha_0$ the identity map, so we delete the reference to $\alpha_0^-$ as this map is canonical. Recall from Section \ref{generalised-regular-reps-sec} that $\textup{Reg}^{\xi_0^-}(H_2^{m_2},H_2^{-,m_2})$ has irreducible quotients of multiplicity one and they are of the form $\pi_2 \otimes \pi_2^{[\xi_0^-]}$ where $\pi_2 \in \textup{Irr}_R(H_2^{m_2})$, or equivalently, $(\pi_2^-)^{[\xi_0^-]} \otimes \pi_2^-$ where $\pi_2^- \in \textup{Irr}_R(H_2^{-,m_2})$. All our representations are assumed to be genuine.

\begin{prop} \label{proj_inj_theta_quotient_prop} Let $\pi_1 \in \textup{Irr}_R^{\textup{cusp}}(H_1)$.
\begin{enumerate}[label=\textup{\alph*)}]
\item If $\Theta(\pi_1,V_2^{m_2^c})$ has a subquotient $\pi_2 \in \textup{Irr}_R^{p\&i}(H_2^{m_2^c})$:
\begin{itemize}[label=$\bullet$]
\item $m_2^c \leq m_2^-(\pi_1^{[\chi_1]})$ ;
\item $\Theta(\pi_1^{[\chi_1]},-V_2^{m_2})$ is a subquotient of $\mathfrak{i}_{P_{\delta}^-}^{H_2^-} ( \nu_{\delta,s} \otimes \pi_2^{[\xi_0^-]})$ where $\delta = m_2 - m_2^c$.
\end{itemize}

\item If furthermore $\Theta(\pi_1^{[\chi_1]},-V_2^{m_2^{-,c}})$ has a subquotient $\pi_2^- \in \textup{Irr}_R^{p\&i}(H_2^{m_2^{-,c}})$:
\begin{itemize}[label=$\bullet$]
\item $m_2^c = m_2^{-,c} = m_2(\pi_1) = m_2^-(\pi_1^{[\chi_1]})$;
\item $\Theta(\pi_1,V_2^{m_2^c}) = \pi_2 = (\pi_2^-)^{[\xi_0^-]} = \Theta(\pi_1^{[\chi_1]},-V_2^{m_2^c})^{[\xi_0^-]}$;
\item $\Theta(\pi_1,V_2^{m_2})$ is a subquotient of $\mathfrak{i}_{P_{\delta}}^{H_2^{m_2}} ( \nu_{\delta,s} \otimes \pi_2)$ where $\delta = m_2 - m_2^c$;
\item in particular $\Theta(\pi_1,V_2^{m_2})$ has no supercuspidal constituents if $m_2 \neq m_2^c$;
\end{itemize}
\item If furthermore $\Theta(\pi_2^-,V_1)=\pi_1^{[\chi_1]}$:
\begin{itemize}[label=$\bullet$]
\item $\Theta(\pi_1,V_2^{m_2})$ is a submodule of $\mathfrak{i}_{P_{\delta}}^{H_2} ( \nu_{\delta,s} \otimes \pi_2)$ where $\delta = m_2 - m_2^c$;
\item in particular $\Theta(\pi_1,V_2^{m_2})$ has no irreducible cuspidal submodule if $m_2 \neq m_2^c$.
\end{itemize}  \end{enumerate} \end{prop}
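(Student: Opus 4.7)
The plan is to analyse the doubled theta lift $\Theta(\chi_1, V_2^\square)$ for $V_2^\square = V_2^{m_2^c} \oplus (-V_2^{m_2})$ by combining four ingredients: the Rallis embedding $\Theta(\chi_1, V_2^\square) \hookrightarrow I := \mathfrak{i}_{P_2^\square}^{H_2^\square}(\nu_{X_2^\square, n_1})$ from Theorem \ref{Rallis_argument_thm}; the Kudla--Rallis filtration of $I$ as a $(H_2^{m_2^c} \times H_2^{-, m_2})$-module from Theorem \ref{filtration_degenerate_principal_series_thm}; the generalised doubling surjection from Lemma \ref{generalised_doubling_theta_lem}; and the seesaw identity from Lemma \ref{seesaw_identity_isotypic_chi1_pi2_lem}. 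The running observation is that $\pi_2$ being $p\&i$ gives two crucial properties: the functor $(-)_{\pi_2}$ is exact (injectivity) and $\pi_2$ is killed by every proper parabolic induction (supercuspidality, coming from the remark after the definition of $\textup{Irr}_R^{p\&i}$).

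\paragraph{Part a).} I will fix $m_2 \geq m_2^c$ and choose $\dot{V}_2 = V_2^{m_2^c}$, $\ddot{V}_2 = -V_2^{m_2}$ so that $\delta = m_2 - m_2^c \geq 0$. Applying the exact functor $(-)_{\pi_2}$ on the $\dot{H}_2 = H_2^{m_2^c}$ side to the Kudla--Rallis filtration $(I_t)$ on $I$: each $J_t = I_t/I_{t-1}$ with $t \geq 1$ is parabolically induced from the proper parabolic $\dot{P}_t$ of $H_2^{m_2^c}$, so $(J_t)_{\pi_2} = 0$ by supercuspidality. Hence $I_{\pi_2} = (J_0)_{\pi_2}$, which evaluates by the defining property of $\textup{Reg}^{\alpha_0^-, \xi_0^-}$ to $\pi_2 \otimes \mathfrak{i}_{P_\delta^-}^{H_2^-}(\nu_{\delta, s} \otimes \pi_2^{[\xi_0^-]})$. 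Next, applying $(-)_{\pi_2}$ to the doubling surjection $\Theta(\chi_1, V_2^\square) \twoheadrightarrow \Theta(\pi_1, V_2^{m_2^c}) \otimes \Theta(\pi_1^{[\chi_1]}, -V_2^{m_2})$ yields a surjection onto $\pi_2 \otimes M \otimes \Theta(\pi_1^{[\chi_1]}, -V_2^{m_2})$, where the multiplicity space $M$ is nonzero since $\pi_2$ is an injective subquotient. Combining with the Rallis embedding and stripping the $\pi_2$ factor gives $\Theta(\pi_1^{[\chi_1]}, -V_2^{m_2})$ as a subquotient of $\mathfrak{i}_{P_\delta^-}^{H_2^-}(\nu_{\delta, s} \otimes \pi_2^{[\xi_0^-]})$. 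For $m_2 < m_2^c$, swapping the dot/double-dot labels places $\pi_2$ on the double-dot side with shift $\delta' = m_2^c - m_2 > 0$, so every $\ddot{P}_{t + \delta'}$ is proper and $(J_t)_{\pi_2} = 0$ for all $t$; thus $I_{\pi_2} = 0$ and the doubling surjection then forces $\Theta(\pi_1^{[\chi_1]}, -V_2^{m_2}) = 0$, which proves $m_2^c \leq m_2^-(\pi_1^{[\chi_1]})$.

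\paragraph{Parts b) and c).} For b), applying a) with the roles of $(\pi_2, \pi_1)$ and $(\pi_2^-, \pi_1^{[\chi_1]})$ interchanged yields the symmetric inequality $m_2^{-,c} \leq m_2(\pi_1)$; combined with the trivial bounds $m_2(\pi_1) \leq m_2^c$ and $m_2^-(\pi_1^{[\chi_1]}) \leq m_2^{-, c}$, the chain forces all four indices to coincide. At $m_2 = m_2^c$ the induction in a) collapses ($\delta = 0$) to the irreducible $\pi_2^{[\xi_0^-]}$, and since $\Theta(\pi_1^{[\chi_1]}, -V_2^{m_2^c}) \neq 0$ we obtain $\Theta(\pi_1^{[\chi_1]}, -V_2^{m_2^c}) = \pi_2^{[\xi_0^-]}$; symmetrically $\Theta(\pi_1, V_2^{m_2^c}) = (\pi_2^-)^{[\xi_0^-]}$, and the involutivity of $[\xi_0^-]$ then gives $\pi_2 = (\pi_2^-)^{[\xi_0^-]}$. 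The remaining statements in b) are the symmetric form of a) and are immediate for the supercuspidal constituents. For c), the same mechanism applied to $\pi_2^-$ gives via Rallis an honest submodule inclusion $\Theta(\chi_1, V_2^\square)_{\pi_2^-} \hookrightarrow \pi_2^- \otimes \mathfrak{i}_{P_\delta}^{H_2^{m_2}}(\nu_{\delta, s} \otimes \pi_2)$, while the doubling surjection, using b)'s identification $\Theta(\pi_1^{[\chi_1]}, -V_2^{m_2^c}) = \pi_2^-$, produces a surjection onto $\pi_2^- \otimes \Theta(\pi_1, V_2^{m_2})$. The hypothesis $\Theta(\pi_2^-, V_1) = \pi_1^{[\chi_1]}$ is then used through the seesaw identity to upgrade this surjection to an isomorphism, so that $\Theta(\pi_1, V_2^{m_2})$ itself embeds as a submodule of $\mathfrak{i}_{P_\delta}^{H_2^{m_2}}(\nu_{\delta, s} \otimes \pi_2)$; the absence of irreducible cuspidal submodules when $m_2 \neq m_2^c$ is then immediate from the fact that a proper parabolic induction has no cuspidal submodule.

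\paragraph{Main obstacle.} The most delicate step is the upgrade from subquotient (as in b)) to bona-fide submodule (as in c)), i.e.\ identifying the multiplicity space of $\pi_2^-$ in $\Theta(\chi_1, V_2^\square)$ with $\Theta(\pi_1, V_2^{m_2})$ on the nose rather than a larger module mapping onto it. Seesaw gives $\Theta(\chi_1, V_2^\square)_{\pi_2^- \otimes \ddot{\pi}_2} = (\Theta(\pi_2^-, V_1) \otimes \Theta(\ddot{\pi}_2, V_1))_{\chi_1} \otimes \pi_2^- \otimes \ddot{\pi}_2$, which under the hypothesis collapses to the $\pi_1$-multiplicity space of $\Theta(\ddot{\pi}_2, V_1)$ tensored with $\pi_2^- \otimes \ddot{\pi}_2$; by the universal property of big theta lifts, this matches the $\ddot{\pi}_2$-multiplicity in $\Theta(\pi_1, V_2^{m_2})$. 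It follows that the surjection from the multiplicity space onto $\Theta(\pi_1, V_2^{m_2})$ induces bijections on $\textup{Hom}$-spaces into all irreducibles, so its kernel has no irreducible quotient and must vanish (the kernel inherits admissibility from the ambient $\mathfrak{i}_{P_\delta}^{H_2^{m_2}}(\nu_{\delta, s} \otimes \pi_2)$, whose finite-length summands guarantee that any nonzero submodule has a simple quotient).
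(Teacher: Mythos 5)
Your proofs of parts a) and b) follow the same lines as the paper: the generalised doubling surjection combined with the Rallis embedding and the filtration of $I(n_1)$, using exactness of $(-)_{\pi_2}$ from injectivity and the vanishing $(J_t)_{\pi_2}=0$ for $t>0$ from supercuspidality. Your handling of the case $m_2<m_2^c$ (swapping the dotted/double-dotted labels so that every $\ddot P_{t+\delta'}$ is proper) is precisely what the paper encodes in the line ``$I_0(n_1)_{\pi_2}\neq 0\Leftrightarrow m_2^c\leq m_2$''; the chain of inequalities in b) and the collapse at $\delta=0$ to the irreducible $\pi_2^{[\xi_0^-]}$ are likewise the same.

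For part c), however, you take a genuinely different route. The paper promotes the subquotient of b) to a submodule by explicitly building a \emph{section} of the doubling surjection
$\Theta(\chi_1,V_2^{m_2}\oplus(-V_2^{m_2^c}))_{\pi_2^-}\twoheadrightarrow\Theta(\pi_1,V_2^{m_2})\otimes\pi_2^-$:
starting from $\omega_{m_1,m_2}\otimes\omega_{m_1,-m_2^c}$, one first factors through the $\pi_2^-$-isotypic quotient of the second tensor factor, substitutes the hypothesis $\Theta(\pi_2^-,V_1)=\pi_1^{[\chi_1]}$, and then factors through $\chi_1$-coinvariants for $H_1^\triangle$, producing a surjection in the opposite direction that is by construction a section. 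This gives an isomorphism of the two modules without invoking any finite-length argument. You instead compare the $\ddot{\pi}_2$-multiplicity of the multiplicity space $M$ (via the seesaw identity and the hypothesis, which collapses $(\Theta(\pi_2^-,V_1)\otimes\Theta(\ddot{\pi}_2,V_1))_{\chi_1}$ to the $\pi_1$-multiplicity of $\Theta(\ddot{\pi}_2,V_1)$, hence to $\dim\textup{Hom}(\Theta(\pi_1,V_2^{m_2}),\ddot{\pi}_2)$) with that of $\Theta(\pi_1,V_2^{m_2})$, deduce that the kernel of $M\twoheadrightarrow\Theta(\pi_1,V_2^{m_2})$ has no irreducible quotient, and then conclude $K=0$ because $M$ is a submodule of the finite-length $\mathfrak{i}_{P_\delta}^{H_2^{m_2}}(\nu_{\delta,s}\otimes\pi_2)$. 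Both arguments are correct; yours makes the role of the seesaw identity transparent, whereas the paper's is finiteness-free (the section is constructed purely by universal properties), which is a non-trivial advantage in a setting where one otherwise has to justify that parabolic induction preserves finite length (a point you correctly flag but which requires the standing hypotheses recalled at the start of Section \ref{proof_modular_theta_sec} for the metaplectic case).
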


\begin{proof} The proof of \cite[chap 2 IV.9 Cor]{mvw} is wrong since there is a confusion between contragredient with respect to different groups: if $H \subset G$ is a subgroup and $V \in \textup{Rep}_R(G)$, then it is not true that $(V^\vee)|_H$ is the contragredient with respect to $H$. We follow instead a strategy similar to \cite{kudla_invent}, except that, here, we have to carefully unwrap which properties about cuspidals or their first occurrences are being used.

\noindent a) The generalised doubling method from Lemma \ref{generalised_doubling_theta_lem} provides a surjection: 
$$\Theta(\chi_1,V_2^{m_2^c} \oplus (-V_2^{m_2})) \twoheadrightarrow \Theta(\pi_1,V_2^{m_2^c}) \otimes \Theta(\pi_1^{[\chi_1]},-V_2^{m_2}).$$
By hypothesis $\pi_2$ is projective-injective and occurs as a subquotient of $\Theta(\pi_1,V_2^{m_2^c})$, so it occurs as a quotient as well. Therefore:
$$\Theta(\chi_1,V_2^{m_2^c} \oplus (-V_2^{m_2}))_{\pi_2} \twoheadrightarrow \pi_2 \otimes \Theta(\pi_1^{[\chi_1]},-V_2^{m_2}).$$
In particular $\Theta(\chi_1,V_2^{m_2^c} \oplus (-V_2^{m_2}))_{\pi_2} \neq 0$ for all $m_2^-(\pi_1^{[\chi_1]}) \leq m_2$. So $I(n_1)_{\pi_2} \neq 0$ for all $m_2^-(\pi_1^{[\chi_1]}) \leq m_2$ as $\Theta(\chi_1,V_2^{m_2^c} \oplus (-V_2^{m_2}))_{\pi_2} \subset I(n_1)_{\pi_2}$ by exactness -- this also shows the statement about $\Theta(\pi_1^{[\chi_1]},-V_2^{m_2})$. Note that $I(n_1)_{\pi_2} = I_0(n_2)_{\pi_2}$, where $J_0(n_1)=I_0(n_1)$ is a subquotient of the filtration of $I(n_1)$ in Theorem \ref{filtration_degenerate_principal_series_thm}. Indeed, $\pi_2$ is injective, and for all other subquotients $J_t(n_1)$ in the filtration, we have $J_t(n_1)_{\pi_2} = 0$. Moreover, we have the equivalence $I_0(n_1)_{\pi_2} \neq 0 \Leftrightarrow m_2^c \leq m_2$ by cuspidality of $\pi_2$, because $I_0(n_1)$ is a proper parabolically induced representations exactly when $m_2^c >m_2$. As $I(n_1)_{\pi_2} \neq 0$ when $m_2^-(\pi_1^{[\chi_1]}) \leq m_2$, this implies the inequality. In general $\pi_2 \otimes \Theta(\pi_1^{[\chi_1]},-V_2^{m_2})$ is a quotient of $\Theta(\chi_1,V_2^{m_2^c} \oplus (-V_2^{m_2}))_{\pi_2}$, which is contained in $I(n_1)_{\pi_2}$. Therefore, by normalising the parabolic induction functor, it is a subquotient of: 
$$I(n_1)_{\pi_2} = I_0(n_1)_{\pi_2} = \pi_2 \otimes i_{P_\delta^-}^{H_2^-}(\nu_{\delta,s}' \otimes \pi_2^{[\xi_0^-]}).$$

\noindent b) By the generalised doubling method:
$$\Theta(\chi_1,V_2^{m_2^c} \oplus (-V_2^{m_2^{-,c}})) \twoheadrightarrow \Theta(\pi_1,V_2^{m_2^c}) \otimes \Theta(\pi_1^{[\chi_1]},-V_2^{m_2^{-,c}}) \twoheadrightarrow \pi_2 \otimes \pi_2^-.$$
Therefore $I(s)_{\pi_2 \otimes \pi_2^-} \neq 0$. By the previous point, we have $m_2^c = m_2^{-,c}$ and the inequalities:
$$m_2(\pi_1) \leq m_2^c \leq m_2^-(\pi_1^{[\chi_1]}) \textup{ and } m_2^-(\pi_1^{[\chi_1]}) \leq m_2^{-,c} \leq m_2(\pi_1),$$
so all these integers are equal. Because $\pi_2$ is injective: 
$$\Theta(\chi_1,V_2^{m_2^c} \oplus (-V_2^{m_2^c}))_{\pi_2} \subset I(s)_{\pi_2} = (I_0(s))_{\pi_2} = \pi_2 \otimes \pi_2^{[\xi_0^-]}.$$
Therefore $\pi_2^- = \pi_2^{[\xi_0^-]} = \Theta(\pi_1^{[\chi_1]},-V_2^{m_2^{-,c}})$. Similarly $\pi_2 = \Theta(\pi_1,V_2^{m_2^c})$. The fourth statement is a consequence of the third one, which follows from a) applied with $\pi_2^-$.

\noindent c) We want to show $\Theta(\chi_1,V_2^{m_2} \oplus (-V_2^{m_2^c}))_{\pi_2^-} \simeq \Theta(\pi_1,V_2^{m_2}) \otimes \pi_2^-$. We are going to build a section of the surjection:
$$\Theta(\chi_1,V_2^{m_2} \oplus (-V_2^{m_2^c}))_{\pi_2^-} \twoheadrightarrow \Theta(\pi_1,V_2^{m_2}) \otimes \pi_2^-.$$
The morphism: 
$$\omega_{m_1,m_2} \otimes \omega_{m_1,-m_2^c} \twoheadrightarrow \Theta(\chi_1,V_2^{m_2} \oplus ( - V_2^{m_2^c}))_{\pi_2^-}$$
is $(H_1^\triangle \times H_2^{m_2} \times H_2^{m_2^c})$-equivariant so it factors through: 
$$\omega_{m_1,m_2} \otimes (\Theta(\pi_2^-,V_1) \otimes \pi_2^-) = \omega_{m_1,m_2} \otimes (\pi_1^{[\chi_1]} \otimes \pi_2^-) \twoheadrightarrow \Theta(\chi_1,V_2^{m_2} \oplus ( - V_2^{m_2(\pi_1)}))_{\pi_2^-}.$$
We can factor it again using the $H_1^\triangle$-equivariance to obtain the desired section:
$$(\omega_{m_1,m_2} \otimes \pi_1^{[\chi_1]})_{\chi_1} \otimes \pi_2^- = \Theta(\pi_1,V_2^{m_2}) \otimes \pi_2^-  \twoheadrightarrow \Theta(\chi_1,V_2^{m_2} \oplus ( - V_2^{m_2^c}))_{\pi_2^-}.$$
Going back to the proof of b) we can now replace ``subquotient'' by ``submodule''. \end{proof}

\begin{rem} \label{cuspidal_quotient_for_theta_rem1} Even in the most favourable situation c) above, nothing \textit{a priori} prevents $\Theta(\pi_1,V_2^{m_2})$ from having cuspidal quotients when $m_2 > m_2(\pi_1)$. \end{rem}

\begin{rem} We could express the result in the terms of the theta lift for $\psi^{-1}$ following Remark \ref{doubling_method_from_minus_to_psi_inverse_rem} as $\Theta_{\psi^{-1}}(\pi_1^{[\chi_1]},V_2) \simeq \Theta(\pi_1^{[\chi_1]},-V_2)$ via pullback. \end{rem}

When the characteristic of the coefficients is good, we have a situation similar to the complex setting, but only up to a certain index $m_2(\textup{ban})$. We use the notation $\sigma \leq \tau$ when $\sigma$ is a subquotient of $\tau$. As long as we sit in the banal range most results of the classical setting hold:

\begin{cor} \label{lifts_of_cuspidals_banal_setting_cor} Let $\ell$ be banal with respect to $U(V_1)$ and define $m_2(\textup{ban})$ as the largest integer such that $\ell$ is banal with respect to $U(V_2^{m_2})$.  Let $\pi_1 \in \textup{Irr}_R^{\textup{cusp}}(H_1)$ and assume $m_2(\pi_1) \leq m_2(\textup{ban})$. Then:
\begin{enumerate}[label=\textup{\alph*)}]
\item $\Theta(\pi_1,V_2^{m_2})$ is irreducible for all $m_2(\pi_1) \leq m_2 \leq m_2(\textup{ban})$; 
\item $\Theta(\pi_1,V_2^{m_2})$ is a submodule of $\mathfrak{i}_{P_{\delta}}^{H_2} ( \nu_{\delta,s} \otimes \Theta(\pi_1,V_2^{m_2(\pi_1)}))$ where $\delta = m_2 - m_2^c$:
\begin{itemize}
\item[$\bullet$] in particular $\Theta(\pi_1,V_2^{m_2})$ is cuspidal only when $m_2 \leq m_2(\pi_1)$
\end{itemize}
\item for $\pi_1' \in \textup{Irr}_R(H_1)$ we have the equivalence:
$$\Theta(\pi_1,V_2^{m_2(\pi_1)}) \leq \Theta(\pi_1',V_2^{m_2(\pi_1)}) \Leftrightarrow \pi_1' \simeq \pi_1;$$
\item for $\pi_1' \in \textup{Irr}_R^{\textup{cusp}}(H_1)$ and $m_2 \geq m_2(\pi_1)$ we have the equivalence:
$$\Theta(\pi_1',V_2^{m_2}) \simeq \Theta(\pi_1,V_2^{m_2}) \Leftrightarrow \pi_1' \simeq \pi_1.$$
\end{enumerate} \end{cor}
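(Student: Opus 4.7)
The plan is to apply Proposition \ref{proj_inj_theta_quotient_prop} systematically in the banal regime, where cuspidality implies projective--injectivity, and to combine it with the Jacquet module computation of Lemma \ref{cuspidal_lift_jacquet_functor_inductive_formula_lem}.

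First I would establish the symmetric matching of first occurrences. Since $\ell$ is banal with respect to $U(V_1)$, the cuspidals $\pi_1$ and $\pi_1^{[\chi_1]}$ are $p\&i$; by Lemma \ref{cuspidal_lift_jacquet_functor_inductive_formula_lem}, the first occurrences $\pi_2 := \Theta(\pi_1, V_2^{m_2(\pi_1)})$ and $\pi_2^- := \Theta(\pi_1^{[\chi_1]}, -V_2^{m_2^-(\pi_1^{[\chi_1]})})$ are irreducible cuspidal. Applying Proposition \ref{proj_inj_theta_quotient_prop}(a) in both towers yields $m_2(\pi_1) = m_2^-(\pi_1^{[\chi_1]}) \leq m_2(\textup{ban})$, so $\pi_2$ and $\pi_2^-$ are themselves $p\&i$. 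Proposition \ref{proj_inj_theta_quotient_prop}(b) then identifies the two first occurrences up to the $\xi_0^-$-twist. To obtain the submodule statement of (b), I would apply Proposition \ref{proj_inj_theta_quotient_prop}(c); the extra hypothesis $\Theta(\pi_2^-, V_1) = \pi_1^{[\chi_1]}$ is secured by running Proposition \ref{proj_inj_theta_quotient_prop}(b) in the reverse direction (i.e., swapping the two members of the dual pair in the doubling construction), using that $\pi_1^{[\chi_1]}$ appears as a $p\&i$ cuspidal quotient of $\Theta(\pi_2^-, V_1)$ via the multiplicity-space identity $(\omega_{m_1,-m_2(\pi_1)})_{\pi_1^{[\chi_1]}} \simeq \pi_1^{[\chi_1]} \otimes \pi_2^-$. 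The corollary of (b) about cuspidality is then immediate: for $m_2 > m_2(\pi_1)$ the ambient parabolic induction is proper, so it has no cuspidal subquotients.

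For (a), I would argue by a length count. Lemma \ref{cuspidal_lift_jacquet_functor_inductive_formula_lem} gives $\mathfrak{r}_{X_2^\delta}(\Theta(\pi_1, V_2^{m_2})) \simeq \nu_{\delta,s} \otimes \pi_2$, which is irreducible of length $1$. Any irreducible subquotient $\sigma$ of $\Theta(\pi_1, V_2^{m_2})$ also sits as a subquotient of $\mathfrak{i}_{P_\delta}^{H_2^{m_2}}(\nu_{\delta,s} \otimes \pi_2)$ by (b); since $\nu_{\delta,s} \otimes \pi_2$ is cuspidal on the Levi, the Bernstein--Zelevinsky geometric lemma forces $\mathfrak{r}_{X_2^\delta}(\sigma) \neq 0$. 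Exactness of the Jacquet functor then bounds the length of $\Theta(\pi_1, V_2^{m_2})$ by the length of its Jacquet module, and nonvanishing for $m_2 \geq m_2(\pi_1)$ gives irreducibility.

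For (c), if $\pi_2 \leq \Theta(\pi_1', V_2^{m_2(\pi_1)})$, a case analysis on $m_2(\pi_1')$ using (b) applied to $\pi_1'$ shows cuspidality of $\pi_2$ forces $m_2(\pi_1') = m_2(\pi_1)$ and $\Theta(\pi_1', V_2^{m_2(\pi_1)}) = \pi_2$; the multiplicity-space identity $(\omega_{m_1,m_2(\pi_1)})_{\pi_2} \simeq \Theta(\pi_2, V_1) \otimes \pi_2$ then places both $\pi_1$ and $\pi_1'$ inside $\Theta(\pi_2, V_1)$, which is irreducible by the reverse-direction application of Proposition \ref{proj_inj_theta_quotient_prop}(b), so $\pi_1 \simeq \pi_1'$. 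For (d), applying Lemma \ref{cuspidal_lift_jacquet_functor_inductive_formula_lem} to both sides of $\Theta(\pi_1', V_2^{m_2}) \simeq \Theta(\pi_1, V_2^{m_2})$ rules out $m_2(\pi_1') \neq m_2(\pi_1)$ by Jacquet-vanishing on the vanishing side, and reduces to (c). The main obstacle throughout is verifying the hypotheses of this reverse-direction Proposition \ref{proj_inj_theta_quotient_prop}(b), which requires careful bookkeeping of the characters $\chi_1$ and $\chi_2$ and of the two see-saw doubling structures.
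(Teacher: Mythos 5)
Your proposal follows essentially the same route as the paper: the banal hypothesis makes cuspidals projective--injective, so Proposition~\ref{proj_inj_theta_quotient_prop} combined with Lemma~\ref{cuspidal_lift_jacquet_functor_inductive_formula_lem} gives the submodule embedding and the first-occurrence identifications, and then an induction using faithfulness of Jacquet functors in the banal range (what you phrase as a ``length count'' through $\mathfrak{r}_{X_2^\delta}$) yields irreducibility for the remaining indices; (c) and (d) follow by reversing the roles of the two groups exactly as in the paper. Your account is actually more explicit than the paper's, which is terse and defers to Kudla's notes; in particular, your explanation of how to secure the hypothesis $\Theta(\pi_2^-,V_1)=\pi_1^{[\chi_1]}$ of Proposition~\ref{proj_inj_theta_quotient_prop}(c) --- by running part~(b) in the opposite direction of the see-saw, using that $\ell$ is banal for $U(V_1)$ --- fills a step the paper leaves silent. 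One small imprecision to correct: you assert that $\pi_2=\Theta(\pi_1,V_2^{m_2(\pi_1)})$ is \emph{irreducible} cuspidal before invoking Proposition~\ref{proj_inj_theta_quotient_prop}. Lemma~\ref{cuspidal_lift_jacquet_functor_inductive_formula_lem} only gives cuspidality of the big lift; irreducibility of the first occurrence is a \emph{conclusion} of Proposition~\ref{proj_inj_theta_quotient_prop}(b). Since part~(a) of that proposition needs only a $p\&i$ irreducible subquotient (which exists because the big lift is cuspidal and $m_2(\pi_1)\le m_2(\textup{ban})$), the argument still goes through, but the wording should be amended so that irreducibility is not assumed prematurely.
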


\begin{proof} a) The proof is the same as in the complex case, but the only proper detailed reference we found are the notes of Kudla \cite[IV.3 Prop]{kudla_notes}. We recall quickly how this works. Because of our assumption on $m_2(\textup{ban})$, cuspidal representations are $p\&i$ for all $m_2 \leq m_2(\textup{ban})$, so we can apply Proposition \ref{proj_inj_theta_quotient_prop} to obtain that $\Theta(\pi_1,V_2^{m_2(\pi_1)})$ is irreducible. Then, a proof by induction, using faithfullness properties of Jacquet functors, gives the irreducibility of $\Theta(\pi_1,V_2^{m_2})$ as long as $m_2 \leq m_2(\textup{ban})$.

\noindent b) As a consequence of Lemma \ref{cuspidal_lift_jacquet_functor_inductive_formula_lem}, the representation $\Theta(\pi_1,V_2^{m_2})$ must contain at least one non-cuspidal irreducible subquotient if $m_2 > m_2(\pi_1)$. So the equivalence follows.

\noindent c) Let $\pi_2 = \Theta(\pi_1,V_2^{m_2(\pi_1)}) \in \textup{Irr}_R^{\textup{cusp}}(H_2^{m_2(\pi_1)})$. Then $\pi_2$ also occurs as a quotient of $\Theta(\pi_1',V_2^{m_2(\pi_1)})$. In a) we can reverse the roles of $H_1$ and $H_2^{m_2(\pi_1)}$ to obtain that $\Theta(\pi_2,V_1)$ is irreducible. As both $\pi_1$ and $\pi_1'$ occur as quotients of $\Theta(\pi_2,V_1)$, we must have $\pi_1' \simeq \pi_1$.

\noindent d) Applying Lemma \ref{cuspidal_lift_jacquet_functor_inductive_formula_lem}, this implies $\Theta(\pi_1',V_2^{m_2(\pi_1)}) \simeq \Theta(\pi_1,V_2^{m_2(\pi_1)})$ then use c). \end{proof}

\begin{rem} If we assume $n_1 \leq m_2(\textup{ban})$, the corollary is uniform in $\pi_1$ because $n_1$ is an upper bound for first occurrence indices by Theorem \ref{first_occurrence_index_thm}. \end{rem}

\begin{rem} We simply repeat Remark \ref{cuspidal_quotient_for_theta_rem1} to stress the fact that nothing \textit{a priori} prevents $\Theta(\pi_1,V_2^{m_2})$ from having an irreducible cuspidal quotient when $m_2 > m_2(\textup{ban})$. \end{rem}

\paragraph{Cuspidal support.} As an easy consequence of Lemma \ref{cuspidal_lift_jacquet_functor_inductive_formula_lem}, we have:

\begin{lem} Let $\pi_1 \in \textup{Irr}_R^{\textup{cusp}}(H_1)$ and assume $\pi_2 \in \textup{Irr}_R(H_2^{m_2})$ occurs a quotient of $\Theta(\pi_1,V_2^{m_2})$ and $\pi_2 \subset \mathfrak{i}_{P_k^{m_2}}^{H_2^{m_2}}(\rho_2 \otimes \sigma_2)$ with $\rho_2 \otimes \sigma_2 \in \textup{Irr}_R(M_k^{m_2})$. Then:
$$\rho_2 = \nu_{k,s} \ \textup{ and } \ \Theta(\pi_1,V_2^{m_2-k}) \twoheadrightarrow \sigma_2.$$ \end{lem}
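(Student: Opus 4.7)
The plan is to use Frobenius reciprocity combined with Lemma \ref{cuspidal_lift_jacquet_functor_inductive_formula_lem}. Since $\pi_2$ is a subrepresentation of $\mathfrak{i}_{P_k^{m_2}}^{H_2^{m_2}}(\rho_2 \otimes \sigma_2)$, by Frobenius reciprocity (the standard adjunction between parabolic restriction and parabolic induction), there is a non-zero $M_k^{m_2}$-morphism
$$\mathfrak{r}_{X_2^k}(\pi_2) \to \rho_2 \otimes \sigma_2,$$
which must be surjective since $\rho_2 \otimes \sigma_2$ is irreducible. In other words, $\rho_2 \otimes \sigma_2$ is an irreducible quotient of $\mathfrak{r}_{X_2^k}(\pi_2)$.

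Now, by hypothesis $\pi_2$ is a quotient of $\Theta(\pi_1,V_2^{m_2})$. Since $\mathfrak{r}_{X_2^k}$ is exact, $\mathfrak{r}_{X_2^k}(\pi_2)$ is in turn a quotient of $\mathfrak{r}_{X_2^k}(\Theta(\pi_1,V_2^{m_2}))$. Using cuspidality of $\pi_1$, Lemma \ref{cuspidal_lift_jacquet_functor_inductive_formula_lem} gives the explicit identification
$$\mathfrak{r}_{X_2^k}(\Theta(\pi_1,V_2^{m_2})) \simeq \nu_{k,s} \otimes \Theta(\pi_1,V_2^{m_2-k}),$$
so composing we obtain an $M_k^{m_2}$-surjection
$$\nu_{k,s} \otimes \Theta(\pi_1,V_2^{m_2-k}) \twoheadrightarrow \rho_2 \otimes \sigma_2.$$

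It remains to read off the two conclusions. On the left-hand side the $G_k^{m_2}$-factor of the Levi $M_k^{m_2} = G_k^{m_2} \times H_2^{m_2-k}$ acts via the character $\nu_{k,s}$, hence acts by scalars; the same must then hold of any non-zero $M_k^{m_2}$-equivariant image. Since $\rho_2 \otimes \sigma_2$ is irreducible with $\rho_2$ carrying the $G_k^{m_2}$-action, this forces $\rho_2 \simeq \nu_{k,s}$. Cancelling the common character factor, the surjection above becomes a surjection $\Theta(\pi_1,V_2^{m_2-k}) \twoheadrightarrow \sigma_2$, which is exactly the second claim. The main point — to which no real obstacle stands in the way here — was the reduction to Lemma \ref{cuspidal_lift_jacquet_functor_inductive_formula_lem}; the rest is bookkeeping with the Levi decomposition.
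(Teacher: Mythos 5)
Your proof is correct and is precisely the deduction the paper intends: the paper states this lemma without proof, calling it ``an easy consequence'' of Lemma~\ref{cuspidal_lift_jacquet_functor_inductive_formula_lem}, and your chain — Frobenius reciprocity to get a surjection $\mathfrak{r}_{X_2^k}(\pi_2) \twoheadrightarrow \rho_2 \otimes \sigma_2$, exactness of $\mathfrak{r}_{X_2^k}$ applied to $\Theta(\pi_1,V_2^{m_2}) \twoheadrightarrow \pi_2$, the cited isomorphism, and the $G_k^{m_2}$-factor forcing $\rho_2 = \nu_{k,s}$ — is exactly that argument. Note that you use only first adjunction ($\mathfrak{r}_P \dashv \mathfrak{i}_P$), which sidesteps the second-adjunction caveat the paper raises for the metaplectic case, so no hidden hypothesis is being used.
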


If we apply the proposition with $\sigma_2$ cuspidal, we then get that the cuspidal support of $\pi_2$ is coming from $[\nu_{k,s} \otimes \sigma_2,M_k^{m_2}]$, so it is supported on the Levi $(G_1^{m_2})^k \times H_2^{m_2-k}$. If $\pi_2$ is a cuspidal quotient of $\Theta(\pi_1,V_2^{m_2})$, then either $m_2 = m_2(\pi_1)$ or $m_2 > m_2(\textup{ban})$. The lemma becomes tautological in this case as we must have $k = 0$, $\rho_2 = 1$ and $\sigma_2 = \pi_2$. If $m_2 \leq m_2(\textup{ban})$ the (super)cuspidal support is identical to the classical case as a consequence of a) and b) in Corollary \ref{lifts_of_cuspidals_banal_setting_cor} -- we refer to Corollary \ref{cuspidal_support_general_pi1_cor} for the explicit form of the support since it encompasses the cuspidal case.

\subsection{General representations $\pi_1$}

Let $\sigma_1 \in \textup{Irr}_R^{\textup{cusp}}(H_1^{m_1-k})$. 

\paragraph{Finite length from cuspidals.} It is enough to know finite generation of big theta lifts of cuspidals to deduce that all big theta lifts have finite length:

\begin{prop} \label{finite_length_from_cuspidals_prop} Let $m_2$ be an integer and assume that $\Theta(\sigma_1,V_2^{m_2^0})$ has finite length for all $m_2^0 \leq m_2$. Let $\rho_1 \in \textup{Irr}_R(G_k^{m_1})$, $\sigma_1 \in \textup{Irr}_R^{\textup{cusp}}(H_1^{m_1-k})$ and $\pi_1 \subset \mathfrak{i}_{P_k}^{H_1}(\rho_1 \otimes \sigma_1)$ be irreducible. Then $\Theta(\pi_1,V_2^{m_2})$ has finite length. \end{prop}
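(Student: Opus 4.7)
The plan is to bound $\Theta(\pi_1, V_2^{m_2})$ above by a representation built from theta lifts of $\sigma_1$, using the Kudla filtration of Theorem~\ref{kudla_filtration_thm}. Each piece will have finite length by assumption, and since parabolic induction on the $H_2^{m_2}$-side preserves finite length (Section~\ref{covering-groups-sec}), finite length of $\Theta(\pi_1, V_2^{m_2})$ will follow.

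First, I would observe that the inclusion $\pi_1 \hookrightarrow \mathfrak{i}_{P_k}^{H_1}(\rho_1 \otimes \sigma_1)$ dualises to a surjection $\mathfrak{i}_{P_k}^{H_1}(\rho_1^\vee \otimes \sigma_1^\vee) \twoheadrightarrow \pi_1^\vee$. Tensoring with $\omega_{m_1, m_2}$ and taking $H_1$-coinvariants (which is right exact) yields a surjection of $H_2^{m_2}$-modules
$$\bigl( \omega_{m_1, m_2} \otimes \mathfrak{i}_{P_k}^{H_1}(\rho_1^\vee \otimes \sigma_1^\vee) \bigr)_{H_1} \twoheadrightarrow \bigl( \omega_{m_1, m_2} \otimes \pi_1^\vee \bigr)_{H_1} = \Theta(\pi_1, V_2^{m_2}),$$
so it suffices to prove the left-hand side has finite length as an $H_2^{m_2}$-module.

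Second, I would use Frobenius reciprocity together with a projection formula to rewrite this left-hand side as the $M_k^{m_1}$-coinvariants of $\mathfrak{r}_{X_1^k}(\omega_{m_1, m_2}) \otimes \rho_1^\vee \otimes \sigma_1^\vee$. Applying the Kudla filtration of Theorem~\ref{kudla_filtration_thm} to $\mathfrak{r}_{X_1^k}(\omega_{m_1, m_2})$ produces subquotients $J_t$ that are parabolically induced from representations involving the smaller Weil representation $\omega_{m_1-k, m_2-t}$ and a generalised regular representation factor $\textup{Reg}^{\alpha_t, \xi_t}(G_1^t, G_2^t)$. Because $\sigma_1$ is cuspidal, the same argument that powers Lemma~\ref{cuspidal_lift_jacquet_functor_inductive_formula_lem} kills the contribution of $J_t$ except for finitely many indices $0 \leq t \leq \min(k, m_2)$, and the surviving contributions are parabolic inductions on the $H_2^{m_2}$-side of $\Theta(\sigma_1, V_2^{m_2-t})$ tensored with pieces computed purely on the general linear factor (which are finite length by standard Bernstein-Zelevinski theory for $\textup{GL}$).

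Third, each $\Theta(\sigma_1, V_2^{m_2-t})$ has finite length by hypothesis (applied to $m_2^0 = m_2 - t \leq m_2$), and parabolic induction preserves finite length on $H_2^{m_2}$. Hence the bounding representation has finite length, and so does its quotient $\Theta(\pi_1, V_2^{m_2})$.

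The hardest part will be making the second step rigorous without invoking second adjunction, since this is unavailable for the metaplectic group. Following the strategy outlined in the introduction, second adjunction should be replaced by Casselman duality on the admissible part wherever it intervenes in the identification of the $M_k^{m_1}$-coinvariants. A secondary technical nuisance is the careful bookkeeping of the modulus twists $\delta_{{}^t X_1^k, n_2}$ and the generalised regular representation factors $\textup{Reg}^{\alpha_t, \xi_t}$ appearing in Theorem~\ref{kudla_filtration_thm}, which must all be correctly absorbed so that each surviving term becomes recognisable as a parabolic induction of $\Theta(\sigma_1, V_2^{m_2-t})$ twisted by admissible pieces coming from $\rho_1$.
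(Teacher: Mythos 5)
Your overall route matches the paper's: apply the Jacquet functor $\mathfrak{r}_{X_1^k}$ to the theta surjection, invoke the Kudla filtration of Theorem~\ref{kudla_filtration_thm}, observe that the $\sigma_1$-isotypic quotients of the subquotients $J_t$ involve $\Theta(\sigma_1,V_2^{m_2-t})$, and conclude from the finite-length hypothesis. However, the step you label as resolved "by standard Bernstein--Zelevinski theory for $\textup{GL}$" is exactly where the paper finds the real difficulty and spends its Appendix~\ref{end_proof_prop_finite_length_app}. The subquotients $(J_t)_{\sigma_1}$ are still induced from an object containing the generalised regular representation $\textup{Reg}^{\alpha_t,\xi_t}(G_1^t,G_2^t)$, which has infinite length. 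Taking the $\rho_1$-isotypic quotient of the $\textup{GL}$-side induction $\mathfrak{i}_{Q_{{}^t X_1^k}}^{G_{X_1^k}}(\,\delta\otimes\textup{Reg}^{\alpha_t,\xi_t}\,)$ and showing the residual $G_2^t$-module has finite length is not a routine Bernstein--Zelevinski computation: the paper proves it by establishing that the relevant induced representation is compatible with depth, locally finitely generated, and admissible over the Bernstein centre $\mathfrak{z}_{G_{X_1^k}}$ (via finiteness of the Harish-Chandra morphism), and only then applies Corollary~\ref{finite_length_criterion_A_cor}. These inputs rest on the finiteness of Hecke algebras from \cite{dhkm-finiteness}, which in the modular setting is a deep recent theorem, not standard textbook material. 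Your gloss conceals the heart of the argument.

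Two secondary remarks. First, your claim that cuspidality of $\sigma_1$ "kills the contribution of $J_t$ except for finitely many indices" is misplaced: the Kudla filtration on $\mathfrak{r}_{X_1^k}(\omega_{m_1,m_2})$ always has finitely many terms $0 \leq t \leq \min(k,m_2)$, and the ${}^k H_1^{m_1}$-action on $J_t$ is \emph{not} through a proper parabolic induction, so cuspidality of $\sigma_1$ eliminates nothing here. (In Lemma~\ref{cuspidal_lift_jacquet_functor_inductive_formula_lem}, the Jacquet functor is taken on the $H_2$-side and the $H_1$-side parabolic induction inside $J_t$ for $t>0$ is what is killed; the present situation is dual.) Cuspidality of $\sigma_1$ enters the statement only to make it cover all irreducible $\pi_1$ through the existence of cuspidal support. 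Second, your step 2 reformulation via the projection formula and coinvariants implicitly invokes second adjunction on the $H_1$-side. The paper sidesteps this entirely by applying the exact functor $\mathfrak{r}_{X_1^k}$ directly to the surjection $\omega_{m_1,m_2}\twoheadrightarrow\pi_1\otimes\Theta(\pi_1,V_2^{m_2})$ and then using Frobenius reciprocity (first adjunction) to deduce $\mathfrak{r}_{X_1^k}(\pi_1)\twoheadrightarrow\rho_1\otimes\sigma_1$. This is cleaner and avoids the metaplectic second-adjunction issue you flag but do not actually resolve: Casselman duality applies to admissible representations, and the object whose coinvariants you need to compute at this stage is not admissible.
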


\begin{proof} Consider the surjection $\omega_{m_1,m_2} \twoheadrightarrow \pi_1 \otimes \Theta(\pi_1,V_2^{m_2})$ and apply the Jacquet functor $\mathfrak{r}_{X_1^k}$ to it. By Frobenius reciprocity, we have $\mathfrak{r}_{X_1^k}(\pi_1) \twoheadrightarrow \rho_1 \otimes \sigma_1$. Therefore:
$$\mathfrak{r}_{X_1^k}(\omega_{m_1,m_2}) \twoheadrightarrow \rho_1 \otimes \sigma_1 \otimes \Theta(\pi_1,V_2^{m_2}).$$
Looking at the subquotient $J_t$ of the filtration in Theorem \ref{kudla_filtration_thm}, we obtain:
$$(J_t)_{\sigma_1} = \mathfrak{i}_{Q_{{}^t X_1^k} \times {}^k H_1^{m_1} \times P_t^{m_2}}^{G_{X_1^k} \phantom{k} \times {}^k H_1^{m_1} \times H_2}(\delta_{{}^t X_1^k,n_2} |\textup{det}_{{}^t X_1^k}|^{\frac{s+k-t}{2}} \otimes \textup{Reg}^{\alpha_t,\xi_t}(G_1^t,G_2^t) \otimes \sigma_1 \otimes \Theta(\sigma_1,V_2^{m_2-t})).$$
A classical argument ensures that all three $(J_t)_{\sigma_1 \otimes \rho_1}$, $(\mathfrak{r}_{X_1^k}(\omega_{m_1,m_2}))_{\rho_1 \otimes \sigma_1}$ and $\Theta(\pi_1,V_2^{m_2})$ have finite length. We sketch this argument in Appendix \ref{end_proof_prop_finite_length_app}. \end{proof}

\paragraph{Cuspidal support.} Let $\upsilon_{k,s} = \delta_{X_1^k,n_2} |\textup{det}_{X_1^k}|^{\frac{s+k}{2}}$ be the genuine character of $G_{X_1^k}$.

\begin{lem} \label{dualising_or_erasing_lem} Let $\rho_1 \in \textup{Irr}_R^{\textup{cusp}}(G_k^{m_1})$, $\sigma_1 \in \textup{Irr}_R(H_1^{m_1-k})$ and $\pi_1 \subseteq \mathfrak{i}_{P_k}^{H_1}(\rho_1 \otimes \sigma_1)$.
\begin{enumerate}[label=\textup{\alph*)}]
\item If $\rho_1 \neq \upsilon_{k,s}$ then: 
$$\mathfrak{i}_{P_k}^{H_2}(\rho_1^{[\alpha_k,\xi_k]} \otimes \Theta(\sigma_1,V_2^{m_2-k})) \twoheadrightarrow \Theta(\pi_1,V_2^{m_2}).$$
\item If $\rho_1 = \upsilon_{k,s}$ then $k=1$ and for all irreducible quotients $\Theta(\pi_1,V_2^{m_2}) \twoheadrightarrow \pi_2$:
$$\mathfrak{i}_{P_1}^{H_2}(\upsilon_{1,s}^{[\alpha_1,\xi_1]} \otimes \Theta(\sigma_1,V_2^{m_2-1})) \twoheadrightarrow \pi_2 \textup{ or } \Theta(\sigma_1,V_2^{m_2}) \twoheadrightarrow \pi_2.$$
\end{enumerate}
\end{lem}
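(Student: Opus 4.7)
The plan is to apply the filtration from Theorem~\ref{kudla_filtration_thm} to $\mathfrak{r}_{X_1^k}(\omega_{m_1,m_2})$, pass to $(\rho_1 \otimes \sigma_1)$-coinvariants, and use the cuspidality of $\rho_1$ on the $\textup{GL}$-type factor $G_{X_1^k}$ to kill all the filtration layers except possibly $J_0$ and $J_k$.

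First I would deduce from Frobenius reciprocity applied to $\pi_1 \hookrightarrow \mathfrak{i}_{P_k}^{H_1}(\rho_1 \otimes \sigma_1)$ a non-zero, hence surjective, map $\mathfrak{r}_{X_1^k}(\pi_1) \twoheadrightarrow \rho_1 \otimes \sigma_1$. Combined with the surjection $\omega_{m_1,m_2} \twoheadrightarrow \pi_1 \otimes \Theta(\pi_1,V_2^{m_2})$ and exactness of $\mathfrak{r}_{X_1^k}$, this produces
$$\mathfrak{r}_{X_1^k}(\omega_{m_1,m_2})_{\rho_1 \otimes \sigma_1} \twoheadrightarrow \rho_1 \otimes \sigma_1 \otimes \Theta(\pi_1,V_2^{m_2}).$$
Next I examine the layers $J_t$ of Kudla's filtration. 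For $0 < t < k$, the subspace ${}^t X_1^k$ is a proper non-zero subspace of $X_1^k$, so $Q_{{}^t X_1^k}$ is a proper parabolic of $G_{X_1^k}$; since $\rho_1$ is cuspidal on this GL-type factor, second adjunction forces $(J_t)_{\rho_1 \otimes \sigma_1} = 0$ for all $0 < t < k$. It remains to compute the two surviving layers.

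For $t = 0$ the parabolic $Q_{{}^0 X_1^k} = G_{X_1^k}$ is trivial and the character reduces to $\upsilon_{k,s}$, so $J_0 \simeq \upsilon_{k,s} \otimes \omega_{m_1-k,m_2}$; its $(\rho_1 \otimes \sigma_1)$-coinvariants vanish unless $\rho_1 \simeq \upsilon_{k,s}$, in which case they equal $\rho_1 \otimes \sigma_1 \otimes \Theta(\sigma_1,V_2^{m_2})$. For $t = k$ the parabolic $Q_{{}^k X_1^k} = G_{X_1^k}$ is again trivial and the character disappears, giving $J_k \simeq \mathfrak{i}_{P_k^{m_2}}^{H_2}\bigl(\textup{Reg}^{\alpha_k,\xi_k}(G_1^k,G_2^k) \otimes \omega_{m_1-k,m_2-k}\bigr)$. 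Using the defining property of the generalised regular representation from Section~\ref{generalised-regular-reps-sec}, namely $(\textup{Reg}^{\alpha_k,\xi_k}(G_1^k,G_2^k))_{\rho_1} \simeq \rho_1 \otimes \rho_1^{[\alpha_k,\xi_k]}$, together with the commutation of parabolic induction with coinvariants taken in independent variables, one obtains
$$(J_k)_{\rho_1 \otimes \sigma_1} \simeq \rho_1 \otimes \sigma_1 \otimes \mathfrak{i}_{P_k^{m_2}}^{H_2}\bigl(\rho_1^{[\alpha_k,\xi_k]} \otimes \Theta(\sigma_1,V_2^{m_2-k})\bigr).$$

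For part~a), the vanishing $(J_t)_{\rho_1 \otimes \sigma_1} = 0$ for all $0 \leq t < k$ combined with right exactness of the coinvariants functor and a descending induction along the filtration $R^0 \supset R^1 \supset \cdots \supset R^k = J_k$ produces a surjection $(J_k)_{\rho_1 \otimes \sigma_1} \twoheadrightarrow (R^0)_{\rho_1 \otimes \sigma_1}$. Composing with the first display and cancelling the multiplicity factor $\rho_1 \otimes \sigma_1$ yields the claimed surjection onto $\Theta(\pi_1,V_2^{m_2})$. For part~b), since $\upsilon_{k,s}$ is a character and characters of $G_k^{m_1}$ are cuspidal only when $k = 1$, the assumption $\rho_1 = \upsilon_{k,s}$ forces $k = 1$. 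Picking an irreducible quotient $\pi_2$ of $\Theta(\pi_1,V_2^{m_2})$, the surjection $(R^0)_{\rho_1 \otimes \sigma_1} \twoheadrightarrow \rho_1 \otimes \sigma_1 \otimes \pi_2$ together with the short exact sequence $0 \to R^1 \to R^0 \to J_0 \to 0$ and the irreducibility of $\rho_1 \otimes \sigma_1 \otimes \pi_2$ produces a dichotomy: either the surjection factors through $(J_0)_{\rho_1 \otimes \sigma_1} = \rho_1 \otimes \sigma_1 \otimes \Theta(\sigma_1,V_2^{m_2})$, giving $\Theta(\sigma_1,V_2^{m_2}) \twoheadrightarrow \pi_2$, or its restriction to $(R^1)_{\rho_1 \otimes \sigma_1} = (J_1)_{\rho_1 \otimes \sigma_1}$ is surjective, giving the second surjection after removing the multiplicity factor.

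The main obstacle is the vanishing step for intermediate layers $0 < t < k$, which requires second adjunction on the factor $G_{X_1^k}$. This is available because $G_{X_1^k}$ is a central extension of $\textup{GL}(X_1^k)$ by $R^\times$ and inherits parabolic theory from the underlying connected reductive group, so the problematic lack of second adjunction for metaplectic groups does not intervene here. The rest is bookkeeping: tracking the tensor factors across the three groups $G_{X_1^k}$, ${}^k H_1^{m_1}$ and $H_2$, and checking that the coinvariants of $\textup{Reg}^{\alpha_k,\xi_k}(G_1^k,G_2^k)$ produce precisely the twisted contragredient $\rho_1^{[\alpha_k,\xi_k]}$.
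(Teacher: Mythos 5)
Your proof is correct and follows essentially the same strategy as the paper: apply the Kudla filtration to $\mathfrak{r}_{X_1^k}(\omega_{m_1,m_2})$, use cuspidality of $\rho_1$ to kill the intermediate layers $J_t$ with $0<t<k$ (whose $G_{X_1^k}$-structure is induced from a proper parabolic), and then analyse the two surviving layers $J_0$ and $J_k$ to obtain the surjection of a) and the dichotomy of b). The only thing you add is more bookkeeping detail; the one small slip is phrasing the $k=1$ dichotomy in terms of a ``restriction to $(R^1)_{\rho_1\otimes\sigma_1}$'' inside $(R^0)_{\rho_1\otimes\sigma_1}$, which is not literally a subobject since coinvariants are only right exact, but the intended argument (composing with the image of $(J_1)_{\rho_1\otimes\sigma_1}$) is clearly what you mean and is exactly what the paper does.
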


\begin{proof} We have $\mathfrak{r}_{X_1^k}(\omega_{m_1,m_2}) \twoheadrightarrow \rho_1 \otimes \sigma_1 \otimes \Theta(\pi_1,V_2^{m_2})$ by exactness of the Jacquet functor and the fact that $\mathfrak{r}_{X_1^k}(\pi_1) \twoheadrightarrow \rho_1 \otimes \sigma_1$ by Frobenius reciprocity. By cuspidality of $\rho_1$, only the subquotients $J_k$ and $J_0$ in the filtration of $\mathfrak{r}_{X_1^k}(\omega_{m_1,m_2})$ may have cuspidal quotients. We obtain an exact sequence:
$$(J_k)_{\rho_1 \otimes \sigma_1} \to (\mathfrak{r}_{X_1^k}(\omega_{m_1,m_2}))_{\rho_1 \otimes \sigma_1} \to (J_0)_{\rho_1 \otimes \sigma_1} \to 0$$
where: 
$$(J_k)_{\rho_1 \otimes \sigma_1} = \rho_1 \otimes \sigma_1 \otimes \mathfrak{i}_{P_k}^{H_2}(\rho_1^{[\alpha_k,\xi_k]} \otimes \Theta(\pi_1,V_2^{m_2-k}))$$
and:
$$(J_0)_{\rho_1 \otimes \sigma_1} = \left\{ \begin{array}{cc}
0 & \textup{if } \rho_1 \neq \upsilon_{k,s} \\
\rho_1 \otimes \sigma_1 \otimes \Theta(\sigma_1,V_2^{m_2}) & \textup{if } \rho_1 = \upsilon_{k,s} 
\end{array} \right.$$
When $\rho_1 = \upsilon_{k,s}$ it is cuspidal and a character, so $k$ is $1$ or $0$. The result easily follows. \end{proof}

If we consider an irreducible quotient $\Theta(\pi_1,V_2^{m_2}) \twoheadrightarrow \pi_2$, then the previous lemma tells us that we are in either of the two cases:
$$\mathfrak{i}_{P_k}^{H_2}(\rho_1^{[\alpha_k,\xi_k]} \otimes \Theta(\pi_1,V_2^{m_2-k})) \twoheadrightarrow \pi_2 \textup{ or } \Theta(\sigma_1,V_2^{m_2}) \twoheadrightarrow \pi_2.$$
We refer to the first situation as \textit{dualising} and the second as \textit{erasing}. In order for the erasing to happen, it is necessary that $\rho_1 = \upsilon_{1,s}$ \textit{i.e.} $k=1$. Note that dualising does not necessarily imply the existence of an irreducible quotient $\Theta(\sigma_1,V_2^{m_2-k}) \twoheadrightarrow \pi_2'$ such that: 
$$\mathfrak{i}_{P_k}^{H_2}(\rho_1^{[\alpha_k,\xi_k]} \otimes \pi_2') \twoheadrightarrow \pi_2$$
that is equivalent by Cassleman duality \cite[II.3.8]{vig_book} to:
$$\rho_1^{[\alpha_k,\xi_k]} \otimes \pi_2' \subseteq \mathfrak{r}_{X_1^k}^{\textup{op}}(\pi_2),$$
where we used the parabolic restriction with respect to the opposite parabolic of $P(X_1^k)$, obtained after choosing a minimal parabolic along with a Levi decomposition. It is valid for the metaplectic group too since $(P,K)$-stability techniques \cite[II.3.7]{vig_book} still apply.

The surjection $\mathfrak{i}_{P_k}^{H_2}(\rho_1^{[\alpha_k,\xi_k]} \otimes \pi_2') \twoheadrightarrow \pi_2$ guarantees that $\mathfrak{i}_{P_k}^{H_1}(\rho_1^{[\alpha_k,\xi_k]} \otimes \pi_2')$ and $\pi_2$ have same (super)cuspidal support only when $\ell$ is banal \textit{i.e.} $m_2 \leq m_2(\textup{ban})$. For this reason, we can only keep track of the support -- through successive dualising and erasing  -- in the banal setting.

\begin{cor} \label{cuspidal_support_general_pi1_cor} Assume $\ell$ is banal with respect to $H_1$ and let $m_2 \leq m_2(\textup{ban})$. Let $\pi_1 \otimes \pi_2$ be an irreducible quotient of $\omega_{m_1,m_2}$. Denote by $[\pi_1] = [\rho_1,\rho_2,\cdots, \rho_r ; \sigma_1]$ the supercuspidal support of $\pi_1$ associated to the Levi $G_{k_1} \times G_{k_2} \times \cdots \times G_{k_r} \times H_1^{m_1-\sum k_i}$. Then there exist indices $1 \leq i_1 < \cdots < i_t \leq r$ such that:
\begin{eqnarray*}
[\pi_2] & = & [\rho_{i_1}^{[\alpha_{k_{i_1}},\xi_{k_{i_1}}]}, \cdots, \rho_{i_t}^{[\alpha_{k_{i_t}},\xi_{k_{i_t}}]} ; [\Theta(\sigma_1,V_2^{m_2(\sigma_1)+j})]] \\
 & = & [\rho_{i_1}^{[\alpha_{k_{i_1}},\xi_{k_{i_1}}]}, \cdots, \rho_{i_t}^{[\alpha_{k_{i_t}},\xi_{k_{i_t}}]},\nu_{1,s}, \cdots, \nu_{1,s-2j};\Theta(\sigma_1,V_2^{m_2(\sigma_1)})]
 \end{eqnarray*}
where $m_2 = k_{i_1} + \cdots + k_{i_t} + j + m_2(\sigma_1)$. In addition we can assume that:
\begin{itemize}
\item[$\bullet$] $t=r$ and $j = m_2 - (\sum_{1 \leq i \leq r} k_i) - m_2(\sigma_1) \geq 0$ if $m_2 \geq (\sum_{1 \leq i \leq r} k_i) + m_2(\sigma_1)$;
\item[$\bullet$] $j=0$ and $t < r$ if $m_2 < (\sum_{1 \leq i \leq r} k_i )+ m_2(\sigma_1)$. 
\end{itemize} \end{cor}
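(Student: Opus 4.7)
The plan is an induction on $r$, the length of the supercuspidal support of $\pi_1$, driven by Lemma \ref{dualising_or_erasing_lem}.

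For the base case $r = 0$, I would observe that $\pi_1 = \sigma_1$ is supercuspidal, hence $p\&i$ since $\ell$ is banal with respect to $H_1$, so Corollary \ref{lifts_of_cuspidals_banal_setting_cor} b) realises $\Theta(\sigma_1, V_2^{m_2})$ as a submodule of $\mathfrak{i}_{P_\delta}^{H_2}(\nu_{\delta, s} \otimes \Theta(\sigma_1, V_2^{m_2(\sigma_1)}))$ with $\delta = m_2 - m_2(\sigma_1)$, and any irreducible quotient $\pi_2$ inherits its supercuspidal support from the inducing data. The character $\nu_{\delta, s}$ on $G_{X_2^\delta}$ has the shape $(\chi_{n_1} \circ \det)\,|\det|^{(-s+\delta)/2}$, and the relation $\nu_{k, s-2} = \nu_{k, s}|\det|$ recorded in Section \ref{Rallis_filtration_sec} decomposes it along the Borel into a chain $\nu_{1, s}, \nu_{1, s-2}, \ldots, \nu_{1, s-2(\delta-1)}$.

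For the inductive step, I would pick $\pi_1' \in \textup{Irr}_R(H_1^{m_1-k_1})$ irreducible of support $[\rho_2, \ldots, \rho_r; \sigma_1]$ with $\pi_1 \hookrightarrow \mathfrak{i}_{P_{k_1}}^{H_1}(\rho_1 \otimes \pi_1')$ and apply Lemma \ref{dualising_or_erasing_lem} to the irreducible quotient $\pi_2$. In the \emph{dualise} case, $\pi_2$ is a quotient of $\mathfrak{i}_{P_{k_1}}^{H_2}(\rho_1^{[\alpha_{k_1}, \xi_{k_1}]} \otimes \Theta(\pi_1', V_2^{m_2-k_1}))$. Proposition \ref{finite_length_from_cuspidals_prop} gives finite length of $\Theta(\pi_1', V_2^{m_2-k_1})$, and banality (through the Bernstein decomposition) forces $\pi_2$ to be a subquotient of $\mathfrak{i}_{P_{k_1}}^{H_2}(\rho_1^{[\alpha_{k_1}, \xi_{k_1}]} \otimes \tau)$ for some irreducible subquotient $\tau$ of $\Theta(\pi_1', V_2^{m_2-k_1})$; the support of $\pi_2$ is then $\rho_1^{[\alpha_{k_1}, \xi_{k_1}]}$ prepended to that of $\tau$, and the parameter $s$ is unchanged since $n_1$ and $n_2$ both drop by $2k_1$. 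In the \emph{erase} case, $\rho_1 = \upsilon_{1, s}$, $k_1 = 1$, and $\pi_2$ is an irreducible quotient of $\Theta(\pi_1', V_2^{m_2})$; the new parameter on the smaller pair is $s + 2$ ($n_1$ drops by $2$, $n_2$ unchanged), so the inductive hypothesis produces a chain starting $\nu_{1, s+2}, \nu_{1, s}, \ldots$ and the erased $\upsilon_{1, s} = \nu_{1, s}$ is absorbed into this chain at its natural position. The extremal-regime statements then follow by greedy scheduling: dualise whenever the second tower index remains $\geq m_2(\sigma_1)$ and erase otherwise; a direct count of the resulting steps gives the two cases.

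The main obstacle I foresee is that Lemma \ref{dualising_or_erasing_lem}, especially its alternative (b), only controls irreducible \emph{quotients} of $\Theta(\pi_1, V_2^{m_2})$, whereas the dualise branch requires information about an irreducible \emph{subquotient} $\tau$ of $\Theta(\pi_1', V_2^{m_2-k_1})$. I would resolve this by strengthening the induction to cover all irreducible subquotients, justifying the extension either through a Jacquet-module refinement of Lemma \ref{dualising_or_erasing_lem} combined with Casselman duality \cite[II.3.8]{vig_book} (valid for admissible representations even on the metaplectic tower), or by appealing to the invariance of supercuspidal support on a Bernstein block in the banal range. A secondary technical point is the careful bookkeeping of the parameter $s$ under successive erase steps and the verification that the order in which erased characters are read off lines up with their slots in the final $\nu$-chain.
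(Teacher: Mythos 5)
Your proposal follows the same route as the paper: induct on $r$ via the dualise/erase dichotomy of Lemma~\ref{dualising_or_erasing_lem}, land on the cuspidal case, compute the tail chain via Lemma~\ref{cuspidal_lift_jacquet_functor_inductive_formula_lem}/Corollary~\ref{lifts_of_cuspidals_banal_setting_cor}, and then reconcile the bookkeeping of the $s$-parameter. The concern you raise about quotient versus subquotient is genuine --- the paper signals it in the paragraph immediately before the corollary (``dualising does not necessarily imply the existence of an irreducible quotient $\pi_2'$...'') and relies implicitly on banality to push the supercuspidal support through parabolic induction regardless of where $\pi_2'$ sits in $\Theta(\pi_1', V_2^{m_2-k})$; strengthening the induction to arbitrary irreducible constituents, as you propose, is a clean way to make this rigorous, and it is compatible with all the lemmas invoked.

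However, your description of the erase step is not correct as written. You claim the erased $\rho_1 = \upsilon_{1,s}$ ``$= \nu_{1,s}$'' and is absorbed into the chain from the inductive hypothesis. These are characters of different groups ($G_{X_1^1}\subset H_1$ versus $G_{X_2^1}\subset H_2$) with different formulas --- $\upsilon_{1,s} = \delta_{X_1^1,n_2}|\det|^{(s+1)/2}$ while $\nu_{1,s} = \delta_{X_2^1,n_1}|\det|^{(-s+1)/2}$ --- so they are not equal. More to the point, nothing of the erased $\rho_1$ contributes to $[\pi_2]$ at all: in the erase branch, $\Theta(\pi_1',V_2^{m_2})\twoheadrightarrow\pi_2$ and the inductive hypothesis applied to $(\pi_1',\pi_2)$ with parameter $s+2$ already \emph{is} the whole answer for $[\pi_2]$. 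The ``reintegration'' you are reaching for is a separate accounting step at the end of the paper's proof, done to recast the answer in the form stated: one observes that the erased characters were forced to be $\upsilon_{1,s},\upsilon_{1,s+2},\ldots$, that $(\upsilon_{1,u})^{[\alpha_1,\xi_1]} = \nu_{1,u+2}$, and that these coincide with the leading terms $\nu_{1,s+2},\ldots,\nu_{1,s+2(r-t)}$ of the $\nu$-chain when $m_2 \geq \sum k_i + m_2(\sigma_1)$, so they may formally be relabelled as dualised factors $\rho_{i'}^{[\alpha_1,\xi_1]}$ to achieve $t=r$. With that correction the argument lines up with the paper's.
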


\begin{proof} By induction, we obtain after dualising $t$ times and erasing $r-t$ times: 
$$[\pi_2] = [\rho_{i_1}^{[\alpha_{k_{i_1}},\xi_{k_{i_1}}]},\cdots,\rho_{i_t}^{[\alpha_{k_{i_t}},\xi_{k_{i_t}}]}; [ \Theta(\sigma_1,V_2^{m_2(\sigma_1) + \delta }]]$$
where $\Theta(\sigma_1,V_2^{m_2(\sigma_1) + \delta})$ is irreducible by Corollary \ref{lifts_of_cuspidals_banal_setting_cor}. If $\delta=0$, we obtain the result with $j=0$. In this case $m_2 \leq (\sum_{1 \leq i \leq r} k_i) + m_2(\sigma_1)$. If $\delta>0$, we have:
$$[\Theta(\sigma_1,V_2^{m_2(\sigma_1) + \delta })] = [\nu_{1,s'}, \cdots, \nu_{1,s'-2\delta}; \Theta(\sigma_1,V_2^{m_2(\sigma_1)})]$$ 
with $s' = n_2(\sigma_1) + 2 \delta - (n_1 - 2 \sum k_i) + \eta_1$ and $n_2(\sigma_1) = n_2^0 + 2m_2(\sigma_1) = \textup{dim}(V_2^{m_2(\sigma_1)})$. Note that $s= n_2 - n_1 + \eta_1 = n_2(\sigma_1) + 2 \delta + 2 (\sum k_{i_j}) - n_1 + \eta_1 = s' - 2 \sum k_{i_j'}$ where $i_j'$ is the complement of $i_j$ in $\{1, \cdots , r\}$. Since the indices $i_j'$ have been erased, we must have $k_{i_1'}=k_{i_2'}=\cdots = k_{i_{r-t}'} = 1$ and $\rho_{i_1'} = \upsilon_{1,s}$, $\rho_{i_2'} = \upsilon_{1,s+2}$, $\cdots$, $\rho_{i_{r-t}'} = \upsilon_{1,s+2(r-t-1)}$. Indeed, dualising preserves $s = n_2 - n_1 + \eta_1 = n_2 - 2k - (n_1 -2k) + \eta_1$ whereas erasing replaces it by $n_2 - (n_1-2) +\eta_1 = s+2$. We can rewrite:
$$[\Theta(\sigma_1,V_2^{m_2(\pi_1) + \delta })] = [\nu_{1,s+2(r-t)}, \cdots, \nu_{1,s+2(r-t)-2\delta}; \Theta(\sigma_1,V_2^{m_2(\pi_1)})].$$
If $m_2 \geq \sum k_i + m_2(\sigma_1)$, then $\delta \geq \sum k_{i_j'} = r-t$ because $m_2 = m_2(\sigma_1) + \sum k_{i_j} + \delta$. So $j=\delta - (r-t) \geq 0$. We can list again our characters as:
$$\nu_{1,s+2(r-t)}, \cdots,\nu_{1,s+2}, \nu_{1,s}, \cdots, \nu_{1,s-2j}.$$
But $(\upsilon_{1,u})^{[\alpha_1,\xi_1]}=\nu_{1,u} |\cdot|^1 = \nu_{1,u+2}$. After a small reordering, this list is also:
$$(\upsilon_{1,s})^{[\alpha_1,\xi_1]}, \cdots,(\upsilon_{1,s+2(r-t-1)})^{[\alpha_1,\xi_1]},\nu_{1,s}, \cdots, \nu_{1,s-2j}.$$
Therefore we can reintegrate the first $r-t$ characters in the support as they run over the $\rho_{i_j'}$'s we erased. The argument is similar when $\delta < \sum k_{i_j'}$, but this time we will not be able to reintegrate all the characters we erased. \end{proof}

\section{Modular theta correspondence} \label{proof_modular_theta_sec}

We begin by recalling some of the notations. We are interested in the non-quaternionic case \textit{i.e.} when $D=E$ and $[E:F] \leq 2$. Let $R$ be an algebraically closed field of characteristic $\ell \neq p$. We fix a choice $q^{1/2}$ of a square root of $q$ in $R$.

In this section, $V_1$ is an $\epsilon_1$-hermitian space over $E$ of dimension $n_1$ and Witt index $m_1$. Let $V_1^k \subset V_1$ be in the same Witt series as $V_1$ with Witt index $k$. We assume they are ordered for inclusion \textit{i.e.} $V_1^{k} \supseteq V_1^{k'}$ if $k \leq k'$. We remark that $V_1^{m_1} = V_1$ and $V_1^0$ is anisotropic. We use similar notations when $V_2$ is an $\epsilon_2$-hermitian space. We now assume that $\epsilon_1 \epsilon_2 = - 1$, so that $(U(V_1),U(V_2))$ is an irreducible dual pair in $\textup{Sp}(V_1 \otimes_E V_2)$. Set:
$$\eta_1 = \left\{ \begin{array}{ll}
\epsilon_1 & \textup{if } E=F; \\
0 & \textup{if } [E:F]=2.
\end{array} \right.$$
Up to switching $V_1$ and $V_2$, we can always assume $n_2 \leq n_1 -\eta_1$. Set $s = n_2-n_1+\eta_1 \leq 0$. We fix a complete isotropic flag $\{X_1^k\}$ in $V_1$, together with a dual isotropic flag $\{Y_1^k\}$. This defines $V_1^0$ as the orthogonal complement of $X_1^{m_1} \oplus Y_1^{m_1}$ and $V_1^k = X_1^k + V_1^0 + Y_1^k$ satisfying $V_1^k \subset V_1^{k'}$ if $k \leq k'$. We do similar choices and use similar notations for $V_2$.

This section is generalising the strategy of \cite{gt} to the modular setting. We will denote by $\vartheta(\pi_1,V_2) \in \mathbb{N} \cup \{ +\infty \}$ the supremum of the lengths of semisimple quotients of $\Theta(\pi_1,V_2)$. If $\Theta(\pi_1,V_2)$ has finite length, then $\vartheta(\pi_1,V_2)$ is exactly the length of the cosocle $\theta(\pi_1,V_2)$. In particular it is finite. However, for representations of infinite length, or when the cosocle is not well-defined, we can have $\vartheta(\pi_1,V_2) = 0$ even though $\Theta(\pi_1,V_2) \neq 0$ and this means $\Theta(\pi_1,V_2)$ does not admit an irreducible quotient.

We also assume in this section that parabolic restriction functors preserve finite length representations. In the modular case, this holds for all reductive groups \cite[II.5.13]{vig_book}, but it is unknown for the metaplectic group in general, except when $p \neq 2$ or $\ell$ is large enough compared to the metaplectic group we study. We refer to Section \ref{covering-groups-sec} for a more detailed discussion about this point.

Moreover, we want to use the MVW involution, but there are some restrictions again in the metaplectic case. Indeed, in Theorem \ref{MVW_involution_metaplectic_thm}, we were able to prove its existence except when the characteristic of $F$ is positive and less than half the dimension of the symplectic space. Therefore, when the metaplectic group appears and $F$ has positive characteristic, we need to assume the characteristic is large enough.

\subsection{Boundary of $\mathfrak{I}(u)$} \label{boundary_of_I_u_sec}

Let $I(n_2)$ be the representation of the doubling method from Section \ref{Rallis_filtration_sec} with $V_1 \oplus (-V_1)$ and $V_2$. In particular $V_1$ and $-V_1$ are anti-isometric via $\alpha^- = \textup{id}_{V_1}$. Because $\alpha^-$ is so canonical, we make it implicit to lighten notations. Using Proposition \ref{modulus_character_parabolic_prop}, we normalise $I(n_2)$ to obtain:
$$\mathfrak{I}(s) = \mathfrak{i}_{P_1^\square}^{H_1^\square}(| \cdot |^{\frac{-2 n_1+n_1+\eta_1}{2}}\nu_{X_1^\square,n_2}) = \mathfrak{i}_{P_1^\square}^{H_1^\square}(| \cdot |^{\frac{s}{2}}\delta_{X_1^\square,n_2}).$$
We also identify $H_1^-$ and $H_1$ via $\gamma_-$ so that the filtration of $\mathfrak{I}(s)$ has subquotients: 
$$\mathfrak{I}_t(s) = \mathfrak{i}_{P_t \times P_t}^{H_1 \times H_1} \big( \upsilon_{t,s} \otimes \alpha \upsilon_{t,s} \otimes \textup{Reg}^{\xi_t^-}(U_t, U_t) \big)$$
where $\upsilon_{t,s} = \delta_{X_1^t,n_2} |\textup{det}_{X_1^t}|^{\frac{s+t}{2}}$ and $\alpha(m) = (-1,\textup{det}_F(m))^{n_2}$ for $m \in \textup{GL}_t(E)$.

We will need to consider other parameters than $s$, so for $u \in \mathbb{Z}$ we set:
$$\mathfrak{I}(u) = \mathfrak{i}_{P_1^\square}^{H_1^\square}(| \cdot |^{\frac{u}{2}}\delta_{X_1^\square,n_2}).$$
The filtration of $\mathfrak{I}(u)$ is obtained by replacing $s$ by $u$ and has subquotients $\mathfrak{I}_t(u)$. We consider $\mathfrak{I}(u) \in \textup{Rep}_R(H_1 \times H_1)$ as a representation of $H_1$ by restriction to the first factor.

\begin{defi} Let $u \in \mathbb{Z}$. We say $\pi_1 \in \textup{Irr}_R(H_1)$ occurs at $t$ in the filtration of $\mathfrak{I}(u)$ if $\mathfrak{I}_{t}(u)_{\pi_1} \neq 0$, or equivalently, $\textup{Hom}_{H_1}(\mathfrak{I}_{t}(u),\pi_1) \neq 0$. If there exists $t > 0$ such that $\pi_1$ occurs at $t$, we say $\pi_1$ appears on the boundary of $\mathfrak{I}(u)$. \end{defi}

\begin{rem} Note that $\{ \pi_1 \ | \ \mathfrak{I}_{0}(u)_{\pi_1} \neq 0\} = \textup{Irr}_R(H_1)$, so all irreducible representations occur at $0$. The representations that only occur at $0$ are said to be outside the boundary. They include all cuspidal representations, but they are not the only ones. We will see in the next section that representations outside the boundary are easier to deal with. \end{rem}

There are other equivalent useful definitions for the boundary:

\begin{prop} \label{equivalences_boundary_prop} Let $\pi_1 \in \textup{Irr}_R(H_1)$. The following assertions are equivalent:
\begin{enumerate}[label=\textup{\alph*)}]
\item $\pi_1$ appears on the boundary of $\mathfrak{I}(u)$;
\item there exists $0 < t \leq m_1$ such that $\textup{Hom}_{G_t^{m_1}}(\upsilon_{t,u},\mathfrak{r}_{X_1^t}^{\textup{op}}(\pi_1)) \neq 0$; \end{enumerate}
If parabolic restriction preserves finite length representations, they are equivalent to:
\begin{enumerate}
\item[\textup{c)}] there exist $0 < t \leq m_1$ and $\sigma_1 \in \textup{Irr}_R(H_1^{m_1 - t })$ such that $\textup{Hom}_{H_1}(\pi_1 , \alpha \upsilon_{t,u}^\vee \rtimes \sigma_1) \neq 0$.
\end{enumerate} \end{prop}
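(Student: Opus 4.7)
The three equivalences fall into two parts: (a)$\Leftrightarrow$(b) holds in full generality, while (b)$\Leftrightarrow$(c) will rely on the finite length of parabolic restriction together with the MVW involution.

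To prove (a)$\Leftrightarrow$(b), I will restrict $\mathfrak{I}_t(u)$ to the first copy of $H_1$. Commuting the two parabolic inductions in the expression of Theorem \ref{filtration_degenerate_principal_series_thm}, we can write $\mathfrak{I}_t(u)|_{H_1\times 1} \simeq \mathfrak{i}_{P_t}^{H_1}(\upsilon_{t,u}\otimes W_t)$, where the $M_t$-module $\upsilon_{t,u}\otimes W_t$ has underlying space $\mathfrak{i}_{P_t}^{H_1}(\alpha\upsilon_{t,u}\otimes\textup{Reg}^{\xi_t^-}(U_t,U_t))$: the first $G_t$ acts through $\upsilon_{t,u}$ and the first $U_t$ acts through the first coordinate of $\textup{Reg}^{\xi_t^-}$. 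Casselman duality on the first factor then gives
\[
\textup{Hom}_{H_1}(\mathfrak{I}_t(u),\pi_1)\simeq\textup{Hom}_{U_t}\bigl(W_t,\textup{Hom}_{G_t}(\upsilon_{t,u},\mathfrak{r}_{X_1^t}^{\textup{op}}(\pi_1))\bigr).
\]
A Mackey calculation identifies $W_t$, as a first-factor $U_t$-module, with a direct sum of copies of $C_c^\infty(U_t)$. By Lemma \ref{regular_rep_is_projective_lem} the regular representation is projective and surjects onto any non-zero smooth $U_t$-module, so the outer Hom is non-zero precisely when the inner one is. Taking the disjunction over $t>0$ gives (a)$\Leftrightarrow$(b).

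For (b)$\Leftrightarrow$(c), under the finite length hypothesis $\mathfrak{r}_{X_1^t}^{\textup{op}}(\pi_1)$ has finite length as an $M_t$-module, and since $G_t$ and $U_t$ commute, (b) is equivalent to the existence of an irreducible $M_t$-subrepresentation of the form $\upsilon_{t,u}\otimes\sigma_1'$ for some $\sigma_1'\in\textup{Irr}_R(H_1^{m_1-t})$. Passing to $M_t$-contragredients via the admissibility identity $(\mathfrak{r}_{X_1^t}^{\textup{op}}(\pi_1))^\vee\simeq \mathfrak{r}_{X_1^t}(\pi_1^\vee)$ turns this into the existence of an irreducible $M_t$-quotient $\upsilon_{t,u}^\vee\otimes(\sigma_1')^\vee$ of $\mathfrak{r}_{X_1^t}(\pi_1^\vee)$, which by Frobenius reciprocity reads $\textup{Hom}_{H_1}(\pi_1^\vee,\upsilon_{t,u}^\vee\rtimes(\sigma_1')^\vee)\neq 0$. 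By the MVW involution (Theorems \ref{MVW_involution_constructive_action_thm} and \ref{MVW_involution_metaplectic_thm}), $\pi_1^\vee\simeq \pi_1^\delta$, and choosing $\delta$ to preserve the flag $X_1^t$ (so that $\delta$ descends to $M_t$) one computes on the $G_t$-factor that $(\upsilon_{t,u}^\vee)^{\delta^{-1}}=\alpha\upsilon_{t,u}^\vee$; the character $\alpha$ tracks precisely the $\{\pm 1\}$-ambiguity of MVW on the genuine Weil-factor part of $\upsilon_{t,u}$. Setting $\sigma_1=((\sigma_1')^\vee)^{\delta^{-1}}$ yields (c).

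The main technical obstacle is the explicit identity $(\upsilon_{t,u}^\vee)^{\delta^{-1}}=\alpha\upsilon_{t,u}^\vee$, which requires tracking how the MVW involution acts on the non-normalised Weil factor $\Omega$ entering $\nu_{X_1^t,n_2}$, and in the metaplectic case relies on the tameness hypothesis of Theorem \ref{MVW_involution_metaplectic_thm}. A parallel concern is justifying the compatibility $\mathfrak{r}_{X_1^t}(\pi_1^\delta)\simeq\mathfrak{r}_{X_1^t}(\pi_1)^\delta$: the involution $\delta$ must stabilise the flag and act compatibly on the covering, which is only possible after a careful choice of representative and renormalisation via the canonical isomorphism $\gamma_{X',X}$.
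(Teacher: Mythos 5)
Your argument follows essentially the same route as the paper's: (a)$\Leftrightarrow$(b) reduces via second adjunction/Casselman duality to the observation that the regular representation of $U_t$ detects every non-zero smooth $U_t$-module, and (b)$\Leftrightarrow$(c) combines the MVW involution, Frobenius reciprocity, and (for the implication out of (b)) the finite-length hypothesis on parabolic restriction. You run (b)$\Rightarrow$(c) explicitly where the paper runs (c)$\Rightarrow$(b) and briefly appeals to finite length for the converse, and your $W_t$-bookkeeping for the $H_1\times 1$-restriction of $\mathfrak{I}_t(u)$ is more elaborate than strictly needed, but the substance and the technical caveats you flag (the $\alpha$-twist under MVW, stabilising the flag) are the same ones the paper handles.
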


\begin{proof} Let $\pi_1$ be on the boundary of $\mathfrak{I}(u)$ at $t$.

\noindent a) $\Leftrightarrow$ b) As $\pi_1$ is admissible, second adjunction is Casselman duality $\mathfrak{r}_{X_1^t}(\pi_1^\vee)^\vee \simeq \mathfrak{r}_{X_1^t}^{\textup{op}}(\pi_1)$. Therefore:
$$\textup{Hom}_{H_1}(\mathfrak{I}_{t}(u),\pi_1) \simeq \textup{Hom}_{M_t}(\upsilon_{t,u} \otimes \textup{Reg}^{\xi_t^-}(U_t,U_t),\mathfrak{r}_{X_1^t}^{\textup{op}}(\pi_1))$$
and the latter is non-zero if and only if $\textup{Hom}_{G_t}(\upsilon_{t,u},\mathfrak{r}_{X_1^t}^{\textup{op}}(\pi_1)) \neq 0$.

\noindent c) $\Leftrightarrow$ b) Applying the MVW involution to $\pi_1 \hookrightarrow  \alpha \upsilon_{t,u}^\vee \rtimes \sigma_1$ gives $\pi_1^\vee \hookrightarrow \upsilon_{t,u}^\vee \rtimes \sigma_1'$ for some $\sigma_1' \in \textup{Irr}_R(H_1^{m_1 - t })$ \textit{i.e.} $\textup{Hom}_{H_1}(\pi_1^\vee , \upsilon_{t,u}^\vee \rtimes \sigma_1') \neq 0$. By Frobenius reciprocity:
$$\textup{Hom}_{M_t}(\mathfrak{r}_{X_1^t}(\pi_1^\vee) , \upsilon_{t,u}^\vee \otimes  \sigma_1') \neq 0.$$
By duality, this is $\textup{Hom}_{M_t}(\upsilon_{t,u} \otimes  \sigma_1'',\mathfrak{r}_{X_1^t}(\pi_1^\vee)^\vee) \simeq \textup{Hom}_{M_t}(\upsilon_{t,u} \otimes  \sigma_1'',\mathfrak{r}_{X_1^t}^{\textup{op}}(\pi_1)) \neq 0$ for some $\sigma_1'' \in \textup{Irr}_R(H_1^{m_1 - t })$. The converse requires parabolic restriction to preserve finite length representations, which is known \cite[II.5.13]{vig_book} for all reductive groups in the modular setting. In the metaplectic case, it is an assumption we made in this section. \end{proof}

%FORMER PROOFS BELOW

%In the proof of c) $\Leftrightarrow$ b), we showed $\pi_1 \hookrightarrow  \alpha \upsilon_{t,u}^\vee \rtimes \sigma_1$ if and only if $\pi_1^\vee \hookrightarrow \upsilon_{t,u}^\vee \rtimes \sigma_1'$ by applying the MVW involution. This equivalence says $\pi_1$ and $\pi_1^\vee$ occur simultaneously:
%
%\begin{cor} For $0 \leq t \leq m_1$, we have the equivalence: NOT QUITE.... HERE \marginpar{HERE}
%$$\mathfrak{I}_{t}(u)_{\pi_1} \neq 0 \Leftrightarrow \mathfrak{I}_{t}(u)_{\pi_1^\vee} \neq 0.$$ \end{cor}
	
\subsection{Representations outside the boundary}

In this section, we show the Howe correspondence is true for representations outside the boundary of $\mathfrak{I}(-s)$ provided a certain assumption related to the generalised doubling method holds. Since we will very often refer to this hypothesis, we define it here:
\[
\mathfrak{I}(-s) \twoheadrightarrow \Theta(\chi_2,V_1 \oplus (-V_1)) \textup{ in } \textup{Rep}_R(H_1 \times H_1) \label{hypothesis_H} \tag{H}
\]

\begin{theo} \label{rep_outside_boundary_thm} Assume \textup{(H)} holds. Then, if $\pi_1$ is outside the boundary of $\mathfrak{I}(-s)$, we have one of the following two situations:
\begin{itemize}
\item $\Theta(\pi_1,V_2)$ has no irreducible quotient;
\item $\Theta(\pi_1,V_2)$ has a unique irreducible quotient, \textit{i.e.} its cosocle $\theta(\pi_1,V_2)$ is irreducible. Moreover, for $\pi_1' \in \textup{Irr}_R(H_1)$, we have $\Theta(\pi_1',V_2) \twoheadrightarrow \theta(\pi_1,V_2) \Leftrightarrow \pi_1' \simeq \pi_1$.
\end{itemize}
\end{theo}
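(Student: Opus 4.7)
The plan is to run a see-saw plus doubling argument in the spirit of \cite{gt}, adapted to the modular setting and driven by hypothesis \textup{(H)}. I would first set up the see-saw dual pair
$$\xymatrix{ H_2 \times H_2 \ar@{-}[dr] \ar@{-}[d] & H_1^\square \ar@{-}[d] \\ H_2^\triangle \ar@{-}[ur] & H_1 \times H_1 }$$
obtained by doubling $V_1$ against $V_2$. Remark \ref{doubling_method_from_minus_to_psi_inverse_rem} (applied with the roles of the indices $1$ and $2$ exchanged) gives, for any $\pi_1, \pi_1' \in \textup{Irr}_R(H_1)$, the identity
$$\Theta_\psi(\chi_2, V_1^\square)_{\pi_1 \otimes \pi_1'} \simeq \big( \Theta_\psi(\pi_1, V_2) \otimes \Theta_{\psi^{-1}}(\pi_1', V_2) \big)_{\chi_2} \otimes \pi_1 \otimes \pi_1'.$$
Using the MVW involution of Theorems \ref{MVW_involution_regular_action_thm}, \ref{MVW_involution_constructive_action_thm} and \ref{MVW_involution_metaplectic_thm} together with the $\gamma_-$-pullback relating $\omega_{\psi^{-1}}$ and $\omega_\psi$, one can convert $\Theta_{\psi^{-1}}(\pi_1',V_2)$ into an appropriate dual of $\Theta_\psi(\pi_1',V_2)$, so that whenever $\pi_2 \in \textup{Irr}_R(H_2)$ is a common irreducible quotient of $\Theta_\psi(\pi_1,V_2)$ and $\Theta_\psi(\pi_1',V_2)$, the canonical pairing $\pi_2 \otimes \pi_2^{[\chi_2]} \twoheadrightarrow \chi_2$ makes the $\chi_2$-coinvariants on the right-hand side non-zero. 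Combined with hypothesis \textup{(H)}, this produces a surjection of $(H_1 \times H_1)$-modules $\mathfrak{I}(-s) \twoheadrightarrow \pi_1 \boxtimes \pi_1'$.

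Next I would exploit the Kudla--Rallis filtration of Theorem \ref{filtration_degenerate_principal_series_thm},
$$0 \subset I_0(-s) \subset I_1(-s) \subset \cdots \subset I_{m_1}(-s) = \mathfrak{I}(-s),$$
together with the hypothesis that $\pi_1$ lies outside the boundary. By definition $\mathfrak{I}_t(-s)_{\pi_1} = 0$ for $t > 0$, so $\textup{Hom}_{H_1 \times H_1}(\mathfrak{I}_t(-s), \pi_1 \otimes Y)$ vanishes for every smooth $H_1$-representation $Y$ and every $t > 0$. Iterating left-exactness of $\textup{Hom}_{H_1 \times H_1}(-, \pi_1 \boxtimes \pi_1')$ over the short exact sequences $0 \to I_{t-1}(-s) \to I_t(-s) \to \mathfrak{I}_t(-s) \to 0$ yields the inclusion
$$\textup{Hom}_{H_1 \times H_1}(\mathfrak{I}(-s), \pi_1 \boxtimes \pi_1') \hookrightarrow \textup{Hom}_{H_1 \times H_1}(\mathfrak{I}_0(-s), \pi_1 \boxtimes \pi_1').$$
Since $\mathfrak{I}_0(-s) \simeq \textup{Reg}^{\xi_0^-}(H_1, H_1)$ is a generalised genuine regular representation in the sense of Section \ref{generalised-regular-reps-sec}, its $\pi_1$-isotypic quotient is $\pi_1 \boxtimes \pi_1^{[\xi_0^-]}$; the last Hom space is therefore at most one-dimensional, and non-zero only when $\pi_1' \simeq \pi_1^{[\xi_0^-]}$. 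Tracking back the MVW conversion from $\Theta_{\psi^{-1}}$ to $\Theta_\psi$, this constraint rewrites exactly as $\pi_1 \simeq \pi_1'$, giving the uniqueness clause in the second bullet.

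For the dichotomy and the irreducibility of the cosocle, I would specialise to $\pi_1' = \pi_1$. Decomposing $\theta(\pi_1, V_2) = \bigoplus_i n_i \pi_2^{(i)}$ and $\theta_{\psi^{-1}}(\pi_1, V_2) = \bigoplus_j m_j \sigma_2^{(j)}$ into distinct irreducibles, the surjections $\Theta \twoheadrightarrow \theta$ and $\Theta_{\psi^{-1}} \twoheadrightarrow \theta_{\psi^{-1}}$ yield a surjection on $\chi_2$-coinvariants, hence
$$\sum_{i} n_i \, m_{i'} \;\leq\; \dim_R \mathfrak{I}(-s)_{\pi_1 \otimes \pi_1} \;\leq\; 1,$$
where $i \mapsto i'$ matches $\sigma_2^{(i')} \simeq \pi_2^{(i), [\chi_2]}$. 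At most one pair therefore contributes, with $n_i = m_{i'} = 1$, so either $\theta(\pi_1, V_2) = 0$ (first bullet) or $\theta(\pi_1, V_2)$ is irreducible with multiplicity one (second bullet).

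The hard part will be the bookkeeping of the various characters and involutions: the genuine character $\xi_0^-$, the twist $\chi_2$ from the doubling, the quadratic character $\alpha$ appearing in the filtration, and the MVW involution $\delta$ used to identify $\Theta_{\psi^{-1}}$ with a dualisation of $\Theta_\psi$. One has to verify that, after all these twists cancel correctly, the constraint $\pi_1' \simeq \pi_1^{[\xi_0^-]}$ extracted from the filtration matches precisely the constraint $\pi_1' \simeq \pi_1$ coming from the existence of a common quotient, with no residual twist. A secondary technical point is that, in all steps involving Jacquet functors and Frobenius reciprocity on the metaplectic side, second adjunction must be replaced by Casselman duality, as in Proposition \ref{equivalences_boundary_prop}.
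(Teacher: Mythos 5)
Your proposal is essentially the paper's own proof, run through the same see-saw and doubling identity, the same Kudla--Rallis filtration localised to the degree-$0$ piece via the outside-the-boundary hypothesis, and the same MVW conversion relating $\Theta_{\psi^{-1}}$ to a dual of $\Theta_\psi$. The only cosmetic difference is that you phrase the filtration step via left-exactness of $\textup{Hom}(-,\pi_1 \boxtimes \pi_1')$ while the paper dually uses right-exactness of the $\pi_1\otimes(\pi_1')^{[\xi_0^-]}$-isotypic quotient functor to get the surjection $\mathfrak{I}_0(-s)_{\pi_1\otimes(\pi_1')^{[\xi_0^-]}} \twoheadrightarrow \mathfrak{I}(-s)_{\pi_1\otimes(\pi_1')^{[\xi_0^-]}}$; both formulations bound the relevant multiplicity space by $1$, and your explicit decomposition of $\theta(\pi_1,V_2)$ into irreducibles is harmless even without (Fin) since the bound forces the sum to be finite. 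As you note, the twist bookkeeping (that $\xi_0^-$ and the MVW twist $\delta_1$ cancel so the constraint from $\mathfrak{I}_0(-s) \simeq \textup{Reg}^{\xi_0^-}(H_1,H_1)$ rewrites as $\pi_1'\simeq\pi_1$, and that $\delta_2$ identifies $\pi_2^{\delta_2}$ with $\pi_2^{[\chi_2]}$) is where the real work lies; the paper packages it into the single identity $\Theta_{\psi^{-1}}((\pi_1')^{[\xi_0^-]},V_2)^{\delta_2}\simeq\Theta_\psi(\pi_1',V_2)$ via Appendix \ref{dual_pairs_weil_rep_appendix}, and verifying this is the one step your outline does not actually carry out.
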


\begin{proof} By assumption:
$$\mathfrak{I}(-s) \twoheadrightarrow \Theta(\chi_2,V_1 \oplus (-V_1)).$$
Because $\pi_1$ does not occur on the boundary of $\mathfrak{I}(-s)$, we have: 
$$\mathfrak{I}_0(-s)_{\pi_1 \otimes (\pi_1')^{[\xi_0^-]}} \twoheadrightarrow \mathfrak{I}(-s)_{\pi_1 \otimes (\pi_1')^{[\xi_0^-]}}$$
where $\mathfrak{I}_0(-s)_{\pi_1 \otimes (\pi_1')^{[\xi_0^-]}}$ is $\pi_1 \otimes (\pi_1)^{[\xi_0^-]}$ if $\pi_1' \simeq \pi_1$ and $0$ otherwise. Therefore:
$$\mathfrak{I}_0(-s)_{\pi_1 \otimes (\pi_1')^{[\xi_0^-]}} \twoheadrightarrow \Theta(\chi_2,V_1 \oplus (-V_1))_{\pi_1 \otimes (\pi_1')^{[\xi_0^-]}}.$$
By Lemma \ref{seesaw_identity_isotypic_chi1_pi2_lem} and Remark \ref{doubling_method_from_minus_to_psi_inverse_rem}, we have:
$$(\mathfrak{I}_0(-s) \otimes \pi_1^\vee \otimes ((\pi_1')^{[\xi_0^-]})^\vee)_{U(V_1) \times U(V_1)} \twoheadrightarrow (\Theta_\psi(\pi_1,V_2) \otimes \Theta_{\psi^{-1}}((\pi_1')^{[\xi_0^-]},V_2))_{\chi_2}.$$
where the left-hand side is $R$ when $\pi_1' \simeq \pi_1$ and $0$ otherwise.

Assume the first situation does not hold, so $\Theta(\pi_1,V_2)$ admits an irreducible quotient. From Appendix \ref{dual_pairs_weil_rep_appendix}, if we choose MVW involutions $\delta_1$ and $\delta_2$ of respectively $V_1$ and $V_2$, the morphism $\delta = \delta_1 \otimes \delta_2$ is in $\textup{GSp}(W)$ and has similitude factor $a=-1$. Therefore $\omega_{\psi,H_1,H_2}^\delta \simeq \omega_{\psi^{-1},H_1,H_2} \circ (\delta_1,\delta_2)$ implies:
$$\Theta_{\psi^{-1}} ((\pi_1')^{[\xi_0^-]},V_2)^{\delta_2} \simeq \Theta_{\psi} (((\pi_1')^{[\xi_0^-]})^{\delta_1},V_2) = \Theta(\pi_1',V_2)$$
\textit{i.e.} $\Theta_{\psi^{-1}}((\pi_1')^{[\xi_0^-]},V_2) \simeq \Theta(\pi_1',V_2)^{\delta_2}$. The dimension of $(\Theta(\pi_1,V_2) \otimes \Theta(\pi_1',V_2)^{\delta_2})_{\chi_2}$ measures common factors in the cosocles counted with multiplicities. It has dimension  $1$ when $\pi_1' \simeq \pi_1$, so $\theta(\pi_1,V_2)$ is irreducible. It has dimension $0$ otherwise \textit{i.e.} $\textup{Hom}_{H_2}(\Theta(\pi_1',V_2),\theta(\pi_1,V_2)) = 0$  when $\pi_1'$ and $\pi_1$ are not isomorphic.  \end{proof}

Conversely, if two theta lifts share a common irreducible quotient, they must appear simultaneously on the boundary:

\begin{prop} \label{violating_theta_at_the_same_t_prop} Assume $\mathfrak{I}(-s) \twoheadrightarrow \Theta(\chi_2,V_1 \oplus (-V_1))$ in $\textup{Rep}_R(H_1 \times H_1)$. Let $\pi_1 \neq \pi_1'$ in $\textup{Irr}_R(H_1)$ and suppose $\Theta(\pi_1,V_2)$ and $\Theta(\pi_1',V_2)$ have a common irreducible quotient. If parabolic restriction preserves finite length representations, there exists $t >0$ such that $\pi_1$ and $\pi_1'$ occur at $t$ on the boundary of $\mathfrak{I}(-s)$. \end{prop}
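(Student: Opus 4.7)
The plan is to parallel the proof of Theorem \ref{rep_outside_boundary_thm}, pushing the chain of deductions one step further: from a common irreducible quotient we derive non-vanishing of a coinvariant of $\mathfrak{I}(-s)$, then localise this contribution to a specific stratum of the filtration, and finally translate it into an occurrence on the boundary at the same index $t$ from both sides. Let $\sigma \in \textup{Irr}_R(H_2)$ be a common irreducible quotient of $\Theta(\pi_1,V_2)$ and $\Theta(\pi_1',V_2)$. The MVW involution on $H_2$ (Theorem \ref{MVW_involution_regular_action_thm} or \ref{MVW_involution_metaplectic_thm}) gives $\sigma^{\delta_2} \simeq \sigma^\vee$, and the pullback identity $\omega_\psi^\delta \simeq \omega_{\psi^{-1}}$ already used in the proof of Theorem \ref{rep_outside_boundary_thm} identifies $\Theta_{\psi^{-1}}((\pi_1')^{[\xi_0^-]},V_2) \simeq \Theta(\pi_1',V_2)^{\delta_2}$, which therefore surjects onto $\sigma^\vee$. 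Consequently
\[ \bigl(\Theta(\pi_1,V_2) \otimes \Theta_{\psi^{-1}}((\pi_1')^{[\xi_0^-]},V_2)\bigr)_{\chi_2} \twoheadrightarrow (\sigma \otimes \sigma^\vee)_{\chi_2} \neq 0, \]
and applying the seesaw identity (Lemma \ref{seesaw_identity_isotypic_chi1_pi2_lem} together with Remark \ref{doubling_method_from_minus_to_psi_inverse_rem}) followed by the standing hypothesis $\mathfrak{I}(-s) \twoheadrightarrow \Theta(\chi_2, V_1 \oplus (-V_1))$ produces $\mathfrak{I}(-s)_{\pi_1 \otimes (\pi_1')^{[\xi_0^-]}} \neq 0$.

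Next, invoking right-exactness of the coinvariant functor in stages along the ascending filtration of $\mathfrak{I}(-s)$ from Theorem \ref{filtration_degenerate_principal_series_thm}, at least one subquotient $\mathfrak{I}_t(-s)$ satisfies $\mathfrak{I}_t(-s)_{\pi_1 \otimes (\pi_1')^{[\xi_0^-]}} \neq 0$. For $t = 0$ the formula $\mathfrak{I}_0(-s) \simeq \textup{Reg}^{\xi_0^-}(H_1,H_1)$ shows this coinvariant is non-zero only when $(\pi_1')^{[\xi_0^-]} \simeq \pi_1^{[\xi_0^-]}$, i.e. $\pi_1 \simeq \pi_1'$, which contradicts the hypothesis. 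Hence some $t \geq 1$ carries the contribution.

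It remains to upgrade the non-vanishing at this stratum into an occurrence at $t$ for both $\pi_1$ and $\pi_1'$. A surjection $\mathfrak{I}_t(-s) \twoheadrightarrow \pi_1 \otimes (\pi_1')^{[\xi_0^-]}$ projected onto the first $H_1$-factor immediately gives $\mathfrak{I}_t(-s)_{\pi_1} \neq 0$, so $\pi_1$ occurs at $t$. Projection onto the second factor yields a surjection $\mathfrak{i}_{P_t}^{H_1}(\alpha \upsilon_{t,-s} \otimes \tau) \twoheadrightarrow (\pi_1')^{[\xi_0^-]}$ for some $\tau \in \textup{Rep}_R(U_t)$. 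Using the explicit form of $\xi_0^-$ from Theorem \ref{filtration_degenerate_principal_series_thm} to identify $(-)^{[\xi_0^-]}$ as the contragredient twisted by an appropriate quadratic character of $H_1$, combined with the MVW involution on $H_1$ and the compatibility of parabolic induction with contragredients on admissible representations, this rearranges into an embedding $\pi_1' \hookrightarrow \alpha \upsilon_{t,-s}^\vee \rtimes \sigma_1'$ for some $\sigma_1' \in \textup{Irr}_R(H_1^{m_1-t})$. Proposition \ref{equivalences_boundary_prop}(c) then gives that $\pi_1'$ also occurs at $t$.

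The main obstacle will be this last translation in the third paragraph: the bookkeeping needed to match the quadratic twist implicit in $(-)^{[\xi_0^-]}$ against the $\alpha$-twist appearing in the inducing data. One must also invoke the MVW involution on $H_1$, which in the metaplectic case requires the tameness assumption of Theorem \ref{MVW_involution_metaplectic_thm}, and the stated hypothesis that parabolic restriction preserves finite length in order to apply Proposition \ref{equivalences_boundary_prop}(c).
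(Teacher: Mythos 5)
Your proposal is correct and takes essentially the same approach as the paper's proof, which in the paper is compressed into three sentences by citing "as in the previous proof" and "as in Proposition \ref{equivalences_boundary_prop}." You unpack exactly what those references hide: the seesaw identity plus the MVW pullback $\Theta_{\psi^{-1}}((\pi_1')^{[\xi_0^-]},V_2)\simeq\Theta(\pi_1',V_2)^{\delta_2}$ to produce a nonzero $\chi_2$-coinvariant, then (H) to lift this to $\mathfrak{I}(-s)_{\pi_1\otimes(\pi_1')^{[\xi_0^-]}}\neq 0$, then right exactness along the filtration of Theorem \ref{filtration_degenerate_principal_series_thm} to isolate a stratum $t$, then $\pi_1\neq\pi_1'$ ruling out $t=0$ because $\mathfrak{I}_0(-s)\simeq\textup{Reg}^{\xi_0^-}(H_1,H_1)$ only pairs $\pi_1$ with its own twist, and finally the twist-bookkeeping from Proposition \ref{equivalences_boundary_prop} to turn the second-factor surjection through $\alpha\upsilon_{t,-s}$ into an occurrence of $\pi_1'$ at $t$. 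Your candid flagging of the last translation as the delicate step is appropriate, and the hypotheses you invoke (parabolic restriction preserving finite length, the MVW involution with its metaplectic tameness caveat) match those of the paper.
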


\begin{proof} As in the previous proof, we use the see-saw identity to show $\mathfrak{I}(-s) \twoheadrightarrow \pi_1 \otimes (\pi_1')^{\delta_1}$. Therefore there exists $t$ such that $\upsilon_{t,-s} \rtimes \sigma_1 \twoheadrightarrow \pi_1$ and $\alpha \upsilon_{t,-s} \rtimes \sigma_1' \twoheadrightarrow (\pi_1')^{\delta_1}$. The latter implies $\upsilon_{t,-s} \rtimes \sigma_1' \twoheadrightarrow \pi_1'$ as in Proposition \ref{equivalences_boundary_prop}. Note that $\pi_1 \neq \pi_1'$ implies $t>0$ because $\mathfrak{I}_0(-s)_{\pi_1} = \pi_1 \otimes (\pi_1)^{\delta_1}$. \end{proof}

\subsection{Representations on the boundary}

\paragraph{Induced representations with irreducible socle.} We first need the following lemma, generalising \cite[5.2]{gt} to the modular setting. We drop the indices in the dual pair and simply write $H$ to mean $H_1$ or $H_2$. The quadratic character $\alpha(m) = (-1, \textup{det}(m))_F^n$ appearing below is due to the conjugation by the Weyl element realising the transpose-inverse-conjugate morphism on a genuine representation as it involves a twist by $\alpha$ in the identification with the conjugate-contragredient \textit{i.e.} ${}^w \rho \simeq \alpha \cdot {}^c \rho^\vee$.

\begin{lem} \label{irred_socle_criterion_lemma} Let $\rho \in \textup{Irr}_R^{\textup{scusp}}(G_r^{m_1})$ and $\sigma \in \textup{Irr}_R(H^{m - r a})$ with $a \in \mathbb{N}$ and set: 
$$\sigma_{\rho,a} = \rho^{\times a} \rtimes \sigma \in \textup{Rep}_R(H).$$
We assume $q_E^r \neq 1 \ \textup{mod} \ \ell$ and:
\begin{enumerate}[label=\textup{\alph*)}]
\item ${}^c \rho^\vee \not\simeq \alpha \rho$;
\item $\sigma \nsubseteq \rho \ltimes \sigma_0$ for all $\sigma_0$.
\end{enumerate}
Then, if parabolic restriction preserves finite length representations:
$$\mathfrak{r}_{X^{ra}}(\sigma_{\rho,a}) \simeq (\rho^{\times a} \otimes \sigma) \oplus T$$
where $T$ has no non-trivial subquotient of the form $\rho^{\times a} \otimes \sigma'$. In particular, the socle of $\sigma_{\rho,a}$ is irreducible \textit{i.e.} it contains a unique irreducible subrepresentation. \end{lem}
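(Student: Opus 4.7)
The plan is to compute $\mathfrak{r}_{X^{ra}}(\sigma_{\rho,a})$ explicitly via the geometric lemma for classical-style groups, and then read off both the direct sum decomposition and the irreducible socle from this computation. First, I would apply the geometric lemma to the Jacquet module of $\rho^{\times a} \rtimes \sigma$. This yields a filtration whose subquotients are parametrised by the Weyl-coset data encoding, for each of the $a$ copies of $\rho$, whether it stays on the $\textup{GL}$-side, is swapped across the $H$-side (producing a twist of the form $\alpha \cdot {}^c \rho^\vee$ via the Weyl element that realises the transpose-inverse-conjugate), or is partially interchanged with a piece of a Jacquet module of $\sigma$. Each subquotient has the shape $\mathfrak{i}_Q^M( \tau \otimes \sigma'')$ where $\tau$ is a product of copies of $\rho$, $\alpha {}^c \rho^\vee$, and intermediate $\rho|\det|^{\pm k}$ factors, and $\sigma''$ ranges over constituents of Jacquet modules of $\sigma$.

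Second, I would extract from this filtration the subquotients of the form $\rho^{\times a} \otimes \sigma'$. Because $\rho$ is supercuspidal, any such constituent must come from a Weyl-coset term in which every $\textup{GL}_r$-block is forced, up to isomorphism, to be $\rho$ itself. Hypothesis (a), ${}^c \rho^\vee \not\simeq \alpha \rho$, eliminates all terms in which a copy of $\rho$ crosses to the $H$-side, since such a copy would reappear as $\alpha {}^c \rho^\vee$. The condition $q_E^r \not\equiv 1 \bmod \ell$ eliminates the intermediate twists, as it forces $\rho|\det|^{\pm 1} \not\simeq \rho$ (so the cuspidal line through $\rho$ has no $\ell$-periodicity in the relevant range). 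Finally, hypothesis (b), rephrased through Casselman duality as $\textup{Hom}_{M_r}( \mathfrak{r}_{X^r}^{\textup{op}}(\sigma), \rho \otimes \sigma_0) = 0$ for all $\sigma_0$, prevents $\sigma$ itself from supplying additional copies of $\rho$. Together these rule out every Weyl coset except the trivial one, whose contribution is exactly $\rho^{\times a} \otimes \sigma$, appearing with multiplicity one. Moreover, the trivial coset corresponds to the top (open) piece of the geometric filtration, so the $\rho^{\times a}$-isotypic piece splits off: the projection onto the block of supercuspidal support $[\rho^{\times a}, \textup{supp}(\sigma)]$ gives $\mathfrak{r}_{X^{ra}}(\sigma_{\rho,a}) \simeq (\rho^{\times a} \otimes \sigma) \oplus T$, and by construction $T$ has no subquotient of the form $\rho^{\times a} \otimes \sigma'$.

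For the irreducible socle, suppose $\pi \hookrightarrow \sigma_{\rho,a}$ is any irreducible subrepresentation. By Frobenius reciprocity, $\mathfrak{r}_{X^{ra}}(\pi) \twoheadrightarrow \rho^{\times a} \otimes \sigma_\pi$ is not quite the right direction, so instead I use that exactness of $\mathfrak{r}_{X^{ra}}$ gives $\mathfrak{r}_{X^{ra}}(\pi) \hookrightarrow \mathfrak{r}_{X^{ra}}(\sigma_{\rho,a})$. Since $\pi \hookrightarrow \rho^{\times a} \rtimes \sigma$, applying second adjointness in the form of Casselman duality produces a nonzero $M$-morphism $\rho^{\times a} \otimes \sigma \to \mathfrak{r}_{X^{ra}}^{\textup{op}}(\pi)$, or equivalently, after replacing $\mathfrak{r}_{X^{ra}}^{\textup{op}}$ by the dual of $\mathfrak{r}_{X^{ra}}$ on admissibles, a surjection of $\mathfrak{r}_{X^{ra}}(\pi)$ onto (the contragredient of) $\rho^{\times a} \otimes \sigma$-type pieces. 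In any case, $\mathfrak{r}_{X^{ra}}(\pi)$ must meet the $\rho^{\times a}$-isotypic part of $\mathfrak{r}_{X^{ra}}(\sigma_{\rho,a})$, which is exactly $\rho^{\times a} \otimes \sigma$ and has multiplicity one. So any two irreducible subrepresentations $\pi, \pi' \hookrightarrow \sigma_{\rho,a}$ have Jacquet modules containing the same line inside this summand, which forces $\pi = \pi'$.

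The hard part will be the geometric-lemma bookkeeping in step one: writing the Weyl-coset decomposition correctly for the dual-pair setting (including the metaplectic case), with the exact twisting characters $\alpha$, $|\det|^{\bullet}$ and the conjugation $c$ appearing where they should, and verifying that the three hypotheses cut out \emph{all} non-trivial Weyl cosets cleanly. A subsidiary difficulty is that, for the metaplectic factor, second adjointness is not available and must be systematically replaced by Casselman duality, valid under the assumed finite-length preservation of parabolic restriction; this forces one to formulate the ``no $\rho \otimes \sigma_0$ in the socle of the Jacquet module'' statement in dual form, and to be careful that the reformulation of hypothesis (b) really matches the statements that appear in the geometric filtration.
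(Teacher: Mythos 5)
Your proposal takes essentially the same route as the paper: apply the geometric lemma, use hypothesis (a) and uniqueness of supercuspidal support to kill the constituents carrying a factor $\alpha\,{}^c\rho^\vee$, use hypothesis (b) together with the decomposition of the finite-length category by supercuspidal support to rule out contributions from $\mathfrak{r}_{X^{rk_2}}(\sigma)$, conclude multiplicity one for $\rho^{\times a}\otimes\sigma$, and then obtain the direct-sum decomposition and the irreducible socle from that multiplicity-one statement. Two small inaccuracies are worth flagging. First, the identity Weyl coset corresponds to the \emph{closed} Bruhat cell, not the open one: in the geometric lemma this term arises as the ``evaluation at $1$'' quotient $\mathfrak{r}_{X^{ra}}(\sigma_{\rho,a}) \twoheadrightarrow \rho^{\times a}\otimes\sigma$, and the splitting is then produced by a section coming from the supercuspidal-support block decomposition, rather than by projecting off a subrepresentation. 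Second, for the irreducible-socle step, ordinary Frobenius reciprocity already gives, for any irreducible $\pi \hookrightarrow \sigma_{\rho,a}$, a non-zero map $\mathfrak{r}_{X^{ra}}(\pi) \to \rho^{\times a}\otimes\sigma$; you did not need to reach for Casselman duality there, since the multiplicity-one statement then immediately forces uniqueness of such a $\pi$. Neither slip damages the proof.
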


\begin{proof} First, the hypothesis $q_E^r \neq 1 \ \textup{mod} \ \ell$ implies $\rho^{\times a}$ is irreducible by \cite[V.3]{vigneras_induced_R_reps}. Second, as a consequence of Frobenius reciprocity, the induced representation $\mathfrak{i}_P^G(\tau)$ with $\tau \in \textup{Irr}_R(M)$ has irreducible socle if the semisimplification of $\mathfrak{r}_G^P(\mathfrak{i}_P^G(\tau))$ contains $\tau$ with multiplicity $1$. Therefore we simply need to prove the result about the $\rho^{\times a}$-isotypic part.

We use \cite{tadic,hanzer_muic} to interpret the geometric lemma. Because $\rho$ is cuspidal, irreducible subquotients of $\mathfrak{r}_{X^{ra}}(\sigma_{\rho,a})$ are of the form $\delta \otimes \sigma'$ where $\delta$ is a subquotient of $\rho^{\times k_3} \times (\alpha {}^c \rho^\vee)^{\times k_1} \times \tau$ with $k_1 + k_2 + k_3 = a$ and $\tau \otimes \sigma'$ is an irreducible subquotient of $\mathfrak{r}_{X^{rk_2}}(\sigma)$. Here we have used the fact that transpose-inverse realises the contragredient for irreducible representations of general linear groups. This fact is valid in the modular setting by the discussion at the end of Section \ref{invariant_distrib_modular_sec}.

Suppose $\delta \simeq \rho^{\times a}$. Then, by uniqueness of the supercuspidal support \cite[V.4]{vigneras_induced_R_reps}, we must have $k_1 = 0$ by assumption a) and $\tau \simeq \rho^{\times k_2}$. We now prove $k_2=0$ as a consequence of assumption b). Indeed, if $\rho^{\times k_2} \otimes \sigma'$ is a subquotient of $\mathfrak{r}_{X^{rk_2}}(\sigma)$ then $\mathfrak{r}_{X^{rk_2}}(\sigma)_{\rho^{\times k_2}} \neq 0$ because $\mathfrak{r}_{X^{rk_2}}(\sigma) \in \textup{Rep}_R(M)$ has finite length and the finite length category for $\textup{GL}_{rk_2}(E)$ is decomposed by supercuspidal support \cite{drevon_secherre}. But this would contradict assumption b), so $k_2 = 0$. Therefore we must have $k_1 = k_2 = 0$ and the subquotient corresponding to $(0,0,a)$ in the geometric lemma is associated to the closed Bruhat cell $P(X^{ra})$ \textit{i.e.} it is the ``evaluation at $1$ map'' $\mathfrak{r}_{X^{ra}}(\sigma_{\rho,a}) \twoheadrightarrow \rho^{\times a} \otimes \sigma$. This surjective map has a section as a consequence of the finite length category decomposition since there is no other $\rho^{\times a} \otimes \sigma'$ subquotient. \end{proof}

We have a nice uniqueness statement for the irreducible socle:

\begin{cor} \label{max_a_and_b_cor} We use the same notations and hypotheses as in the previous lemma. Let $\pi$ be the socle of $\sigma_{\rho,a}$. Let $b \geq a$ and $\delta \in \textup{Irr}_R(H^{m - rb})$ such that $\pi \subseteq \delta_{\rho,b} = \rho^{\times b} \rtimes \delta$. Then $b=a$ and $\delta \simeq \sigma$.  \end{cor}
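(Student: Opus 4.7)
The plan is to exploit the two embeddings $\pi \hookrightarrow \sigma_{\rho,a}$ and $\pi \hookrightarrow \delta_{\rho,b}$ to manufacture an embedding $\sigma \hookrightarrow \rho^{\times (b-a)} \rtimes \delta$, and then invoke assumption (b) of Lemma \ref{irred_socle_criterion_lemma}. First I would translate the two embeddings via Frobenius reciprocity into surjections
$$\mathfrak{r}_{X^{ra}}(\pi) \twoheadrightarrow \rho^{\times a} \otimes \sigma \quad \textup{and} \quad \mathfrak{r}_{X^{rb}}(\pi) \twoheadrightarrow \rho^{\times b} \otimes \delta.$$
By transitivity of the Jacquet functor, refining the second one to the intermediate Levi $M = \textup{GL}_r^a \times \textup{GL}_r^{b-a} \times H^{m-rb}$ produces a surjection $\mathfrak{r}_M(\pi) \twoheadrightarrow \rho^{\times a} \otimes \rho^{\times (b-a)} \otimes \delta$.

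Next I would invoke Lemma \ref{irred_socle_criterion_lemma}: $\mathfrak{r}_{X^{ra}}(\sigma_{\rho,a}) = (\rho^{\times a} \otimes \sigma) \oplus T$ with no irreducible subquotient of $T$ of the form $\rho^{\times a} \otimes \sigma'$. Exactness of the Jacquet functor gives $\mathfrak{r}_{X^{ra}}(\pi) \hookrightarrow \mathfrak{r}_{X^{ra}}(\sigma_{\rho,a})$, and combining with the Frobenius surjection above yields a short exact sequence
$$0 \to K \to \mathfrak{r}_{X^{ra}}(\pi) \to \rho^{\times a} \otimes \sigma \to 0, \qquad K \subseteq T,$$
so $K$ has no $\rho^{\times a} \otimes \sigma'$-subquotient. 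Applying the exact Jacquet functor $\mathfrak{r}^{H^{m-ra}}_{X^{r(b-a)}}$ on the isometry factor and using transitivity identifies the middle term of the resulting exact sequence with $\mathfrak{r}_M(\pi)$ and the right term with $\rho^{\times a} \otimes \mathfrak{r}_{X^{r(b-a)}}(\sigma)$. Since this further Jacquet commutes with the $\textup{GL}_{ra}$-action, no irreducible subquotient of $\mathfrak{r}^{H^{m-ra}}_{X^{r(b-a)}}(K)$ is $\textup{GL}_{ra}$-isotypic of type $\rho^{\times a}$. Hence the surjection $\mathfrak{r}_M(\pi) \twoheadrightarrow \rho^{\times a} \otimes \rho^{\times (b-a)} \otimes \delta$ vanishes on the leftmost term, factors through the quotient, and yields $\mathfrak{r}_{X^{r(b-a)}}(\sigma) \twoheadrightarrow \rho^{\times (b-a)} \otimes \delta$; Frobenius converts this into the desired $\sigma \hookrightarrow \rho^{\times (b-a)} \rtimes \delta$.

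Finally, if $b > a$, transitivity of induction rewrites this as $\sigma \hookrightarrow \rho \rtimes (\rho^{\times (b-a-1)} \rtimes \delta)$; a further Frobenius step, composed with projection onto any irreducible quotient of $\rho^{\times (b-a-1)} \rtimes \delta$, delivers an embedding $\sigma \hookrightarrow \rho \rtimes \tau_0$ with $\tau_0$ irreducible, contradicting assumption (b). Thus $b = a$, and the surjection of the previous paragraph degenerates into $\sigma \twoheadrightarrow \delta$; irreducibility gives $\delta \simeq \sigma$.

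The main obstacle is the Bernstein-block bookkeeping in the middle paragraph, namely promoting ``$K$ has no $\rho^{\times a} \otimes \sigma'$-subquotient'' to the same statement for $\mathfrak{r}^{H^{m-ra}}_{X^{r(b-a)}}(K)$. This rests on three ingredients that I would need to spell out carefully: $R$ is algebraically closed, so irreducibles of $\textup{GL}_{ra} \times H$-modules factor as tensor products; the finite-length category for $\textup{GL}_{ra}(E)$ decomposes along supercuspidal support (\textit{cf.}\ \cite{drevon_secherre}), yielding a block decomposition on $K$ in which the $[\rho^{\times a}]$-block vanishes; and this block decomposition is preserved by the Jacquet functor on the commuting isometry factor.
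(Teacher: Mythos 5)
Your proof is correct and rests on the same two ingredients the paper uses -- the decomposition $\mathfrak{r}_{X^{ra}}(\sigma_{\rho,a}) \simeq (\rho^{\times a}\otimes\sigma)\oplus T$ from Lemma \ref{irred_socle_criterion_lemma} and hypothesis (b) on $\sigma$ -- but it reaches the key conclusion $\sigma\hookrightarrow\rho^{\times(b-a)}\rtimes\delta$ by a longer route. The paper applies Frobenius reciprocity \emph{once}, at level $ra$, to $\pi\hookrightarrow\rho^{\times a}\rtimes(\rho^{\times(b-a)}\rtimes\delta)$, producing a non-zero map $\mathfrak{r}_{X^{ra}}(\pi)\to\rho^{\times a}\otimes(\rho^{\times(b-a)}\rtimes\delta)$; since the target is entirely $\rho^{\times a}$-isotypic on the $\textup{GL}_{ra}$-factor while $T$ has no such subquotient, the map kills $\mathfrak{r}_{X^{ra}}(\pi)\cap T$ and factors through the irreducible $\rho^{\times a}\otimes\sigma$, giving the embedding directly. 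You instead apply Frobenius at both levels $ra$ and $rb$, pass a short exact sequence through the further Jacquet functor on the isometry factor, and invoke the block decomposition by supercuspidal support to identify which term the surjection factors through. The two arguments are Frobenius-dual formulations of the same mechanism; yours is valid but incurs precisely the ``Bernstein-block bookkeeping'' you flag as the main obstacle, which the paper's one-step application at level $ra$ is designed to sidestep (there one only needs that the \emph{target} of a single map is $\rho^{\times a}$-isotypic, with no need to track how blocks behave under a second Jacquet). Two small points worth tightening: the inclusion $K\subseteq T$ needs a short argument (comparing lengths of $\mathfrak{r}_{X^{ra}}(\pi)/K$ and $\mathfrak{r}_{X^{ra}}(\pi)/(\mathfrak{r}_{X^{ra}}(\pi)\cap T)$), and in the final paragraph ``projection onto any irreducible quotient'' should be ``an irreducible quotient of the image of $\mathfrak{r}(\sigma)\to\rho\otimes\tau$'' so that the composite remains non-zero.
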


\begin{proof} If we have $\pi \hookrightarrow \rho^{\times b} \rtimes \delta$, we obtain by Frobenius reciprocity a non-zero map $\mathfrak{r}_{X^{ra}}(\pi) \to \rho^{\times a} \otimes (\rho^{\times (b-a)} \rtimes \delta)$. By the previous lemma this map must factor through $\rho^{\times a} \otimes \sigma \hookrightarrow \rho^{\times a} \otimes (\rho^{\times (b-a)} \rtimes \delta)$. Therefore $b-a=0$ by assumption b) on $\sigma$ and $\delta \simeq \sigma$. \end{proof}

\begin{defi} Let $\pi \in \textup{Irr}_R(H)$ and $\rho \in \textup{Irr}_R^{\textup{scusp}}(G_r^{m_1})$ with $\alpha \cdot {}^c \rho^\vee \not\simeq \rho$ (\textit{i.e.} ${}^w \rho \not\simeq \rho$) and assume $q_E^r \neq 1 \ \textup{mod} \ \ell$. Corollary \ref{max_a_and_b_cor} tells us that the set of pairs $(a,\delta)$ such that $\pi \subseteq \delta_{\rho,a}$ has a maximal element in the following sense. We set $a_\rho(\pi) = \textup{max} \{ a \ | \ \exists \ \delta \textup{ s.t. } \pi \subseteq \delta_{\rho,a} \}$. There is a unique pair $(a_\rho(\pi),\sigma)$ such that $\pi \subseteq \sigma_{\rho,a_\rho(\pi)}$. \end{defi}

\paragraph{Theta lifts of irreducible socles.}  If $\pi_1$ occurs on the boundary of $\mathfrak{I}(-s)$, there exist $t>0$ and $\sigma_1 \in \textup{Irr}_R(H_1^{m_1-t})$ such that $\pi_1 \subseteq \alpha \upsilon_{t,-s}^\vee \rtimes \sigma_1$. Moreover:
$$1_{\textup{GL}_r(E)} \hookrightarrow | \cdot |^{-\frac{t-1}{2}} \times | \cdot |^{-\frac{t-3}{2}} \times \cdots \times | \cdot |^{\frac{t-1}{2}}$$
therefore:
$$\alpha \upsilon_{t,-s}^\vee  = \upsilon_{t,s-2t}	 \hookrightarrow  \upsilon_{1,s-2t} \times \upsilon_{1,s-2(t-1)} \times \cdots \times \upsilon_{1,s-2}.$$ 
We set $\rho_t = \upsilon_{1,s-2t} \in \textup{Irr}_R^{\textup{scusp}}(G_1^{m_1})$. Therefore $a_{\rho_t}(\pi_1) > 0$ if $\pi_1$ occurs at $t > 0$.

We will be able to deal with these representations living on the boundary at $t$ if the characteristic $\ell$ is good enough. We introduce the following definition. Let $0 < t \leq m_1$. We say $\ell$ is $(E,s,t)$-banal if the three conditions below hold:
\begin{enumerate}[label=\textup{\alph*)}]
\item $(q_E)^{s-2t+1} \neq 1 \ \textup{mod} \ \ell$;
\item $(q_E)^t \neq 1 \ \textup{mod} \ \ell$;
\item $(q_E)^{s-t} \neq 1 \ \textup{mod} \ \ell$.
\end{enumerate}

\begin{rem} In the complex setting, the order $q_E$ is infinite and the three conditions always hold since $s \leq 0$. In the modular setting, if we first fix the data $V_1$ and $V_2$, there are only finitely many $\ell$ such that one of the three conditions may fail to hold. \end{rem}

\begin{prop} \label{theta_lifts_irreducible_socles_prop} Let $0 < t \leq m_1$ and assume $\ell$ is $(E,s,t)$-banal. Let $\pi_1 \in \textup{Irr}_R(H_1)$ such that $(\omega_{V_1,V_2})_{\pi_1} \neq 0$ and $a_{\rho_t}(\pi_1) >0$. We set $\varrho_t = \nu_{1,-s+2t}$. Then for all irreducible quotients $\pi_2$ of $\Theta(\pi_1,V_2)$ we have:
\begin{itemize}[label=$\bullet$]
\item $a_{\rho_t}(\pi_1) = a_{\varrho_t}(\pi_2)$ and we denote it by $a$ for simplicity;
\item the unique embeddings $\pi_1 \subseteq (\sigma_1)_{\rho_t,a}$ and $\pi_2 \subseteq (\sigma_2)_{\varrho_t,a}$ induce:
$$0 \neq \textup{Hom}_{H_1 \times H_2}(\omega_{V_1,V_2},\pi_1 \otimes \pi_2) \hookrightarrow \textup{Hom}_{H_1^{m_1-a} \times H_2^{m_2-a}}(\omega_{V_1^{m_1-a},V_2^{m_2-a}},\sigma_1 \otimes \sigma_2).$$
\end{itemize}
\end{prop}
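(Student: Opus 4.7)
The plan is to combine Corollary~\ref{max_a_and_b_cor} with Kudla's filtration (Theorem~\ref{kudla_filtration_thm}) applied to $\mathfrak{r}_{X_1^a}^{\textup{op}}(\omega_{V_1,V_2})$, using the banality conditions to isolate the top subquotient. First I would produce the unique embeddings $\pi_1\hookrightarrow(\sigma_1)_{\rho_t,a}$ and $\pi_2\hookrightarrow(\sigma_2)_{\varrho_t,b}$ via Corollary~\ref{max_a_and_b_cor}: the Weyl-instability ${}^w\rho_t\not\simeq\rho_t$ (and likewise for $\varrho_t$) follows from condition (a) $(q_E)^{s-2t+1}\neq 1\bmod\ell$, noting that $\rho_t$ and $\varrho_t$ share the modulus-exponent $(s-2t+1)/2$.

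For a non-zero $f\in\textup{Hom}_{H_1\times H_2}(\omega_{V_1,V_2},\pi_1\otimes\pi_2)$, I would realise it as a surjection and apply the exact Jacquet functor $\mathfrak{r}_{X_1^a}^{\textup{op}}$ (opposite so that Casselman duality turns $\pi_1\hookrightarrow\rho_t^{\times a}\rtimes\sigma_1$ into a surjection $\mathfrak{r}_{X_1^a}^{\textup{op}}(\pi_1)\twoheadrightarrow\rho_t^{\times a}\otimes\sigma_1$), producing
\[
\mathfrak{r}_{X_1^a}^{\textup{op}}(\omega_{V_1,V_2})\twoheadrightarrow\rho_t^{\times a}\otimes\sigma_1\otimes\pi_2.
\]
The opposite-parabolic analogue of Theorem~\ref{kudla_filtration_thm} gives a filtration by subquotients $J_j$, for $0\leq j\leq\min(a,m_2)$, whose $G_a^{m_1}$-cuspidal support is forced to contain $\upsilon_{1,s},\upsilon_{1,s+2},\dots,\upsilon_{1,s+2(a-j-1)}$ from the factor $\upsilon_{a-j,s}$ on the $\textup{GL}_{a-j}$-Levi. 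In particular $\upsilon_{1,s}$ appears in every $J_j$ with $j<a$, and condition (b) $(q_E)^t\neq 1\bmod\ell$ gives $\upsilon_{1,s}\neq\rho_t=\upsilon_{1,s-2t}$, so no $J_j$ for $j<a$ can admit $\rho_t^{\times a}$ as $G_a^{m_1}$-cuspidal support. Only $J_a$ contributes.

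Next I would identify $J_a\simeq\textup{Reg}^{\alpha_a,\xi_a}(G_1^a,G_2^a)\rtimes\omega_{V_1^{m_1-a},V_2^{m_2-a}}$; taking $G_1^a$-coinvariants along $\rho_t^{\times a}$ yields $\rho_t^{\times a}\otimes\varrho_t^{\times a}$ by the identification $(\rho_t^{\times a})^{[\alpha_a,\xi_a]}=\varrho_t^{\times a}$ (matching the twist data of Theorem~\ref{kudla_filtration_thm} with the definition $\varrho_t=\nu_{1,-s+2t}$). Hence one obtains a non-zero $(H_1^{m_1-a}\times H_2)$-surjection $\varrho_t^{\times a}\rtimes\omega_{V_1^{m_1-a},V_2^{m_2-a}}\twoheadrightarrow\sigma_1\otimes\pi_2$. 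Via Casselman duality and the MVW involution on $H_2$ (available under the standing hypothesis and Theorem~\ref{MVW_involution_metaplectic_thm}), this yields $\pi_2\hookrightarrow(\sigma')_{\varrho_t,a}$ for some $\sigma'$, so $b\geq a$; the reverse inequality comes from running the same argument with the roles of $V_1$ and $V_2$ exchanged, which is legitimate because $\pi_1$ is recovered as an irreducible quotient of $\Theta(\pi_2,V_1)$ directly from $f$, and because condition (c) $(q_E)^{s-t}\neq 1\bmod\ell$ plays the symmetric role of condition (b) after the exchange $\eta_1\leftrightarrow\eta_2$. Taking $\sigma_1$-coinvariants on the $H_1^{m_1-a}$-side and projecting onto $\sigma_2$ then supplies the desired map $\omega_{V_1^{m_1-a},V_2^{m_2-a}}\to\sigma_1\otimes\sigma_2$, defining the injection on Hom spaces; naturality in $f$ and the uniqueness in Corollary~\ref{max_a_and_b_cor} ensure injectivity.

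The hardest step will be the opposite-parabolic version of Kudla's filtration with the bookkeeping required to verify that the fixed twists $\delta_{{}^j X_1^a,n_2}$ together with the characters coming from $\textup{Reg}^{\alpha_j,\xi_j}$ never conspire, under the banality conditions, to produce $\rho_t^{\times a}$ as cuspidal support in a $J_j$ for $j<a$: all three conditions (a), (b), (c) must be used in combination to rule out each potential match (and to establish the symmetric direction). A secondary subtlety, specific to the metaplectic group, is the reliance on the standing assumption that parabolic restriction preserves finite length (see Section~\ref{covering-groups-sec}) and on Theorem~\ref{MVW_involution_metaplectic_thm} for the MVW involution.
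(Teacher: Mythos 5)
Your plan is recognisably the same general strategy as the paper's — fix $a = a_{\rho_t}(\pi_1)$, feed the embedding $\pi_1 \subseteq (\sigma_1)_{\rho_t,a}$ into Kudla's filtration of the degree-$a$ Jacquet module of $\omega_{V_1,V_2}$, isolate $J_a$ by the banality conditions, and run the argument back and forth between $H_1$ and $H_2$. However, there is a concrete error at the first real step.

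You propose to apply $\mathfrak{r}_{X_1^a}^{\textup{op}}$ to $\omega\twoheadrightarrow\pi_1\otimes\pi_2$ and claim that Casselman duality ``turns $\pi_1\hookrightarrow\rho_t^{\times a}\rtimes\sigma_1$ into a surjection $\mathfrak{r}_{X_1^a}^{\textup{op}}(\pi_1)\twoheadrightarrow\rho_t^{\times a}\otimes\sigma_1$.'' That is not what Casselman duality does. From $\pi_1\hookrightarrow\mathfrak{i}_{P}(\rho_t^{\times a}\otimes\sigma_1)$, Frobenius reciprocity (the \emph{first} adjunction, $\textup{Hom}(\mathfrak{r}_P(-),\tau)\simeq\textup{Hom}(-,\mathfrak{i}_P(\tau))$) gives a nonzero map $\mathfrak{r}_{X_1^a}(\pi_1)\to\rho_t^{\times a}\otimes\sigma_1$ and hence a surjection since the target is irreducible — but on the \emph{ordinary} Jacquet module, not the opposite one. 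Casselman duality, $\mathfrak{r}^{\textup{op}}(\pi_1)\simeq\mathfrak{r}(\pi_1^\vee)^\vee$, relates the opposite Jacquet module to the ordinary one of the contragredient; applied to your embedding it produces a \emph{sub}object inside $\mathfrak{r}^{\textup{op}}(\pi_1^\vee)$, not a quotient of $\mathfrak{r}^{\textup{op}}(\pi_1)$. As written, the intermediate surjection $\mathfrak{r}_{X_1^a}^{\textup{op}}(\omega_{V_1,V_2})\twoheadrightarrow\rho_t^{\times a}\otimes\sigma_1\otimes\pi_2$ is not established.

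The downstream consequences are two further gaps. Because you work with $\mathfrak{r}^{\textup{op}}$, you then invoke an ``opposite-parabolic analogue of Theorem~\ref{kudla_filtration_thm}'' — you flag this yourself as the hardest step — but such a filtration is neither stated nor proved in the paper, and the bookkeeping of twists $\delta_{{}^t X_1^k,n_2}$ and $\xi_t$ there is exactly the sort of thing that would need to be redone from scratch. The paper sidesteps this entirely: it composes $\textup{Hom}_{H_1\times H_2}(\omega,\pi_1\otimes\pi_2)\hookrightarrow\textup{Hom}_{H_1\times H_2}(\omega,(\rho_t^{\times a}\rtimes\sigma_1)\otimes\pi_2)$ with the Frobenius isomorphism to land in $\textup{Hom}_{M_a^{m_1}\times H_2}(\mathfrak{r}_{X_1^a}(\omega),\rho_t^{\times a}\otimes\sigma_1\otimes\pi_2)$, and then restricts to the bottom piece $J_a$ of the \emph{standard} Kudla filtration — injectivity of the restriction coming from the vanishing of $\textup{Hom}$ on the other subquotients $J_k$, which is precisely where conditions (a)–(c) enter. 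Finally, your last step ``naturality in $f$ and the uniqueness in Corollary~\ref{max_a_and_b_cor} ensure injectivity'' is too vague: the paper proves the injection into $\textup{Hom}_{H_1^{m_1-a}\times H_2^{m_2-a}}(\omega_{V_1^{m_1-a},V_2^{m_2-a}},\sigma_1\otimes\sigma_2)$ by exhibiting an explicit chain of isomorphisms of $\textup{Hom}$-spaces out of $J_a$, using $\mathfrak{r}_{X_2^a}^{\textup{op}}$ and Casselman duality only at this final stage, where it is used correctly as an identification of $\textup{Hom}$-spaces rather than as a device for producing surjections.

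Your cuspidal-support count isolating $J_a$ (observing that $\upsilon_{1,s}$ always appears in the $\textup{GL}_{a-j}$-block for $j<a$, and that condition (b) kills $\upsilon_{1,s}=\rho_t$) is a clean and slightly tighter bookkeeping than the paper's split into ``$a-k>1$'' and ``$a-k=1$'', and the identification of the Weyl-instability with condition (a) and the role of condition (c) in the reverse direction are correct. But the first step needs to be repaired by replacing $\mathfrak{r}^{\textup{op}}$ with $\mathfrak{r}$ and Casselman duality with Frobenius reciprocity; once you do that, the ordinary Kudla filtration suffices and your argument collapses to the paper's.
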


\begin{proof} None of the three assumptions can be realised if $q_E = 1 \ \textup{mod} \ \ell$, so we may assume $q_E \neq 1 \ \textup{mod} \ \ell$. We have $\alpha {}^c \rho_t^\vee = \rho_t$ if and only if $| \cdot |^{s-2t+1} = 1$, which is impossible by assumption a), and Lemma \ref{irred_socle_criterion_lemma} applies. Let $a=a_{\rho_t}(\pi_1)$ and $\pi \subseteq (\sigma_1)_{\rho_t,a}$. The latter embedding, together with Frobenius reciprocity and restriction to $J_a \hookrightarrow \mathfrak{r}_{X_1^a}(\omega_{V_1,V_2})$ in the filtration, gives a map:
$$\textup{Hom}_{H_1 \times H_2}(\omega_{V_1,V_2},\pi_1 \otimes \pi_2) \to \textup{Hom}_{M_a^{m_1} \times H_2}(J_a,\rho_t^{\times a} \otimes \sigma_1 \otimes \pi_2).$$
We claim this map is injective.

In order to prove it, we show that $\textup{Hom}_{M_a^{m_1} \times H_2}(J_k,\rho_t^{\times a} \otimes \sigma_1 \otimes \pi_2) = 0$ if $k \neq a$. As a representation of $G_a^{m_1}$ the subquotient $J_k$ is $\mathfrak{i}_{Q_{a-k}^a}^{G_a^{m_1}}(\upsilon_{a-k,s} \otimes \textup{Reg}^{\alpha_k,\xi_k}(G_1^k,G_2^k))$-isotypic. By uniqueness of supercuspidal support \cite[V.4]{vigneras_induced_R_reps} and since $q_E \neq 1 \ \textup{mod} \ \ell$, we obtain: 
$$\textup{Hom}_{G_a^{m_1}}(\mathfrak{i}_{Q_{a-k}^a}^{G_a^{m_1}}(\upsilon_{a-k,s} \otimes \textup{Reg}^{\alpha_k,\xi_k}(G_1^k,G_2^k)),\rho_t^{\times a}) = 0 \textup{ if } a-k > 1.$$
This homomorphism space may be non-zero when $k=a-1$. Actually, it is non-zero if and only if $\upsilon_{1,s} = \rho_t$ \textit{i.e.} $| \cdot |^t  = 1$. But assumption b) prevents it from happening, so $\textup{Hom}_{M_a^{m_1} \times H_2}(J_k,\rho_t^{\times a} \otimes \sigma_1 \otimes \pi_2) = 0$ if $k \neq a$.

By the existence of the previous injection, we get $\textup{Hom}_{M_a^{m_1} \times H_2}(J_a,\rho_t^{\times a} \otimes \sigma_1 \otimes \pi_2) \neq 0$. In particular $(J_a)_{\rho_t^{\times a} \otimes \sigma_1} \neq 0$. Recall $J_a = \mathfrak{i}_{P_a^{m_2}}^{H_2}( \textup{Reg}^{\alpha_a,\xi_a}(G_1^a,G_2^a) \otimes \omega_{V_1^{m_1-a},V_2^{m_2-a}})$, so:
$$(J_a)_{\rho_t^{\times a} \otimes \sigma_1} \simeq \rho_t^{\times a} \otimes \sigma_1 \otimes \mathfrak{i}_{P_a^{m_2}}^{H_2}((\rho_t^{\times a})^{[\alpha_a,\xi_a]} \otimes \Theta(\sigma_1,V_2^{m_2-a})).$$
We deduce $\mathfrak{i}_{P_a^{m_2}}^{H_2}((\rho_t^{\times a})^{[\alpha_a,\xi_a]} \otimes \Theta(\sigma_1,V_2^{m_2-a})) \twoheadrightarrow \pi_2$ where $(\rho_t^{\times a})^{[\alpha_a,\xi_a]} \simeq (\rho_t^{[\alpha_1,\xi_1]})^{\times a}$ and: 
$$\rho_t^{[\alpha_1,\xi_1]} = (\upsilon_{1,s-2t})^{[\alpha_1,\xi_1]} = \nu_{1,s-2t+2}.$$
We do not necessarily know that $\Theta(\sigma_1,V_2^{m_2-a})$ has finite length, but Casselman duality and the preservation of finite length by Jacquet functors ensure the existence of $\delta \in \textup{Irr}_R(H_2^{m_2-a})$ such that $(\nu_{1,s-2t+2})^{\times a} \rtimes \delta \twoheadrightarrow \pi_2$. As in the proof of Proposition \ref{equivalences_boundary_prop}, we obtain $\pi_2 \hookrightarrow \varrho_t^{\times a} \rtimes \delta$ with $\varrho_t = \alpha \nu_{1,s-2t+2}^\vee = \nu_{1,-s+2t}$. In particular $a_{\varrho_t}(\pi_2) \geq a$.

We can also apply the argument the other way round, starting with $b = a_{\varrho_t}(\pi_2)$ and $\pi_2 \subseteq (\sigma_2)_{\rho_t,b}$. This time the condition for $\textup{Hom}_{G_b^{m_2}}(\mathfrak{i}_{Q_{b-k}^b}^{G_b^{m_2}}(\nu_{b-k,s} \otimes \textup{Reg} \cdots),\varrho_t^{\times b}) \neq 0$ leads to $b-k=1$ and $| \cdot |^{s-t} \neq 1$, but assumption c) makes it impossible. Therefore $\pi_1 \subseteq \delta_{\rho_t,b}$ where $b \geq a_{\rho_t}(\pi_1)$, so we must have $b=a$ and $\delta \simeq \sigma_1$ by Corollary \ref{max_a_and_b_cor}.

To finish the proof, it is easy to see that the conclusions of Lemma \ref{irred_socle_criterion_lemma} still hold with $\alpha \cdot {}^c \varrho^\vee \otimes \sigma$ if we use the opposite parabolic restriction, and using Casselman duality: 
\begin{eqnarray*}
\textup{Hom}_{M_a^{m_1} \times H_2}(J_a,\rho_t^{\times a} \otimes \sigma_1 \otimes \pi_2) & \simeq & \textup{Hom}_{M_a^{m_2}}((\rho_t^{\times a})^{[\alpha_t,\xi_t]} \otimes \Theta(\sigma_1,V_2),\mathfrak{r}_{X_2^a}^{\textup{op}}(\pi_2)) \\
& \simeq &  \textup{Hom}_{H_2^{m_2-a}}(\Theta(\sigma_1,V_2),\sigma_2) \\
& \simeq & \textup{Hom}_{H_1^{m_1-a} \times H_2^{m_2-a}}(\omega_{V_1^{m_1-a},V_2^{m_2-a}}, \sigma_1 \otimes \sigma_2).
\end{eqnarray*}
This proves the proposition. \end{proof}

\subsection{Heredity of irreducibility and uniqueness}

We can now prove heredity for irreducibility and uniqueness.

\paragraph{Heredity of irreducibility.} We can use an induction argument for representations on the boundary thanks to Proposition \ref{theta_lifts_irreducible_socles_prop}.

\begin{theo} Assume $\pi_1 \in \textup{Irr}_R(H_1)$ occurs at $t >0$ on the boundary of $\mathfrak{I}(-s)$ and $\ell$ is $(E,s,t)$-banal. Set $a = a_{\rho_t}(\pi_1) > 0$ and consider the unique $\sigma_1 \in \textup{Irr}_R(H_1^{m_1-a})$ such that $\pi_1 \subset (\sigma_1)_{\rho_t,a}$. Then the maximal length of semisimple quotients satisfy:
$$\vartheta(\pi_1,V_2) \leq \vartheta(\sigma_1,V_2^{m_2-a}).$$ \end{theo}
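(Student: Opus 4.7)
The strategy is to transfer semisimple quotients of $\Theta(\pi_1,V_2)$ down to semisimple quotients of $\Theta(\sigma_1,V_2^{m_2-a})$ through the machinery of Proposition~\ref{theta_lifts_irreducible_socles_prop}. Let $n \leq \vartheta(\pi_1,V_2)$ and pick a semisimple quotient $\Theta(\pi_1,V_2) \twoheadrightarrow \bigoplus_{i=1}^n \pi_2^{(i)}$ of length $n$ with pairwise non-isomorphic irreducible summands $\pi_2^{(i)}$. It suffices to exhibit a semisimple quotient of $\Theta(\sigma_1,V_2^{m_2-a})$ of length at least $n$.

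For each $i$, apply Proposition~\ref{theta_lifts_irreducible_socles_prop} to $\pi_1$ and the irreducible quotient $\pi_2^{(i)}$ of $\Theta(\pi_1,V_2)$. The hypotheses are satisfied: $a_{\rho_t}(\pi_1) = a > 0$ by assumption, and $(\omega_{V_1,V_2})_{\pi_1} \neq 0$ since $\Theta(\pi_1,V_2) \twoheadrightarrow \pi_2^{(i)} \neq 0$. The proposition produces a unique $\sigma_2^{(i)} \in \textup{Irr}_R(H_2^{m_2-a})$ such that $\pi_2^{(i)}$ embeds as the irreducible socle of $(\sigma_2^{(i)})_{\varrho_t,a}$, and gives the injection
$$0 \neq \textup{Hom}_{H_1 \times H_2}(\omega_{V_1,V_2},\pi_1 \otimes \pi_2^{(i)}) \hookrightarrow \textup{Hom}_{H_1^{m_1-a} \times H_2^{m_2-a}}(\omega_{V_1^{m_1-a},V_2^{m_2-a}},\sigma_1 \otimes \sigma_2^{(i)}).$$
The non-vanishing of the right-hand side translates into $\Theta(\sigma_1,V_2^{m_2-a}) \twoheadrightarrow \sigma_2^{(i)}$, so each $\sigma_2^{(i)}$ is an irreducible quotient of $\Theta(\sigma_1,V_2^{m_2-a})$.

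It remains to show that the $\sigma_2^{(i)}$ are pairwise non-isomorphic; this will then assemble them into a semisimple quotient of length $n$ of $\Theta(\sigma_1,V_2^{m_2-a})$ and yield $\vartheta(\sigma_1,V_2^{m_2-a}) \geq n$, whence the theorem after taking $n = \vartheta(\pi_1,V_2)$ (or, if the supremum is infinite, by letting $n$ grow). Suppose for contradiction $\sigma_2^{(i)} \simeq \sigma_2^{(j)}$ for some $i \neq j$. Since $\ell$ is $(E,s,t)$-banal, the characters entering Lemma~\ref{irred_socle_criterion_lemma} satisfy its hypotheses for $\varrho_t$ acting on $H_2$, so $(\sigma_2^{(i)})_{\varrho_t,a}$ has irreducible socle. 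Both $\pi_2^{(i)}$ and $\pi_2^{(j)}$ embed into this common induced representation as its irreducible socle, forcing $\pi_2^{(i)} \simeq \pi_2^{(j)}$, contradicting our assumption.

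The main conceptual step is already contained in Proposition~\ref{theta_lifts_irreducible_socles_prop}; the only new ingredient here is the observation that distinctness of the quotients $\pi_2^{(i)}$ is inherited by the $\sigma_2^{(i)}$ via the irreducibility of the socle of $(-)_{\varrho_t,a}$. The potential obstacle would be the existence of the unique embedding $\pi_2^{(i)} \hookrightarrow (\sigma_2^{(i)})_{\varrho_t,a}$, which requires the $(E,s,t)$-banal hypothesis precisely to ensure that Lemma~\ref{irred_socle_criterion_lemma} and Corollary~\ref{max_a_and_b_cor} apply on both sides of the dual pair; this has already been secured.
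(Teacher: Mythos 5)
Your argument is close in spirit to the paper's, but there is a genuine gap at the very first step. You write: ``pick a semisimple quotient $\Theta(\pi_1,V_2) \twoheadrightarrow \bigoplus_{i=1}^n \pi_2^{(i)}$ of length $n$ with pairwise non-isomorphic irreducible summands.'' This is not always possible. The quantity $\vartheta(\pi_1,V_2)$ is the supremum of lengths of semisimple quotients, and a semisimple quotient of length $n$ may have repeated irreducible constituents. What your argument actually bounds by $\vartheta(\sigma_1,V_2^{m_2-a})$ is the number of pairwise non-isomorphic irreducible quotients of $\Theta(\pi_1,V_2)$, not $\vartheta(\pi_1,V_2)$ itself. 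If, say, the maximal semisimple quotient were $\pi_2 \oplus \pi_2$, your scheme produces only the single quotient $\sigma_2$ downstairs and proves nothing about its multiplicity.

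The fix requires using more than the mere non-vanishing of the right-hand Hom space. Proposition~\ref{theta_lifts_irreducible_socles_prop} provides an honest \emph{injection}
$$\textup{Hom}_{H_1 \times H_2}(\omega_{V_1,V_2},\pi_1 \otimes \pi_2) \hookrightarrow \textup{Hom}_{H_1^{m_1-a} \times H_2^{m_2-a}}(\omega_{V_1^{m_1-a},V_2^{m_2-a}},\sigma_1 \otimes \sigma_2),$$
whose dimensions are precisely the multiplicities of $\pi_2$ and $\sigma_2$ in the respective maximal semisimple quotients. Thus for every isomorphism class $\pi_2$ of irreducible quotients of $\Theta(\pi_1,V_2)$, its multiplicity is bounded by the multiplicity of the corresponding $\sigma_2$. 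Combined with the injectivity of $\pi_2 \mapsto \sigma_2$ (your socle argument via Corollary~\ref{max_a_and_b_cor}, which is correct and is essentially how the paper argues), summing over isomorphism classes gives $\vartheta(\pi_1,V_2) \leq \vartheta(\sigma_1,V_2^{m_2-a})$. This multiplicity bookkeeping is exactly what the paper's proof does in its final sentence; everything else in your proposal tracks the paper closely.
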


\begin{proof} Let $\pi_2 \in \textup{Irr}_R(H_2)$ such that $\omega_{V_1,V_2} \twoheadrightarrow \pi_1 \otimes \pi_2$ \textit{i.e.} $\Theta(\pi_1,V_2) \twoheadrightarrow \pi_2$. Then, by Proposition \ref{theta_lifts_irreducible_socles_prop}, there exists an irreducible quotient $\sigma_2$ of $\Theta(\sigma_1,V_2^{m_2-a})$ such that $\pi_2 \subset (\sigma_2)_{\rho_t,a}$ and $a = a_{\rho_t}(\pi_2)$. Moreover, such a $\sigma_2$ is unique by Corollary \ref{max_a_and_b_cor} and, for another $\Theta(\pi_1,V_2) \twoheadrightarrow \pi_2'$, we have $\sigma_2' \simeq \sigma_2$ if and only if $\pi_2' \simeq \pi_2$. Note that the inclusion:
$$\textup{Hom}_{H_1 \times H_2}(\omega_{V_1,V_2},\pi_1 \otimes \pi_2) \hookrightarrow \textup{Hom}_{H_1^{m_1-a} \times H_2^{m_2-a}}(\omega_{V_1^{m_1-a},V_2^{m_2-a}},\sigma_1 \otimes \sigma_2)$$
tells us that the multiplicity of $\pi_2$ is always less than that of $\sigma_2$, so this implies the inequality $\vartheta(\pi_1,V_2) \leq \vartheta(\sigma_1,V_2^{m_2-a})$ by summing over isomorphism classes of such $\pi_2$. \end{proof}

As a direct consequence of this theorem and its proof, we obtain

\begin{cor} Take the same notations and hypotheses as in the previous theorem. Assume $\vartheta(\sigma_1,V_2^{m_2-a})=1$, then $\vartheta(\pi_1,V_2) \leq 1$. Moreover, when $\vartheta(\pi_1,V_2)=1$, we have: 
$$a = a_{\varrho_t}(\theta(\pi_1,V_2)) \textup{ and } \theta(\pi_1,V_2) \hookrightarrow \varrho_t^{\times a} \rtimes \theta(\sigma_1,V_2^{m_2-a}).$$ \end{cor}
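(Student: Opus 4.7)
The plan is to derive this corollary almost immediately from the preceding theorem and Proposition \ref{theta_lifts_irreducible_socles_prop}, since nearly all the work has been done there. The strategy has two steps: first extract the numerical bound on $\vartheta(\pi_1, V_2)$, then unpack what the embedding from Proposition \ref{theta_lifts_irreducible_socles_prop} says about the cosocle when it is non-zero.

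First I would observe that the preceding theorem gives the inequality $\vartheta(\pi_1, V_2) \leq \vartheta(\sigma_1, V_2^{m_2-a})$, so the assumption $\vartheta(\sigma_1, V_2^{m_2-a}) = 1$ directly yields $\vartheta(\pi_1, V_2) \leq 1$.

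Next, assume $\vartheta(\pi_1, V_2) = 1$ and let $\pi_2 = \theta(\pi_1, V_2)$ be the unique irreducible quotient of $\Theta(\pi_1, V_2)$. Since the hypotheses of Proposition \ref{theta_lifts_irreducible_socles_prop} are in force (in particular $a_{\rho_t}(\pi_1) = a > 0$ and $\ell$ is $(E,s,t)$-banal), this proposition produces an irreducible quotient $\sigma_2$ of $\Theta(\sigma_1, V_2^{m_2-a})$ such that $a_{\varrho_t}(\pi_2) = a$ and $\pi_2 \hookrightarrow (\sigma_2)_{\varrho_t, a} = \varrho_t^{\times a} \rtimes \sigma_2$. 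The hypothesis $\vartheta(\sigma_1, V_2^{m_2-a}) = 1$ forces $\sigma_2$ to coincide with the (unique) cosocle $\theta(\sigma_1, V_2^{m_2-a})$, giving the claimed embedding $\theta(\pi_1, V_2) \hookrightarrow \varrho_t^{\times a} \rtimes \theta(\sigma_1, V_2^{m_2-a})$ and the equality $a = a_{\varrho_t}(\theta(\pi_1, V_2))$.

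There is no genuine obstacle here: the corollary is really just a bookkeeping consequence, packaging the quantitative bound of the theorem together with the explicit inclusion of Proposition \ref{theta_lifts_irreducible_socles_prop} under the extra assumption that the smaller dual pair admits a well-defined irreducible cosocle. The only point to double-check is that the unique $\sigma_2$ produced by the proposition is indeed the cosocle and not some other irreducible quotient, but this is guaranteed by uniqueness of the cosocle when $\vartheta(\sigma_1, V_2^{m_2-a}) = 1$.
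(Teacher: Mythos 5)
Your proof is correct and follows exactly the route the paper has in mind: the paper presents this corollary without a separate proof, stating only that it is "a direct consequence of this theorem and its proof," and your argument supplies precisely those details — the numerical bound comes straight from the theorem, and the embedding and the equality $a = a_{\varrho_t}(\theta(\pi_1,V_2))$ are read off from Proposition \ref{theta_lifts_irreducible_socles_prop} once $\vartheta(\sigma_1,V_2^{m_2-a})=1$ pins down $\sigma_2$ as the unique irreducible quotient $\theta(\sigma_1,V_2^{m_2-a})$.
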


\paragraph{Heredity of uniqueness.}

\begin{prop} \label{heredity_of_uniquenes_prop} Assume \textup{(H)} holds and $\ell$ is $(E,s,t)$-banal for all $m_1 \geq t >0$. Suppose $\Theta(\pi_1,V_2)$ and $\Theta(\pi_1',V_2)$ have a common irreducible quotient $\pi_2$. Then:
\begin{itemize}[label=$\bullet$]
\item there exists $t > 0$ such that $a_{\rho_t}(\pi_1) = a_{\rho_t}(\pi_1') = a_{\varrho_t}(\pi_2) >0$ and we denote it by $a$;
\item $\Theta(\sigma_1,V_2^{m_2-a})$ and $\Theta(\sigma_1',V_2^{m_2-a})$ have $\sigma_2$ as a common irreducible quotient where $\pi_1 \subseteq (\sigma_1)_{\rho_t,a}$, $\pi_1' \subseteq (\sigma_1')_{\rho_t,a}$ and $\pi_2 \subseteq (\sigma_2)_{\rho_t,a}$.
\end{itemize} \end{prop}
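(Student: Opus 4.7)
The plan is to reduce this proposition to a direct combination of Proposition~\ref{violating_theta_at_the_same_t_prop} and Proposition~\ref{theta_lifts_irreducible_socles_prop}, so that no new technical work is required beyond bookkeeping. If $\pi_1 \simeq \pi_1'$ the statement is vacuous, so I may assume $\pi_1 \not\simeq \pi_1'$. Since both $\Theta(\pi_1,V_2)$ and $\Theta(\pi_1',V_2)$ surject onto the same $\pi_2 \in \textup{Irr}_R(H_2)$, Proposition~\ref{violating_theta_at_the_same_t_prop} (whose hypothesis (H) and the preservation of finite length by parabolic restriction are standing assumptions in this section) yields an integer $t > 0$ at which both $\pi_1$ and $\pi_1'$ occur on the boundary of $\mathfrak{I}(-s)$. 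In particular $a_{\rho_t}(\pi_1) > 0$ and $a_{\rho_t}(\pi_1') > 0$, and since $\ell$ is $(E,s,t)$-banal, the hypotheses of Proposition~\ref{theta_lifts_irreducible_socles_prop} are met for the two pairs $(\pi_1,\pi_2)$ and $(\pi_1',\pi_2)$.

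Next, I apply Proposition~\ref{theta_lifts_irreducible_socles_prop} independently to each of these pairs. From $(\pi_1,\pi_2)$ I obtain $a := a_{\rho_t}(\pi_1) = a_{\varrho_t}(\pi_2)$, the unique embeddings $\pi_1 \subseteq (\sigma_1)_{\rho_t,a}$ and $\pi_2 \subseteq (\sigma_2)_{\varrho_t,a}$, and a non-zero surjection $\Theta(\sigma_1, V_2^{m_2-a}) \twoheadrightarrow \sigma_2$. Applied to $(\pi_1',\pi_2)$, it produces similarly $a' := a_{\rho_t}(\pi_1') = a_{\varrho_t}(\pi_2)$, the embedding $\pi_1' \subseteq (\sigma_1')_{\rho_t,a'}$, an embedding $\pi_2 \subseteq (\sigma_2')_{\varrho_t,a'}$, and a surjection $\Theta(\sigma_1', V_2^{m_2-a'}) \twoheadrightarrow \sigma_2'$. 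Both applications compute the same invariant $a_{\varrho_t}(\pi_2)$, so $a = a'$; then the uniqueness part of Corollary~\ref{max_a_and_b_cor}, applied on the $H_2$-side to $\pi_2$, forces $\sigma_2' \simeq \sigma_2$. Combining the two surjections then gives $\sigma_2$ as a common irreducible quotient of $\Theta(\sigma_1, V_2^{m_2-a})$ and $\Theta(\sigma_1', V_2^{m_2-a})$, which is exactly the second bullet of the statement.

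In this plan, the only non-obvious step is matching the index $t$ for $\pi_1$ and $\pi_1'$, which is exactly what Proposition~\ref{violating_theta_at_the_same_t_prop} produces via the see-saw identity and the MVW-involution; once $t$ is fixed, the rest is a direct transfer through Proposition~\ref{theta_lifts_irreducible_socles_prop} and the socle uniqueness of Corollary~\ref{max_a_and_b_cor}. I do not expect a serious obstacle, because all of the delicate induction, filtration and duality arguments have been absorbed into those earlier results, and the $(E,s,t)$-banal hypothesis for all $0 < t \leq m_1$ ensures Lemma~\ref{irred_socle_criterion_lemma} and Proposition~\ref{theta_lifts_irreducible_socles_prop} are available at whatever $t$ Proposition~\ref{violating_theta_at_the_same_t_prop} selects.
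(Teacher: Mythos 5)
Your proof is correct and follows the same route as the paper: invoke Proposition~\ref{violating_theta_at_the_same_t_prop} to align the boundary index $t$, then apply Proposition~\ref{theta_lifts_irreducible_socles_prop} to each of the pairs $(\pi_1,\pi_2)$ and $(\pi_1',\pi_2)$ and use the uniqueness of the pair $(a_{\varrho_t}(\pi_2),\sigma_2)$ from Corollary~\ref{max_a_and_b_cor} to match up the quotients. The paper states this more tersely (compressing your steps 3--7 into one sentence), but the underlying argument is identical; the only small caveat is that the reduction to $\pi_1\not\simeq\pi_1'$ is better described as rendering the uniqueness conclusion trivial, rather than the statement vacuous, since Proposition~\ref{violating_theta_at_the_same_t_prop} explicitly requires $\pi_1\neq\pi_1'$ while the statement of the present proposition does not.
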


\begin{proof} By Proposition \ref{violating_theta_at_the_same_t_prop}, we can assume there exists $t >0$ such that $a_{\rho_t}(\pi_1) > 0$ and $a_{\rho_t}(\pi_1') > 0$. Proposition \ref{theta_lifts_irreducible_socles_prop} shows, on the one hand, that these two indices are equal to $a_{\rho_t}(\pi_2)$, and on the other hand, the claim on $\sigma_2$ as a common quotient of $\Theta(\sigma_1,V_2^{m_2-a})$ and $\Theta(\sigma_1',V_2^{m_2-a})$. \end{proof}

\begin{cor} In the context of Proposition \ref{heredity_of_uniquenes_prop}, if $\sigma_1$ is unique, \textit{i.e.} $\sigma_1 \simeq \sigma_1'$ when $\Theta(\sigma_1,V_2^{m_2-a})$ and $\Theta(\sigma_1',V_2^{m_2-a})$ have a common irreducible quotient, then $\pi_1 \simeq \pi_1'$. \end{cor}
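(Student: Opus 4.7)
The plan is to feed the hypothesis directly into Proposition \ref{heredity_of_uniquenes_prop} and then exploit the irreducible socle statement of Lemma \ref{irred_socle_criterion_lemma}. Concretely, I would first invoke Proposition \ref{heredity_of_uniquenes_prop} on the pair $(\pi_1, \pi_1')$ sharing the irreducible quotient $\pi_2$: this produces an integer $t > 0$, a common value $a = a_{\rho_t}(\pi_1) = a_{\rho_t}(\pi_1') = a_{\varrho_t}(\pi_2) > 0$, and representations $\sigma_1, \sigma_1' \in \textup{Irr}_R(H_1^{m_1-a})$ together with a common irreducible quotient $\sigma_2$ of $\Theta(\sigma_1, V_2^{m_2-a})$ and $\Theta(\sigma_1', V_2^{m_2-a})$, satisfying $\pi_1 \subseteq (\sigma_1)_{\rho_t,a}$ and $\pi_1' \subseteq (\sigma_1')_{\rho_t,a}$.

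The second step is the uniqueness hypothesis itself: since $\Theta(\sigma_1,V_2^{m_2-a})$ and $\Theta(\sigma_1',V_2^{m_2-a})$ share the irreducible quotient $\sigma_2$, we conclude $\sigma_1 \simeq \sigma_1'$. Consequently both $\pi_1$ and $\pi_1'$ sit as irreducible subrepresentations of the single induced representation $(\sigma_1)_{\rho_t,a} = \rho_t^{\times a} \rtimes \sigma_1$.

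The final step is to apply Lemma \ref{irred_socle_criterion_lemma} to this induced representation. Its hypotheses can be checked as follows. The integer $r = 1$ here (because $\rho_t = \upsilon_{1,s-2t}$ is a character), and $q_E \neq 1 \bmod \ell$ because otherwise all three conditions of $(E,s,t)$-banality would automatically fail. The condition $\alpha \cdot {}^{c}\rho_t^\vee \not\simeq \rho_t$ is equivalent to $|\cdot|^{s-2t+1} \neq 1$ on $E^\times$, which is precisely condition a) of $(E,s,t)$-banality. Finally, the condition $\sigma_1 \nsubseteq \rho_t \rtimes \sigma_0$ for any $\sigma_0$ is forced by the maximality of $a = a_{\rho_t}(\pi_1)$: otherwise $\pi_1 \subseteq (\sigma_1)_{\rho_t,a} \subseteq (\sigma_0)_{\rho_t,a+1}$, contradicting the defining maximality of $a$. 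Hence $(\sigma_1)_{\rho_t,a}$ has an irreducible socle, and both $\pi_1$ and $\pi_1'$ must coincide with it, so $\pi_1 \simeq \pi_1'$.

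There is essentially no obstacle: the corollary is a direct packaging of the descent from Proposition \ref{heredity_of_uniquenes_prop} with the socle uniqueness coming from Lemma \ref{irred_socle_criterion_lemma}. The only point that requires a moment of care is the verification of the third hypothesis of that lemma, which follows from the definition of $a_{\rho_t}(\pi_1)$ via Corollary \ref{max_a_and_b_cor}, and this is already implicit in the setup of Proposition \ref{theta_lifts_irreducible_socles_prop}.
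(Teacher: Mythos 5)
Your proof is correct and is exactly the intended argument: feed the two lifts into Proposition \ref{heredity_of_uniquenes_prop}, use the uniqueness hypothesis to conclude $\sigma_1 \simeq \sigma_1'$, and then observe that $\pi_1$ and $\pi_1'$ are both irreducible subrepresentations of $(\sigma_1)_{\rho_t,a}$, whose socle is irreducible by Lemma \ref{irred_socle_criterion_lemma} once one checks its hypotheses via the $(E,s,t)$-banality conditions and the maximality defining $a_{\rho_t}(\pi_1)$. Your explicit verification of the hypotheses of Lemma \ref{irred_socle_criterion_lemma} (in particular that condition b) follows from maximality of $a$, not from any further input) is precisely the care the paper leaves implicit.
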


\subsection{Proof of the correspondence in the banal setting} \label{proof_banal_theta_sec}

It is clear that all but finitely many $\ell$ are $(E,s,t)$-banal for all $m_1 \geq t >0$. Moreover, if the order $o_R(q_E)$ of $q_E$ in $R$ satisfies: 
$$o_R(q_E)> \underset{0<t \leq m_1}{\textup{max}}(-s+2t-1,t,-s+t)=-s+2m_1-1,$$
then $\ell$ is $(E,s,t)$-banal for all $m_1 \geq t >0$. Clearly, there are only finitely many $\ell$ such that $o_R(q_E) \leq -s+2m_1-1$. Note that $-s +2m_1 -1 \leq 2n_1$. We say $\ell$ is strongly banal with respect to $(U(V_1),U(V_2))$ if $o_R(q_E) > 2n_1$. In particular this condition guarantees that $\ell$ is banal with respect to $(U(V_1),U(V_2))$ \textit{i.e.} it does not divide the pro-orders of these groups.

\paragraph{Conditional banal theta correspondence.} We first prove the theta correspondence holds provided the assumption (H) holds and $\ell$ is large enough. Even though this first statement is conditional on (H), we will explain later in which cases we can relax it.

\begin{theo} \label{conditional_banal_theta_thm} Let $(U(V_1),U(V_2))$ be an irreducible dual pair of type I.  Assume: 
\begin{itemize}[label=$\bullet$]
\item $\ell$ is strongly banal with respect to the dual pair;
\item \textup{(H)} holds for all $(U(V_1^{m_1-a}),U(V_2^{m_2-a}))$ with $a \geq 0$.
\end{itemize}
Then:
\begin{enumerate}[label=\textup{\alph*)}]
\item $\Theta(\pi_1,V_2)$ has finite length for all $\pi_1 \in \textup{Irr}_R(H_1)$;
\item its cosocle $\theta(\pi_1,V_2)$ has length at most $1$;
\item $0 \neq \theta(\pi_1,V_2) \simeq \theta(\pi_1',V_2)$ if and only if $\pi_1' \simeq \pi_1$.
\end{enumerate}
Conversely, $\Theta(\pi_2,V_1)$ for $\pi_2 \in \textup{Irr}_R(H_2)$ satisfies \textup{a)-b)-c)}.
\end{theo}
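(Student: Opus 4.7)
The proof is a strong induction on $m_1 + m_2$, the total Witt rank of the dual pair. The induction is well-founded because each reduction step below decreases both Witt indices simultaneously by the same positive amount $a$, and both running hypotheses are preserved: the strong banality bound $o_R(q_E) > 2n_1$ descends trivially to any smaller pair $(V_1^{m_1-a}, V_2^{m_2-a})$ (it depends only on the ambient $n_1$), and (H) is assumed throughout the whole tower. The base case is $\pi_1$ cuspidal, where no induction is needed at all thanks to the banality assumption.

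For part (a), if $\pi_1$ is cuspidal, then banality together with Corollary \ref{lifts_of_cuspidals_banal_setting_cor} show that $\Theta(\pi_1, V_2)$ is irreducible or zero, hence of finite length. For general $\pi_1 \hookrightarrow \mathfrak{i}_{P_k}^{H_1}(\rho_1 \otimes \sigma_1)$ with $\sigma_1$ cuspidal, Proposition \ref{finite_length_from_cuspidals_prop} bootstraps this to finite length of $\Theta(\pi_1, V_2)$. Consequently the cosocle $\theta(\pi_1, V_2)$ is well defined and statements (b)-(c) become meaningful.

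For (b) and (c), we distinguish two cases according to whether $\pi_1$ sits on the boundary of $\mathfrak{I}(-s)$. If $\pi_1$ lies outside the boundary, Theorem \ref{rep_outside_boundary_thm} applies thanks to (H) and immediately yields both (b) and the uniqueness clause (c) for $\pi_1$. Otherwise $\pi_1$ occurs at some $t > 0$ on the boundary; strong banality guarantees that $\ell$ is $(E, s, t)$-banal for every such $t$, so Proposition \ref{theta_lifts_irreducible_socles_prop} and Corollary \ref{max_a_and_b_cor} apply. Setting $a = a_{\rho_t}(\pi_1) > 0$ and choosing the unique $\sigma_1 \in \textup{Irr}_R(H_1^{m_1-a})$ with $\pi_1 \hookrightarrow (\sigma_1)_{\rho_t, a}$, every irreducible quotient $\pi_2$ of $\Theta(\pi_1, V_2)$ is controlled, via an injection of multiplicity spaces, by an irreducible quotient $\sigma_2$ of $\Theta(\sigma_1, V_2^{m_2-a})$. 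By induction, (b) and (c) hold on the smaller pair; the heredity theorem for irreducibility (giving $\vartheta(\pi_1, V_2) \leq \vartheta(\sigma_1, V_2^{m_2-a}) \leq 1$) and Proposition \ref{heredity_of_uniquenes_prop} (combined with the uniqueness of $\sigma_1$ from Corollary \ref{max_a_and_b_cor}) transport (b) and (c) back to $\pi_1$.

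The converse for $\pi_2 \in \textup{Irr}_R(H_2)$ is obtained by the symmetric argument after swapping the roles of $V_1$ and $V_2$, possibly replacing $\psi$ by $\psi^{-1}$ via Remark \ref{doubling_method_from_minus_to_psi_inverse_rem}; alternatively, once (a)-(b)-(c) hold for $H_1$, the reverse statements follow from the redundancy of the six statements observed in the introduction, as (a) in both directions plus (b) and (c) in one direction determine the graph of $\theta$ as a bijection $\textup{Irr}_R^\theta(H_1) \simeq \textup{Irr}_R^\theta(H_2)$. The main obstacle is the boundary case: one must verify that Proposition \ref{theta_lifts_irreducible_socles_prop} produces multiplicity injections sharp enough to combine, via the heredity results, the uniqueness and irreducibility of $\theta(\sigma_1, V_2^{m_2-a})$ into the corresponding statements for $\pi_1$. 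This is precisely where the three inequalities defining $(E, s, t)$-banality are consumed, which is why the explicit bound $o_R(q_E) > 2n_1$ cannot be relaxed without further input.
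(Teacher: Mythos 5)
Your proof tracks the paper's strategy closely: (a) via Corollary \ref{lifts_of_cuspidals_banal_setting_cor} and Proposition \ref{finite_length_from_cuspidals_prop}; (b)--(c) for representations outside the boundary via Theorem \ref{rep_outside_boundary_thm} (using (H)); (b)--(c) on the boundary via the heredity results built on Proposition \ref{theta_lifts_irreducible_socles_prop}, Corollary \ref{max_a_and_b_cor}, and Proposition \ref{heredity_of_uniquenes_prop}; and the converse via redundancy of the six statements. This is the paper's argument.

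There is one genuine gap: your description of the base case of the induction. You assert that ``the base case is $\pi_1$ cuspidal, where no induction is needed at all thanks to the banality assumption.'' But the induction is on the size of the dual pair, and its base is the minimal pair $(U(V_1^{m_1-a}), U(V_2^{m_2-a}))$ with $a = \min(m_1,m_2)$, where one group has \emph{no proper parabolic}. Cuspidality is a property of $\pi_1$, not of the pair, and the chain of reductions along the boundary need not terminate at a cuspidal representation -- it terminates when one Witt index hits zero. At that point, the paper makes an explicit and necessary observation: no irreducible representation on the boundary can occur as a quotient of the Weil representation, because Proposition \ref{theta_lifts_irreducible_socles_prop} would force $a_{\varrho_t}(\pi_2)>0$ for a quotient $\pi_2$ living on a group with no proper parabolic, which is impossible. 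Without this step (or the symmetric observation when it is $H_1$ that degenerates, in which case everything is automatically outside the boundary), the induction is not closed -- you would be asserting (b)--(c) for the minimal pair without proof. Cuspidal $\pi_1$ are indeed handled without reduction, but by Theorem \ref{rep_outside_boundary_thm}, because cuspidals lie outside the boundary; that is a different phenomenon from the termination of the induction. Your proof would be complete once you add the explicit argument for $a = \min(m_1,m_2)$.
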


\begin{proof} Finite length follows from Corollary \ref{lifts_of_cuspidals_banal_setting_cor} and Proposition \ref{finite_length_from_cuspidals_prop} because $\ell$ is strongly banal, so it is banal with respect to $U(V_1)$ and $U(V_2)$ because $s \leq 0$. Theorem \ref{rep_outside_boundary_thm} proves b) and c) when $\pi_1$ is outside the boundary of $\mathfrak{I}(-s)$.

We use an induction argument for representations on the boundary. We assume the theorem holds for all $(U(V_1^{m_1-a}),U(V_2^{m_2-a}))$ with $a>0$. Given the assumptions on (H) and $\ell$, the heredity corollaries for irreducibility and uniqueness ensure b) and c). 

We still have to prove the initial case of our induction argument \textit{i.e.} $a = \textup{min}(m_1,m_2)$. We claim that no irreducible representations $\pi_1$ on the boundary of $\mathfrak{I}(-s)$ can occur as a quotient of the Weil representation. Indeed, at least one of the two groups in the dual pair has no proper parabolic, therefore the condition $a_{\rho_t}(\pi_1) > 0 $ never happens as Proposition \ref{theta_lifts_irreducible_socles_prop} can't be applied.

Conversely, a) holds for $H_2$ because $\ell$ is banal with respect to $U(V_1)$ and $U(V_2)$. Moreover b)-c) can be obtained from b)-c) for $H_1$. \end{proof}

\paragraph{When does (H) hold?} We first prove that the hypothesis (H) is implied by the irreducibility of some theta lift.

\begin{lem} \label{H_is_implied_by_irred_Theta_lem} Assume $\Theta(\chi_2,V_1 \oplus (-V_1)) \in \textup{Rep}_R(H_1^\square)$ is irreducible. Then \textup{(H)} holds. \end{lem}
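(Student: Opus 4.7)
The plan is to combine Rallis's argument with the MVW involution on $H_1^\square$: Rallis provides an embedding of $\Theta(\chi_2, V_1 \oplus (-V_1))$ into $\mathfrak{I}(s)$, and the irreducibility hypothesis, via MVW, converts this embedding into the surjection from $\mathfrak{I}(-s)$ demanded by \textup{(H)}.

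First I would apply Theorem~\ref{Rallis_argument_thm} to the dual pair $(U(V_1^\square), U(V_2))$, now with the roles of $V_1$ and $V_2$ reversed since $V_1^\square$ is the split space. The geometric action of $U(V_2)$ on the Lagrangian $Y_1^\square \otimes V_2$ coincides, via the diagonal metaplectic embedding, with the $\chi_2$-twisted diagonal action of $H_2^\triangle \subset \dot{H}_2 \times \ddot{H}_2$, so Rallis's coinvariants are precisely $\Theta(\chi_2, V_1 \oplus (-V_1))$. Normalising the induction using Proposition~\ref{modulus_character_parabolic_prop} (with $n = 2n_1$, $k = n_1$, $\eta = \eta_1$) and the identity $\nu_{X_1^\square, n_2} = \delta_{X_1^\square, n_2}|\det|^{n_2/2}$ identifies the ambient induced representation with $\mathfrak{I}(s)$, producing
$$\Theta(\chi_2, V_1 \oplus (-V_1)) \hookrightarrow \mathfrak{I}(s). \quad (*)$$

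Next I would invoke the MVW involution $\delta_1^\square$ of $H_1^\square$ (Theorem~\ref{MVW_involution_constructive_action_thm}, or Theorem~\ref{MVW_involution_metaplectic_thm} in the metaplectic case). The irreducibility of $\Theta(\chi_2, V_1^\square)$ gives $\Theta(\chi_2, V_1^\square)^{\delta_1^\square} \simeq \Theta(\chi_2, V_1^\square)^\vee$. On the other side, conjugation by $\delta_1^\square$ sends the Siegel parabolic $P_1^\square$ to its opposite; composed with the Weyl element bringing the opposite back to $P_1^\square$, this flips $|\det|^{s/2}$ to $|\det|^{-s/2}$ while sending $\delta_{X_1^\square,n_2}$ to a character equivalent to it up to the quadratic square $\delta_{X_1^\square,n_2}^2$, so one expects $\mathfrak{I}(s)^{\delta_1^\square} \simeq \mathfrak{I}(-s)^\vee$. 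Conjugating $(*)$ by $\delta_1^\square$ and then dualising (both sides being admissible of finite length under the running assumptions) would then produce $\mathfrak{I}(-s) \twoheadrightarrow \Theta(\chi_2, V_1 \oplus (-V_1))$, which is precisely \textup{(H)}.

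The main obstacle will be the explicit identification $\mathfrak{I}(s)^{\delta_1^\square} \simeq \mathfrak{I}(-s)^\vee$, which requires careful bookkeeping of (i) how $\delta_1^\square$-conjugation interacts with the Siegel parabolic $P_1^\square$, (ii) the Weyl element recovering $P_1^\square$ from its opposite, and (iii) the transformation of the non-normalised Weil factor $\Omega_{1,\cdot}$ entering in $\delta_{X_1^\square,n_2}$. A cleaner alternative route passes through $\psi \to \psi^{-1}$: from $\omega_\psi^{\delta_1^\square \otimes \delta_2} \simeq \omega_{\psi^{-1}}$ (as used in the proof of Theorem~\ref{rep_outside_boundary_thm}) one obtains $\Theta_\psi(\chi_2, V_1^\square)^\vee \simeq \Theta_{\psi^{-1}}(\chi_2^{-1}, V_1^\square)$; applying Rallis to the $\omega_{\psi^{-1}}$-lift embeds the right-hand side into $\mathfrak{I}_{\psi^{-1}}(s)$, which identifies with $\mathfrak{I}_\psi(-s)^\vee$ via the transformation of the Weil factor under $\psi \to \psi^{-1}$, and dualising yields \textup{(H)}.
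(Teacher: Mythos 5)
Your proposal matches the paper's own (one-line) proof: Rallis gives $\Theta(\chi_2, V_1 \oplus (-V_1)) \subseteq \mathfrak{I}(s)$, and the irreducibility hypothesis lets one apply the MVW involution (to replace $\Theta^{\delta_1^\square}$ by $\Theta^\vee$) and then the contragredient to obtain the surjection $\mathfrak{I}(-s) \twoheadrightarrow \Theta(\chi_2, V_1 \oplus (-V_1))$. You are right that the identification of the MVW/dual image of $\mathfrak{I}(s)$ with $\mathfrak{I}(-s)$ requires bookkeeping with ${}^w \rho \simeq \alpha \cdot {}^c \rho^\vee$ and $\delta_{X_1^\square, n_2}^2 = \alpha$ — a computation the paper suppresses — and your alternative route through $\psi \mapsto \psi^{-1}$ is a sound reformulation, consistent with Appendix~\ref{compatibility_Weil_rep_app} and Remark~\ref{doubling_method_from_minus_to_psi_inverse_rem}.
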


\begin{proof} We have $\Theta(\chi_2,V_1 \oplus (-V_1)) \subseteq \mathfrak{I}(s)$ by Rallis' argument. By applying MVW and contragredient to this embedding, we obtain the desired surjection. \end{proof}

Therefore it is sufficient to prove the irreducibility of $\Theta(\chi_2,V_1 \oplus (-V_1)) \in \textup{Rep}_R(H_1^\square)$. In the complex setting, since $s \leq 0$, the irreducibility follows from a series of papers \cite{kudla_rallis,kudla_sweet,sweet,ban_jantzen,yamana} where the authors study constituents of certain degenerate principal series such as $\mathfrak{I}(-s)$. However, this literature only seems, according to the forthcoming book \cite{theta_book}, to prove the irreducibility when $\textup{char}(F)=0$ because of the use of conservation relations, so we will also make this assumption. In the strategy to prove irreducibility, there are obvious obstacles such as the methods of \cite{li}, which rely crucially on some argument involving unitarity -- see \cite[Prop 3.1]{kudla_rallis} for instance. As these tools are not available in the modular setting, proving the irreducibility would require completely new methods, and therefore would require substantial developments we do not intend to pursue here. We use a different method instead, by an argument of reduction modulo $\ell$, but this method has a drawback: it is not explicit. It will only ensure (H) holds for all but finitely many $\ell$.

\begin{prop} \label{H_holds_for_almost_all_l_prop} Assume $\textup{char}(F) = 0$. Let $(U(V_1),U(V_2))$ be a type I commutative dual pair. For all but finitely many $\ell$, the hypothesis \textup{(H)} holds. \end{prop}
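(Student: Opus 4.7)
The plan is to combine Lemma \ref{H_is_implied_by_irred_Theta_lem} with a generic irreducibility argument. By that lemma, it suffices to show that $\Theta(\chi_2, V_1 \oplus (-V_1)) \in \textup{Rep}_R(H_1^\square)$ is irreducible for all but finitely many $\ell$. In characteristic zero, this irreducibility is known from the series \cite{kudla_rallis,kudla_sweet,sweet,ban_jantzen,yamana} in the range $s \leq 0$ when $\textup{char}(F) = 0$; the issue is to propagate it to mod-$\ell$ coefficients generically.

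First, I would set up integral models. Because $\textup{char}(F) = 0$ and $\psi$ takes values in roots of unity, all the explicit ingredients from \cite{trias_theta1} (cocycles, Weil factors, intertwining operators) live over a Noetherian $\mathbb{Z}[1/p]$-subalgebra $A$ of $R$ that also embeds into an algebraically closed field $K$ of characteristic zero. The Weil representation $\omega_{\psi,H_1^\square,H_2^\square}$, the degenerate principal series $\mathfrak{I}(\pm s)^A$, the coinvariant $\Theta_A = \Theta(\chi_2, V_1 \oplus (-V_1))^A$, the Rallis embedding $\Theta_A \hookrightarrow \mathfrak{I}(s)^A$, and the MVW involution (given by a rational element $\delta$) are all defined over $A$. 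Composing the integral Rallis embedding with the MVW--contragredient transformation from the proof of Lemma \ref{H_is_implied_by_irred_Theta_lem} yields an $A$-linear map $\varphi_A : \mathfrak{I}(-s)^A \to \Theta_A^{\vee,\delta}$ whose base change to $K$ is surjective by the characteristic-zero irreducibility. Its cokernel $C_A$ is therefore a torsion $A$-module.

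Second, the transfer is a generic flatness argument on $\textup{Spec}(A)$. Provided $C_A$ is finitely generated as a module over a Noetherian commutative subring acting on both sides (e.g.\ a component of the Bernstein centre of $H_1^\square$, whose relevant finiteness is supplied by \cite{dhkm-finiteness}), its support is a proper closed subset of $\textup{Spec}(A)$, and the specialization $\varphi_A \otimes_A R$ is surjective for $\ell$ outside a finite set of primes of $\mathbb{Z}[1/p]$. Combined with the MVW identification $\Theta_R \simeq \Theta_R^{\vee,\delta}$ (which at generic $\ell$ holds because $\Theta_R$ is then irreducible, usable as part of a bootstrap via Schur's lemma in the admissible setting), this gives (H) over $R$. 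The main obstacle is establishing the required finiteness of $C_A$ (equivalently of $\mathfrak{I}(-s)^A$ modulo its $\Theta_A$-image) over a Noetherian centre; it is closely tied to the finite-length techniques of Appendix \ref{finite_length_app} and the relative Hecke algebra finiteness of \cite{dhkm-finiteness}, and is the nontrivial technical heart of the argument. A minor additional subtlety is the commutation of the coinvariant functor $\Theta$ with the base change $A \to R$ outside a finite set of primes, which follows again by right exactness plus generic flatness once $\Theta_A$ is known to be sufficiently finitely generated.
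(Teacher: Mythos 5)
Your opening move---reduce to the irreducibility of $\Theta(\chi_2, V_1 \oplus (-V_1))$ via Lemma~\ref{H_is_implied_by_irred_Theta_lem} and then prove irreducibility generically by an integrality argument---is exactly the paper's plan, and the observation that the whole picture lives over some $\mathbb{Z}[1/p]$-algebra is the right instinct. But from there the execution departs from the paper and leaves the main step unproved. The paper does not study the cokernel of a map; instead it shows that $\Theta^{\mathcal{A}}$ (over the carefully chosen ring $\mathcal{A} = \mathbb{Z}[1/p,\zeta_{p^\infty}]$, which it proves is a noetherian Dedekind domain) is an $\mathcal{A}[H_1^\square]$-\emph{lattice} in the sense of \cite[I.9.1]{vig_book}, obtained as the image of the integral Rallis map and shown admissible because $\Theta^{\mathcal{K}}$ is absolutely irreducible and $\mathcal{A}$ is noetherian. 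Generic irreducibility then follows from upper semi-continuity of $\mathcal{P}\mapsto\operatorname{length}(L\otimes\overline{k_\mathcal{P}})$, with no cokernel or generic-flatness analysis needed; the nonemptiness of the length-one locus comes from the complex fibre and it is automatically dense.

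By contrast, your version relies on two things you do not establish. First, the finiteness of the cokernel $C_A$ over a noetherian centre is precisely the missing content---you flag it as ``the nontrivial technical heart'' but offer only a pointer to \cite{dhkm-finiteness} and Appendix~\ref{finite_length_app}; this is not a deduction. Second, you need $\Theta_R^{\vee,\delta}\simeq\Theta_R$ to convert surjectivity onto $\Theta_R^{\vee,\delta}$ into (H), and you justify this ``because $\Theta_R$ is then irreducible,'' which is the statement you set out to prove; the proposed bootstrap is circular. There is also a set-up issue: a ``Noetherian $\mathbb{Z}[1/p]$-subalgebra $A$ of $R$ that also embeds into an algebraically closed field $K$ of characteristic zero'' cannot exist when $R$ has characteristic $\ell>0$; the integral ring must live \emph{outside} $R$ and specialize to both $\mathbb{C}$ and $\overline{\mathbb{F}}_\ell$, and its noetherianity (needed for the lattice argument) is exactly why the paper works so hard to show $\mathcal{A}$ is Dedekind. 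The fix is to abandon the cokernel/flatness route and redo the argument with the lattice $\Theta^{\mathcal{A}}$ and length semi-continuity as in the paper.
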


\begin{proof} We first start by proving a result we call generic irreducibility:
\begin{lem} Let $L \in \textup{Rep}_{\mathbb{Z}[1/p]}(G)$ a $\mathbb{Z}[1/p][G]$-lattice in the sense of \textup{\cite[I.9.1]{vig_book}} \textit{i.e.} the $\mathbb{Z}[1/p]$-module $L^K$ is finite free for all compact open subgroups $K$. Assume $L \otimes \mathbb{C}$ is irreducible, then $L \otimes \overline{\mathbb{F}}_\ell$ is irreducible for all but finitely many primes $\ell$. \end{lem} 

\begin{proof} The function $\textup{len} : \mathcal{P} \mapsto \textup{length}(L \otimes \overline{k_\mathcal{P}})$ on $\textup{Spec}(\mathbb{Z}[1/p])$ is upper semi-continuous in the Zariski topology, where $\overline{k_\mathcal{P}}$ is the algebraic closure of the residue field $k_\mathcal{P}$ at $\mathcal{P}$. Moreover $L \otimes \overline{k}_\mathcal{P} \neq 0$ for all $\mathcal{P}$, so the inverse image of $\{1\}$ by $\textup{len}$ is Zariski open. It is non-empty because it must contain the generic point by assumption. Therefore it is open and dense \textit{i.e.} it is the complement of a finite set. \end{proof}

In our situation, we want to consider a generalisation of the previous lemma for the ring $\mathcal{A} = \mathbb{Z}[1/p,\zeta_{p^\infty}]$ appearing in \cite{trias_theta2}. Its fraction field $\mathcal{K}$ is a Galois extension of $\mathbb{Q}$ that is unramified ouside $p$. Above a prime number $\ell \neq p$, there are actually finitely many prime ideals. The decomposition number $r_\ell$ for each $\ell$ is finite and can be described as follows. Let $a_\ell$ be the order of $\ell$ modulo $p$ and let $k_\ell$ be the $p$-adic valuation of $\ell^{a_\ell}-1$, then the number of prime ideals above $\ell$ is:
$$r_\ell = \frac{p-1}{a_\ell} p^{k_\ell-1} = \frac{\varphi(p^{k_\ell})}{a_\ell}.$$
Non-zero prime ideals of $\mathcal{A}$ coincide with maximal ideals and they are noetherian. As the ring $\mathcal{A}$ is also a Prüfer domain, it is Dedekind if and only if it is noetherian. Since all maximal ideals of $\mathcal{A}$ are noetherian, the localisation at a maximal ideal is a noetherian valuation ring \textit{i.e.} a discrete valuation ring. As a result, the ring $\mathcal{A}$ is a Dedekind domain.

We want to use the previous lemma, with $\mathcal{A}$ instead of $\mathbb{Z}[1/p]$, to prove a generic irreducibility result for $\Theta^R = \Theta(\chi_2,V_1 \oplus (-V_1))$. To apply the lemma, we first need to show that $\Theta^{\mathcal{K}}$ admits an integral model over $\mathcal{A}$, also known as an $\mathcal{A}[H_1^\square]$-lattice.

Recall that $\Theta^R$ is obtained in Theorem \ref{Rallis_argument_thm} as a space of functions from the evaluation at $0$ for functions in $\omega_{H_1^\square,H_2}^R \simeq C_c^\infty(Y_1 \otimes V_2,R)$. More precisely, the $U(V_2)$-coinvariants are given by the map: 
$$\phi^R : f \mapsto (h_1 \mapsto (h_1 \cdot f)(0)).$$
There is an integral version of it, as we now explain. 

The Weil representation over $\mathcal{A}$ is $\omega_{H_1^\square,H_2}^A \simeq C_c^\infty(Y_1 \otimes V_2,\mathcal{A})$. The map $\phi^\mathcal{A}$ is defined the exact same way and $U(V_2)$ acts trivially on its image, therefore $\phi^\mathcal{A}$ factors through:
$$(\omega_{H_1^\square,H_2}^\mathcal{A})_{U(V_2)} \twoheadrightarrow \textup{Im}(\phi^\mathcal{A}).$$
But this map must be an isomorphism, since it induces, after tensoring with the residue field at $\mathcal{P}$ for any prime ideal in $\mathcal{A}$, the isomorphism factorising $\phi^{k_\mathcal{P}}$. 

We set $\Theta^\mathcal{A} = (\omega_{H_1^\square,H_2}^\mathcal{A})_{U(V_2)}$. Then:
\begin{itemize}[label=$\bullet$]
\item $\Theta^\mathcal{A} = \textup{Im}(\phi^\mathcal{A}) \subset \Theta^\mathcal{K}$ where $\mathcal{K}$ is the fraction field of $\mathcal{A}$;
\item $\Theta^\mathcal{A} \otimes k_\mathcal{P} = \Theta^{k_\mathcal{P}}$ by compatibility of $\phi^\mathcal{A}$ to scalar extension.
\end{itemize}
Because $\Theta^\mathbb{C} = \Theta^\mathcal{K} \otimes \mathbb{C}$ is irreducible, it is admissible, so $\Theta^\mathcal{K}$ is admissible and absolutely irreducible. As $\mathcal{A}$ is noetherian, we deduce that $\Theta^\mathcal{A}$ is admissible and contains a $\mathcal{K}$-basis of $\Theta^{\mathcal{K}}$. So $\Theta^\mathcal{A}$ is an $\mathcal{A}[H_1^\square]$-lattice in the sense of \cite[I.9.1]{vig_book}.

Now we can apply the generalised version of the lemma to $\Theta^\mathcal{A}$. It tells us that there is a Zariski open dense subset of $\textup{Spec}(\mathcal{A})$ on which $\Theta^{k_\mathcal{P}}$ is irreducible. Its complement has finite image\footnote{This complement is actually finite because $\mathcal{A}$ is a Dedekind domain. When $\mathcal{A}$ is a Prüfer domain, the image would still be finite even though the complement may fail to be finite.} under the map $\textup{Spec}(\mathcal{A}) \to \textup{Spec}(\mathbb{Z}[1/p])$. In particular, for all but finitely many $\ell$, we have $\Theta^{\overline{k}_\mathcal{P}} = \Theta^{\overline{\mathbb{F}_\ell}}$ is irreducible. The compatibility of the map $\phi$ with arbitrary field extensions yields the result for all $R$, as $\Theta^R = \Theta^{\overline{\mathbb{F}_\ell}} \otimes R$ and $\Theta^{\overline{\mathbb{F}_\ell}}$ is absolutely irreducible. \end{proof}

\paragraph{Unconditional theta correspondence.}  Building on Lemma \ref{H_is_implied_by_irred_Theta_lem} and Proposition \ref{H_holds_for_almost_all_l_prop}, we obtain: 
 
\begin{theo} \label{non_explicit_banal_theta_thm} Assume $\textup{char}(F) = 0$. Let $(U(V_1),U(V_2))$ be a type I commutative dual pair. For all but finitely many $\ell$, the consequences of Theorem \ref{conditional_banal_theta_thm} hold. \end{theo}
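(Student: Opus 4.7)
The plan is to reduce the statement to verifying both hypotheses of Theorem \ref{conditional_banal_theta_thm} simultaneously for all but finitely many $\ell$: first, that $\ell$ is strongly banal with respect to $(U(V_1),U(V_2))$; and second, that \textup{(H)} holds for each sub-dual-pair $(U(V_1^{m_1-a}), U(V_2^{m_2-a}))$ with $0 \leq a \leq \min(m_1,m_2)$.

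The strongly banal condition amounts to $o_R(q_E) > 2 n_1$, which depends only on the order of $q_E$ modulo $\ell$. Since that order tends to infinity with $\ell$, only finitely many primes $\ell \neq p$ can violate it. For hypothesis \textup{(H)} attached to a fixed sub-dual-pair, I would first invoke Lemma \ref{H_is_implied_by_irred_Theta_lem} to reduce it to the irreducibility of $\Theta(\chi_2, V_1^{m_1-a} \oplus (-V_1^{m_1-a}))$, and then apply Proposition \ref{H_holds_for_almost_all_l_prop} (whose proof builds an $\mathcal{A}[H^\square]$-lattice for $\mathcal{A} = \mathbb{Z}[1/p,\zeta_{p^\infty}]$ and uses upper semi-continuity of the length on $\textup{Spec}(\mathcal{A})$) to conclude that \textup{(H)} holds outside a finite set of residue characteristics.

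The final step is to combine these exclusions. Since the indexing set $\{0,1,\ldots,\min(m_1,m_2)\}$ of sub-dual-pairs is finite, the union of the associated finite exceptional sets is again finite. Together with the finite set of non-strongly-banal primes, this gives a single finite set $S$ of primes such that, for every $\ell \notin S$, both hypotheses of Theorem \ref{conditional_banal_theta_thm} are satisfied for the whole tower of sub-dual-pairs, and the conclusion follows directly. I do not anticipate a genuine obstacle here: all substantive inputs — the conditional banal correspondence (Theorem \ref{conditional_banal_theta_thm}) and the generic irreducibility of the doubling lift (Proposition \ref{H_holds_for_almost_all_l_prop}) — are already in hand, and the remaining work is the bookkeeping that intersects finitely many cofinite sets of primes.
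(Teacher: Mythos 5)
Your proof is correct and follows exactly the paper's (implicit, unwritten) reasoning: Theorem~\ref{conditional_banal_theta_thm} needs the strongly banal condition plus (H) for the finitely many sub-dual-pairs $(U(V_1^{m_1-a}),U(V_2^{m_2-a}))$, each of which excludes only finitely many primes — the strongly banal one because $o_R(q_E)\to\infty$ as $\ell\to\infty$, and each instance of (H) because Proposition~\ref{H_holds_for_almost_all_l_prop} applies verbatim to every sub-dual-pair. The only small redundancy is that your invocation of Lemma~\ref{H_is_implied_by_irred_Theta_lem} is already subsumed in Proposition~\ref{H_holds_for_almost_all_l_prop}, whose conclusion is directly that (H) holds for all but finitely many $\ell$; but this does not affect correctness.
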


In order to make this result more explicit, and as we have already mentioned, it would require to conduct, in the modular setting, a thorough study of reducibility points and composition factors of degenerate principal series similar to the series of papers \cite{kudla_rallis,kudla_sweet,ban_jantzen,yamana}. We conjecture the theorem holds if $\ell$ is strongly banal.

\subsection{A counter-example in the non-banal setting} \label{counter_example_sec}

We give a counter-example to the irreducibility of the co-socle of $\Theta(\pi_1,V_2)$ when $\ell$ is non-banal. Let $(U(V_1),U(V_2))$ be a type I dual where:
\begin{itemize}
\item $V_1$ is symplectic of dimension $2$;
\item $V_2$ is split orthogonal of even dimension. 
\end{itemize} 
We will find $\ell$ such that $\Theta(\chi_2,V_1)$ is semisimple and has length $2$. First, note that $H_1 = \textup{Sp}(V_1) \times R^\times$ because $n_2$ is even. Let $s = n_2 - m_1 -1 = n_2 -3$ and recall: 
$$\mathfrak{I}^R(s) = \mathfrak{i}_{P_1}^{H_1}(| \cdot |^{\frac{-n_1+m_1+\eta_1}{2}}\nu_{X_1,n_2}) = \mathfrak{i}_{P_1}^{H_1}(| \cdot |^{\frac{s}{2}}\delta_{X_1,n_2}) = \mathfrak{i}_{P_1}^{H_1}(| \cdot |^{\frac{s}{2}}) \in \textup{Rep}_R(H_1).$$
Though the last representation is in $\textup{Rep}_R(\textup{Sp}(V_1))$, it is identified with a representation of $H_1$ in an obvious way. In the so-called stable range, this representation is irreducible:

\begin{prop}[\cite{kudla_rallis}] If $|s| > \frac{m_1+1}{2} = 1$, \textit{i.e.} $m_2 \geq 2$, then $\mathfrak{I}^\mathbb{C}(s)$ is irreducible. In particular $\Theta^\mathbb{C}(\chi_2,V_1) = \mathfrak{I}^\mathbb{C}(s)$. \end{prop}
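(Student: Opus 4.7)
The plan is first to recognise $\mathfrak{I}^\mathbb{C}(s)$ as a standard degenerate principal series of $\textup{Sp}(V_1)\simeq \textup{SL}_2(F)$. Because $n_2$ is even the genuine piece of the cocycle defining $H_1$ is trivial, so $H_1=\textup{Sp}(V_1)\times\mathbb{C}^{\times}$; moreover the character $\delta_{X_1,n_2}$ is trivial (its parity-defect vanishes for even $n_2$), and the parabolic $P_1$ stabilising the Lagrangian line $X_1\subset V_1$ is the Borel of $\textup{SL}_2(F)$. Thus after discarding the harmless $\mathbb{C}^{\times}$-factor, $\mathfrak{I}^\mathbb{C}(s)$ is the normalised principal series $\mathfrak{i}_B^{\textup{SL}_2}(|\cdot|^{s/2})$.

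Next I would invoke the classical structure theory of principal series for $\textup{SL}_2(F)$. Via the inclusion $\textup{SL}_2\hookrightarrow \textup{GL}_2$, the representation $\mathfrak{i}_B^{\textup{SL}_2}(|\cdot|^{s/2})$ is the restriction of $|\cdot|^{s/2}\times|\cdot|^{-s/2}$ on $\textup{GL}_2$, which is reducible precisely at $s=\pm 1$ (where a trivial character and a Steinberg-type constituent appear). Consequently, the hypothesis $|s|>1$ places us strictly outside the reducibility locus, giving the irreducibility of $\mathfrak{I}^\mathbb{C}(s)$.

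For the second assertion I would apply Rallis' argument (Theorem \ref{Rallis_argument_thm}) with the roles of $V_1$ and $V_2$ exchanged (which is allowed since $V_1$ is split), producing an injective $H_1$-morphism $\Theta^\mathbb{C}(\chi_2,V_1)\hookrightarrow \mathfrak{I}^\mathbb{C}(s)$. Since $\mathfrak{I}^\mathbb{C}(s)$ is now irreducible, the equality $\Theta^\mathbb{C}(\chi_2,V_1)=\mathfrak{I}^\mathbb{C}(s)$ reduces to showing that $\Theta^\mathbb{C}(\chi_2,V_1)$ is non-zero. Over $\mathbb{C}$ this non-vanishing follows from an explicit test-function argument in the Schrödinger model: realising $\omega_\psi\simeq C_c^\infty(V_2)$ with $O(V_2)$ acting geometrically, one exhibits a Schwartz function $f\in C_c^\infty(V_2)$ supported on a regular semisimple $O(V_2)$-orbit so that the Rallis map $f\mapsto (h_2\mapsto (h_2\cdot f)(0))$ descends to a non-zero element of $\mathfrak{I}^\mathbb{C}(s)$ after passage to $O(V_2)$-coinvariants.

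The main obstacle is the reducibility analysis of $\mathfrak{I}^\mathbb{C}(s)$; however, because we are in the specific case of $\textup{SL}_2(F)$, the full machinery of Kudla--Rallis on degenerate principal series for higher rank $\textup{Sp}_{2n}$ collapses to the elementary $\textup{SL}_2$ principal series classification described above. The non-vanishing of $\Theta^\mathbb{C}(\chi_2,V_1)$, which in higher rank is delicate, is here immediate since $m_2\geq 2$ lies well within the stable range for the pair $(\textup{SL}_2,O_{2m_2})$.
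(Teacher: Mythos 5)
Your reduction of $\mathfrak{I}^\mathbb{C}(s)$ to the normalised principal series $\mathfrak{i}_B^{\textup{SL}_2}(|\cdot|^{s/2})$ is correct, and it is reasonable to try to bypass the Kudla--Rallis machinery by direct $\textup{SL}_2$ considerations. But the key step is wrong: you claim $\mathfrak{i}_B^{\textup{SL}_2}(|\cdot|^{s/2})$ is the restriction of the $\textup{GL}_2$-principal series $|\cdot|^{s/2}\times |\cdot|^{-s/2}$. Restricting $\mathfrak{i}_B^{\textup{GL}_2}(\chi_1,\chi_2)$ to $\textup{SL}_2$ yields $\mathfrak{i}_B^{\textup{SL}_2}(\chi_1\chi_2^{-1})$, since on the torus $\textup{diag}(t,t^{-1})$ one sees $(\chi_1\chi_2^{-1})(t)$ and the half-sums of roots agree; so the restriction of $|\cdot|^{s/2}\times |\cdot|^{-s/2}$ is $\mathfrak{i}_B^{\textup{SL}_2}(|\cdot|^{s})$, not $\mathfrak{i}_B^{\textup{SL}_2}(|\cdot|^{s/2})$. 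The exponent doubles. Consequently the reducibility points you inherit from $\textup{GL}_2$ (namely $s=\pm 1$) are not the right ones: over $\mathbb{C}$ the series $\mathfrak{i}_B^{\textup{SL}_2}(|\cdot|^{s/2})$ is reducible exactly when $|\cdot|^{s/2}=|\cdot|^{\pm 1}$, i.e.\ at $s=\pm 2$, with no contribution from nontrivial quadratic characters since $|\cdot|^{s/2}$ is always a positive real power of $q$. The hypothesis $|s|>1$ does \emph{not} rule out $s=\pm 2$, so your irreducibility argument does not close at precisely the reducibility points. What you need, and never invoke, is that the specific $s=n_2-3$ arising in this dual pair is always odd (because $n_2$ is even), hence never hits $0$ or $\pm 2$; without that parity observation the claim ``$|s|>1$ places us strictly outside the reducibility locus'' is simply false.

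The second assertion is fine in outline: Rallis' argument with $V_1$ and $V_2$ exchanged gives $\Theta^\mathbb{C}(\chi_2,V_1)\hookrightarrow \mathfrak{I}^\mathbb{C}(s)$, and irreducibility reduces the equality to non-vanishing. But the non-vanishing is immediate, not delicate: the Rallis map sends $f$ to $h_1\mapsto (\omega_\psi(h_1)f)(0)$ — note it is $h_1\in H_1$ that varies, not $h_2$; the geometric $O(V_2)$-action fixes the origin and would make the evaluation constant — and any $f$ with $f(0)\neq 0$ already gives a non-zero image at the identity. No appeal to test functions supported on regular semisimple orbits is needed.
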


\noindent  From now on we suppose we are in the stable range \textit{i.e.} $m_2 \geq 2$. In the proof of Proposition \ref{H_holds_for_almost_all_l_prop}, we have shown $\Theta^\mathbb{C} = \Theta^\mathbb{C}(\chi_2,V_1)$ has an integral model $\Theta^\mathcal{A}$ where $\mathcal{A} = \mathbb{Z}[1/p,\zeta_{p^\infty}]$. By completing at a maximal ideal $\mathcal{P}$ above $\ell$, we obtain a model over the Witt vectors since $\hat{\mathcal{A}} \simeq  W(\mathbb{F}_\ell(\zeta_{p^\infty}))$. Write $\hat{\mathcal{K}}$ for the field of fractions of $\hat{\mathcal{A}}$.

Moreover, Rallis' argument showed $\Theta^R \subset \mathfrak{I}^R(s)$. Since we know $\Theta^{\hat{\mathcal{K}}} = \mathfrak{I}^{\hat{\mathcal{K}}}(s)$ by the proposition, the Brauer-Nesbitt principle \cite[I.9.6]{vig_book} ensures that the representations $\Theta^{\mathbb{F}_\ell(\zeta_{p^\infty})}(\chi_2,V_1)$ and $\mathfrak{I}^{\mathbb{F}_\ell(\zeta_{p^\infty})}(s)$ have same length, which is finite, so they are equal by Rallis' argument. This shows:

\begin{prop} If $m_2 \geq 2$, then $\Theta^R(\chi_2,V_1) = \mathfrak{I}^R(s)$. \end{prop}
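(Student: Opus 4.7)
The plan is to combine Rallis' injection with a Brauer--Nesbitt comparison of two integral models. Rallis' argument (Theorem \ref{Rallis_argument_thm}) gives a canonical $H_1$-embedding $\Theta^R(\chi_2,V_1) \hookrightarrow \mathfrak{I}^R(s)$ over any coefficient field $R$ of residual characteristic not $p$, so what we need is the reverse inequality on lengths; once both sides agree in length, the embedding is forced to be an isomorphism.

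To compare lengths, I would pass to the discrete valuation ring $\hat{\mathcal{A}} = W(\mathbb{F}_\ell(\zeta_{p^\infty}))$ from the previous proof. On the one hand, the integral model $\Theta^{\hat{\mathcal{A}}} = (\omega^{\hat{\mathcal{A}}}_{H_1^\square,H_2})_{U(V_2)}$ is an $\hat{\mathcal{A}}[H_1]$-lattice in $\Theta^{\hat{\mathcal{K}}}$ by the construction recalled in Proposition \ref{H_holds_for_almost_all_l_prop}. On the other hand, $\mathfrak{I}^{\hat{\mathcal{A}}}(s)$ is itself an $\hat{\mathcal{A}}[H_1]$-lattice in $\mathfrak{I}^{\hat{\mathcal{K}}}(s)$: parabolic induction of the character $|\cdot|^{s/2}\delta_{X_1,n_2}$, whose values lie in $\hat{\mathcal{A}}^\times$, commutes with scalar extension. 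Crucially, by the preceding proposition (Kudla--Rallis in the stable range, extended from $\mathbb{C}$ to $\hat{\mathcal{K}}$ via absolute irreducibility of the complex picture), these two lattices live in the same $\hat{\mathcal{K}}[H_1]$-module: $\Theta^{\hat{\mathcal{K}}} = \mathfrak{I}^{\hat{\mathcal{K}}}(s)$.

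Now the Brauer--Nesbitt principle \cite[I.9.6]{vig_book} tells us that any two $\hat{\mathcal{A}}[H_1]$-lattices in a common finite length $\hat{\mathcal{K}}[H_1]$-module have residually isomorphic semisimplifications. Compatibility of coinvariants and of parabolic induction with the base change $\hat{\mathcal{A}} \to \mathbb{F}_\ell(\zeta_{p^\infty})$ identifies these reductions with $\Theta^{\mathbb{F}_\ell(\zeta_{p^\infty})}(\chi_2,V_1)$ and $\mathfrak{I}^{\mathbb{F}_\ell(\zeta_{p^\infty})}(s)$ respectively. Both are of finite length (the right one visibly as a parabolically induced character, the left one by the embedding into the right one), hence have equal composition length. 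Applying the Rallis embedding once more over $\mathbb{F}_\ell(\zeta_{p^\infty})$, an injection between finite length modules of equal length is an isomorphism, so the proposition holds over $\mathbb{F}_\ell(\zeta_{p^\infty})$.

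Finally, an arbitrary algebraically closed coefficient field $R$ of characteristic $\ell$ contains $\mathbb{F}_\ell(\zeta_{p^\infty})$, and both $\Theta(\chi_2,V_1)$ and $\mathfrak{I}(s)$ commute with the scalar extension from $\mathbb{F}_\ell(\zeta_{p^\infty})$ to $R$ (the former because the coinvariants construction of Section \ref{largest-isotypic-quotient-sec} commutes with flat base change, the latter because smooth parabolic induction does). Tensoring the equality over $\mathbb{F}_\ell(\zeta_{p^\infty})$ with $R$ therefore yields the equality $\Theta^R(\chi_2,V_1) = \mathfrak{I}^R(s)$. The only mildly delicate point is verifying that $\mathfrak{I}^{\hat{\mathcal{A}}}(s)$ and $\Theta^{\hat{\mathcal{A}}}$ are genuine lattices in the sense of \cite[I.9.1]{vig_book}, but admissibility of $\mathfrak{I}^{\hat{\mathcal{K}}}(s)$ combined with noetherianness of $\hat{\mathcal{A}}$ makes both checks routine.
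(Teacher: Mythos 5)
Your argument is correct and follows essentially the same route as the paper's own proof: pass to the integral model over $\hat{\mathcal{A}} = W(\mathbb{F}_\ell(\zeta_{p^\infty}))$, invoke Kudla--Rallis over $\hat{\mathcal{K}}$ and Brauer--Nesbitt to equate lengths of the reductions, then conclude via the Rallis injection. The extra details you supply — that $\mathfrak{I}^{\hat{\mathcal{A}}}(s)$ is itself a lattice, the base-change compatibilities, and the final extension from $\mathbb{F}_\ell(\zeta_{p^\infty})$ to arbitrary algebraically closed $R$ — are precisely the points the paper elides and are all sound.
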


We can study the reducibility points and the structure of $\mathfrak{I}^R(s) \in \textup{Rep}_R(\textup{Sp}(V_1)$ with respect to $\ell$ following the example at the end of \cite{dat} because $\textup{Sp}(V_1) \simeq \textup{SL}_2(F)$. As $|\varpi_F|^{\frac{s}{2}} = q^{\frac{1}{2}} q^{m_1-2}$, we have:
\begin{enumerate}
\item if $q = -1 \mod \ell$, then $\mathfrak{I}^R(s)$ is irreducible as $|\varpi_F|^{\frac{s}{2}} \neq -1$;
\item if $q = 1 \mod \ell$, then $\mathfrak{I}^R(s)$ is reducible semisimple as $|\varpi_F|^{\frac{s}{2}} \in \{ \pm 1\}$;
\item if $q \neq \pm 1 \mod \ell$, then $\mathfrak{I}^R(s)$ is irreducible as $|\varpi_F|^{\frac{s}{2}} \notin \{ -1,q,q^{-1}\}$.
\end{enumerate}
Note that these techniques could prove (H) in the stable range, but it requires to know reducibility points and composition series of these degenerate principal series in the modular setting. Finally, our counter-example comes from the second case:
$$\Theta^R(\chi_2,V_1) \textup{ has length $2$ and is semisimple if } q = 1 \mod \ell.$$
Furthermore, we can describe its composition factor in more details: 
\begin{enumerate}
\item if $\ell = 2$, then $|\varpi_F|^{\frac{s}{2}}=1$ and the trivial representation appear in $\mathfrak{i}_B^{\textup{SL}_2(F)}(1)$, so it is of the form $1_{\textup{SL}_2(F)} \oplus \pi$ where $\pi$ is the so-called special representation, which is infinite dimensional;
\item if $\ell \neq 2$, then $|\varpi_F|^{\frac{s}{2}}=-1$ and it consists of two inequivalent infinite dimensional representations $\pi \oplus \pi'$ as the geometric lemma ensures that the trivial does not appear and $\textup{End}_{\textup{SL}_2(F)}(\mathfrak{i}_B^{\textup{SL}_2(F)}(\chi))$ has dimension at most $2$ for all characters $\chi$.
\end{enumerate}

\appendix

\section{Compatibilities for the Weil representation} \label{compatibility_Weil_rep_app}

The lemmas below come from straightforward and elementary calculations related to the Weil representation and the metaplectic group, so their proofs are omitted.

\subsection{Replacing $W$ by $W^a$}

Let $(W,\langle \ , \ \rangle)$ be a symplectic space over $F$. For $a \in F^\times$, we define the symplectic space $W^a$ whose underlying vector space is $W$ and whose symplectic product is $a \langle \ , \ \rangle$. We let $H^a$ be the Heisenberg group associated to $W^a$. It is isomorphic to $H$ via:
$$\begin{array}{cccc}
\iota^a : & H & \to & H^a \\
& (w,t) & \mapsto & (w,at)
\end{array}.$$
We denote by $S_{\psi,X}^a$ the Heisenberg representation of $H^a$ \textit{i.e.} smooth functions on $H^a$ such that $f((x,t) h) = \psi(t) f(h)$ for all $x \in X$, $t \in F$ and $h \in H^a$. This is compatible with the Heisenberg representation of $H$ in a sense we now explain. Let $\psi^a : t \mapsto \psi(a^{-1} t)$. Then the following map is an $\iota^a$-equivariant isomorphism:
$$\begin{array}{cccc}
\phi_{\iota^a} : & S_{\psi,X}^a & \to & S_{\psi^a,X} \\ 
& f & \mapsto & f \circ \iota^a
\end{array}$$
\textit{i.e.} $\phi_{\iota^a} \circ \rho_{\psi,X}^a(\iota^a(h)) = \rho_{\psi^a,X}(h) \circ \phi_{\iota^a}$ for all $h \in H$. 

\begin{lem} Let $\textup{Mp}^{c_X^a}(W^a)$ and $\textup{Mp}^{c_X}(W)$ be the metaplectic groups associated to the Lagrangian $X$. The canonical isomorphism of central extensions $\gamma_a$ makes the following diagram commute:
$$\xymatrix{
		\textup{Mp}^{c_X^a}(W^a) \ar@{->}[r]^{\omega_{\psi,X}^a} \ar@{->}[d]^{\gamma_a} & \ar@{->}[d]^{\phi_{\iota^a} (-) \phi_{\iota^a}^{-1}} \textup{GL}(S_{\psi,X}^a) \\
		 \textup{Mp}^{c_X}(W) \ar@{->}[r]^{\omega_{\psi^a,X}} & \textup{GL}(S_{\psi^a,X}) 
		}$$
where $\gamma_a : (g,\lambda) \mapsto (g,\lambda \lambda_a(g))$ and $\lambda_a(g) \in R^\times$ is characterised by the relation:
$$\phi_{\iota^a} \circ \omega_{\psi,X}^a(g,1) = \lambda_a(g) \times \omega_{\psi^a,X}(g,1) \circ \phi_{\iota^a}.$$ \end{lem}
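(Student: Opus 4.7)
The plan is to extract the scalar $\lambda_a(g)$ from Stone--von Neumann uniqueness, verify that it encodes a coboundary relation between the cocycles $c_X^a$ and $c_X$, and then interpret this as the desired canonical isomorphism $\gamma_a$ of central extensions. The commutativity of the diagram will then be tautological once $\lambda_a$ is correctly defined.

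First I would observe that $\iota^a$ is a topological group isomorphism $H \overset{\sim}{\to} H^a$ (an immediate check on the two group laws) and that $\phi_{\iota^a}$ is an $\iota^a$-equivariant isomorphism of Heisenberg representations, as the paper has already indicated. Identifying $\textup{Sp}(W^a) = \textup{Sp}(W)$ as topological groups, for each $g$ the operator $\phi_{\iota^a} \circ \omega_{\psi,X}^a(g,1) \circ \phi_{\iota^a}^{-1}$ on $S_{\psi^a,X}$ intertwines the Heisenberg actions $\rho_{\psi^a,X}$ and $\rho_{\psi^a,X} \circ g$. By the Stone--von Neumann theorem applied to $(\rho_{\psi^a,X}, S_{\psi^a,X})$, such an intertwiner is unique up to a scalar, which yields a unique $\lambda_a(g) \in R^\times$ satisfying
\begin{equation*}
\phi_{\iota^a} \circ \omega_{\psi,X}^a(g,1) = \lambda_a(g) \cdot \omega_{\psi^a,X}(g,1) \circ \phi_{\iota^a}.
\end{equation*}

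Next I would expand this defining relation on a product $gg'$: using the cocycle identity $\omega_{\psi,X}^a(g,1)\omega_{\psi,X}^a(g',1) = c_X^a(g,g')\omega_{\psi,X}^a(gg',1)$ and the analogous one for $\omega_{\psi^a,X}$, a short calculation yields the coboundary relation
\begin{equation*}
c_X^a(g,g') \lambda_a(gg') = \lambda_a(g)\lambda_a(g') c_X(g,g').
\end{equation*}
This is precisely the condition for $(g,\lambda) \mapsto (g, \lambda\lambda_a(g))$ to define a group homomorphism $\textup{Mp}^{c_X^a}(W^a) \to \textup{Mp}^{c_X}(W)$, and this map is manifestly an isomorphism of central extensions since it restricts to the identity on the central $R^\times$. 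Canonicity follows from the fact that $\textup{Sp}(W)$ is perfect, hence admits no non-trivial characters to $R^\times$, so $\lambda_a$ is uniquely pinned down and so is $\gamma_a$.

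Finally, the commutativity of the diagram on an element of the form $(g,1)$ \emph{is} the defining relation of $\lambda_a(g)$; extending to arbitrary $(g,\lambda)$ is immediate because $\lambda \in R^\times$ acts by scalar multiplication on both Weil representations through the centre. The only real obstacle is bookkeeping: the explicit construction of $\omega_{\psi,X}^a$ and $\omega_{\psi^a,X}$ in \cite{trias_theta1} depends on particular choices of Haar measures on $(gX \cap X) \backslash X$, and a direct computation of $\lambda_a$ on the Bruhat representatives $w_S$ would require tracking how rescaling the symplectic form by $a$ affects those measures. The Stone--von Neumann approach above bypasses this entirely by packaging all such measure-theoretic ambiguities into the single scalar $\lambda_a(g)$.
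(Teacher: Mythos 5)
Your proof is correct. Note that the paper omits the proofs of all the lemmas in Appendix~\ref{compatibility_Weil_rep_app}, describing them as ``straightforward and elementary calculations'', so there is no paper proof to compare against; your argument is the natural one. The structure — extract $\lambda_a(g)$ from Stone--von Neumann uniqueness of intertwiners, expand the defining relation on a product $gg'$ to obtain the coboundary identity $\partial\lambda_a \cdot c_X = c_X^a$, deduce that $\gamma_a$ is a morphism of central extensions, and invoke perfectness of $\textup{Sp}(W)$ for uniqueness — is exactly the right way to organise it, and your remark that this bypasses the measure-theoretic bookkeeping on Bruhat representatives is a good observation. One point you could make explicit if you wanted a fully airtight statement: $\gamma_a$ should be an isomorphism of \emph{topological} central extensions, so one needs $\lambda_a$ to be locally constant near the identity. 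This follows quickly from smoothness of the Weil representations together with the fact that, near the identity, the metaplectic topology is the product topology (so the section $g \mapsto (g,1)$ is continuous there); a matrix-coefficient computation then shows $\lambda_a \equiv 1$ on a neighbourhood of the identity. This is routine and well within the spirit of ``straightforward calculations'' the paper alludes to.
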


\begin{rem} In particular the Weil representations $(\omega_{\psi,X}^-,S_{\psi,X}^-) \in \textup{Rep}_R(\textup{Mp}^{c_X^-}(W^-))$ and $(\omega_{\psi^{-1},X},S_{\psi^{-1},X}) \in \textup{Rep}_R(\textup{Mp}^{c_X}(W))$ can be identified via $\gamma_{-1}$. In other words:
$$(\omega_{\psi,X}^- \circ \gamma_{-1} , S_{\psi,X}^- ) \simeq (\omega_{\psi,X},S_{\psi,X}).$$ \end{rem}

\begin{rem} From a computational perspective, the constant $\lambda_a(g)$ is almost impossible to determine without further assumptions on $g \in \textup{Sp}(W)$. However, in a number of cases, the specific $g$'s we consider are nice enough to carry out the computation. \end{rem}

\subsection{Action of $\textup{GSp}(W)$}

Let $\delta \in \textup{GSp}(W)$ and denote by $a \in F^\times$ its similitude factor \textit{i.e.} $\langle \delta w , \delta w' \rangle = a \langle w , w' \rangle$ for all $w, w' \in W$.  In particular $\delta : w \mapsto \delta w$ realises an isometry $W^a \to W$.

\subsubsection{Metaplectic group}

Let $X$ be a Lagrangian in $W$ and let $\textup{Mp}^{c_X}(W)$ be the metaplectic group associated to it. There exists a unique automorphism of the metaplectic group lifting $g \mapsto \delta g \delta^{-1}$ and we denote it $\gamma_\delta$. To be explicit:
$$\begin{array}{cccc}
\gamma_\delta : & \textup{Mp}^{c_X}(W) & \to & \textup{Mp}^{c_X}(W) \\
 & (g,\lambda) & \mapsto & (\delta g \delta^{-1}, \lambda \times \lambda_\delta(g))
\end{array}$$
and $\lambda_\delta$ is uniquely characterised by $\partial \lambda_\delta \cdot c_X = c_X^\delta$ that is:
$$\lambda_\delta(g) \lambda_\delta(g') c_X(\delta g \delta^{-1}, \delta g' \delta^{-1}) = \lambda_\delta(g g') c_X(g,g').$$
By unicity, $\gamma_\delta \circ \gamma_{\delta'} = \gamma_{\delta \delta'}$, for $\delta, \delta' \in \textup{GSp}(W)$, and we deduce for $\lambda$ the relation:
$$\lambda_{\delta \delta'}(g) = \lambda_{\delta'}(g) \lambda_\delta(\delta' g \delta'^{-1}).$$
If $w \in \textup{Sp}(W)$, then $\gamma_w(g,\lambda) = (w,1)(g,\lambda)(w,1)^{-1}$ is the conjugation in the metaplectic group. Therefore $\lambda_w(g) = c_X(w,w^{-1}) c_X(w,g) c_X(wg,w^{-1})$. Moreover if $p \in P(X)$, we have $\gamma_p(g,\lambda) = (p,1) (g,1) (p,1)^{-1} = (p g p^{-1},\lambda)$ \textit{i.e.} $\lambda_p(g)=1$. Alternatively $c_X$ is invariant by $P(X)$ \textit{i.e.} $c_X(pgp^{-1},pg'p^{-1})=c_X(g,g')$. As a result:
$$\lambda_{p \delta} (g) = \lambda_\delta(g) \textup{ and } \lambda_{\delta p}(g) = \lambda_\delta(pg p^{-1}).$$
The morphism $\gamma_\delta$ can be factored as the composition of two morphisms as follows. Let $X' = \delta(X)$ and consider $\textup{Mp}^{c_{X'}^a}(W^a)$. Then $(g,\lambda) \mapsto (\delta g \delta^{-1},\lambda)$ is an isomorphism:
$$\textup{Mp}^{c_X}(W) \to \textup{Mp}^{c_{X'}^a}(W^a)$$
\textit{i.e.} $c_X(\delta g \delta^{-1} , \delta g' \delta^{-1}) = c_{X'}^a(g,g')$. Now, its composition with the canonical morphism of central extensions:
$$\begin{array}{ccc}
 \textup{Mp}^{c_{X'}^a}(W^a) & \to & \textup{Mp}^{c_X}(W) \\
  (g,\lambda) & \mapsto & (g, \lambda \times \lambda_{X,X'}^a(g)) 
\end{array}$$
gives $\gamma_\delta$. In particular $\lambda_{X,X'}^a(\delta g \delta^{-1}) = \lambda_\delta(g)$.

\subsubsection{Weil representation}

It is easy to check that the following map is an isomorphism of the Heisenberg group:
$$\begin{array}{cccc}
\iota_\delta : & H & \to & H \\
 & (w,t) & \mapsto & (\delta w , at)
\end{array}$$
Let $\Phi_{X,\delta^{-1}(X)} : S_{\psi,\delta^{-1}(X)} \to S_{\psi,X}$ be the change of model for the Heisenberg representation. Similarly to the previous paragraph, there is an $\iota_\delta$-equivariant isomorphism:
$$\begin{array}{cccc}
\phi_{\iota_\delta} : & S_{\psi^a,X} & \to & S_{\psi,X} \\
 & f & \mapsto & \Phi_{X,\delta^{-1}(X)} (f \circ \iota_{\delta})
\end{array}$$
\textit{i.e.}  $\phi_{\iota_\delta} \circ \rho_{\psi^a,X}(\iota_\delta(h)) = \rho_{\psi,X}(h) \circ \phi_{\iota_\delta}$ for all $h \in H$. This leads to the following compatibility:

\begin{lem} The canonical isomorphism $\gamma_\delta$ of the metaplectic group, lifting conjugation by $\delta$ on $\textup{Sp}(W)$, makes the diagram below commute:
$$\xymatrix{
		\textup{Mp}^{c_X}(W) \ar@{->}[r]^{\omega_{\psi^a,X}} \ar@{->}[d]^{\gamma_\delta} & \ar@{->}[d]^{\phi_{\iota_\delta} (-) \phi_{\iota_\delta}^{-1}} \textup{GL}(S_{\psi^a,X}) \\
		 \textup{Mp}^{c_X}(W) \ar@{->}[r]^{\omega_{\psi,X}} & \textup{GL}(S_{\psi,X}) 
		}$$
where $\gamma_\delta : (g,\lambda) \mapsto (\delta^{-1} g \delta,\lambda \lambda_\delta(g))$ and $\lambda_\delta(g) \in R^\times$ is characterised by the relation:
$$\phi_{\iota_\delta} \circ \omega_{\psi^a,X}(g,1) = \lambda_\delta(g) \times \omega_{\psi,X}(\delta^{-1} g \delta,1) \circ \phi_{\iota_\delta},$$
where $a$ is the similitude factor of $\delta$. \end{lem}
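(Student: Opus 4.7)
The plan is to exploit the Stone--von Neumann theorem: both $\omega_{\psi^a,X}$ on $S_{\psi^a,X}$ and the conjugate of $\omega_{\psi,X}$ by $\phi_{\iota_\delta}$ on $S_{\psi,X}$ are projective representations of $\textup{Sp}(W)$ built from Heisenberg representations of $H$ with central character $\psi$, and the Stone--von Neumann uniqueness of intertwiners will force them to agree up to a scalar, from which the cocycle $\lambda_\delta$ emerges. Once existence of $\lambda_\delta(g)$ is established, the uniqueness of lifts of conjugation to the metaplectic group identifies the resulting automorphism with $\gamma_\delta$.

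First, I would unwind the definitions to check that $\phi_{\iota_\delta} : S_{\psi^a,X}\to S_{\psi,X}$ is well-defined and satisfies the stated $\iota_\delta$-equivariance. A direct computation shows that for $f\in S_{\psi^a,X}$, the function $f\circ \iota_\delta$ lies in $S_{\psi,\delta^{-1}(X)}$: indeed $\iota_\delta$ maps $\delta^{-1}(X)\times F$ into $X\times F$, and the twist of the centre by the similitude factor $a$ cancels $\psi^a$ to produce $\psi$. Composing with the canonical Heisenberg intertwiner $\Phi_{X,\delta^{-1}(X)}$ lands in $S_{\psi,X}$, and the identity $(\rho_{\psi^a,X}(h_0)f)\circ \iota_\delta = \rho_{\psi,\delta^{-1}(X)}(\iota_\delta^{-1}(h_0))(f\circ \iota_\delta)$, together with the $H$-equivariance of $\Phi_{X,\delta^{-1}(X)}$, yields $\phi_{\iota_\delta}\circ \rho_{\psi^a,X}(h_0)=\rho_{\psi,X}(\iota_\delta^{-1}(h_0))\circ \phi_{\iota_\delta}$ for all $h_0\in H$.

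Using this equivariance together with the defining covariance $\omega_{\psi^a,X}(g,1)\rho_{\psi^a,X}(h)\omega_{\psi^a,X}(g,1)^{-1}=\rho_{\psi^a,X}(g\cdot h)$ of the Weil representation, one computes, for $g\in \textup{Sp}(W)$ and $h\in H$,
\[ \bigl(\phi_{\iota_\delta}\omega_{\psi^a,X}(g,1)\phi_{\iota_\delta}^{-1}\bigr)\,\rho_{\psi,X}(h)\,\bigl(\phi_{\iota_\delta}\omega_{\psi^a,X}(g,1)\phi_{\iota_\delta}^{-1}\bigr)^{-1}=\rho_{\psi,X}\bigl((\delta^{-1}g\delta)\cdot h\bigr), \]
the key point being that $g\cdot \iota_\delta(w,t)=(g\delta w,at)=\iota_\delta((\delta^{-1}g\delta)\cdot (w,t))$. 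Thus $\phi_{\iota_\delta}\omega_{\psi^a,X}(g,1)\phi_{\iota_\delta}^{-1}$ is a Heisenberg-projective intertwiner on $S_{\psi,X}$ for the automorphism of $H$ induced by $\delta^{-1}g\delta$, and the Stone--von Neumann uniqueness of such intertwiners forces it to be a scalar multiple of $\omega_{\psi,X}(\delta^{-1}g\delta,1)$. This produces the unique scalar $\lambda_\delta(g)\in R^\times$ satisfying $\phi_{\iota_\delta}\omega_{\psi^a,X}(g,1)=\lambda_\delta(g)\,\omega_{\psi,X}(\delta^{-1}g\delta,1)\phi_{\iota_\delta}$.

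Finally, composing this identity for $g$ and $g'$ and comparing with the identity for $gg'$ yields the cocycle relation $\lambda_\delta(g)\lambda_\delta(g')c_X(\delta^{-1}g\delta,\delta^{-1}g'\delta)=\lambda_\delta(gg')c_X(g,g')$, which is exactly the defining condition $\partial\lambda_\delta\cdot c_X=c_X^{\delta^{-1}}$ of the canonical isomorphism of central extensions lifting $g\mapsto \delta^{-1}g\delta$. As two such lifts differ by a continuous character of $\textup{Sp}(W)$, which is necessarily trivial, the resulting automorphism $(g,\lambda)\mapsto (\delta^{-1}g\delta,\lambda\lambda_\delta(g))$ of $\textup{Mp}^{c_X}(W)$ must coincide with $\gamma_\delta$, giving commutativity of the diagram. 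The main obstacle is bookkeeping: one has to track carefully the direction of the conjugation and the fact that $\iota_\delta$ twists the centre of $H$ by the similitude factor $a$, which is precisely what forces the source side to carry the character $\psi^a$ rather than $\psi$; once the conventions are aligned, every step is an explicit Schr\"odinger-model calculation.
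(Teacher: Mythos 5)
Your proof is correct. The paper explicitly omits the proofs of the lemmas in this appendix, noting that they follow from "straightforward and elementary calculations related to the Weil representation and the metaplectic group," and yours supplies exactly the calculation one would expect: the verification that $\phi_{\iota_\delta}$ realizes the stated $\iota_\delta$-equivariance of Heisenberg representations, the key identity $g\cdot\iota_\delta(h)=\iota_\delta((\delta^{-1}g\delta)\cdot h)$ which turns the conjugate $\phi_{\iota_\delta}\,\omega_{\psi^a,X}(g,1)\,\phi_{\iota_\delta}^{-1}$ into a Stone--von Neumann intertwiner for the automorphism $\delta^{-1}g\delta$, the resulting scalar $\lambda_\delta(g)$, and the cocycle computation identifying the assembled map with the canonical lift of conjugation by $\delta$. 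You also correctly flag the two bookkeeping pitfalls: the direction of conjugation and the fact that $\iota_\delta$ dilates the centre of $H$ by the similitude factor $a$, which is precisely why the source space carries the character $\psi^a$. One small point worth making explicit when you invoke uniqueness of lifts: the two candidate lifts differ by a group homomorphism $\textup{Sp}(W)\to R^\times$, and $\textup{Sp}(W)$ is its own derived group, so that homomorphism is trivial without any continuity hypothesis; that is the cleanest justification and avoids having to discuss smoothness of the scalar $\lambda_\delta$.
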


\begin{rem} We often write $\omega_{\psi,X}^\delta$ to mean the pullback along the canonical map $\gamma_\delta$. In this case, this gives the compatibility:
$$(\omega_{\psi,X}^\delta,S_{\psi,X}) \simeq (\omega_{\psi^a,X},S_{\psi^a,X}).$$ \end{rem}

\subsection{Dual pairs} \label{dual_pairs_weil_rep_appendix}

Let $W=V_1 \otimes V_2$ together with anisotropic decompositions $V_1 = X_1 + V_1^0 + Y_1$ and $V_2 = X_2 + V_2^0 + Y_2$. Let $\iota_{\psi,H_1}$ be the embedding of $H_1=U(V_1) \times_{c_1} R^\times$ into $\textup{Mp}^{c_{X^W}}(W)$ where $X^W = X^0 \oplus X^\mathbb{H}$ with $X^0 \in \Omega(V_1 \otimes V_2^0)$ and $X^\mathbb{H} = V_1 \otimes X_2 \in \Omega(V_1 \otimes V_2^\mathbb{H})$. Recall that $c_1$ is the $2$-cocyle induced by $c_{X^0}$ on $U(V_1)$ by restriction. We consider the Weil representation $(\omega_{\psi,X^W} \circ \iota_{\psi,X^W}) \in \textup{Rep}_R(H_1)$ and denote its isomorphism class by $\omega_{\psi,H_1}$. Therefore the realisation of $\omega_{\psi,H_1}$ on another model $S_{\psi,X'}$ is the composition:
$$H_1 \overset{\iota_{\psi,H_1}}{\to} \textup{Mp}^{c_{X^W}}(W) \overset{\gamma_{X',X}}{\to} Mp^{c_{X'}}(W) \overset{\omega_{\psi,X'}}{\to} \textup{GL}(S_{\psi,X'}).$$
When we modify $W$ to $W^a$ we get $\omega_{\psi,H_1^a}^a$ given by $(\omega_{\psi,X}^a \circ \iota_{\psi,H_1^a}^a,S_{\psi,X}^a)$ where $H_1^a  = U(V_1) \times_{c_1^a} R^\times$ and $c_1^a$ is induced by $c_{X^0}^a$.

\begin{lem} The canonical isomorphism of central extensions $\gamma_a$ descends to $H_1$ and $H_1^a$ by inducing an isomorphism of central extensions still denoted $\gamma_a$. To be more precise, the following diagram commutes :
$$\xymatrix{
	H_1^a \ar@{->}[r]^-{\iota_{\psi,H_1^a}} \ar@{->}[d]^{\gamma_a} &	\textup{Mp}^{c_{X^W}^a}(W^a) \ar@{->}[d]^{\gamma_a} \\
		H_1 \ar@{->}[r]^-{\iota_{\psi^a,H_1}} & \textup{Mp}^{c_{X^W}}(W)
		}$$
where $\gamma_a : (u_1,\lambda_1) \mapsto (u_1,\lambda_1 \lambda_a(u_1)))$ and $\lambda_a(u_1) \in R^\times$ is characterised by the relation:
$$\phi_{\iota^a} \circ \omega_{\psi,H_1^a}^a(u_1,1) = \lambda_a(u_1) \times \omega_{\psi^a,H_1}(u_1,1) \circ \phi_{\iota^a}.$$  \end{lem}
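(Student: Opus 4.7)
The plan is to deduce this compatibility from the already established lemma for the full metaplectic groups, by restricting along the embeddings of dual pairs. The key observation is that the $2$-cocycles $c_1^a$ and $c_1$ on $U(V_1)$ are obtained from the big cocycles $c_{X^W}^a$ and $c_{X^W}$ by pullback along the embedding $\iota_1 : u_1 \mapsto u_1 \otimes_D \textup{id}_{V_2}$ of $U(V_1)$ into $\textup{Sp}(W)$, together with the extra contribution coming from the embedding $\iota_{\psi,H_1^a}^a$ (resp.\ $\iota_{\psi^a,H_1}$) into the full metaplectic group.

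First, I would unpack the two embeddings. Both $\iota_{\psi,H_1^a}^a$ and $\iota_{\psi^a,H_1}$ factor through the diagonal embedding of metaplectic groups associated to the orthogonal decomposition $W = W^0 \oplus W^{\mathbb{H}}$. They differ only in which additive character is used to form the non-normalised Weil factor on the $W^{\mathbb{H}}$ component: one uses $\psi$ with the scaled symplectic form (since we are sitting inside $\textup{Mp}^{c_{X^W}^a}(W^a)$), and the other uses $\psi^a$ with the original form.

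Second, I would use the identity $\partial \lambda_a \cdot c_{X^W} = c_{X^W}^a$ governing the big $\gamma_a$ and restrict it to the subgroup $\iota_1(U(V_1))$: this yields $\partial(\lambda_a \circ \iota_1) \cdot c_1 = c_1^a$, so the map $\gamma_a : (u_1,\lambda_1) \mapsto (u_1, \lambda_1 \cdot (\lambda_a \circ \iota_1)(u_1))$ is a well-defined isomorphism of central extensions $H_1^a \to H_1$. Commutativity of the diagram then amounts to checking that applying the big $\gamma_a$ to the image of $(u_1,\lambda_1)$ under $\iota_{\psi,H_1^a}^a$ produces precisely the image of $\gamma_a(u_1,\lambda_1)$ under $\iota_{\psi^a,H_1}$. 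This requires matching, on the $W^{\mathbb{H}}$ piece, the scalar $\Omega_{1,\textup{det}_F(u_1)^{m_2}}$ computed via $\psi$ (scaled) with the same Weil factor computed via $\psi^a$, multiplied by the relevant restriction of $\lambda_a$; the matching is forced by uniqueness of $\gamma_a$ as an isomorphism of central extensions, applied on each factor of the diagonal embedding separately.

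Finally, the characterisation of $\lambda_a(u_1)$ via the Weil representation is obtained by specialising the analogous identity $\phi_{\iota^a} \circ \omega_{\psi,X^W}^a(g,1) = \lambda_a(g) \times \omega_{\psi^a,X^W}(g,1) \circ \phi_{\iota^a}$ from the previous lemma to $g = \iota_1(u_1)$, and then recognising, by the definition of the pulled-back Weil representation on dual pairs, that $\omega_{\psi,X^W}^a \circ \iota_{\psi,H_1^a}^a$ is a model for $\omega_{\psi,H_1^a}^a$, and likewise $\omega_{\psi^a,X^W} \circ \iota_{\psi^a,H_1}$ is a model for $\omega_{\psi^a,H_1}$. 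The main obstacle, and really the only non-routine step, is verifying that the non-normalised Weil factors appearing in the two dual-pair embeddings transform correctly under $\gamma_a$; this is pure bookkeeping with the properties of $\Omega_{1,\cdot}$ from \cite{trias_theta1} once the restriction of $\lambda_a$ to the image of $U(V_1)$ has been identified.
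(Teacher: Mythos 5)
Your overall strategy — descend the big-metaplectic lemma along the dual-pair embeddings — is the right idea, but the cocycle step in your second paragraph has a genuine gap. Restricting $\partial\lambda_a \cdot c_{X^W} = c_{X^W}^a$ to $\iota_1(U(V_1))$ gives $\partial(\lambda_a \circ \iota_1)\cdot c_1^W = c_1^{W,a}$, where $c_1^W$ and $c_1^{W,a}$ denote the pullbacks of $c_{X^W}$ and $c_{X^W}^a$ along the naive inclusion of $U(V_1)$ into $\textup{Sp}(W)$; it does not yield $\partial(\lambda_a\circ\iota_1)\cdot c_1 = c_1^a$, because $c_1$ and $c_1^a$ are by definition pulled back from $c_{X^0}$ on the anisotropic piece $W^0$, not from $c_{X^W}$. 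The discrepancy between $c_1$ and $c_1^W$ is precisely a coboundary in the Weil factor $\Omega_{1,\textup{det}_F(u_1)^{m_2}}$ — which is why $\iota_{\psi,H_1}$ is not the naive inclusion but carries that extra scalar on the $W^{\mathbb H}$-component. Consequently, your explicit formula $\gamma_a(u_1,\lambda_1) = (u_1,\lambda_1(\lambda_a\circ\iota_1)(u_1))$ need not make the square commute: the correct $\lambda_a(u_1)$ also absorbs the ratio between the $\psi$-Weil-factor correction in $\iota_{\psi,H_1^a}$ and the $\psi^a$-Weil-factor correction in $\iota_{\psi^a,H_1}$. Your appeal to ``uniqueness of $\gamma_a$'' to force the matching does not repair this either: unlike $\textup{Sp}(W)$, the group $U(V_1)$ has non-trivial characters (orthogonal, unitary), so isomorphisms of central extensions over $U(V_1)$ are only unique up to twist by such a character; uniqueness therefore cannot select the one making the diagram commute.

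The cleaner route sidesteps cocycles: observe that $\iota_{\psi,H_1^a}$ and $\iota_{\psi^a,H_1}$ are both injective morphisms of central extensions lifting $\iota_1$, hence their images are exactly the full inverse images of $\iota_1(U(V_1))$ in $\textup{Mp}^{c_{X^W}^a}(W^a)$ and in $\textup{Mp}^{c_{X^W}}(W)$; since the big $\gamma_a$ carries the first inverse image isomorphically onto the second, the composite $\iota_{\psi^a,H_1}^{-1}\circ\gamma_a\circ\iota_{\psi,H_1^a}$ is automatically a well-defined isomorphism of central extensions $H_1^a \to H_1$, and the square commutes by construction. The displayed characterisation of $\lambda_a(u_1)$ then follows from your last paragraph — specialise the Weil-representation identity from the preceding lemma at $g = \iota_1(u_1)$ and substitute the defining equalities $\omega_{\psi,H_1^a}^a = \omega_{\psi,X^W}^a\circ\iota_{\psi,H_1^a}$ and $\omega_{\psi^a,H_1} = \omega_{\psi^a,X^W}\circ\iota_{\psi^a,H_1}$ — but one must carry the Weil-factor scalars from those two substitutions through the computation; they do not cancel in general, and they are what accounts for the difference between $\lambda_a(u_1)$ and the bare restriction $\lambda_a(\iota_1(u_1))$.
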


\begin{rem} \label{pullback_for_-1_remark_annex} This means $\omega_{\psi,H_1^-}^-$ can be identified with $\omega_{\psi^{-1},H_1}$ thanks to $\gamma_{-1}$. \end{rem}

\begin{lem} Let $\delta \in \textup{GSp}(W)$ and assume $\delta$ preserves $U(V_1)$ by conjugation \textit{i.e.} $u_1 \mapsto \delta^{-1} (u_1 \otimes \textup{id}_{V_2})\delta$ induces an automorphism of $U(V_1)$ denoted $u_1 \mapsto u_1^\delta$. Then the canonical isomorphism of central extensions $\gamma_\delta$ descends to an automorphism of $H_1$ still denoted $\gamma_\delta$ such that the diagram below commutes:
$$\xymatrix{
	H_1 \ar@{->}[r]^-{\iota_{\psi^a,H_1}} \ar@{->}[d]^{\gamma_\delta} &	\textup{Mp}^{c_{X^W}}(W) \ar@{->}[d]^{\gamma_\delta} \\
		H_1 \ar@{->}[r]^-{\iota_{\psi,H_1}} & \textup{Mp}^{c_{X^W}}(W)
		}$$
where $\gamma_\delta : (u_1,\lambda_1) \mapsto (u_1^\delta,\lambda_1 \lambda_\delta(u_1))$ and $\lambda_\delta(u_1) \in R^\times$ is characterised by the relation:
$$\phi_{\iota_\delta} \circ \omega_{\psi^a,H_1}(u_1,1) = \lambda_\delta(u_1) \times \omega_{\psi,H_1}(u_1^\delta,1) \circ \phi_{\iota_\delta},$$
where $a$ is the similitude factor of $\delta$.
\end{lem}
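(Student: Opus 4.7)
The plan is to descend the automorphism $\gamma_\delta$ of $\textup{Mp}^{c_{X^W}}(W)$ along the embeddings $\iota_{\psi^a,H_1}$ and $\iota_{\psi,H_1}$, and then extract the explicit formula from the preceding lemma. The key observation is that the hypothesis that $\delta$ preserves $U(V_1)$ by conjugation says exactly that $\delta (U(V_1) \otimes \textup{id}_{V_2}) \delta^{-1} = U(V_1) \otimes \textup{id}_{V_2}$ inside $\textup{Sp}(W)$, so $\gamma_\delta$ stabilises (as a set) the preimage of $U(V_1) \otimes \textup{id}_{V_2}$ in $\textup{Mp}^{c_{X^W}}(W)$. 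Now both $\iota_{\psi,H_1}$ and $\iota_{\psi^a,H_1}$ identify $H_1$ bijectively with this preimage; they differ only by the $\psi$-dependent twist coming from the non-normalised Weil factor $\Omega_{1,\textup{det}_F(u_1)^{m_2}}$ appearing in their definitions in Section~\ref{witt_towers_dual_pairs_type_I_section}. Consequently the composition $\iota_{\psi,H_1}^{-1} \circ \gamma_\delta \circ \iota_{\psi^a,H_1}$ is a well-defined group automorphism of $H_1$, and the diagram commutes by construction; this is the descended $\gamma_\delta$.

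I then determine its explicit form by chasing $(u_1,\lambda_1) \in H_1$ around the diagram. The first component must land on $u_1^\delta$ by the very definition of the latter, while the second component is forced to be of the form $\lambda_1 \lambda_\delta(u_1)$ for a uniquely determined scalar $\lambda_\delta(u_1) \in R^\times$ that absorbs both the metaplectic cocycle correction $\lambda_\delta^{\textup{Mp}}(u_1 \otimes \textup{id}_{V_2})$ and the discrepancy between the $\psi$- and $\psi^a$-twists in the two embeddings. The cocycle identity ensuring $(u_1,\lambda_1) \mapsto (u_1^\delta,\lambda_1 \lambda_\delta(u_1))$ is a group morphism follows from the corresponding property of $\gamma_\delta$ on the metaplectic group. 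The characterisation
\begin{equation*}
\phi_{\iota_\delta} \circ \omega_{\psi^a,H_1}(u_1,1) = \lambda_\delta(u_1) \times \omega_{\psi,H_1}(u_1^\delta,1) \circ \phi_{\iota_\delta}
\end{equation*}
is then obtained by applying the analogous identity from the preceding lemma to $g = \iota_{\psi^a,H_1}(u_1,1)$, and using that the Weil representation on a dual pair is by definition $\omega_{\psi,H_1} = \omega_{\psi,X^W} \circ \iota_{\psi,H_1}$ (and similarly for $\psi^a$). Uniqueness of such a scalar via the irreducibility of the Heisenberg representation ensures that the two descriptions of $\lambda_\delta(u_1)$ coincide.

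The hard part will be the explicit bookkeeping around the $\psi$ versus $\psi^a$ normalisations: one must check that the twists entering $\iota_{\psi,H_1}$ and $\iota_{\psi^a,H_1}$ are compatible with the twist $\lambda_\delta^{\textup{Mp}}$ produced by $\gamma_\delta$ on the full metaplectic group, so that applying $\gamma_\delta$ to $\iota_{\psi^a,H_1}(H_1)$ really lands in $\iota_{\psi,H_1}(H_1)$ in a way that is functorial in $(u_1,\lambda_1)$, not merely as a map of underlying sets. This reduces to tracking how the non-normalised Weil factor $\Omega_{1,\cdot}$ transforms under $\psi \mapsto \psi^a$, and is routine but delicate; the final scalar $\lambda_\delta(u_1)$ is pinned down uniquely by the intertwining characterisation above.
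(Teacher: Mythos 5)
The paper states at the start of Appendix~\ref{compatibility_Weil_rep_app} that the lemmas there ``come from straightforward and elementary calculations \dots\ so their proofs are omitted,'' so there is no written argument to compare your proposal against. Judged on its own merits, your proposal is correct and is the natural route. The descent step is sound: $\gamma_\delta$ covers conjugation by $\delta$ on $\textup{Sp}(W)$, conjugation by $\delta$ stabilises the image of $U(V_1)$ by hypothesis, hence $\gamma_\delta$ stabilises its preimage in $\textup{Mp}^{c_{X^W}}(W)$; both $\iota_{\psi^a,H_1}$ and $\iota_{\psi,H_1}$ are isomorphisms of central extensions onto that preimage (they do differ only by the $\Omega_{1,\cdot}$ twist in the diagonal embedding), so $\iota_{\psi,H_1}^{-1}\circ\gamma_\delta\circ\iota_{\psi^a,H_1}$ is a well-defined automorphism of $H_1$ covering $u_1\mapsto u_1^\delta$ and fixing the central $R^\times$, forcing the shape $(u_1,\lambda_1)\mapsto(u_1^\delta,\lambda_1\lambda_\delta(u_1))$.

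One small clarification is worth making in the intertwining step. The identity in the preceding lemma is written for $(g,1)\in\textup{Mp}^{c_{X^W}}(W)$, while $\iota_{\psi^a,H_1}(u_1,1)$ is of the form $(u_1\otimes\textup{id}_{V_2},\mu)$ with $\mu$ generally not $1$ because of the Weil factor in the diagonal embedding. Since $\omega_{\psi,X^W}$ is genuine, the relation extends linearly in the central scalar, so it reads $\phi_{\iota_\delta}\circ\omega_{\psi^a,X^W}(g,\mu)=\omega_{\psi,X^W}(\gamma_\delta(g,\mu))\circ\phi_{\iota_\delta}$; plugging in $(g,\mu)=\iota_{\psi^a,H_1}(u_1,1)$, using $\omega_{\psi,H_1}=\omega_{\psi,X^W}\circ\iota_{\psi,H_1}$ and the commutativity of the square, and peeling off the factor $\lambda_\delta(u_1)$ gives exactly the stated characterisation. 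Your remark that this scalar bundles the metaplectic cocycle correction with the discrepancy between the $\psi$- and $\psi^a$-twists of the two embeddings is the right bookkeeping, and Stone--von Neumann pins it down uniquely as you say.
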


\begin{rem} This means $\omega_{\psi,H_1}^\delta$ is isomorphic to $\omega_{\psi^a,H_1}$. \end{rem}

\begin{rem} Note that $\delta$ preserves $U(V_1)$ if and only if $\delta$ preserves $U(V_2)$ as they are mutual centralisers in the symplectic group. \end{rem}

\section{On finite length of theta lifts} \label{finite_length_app}

Our proofs in this section will be based on finiteness properties, involving finite generation and admissibility, to deduce finite length of theta lifts. The following criterion for finite length is often very helpful. By \cite[II.5.10]{vig_book} it works for any reductive group $G$ with a compact open subgroup of invertible pro-order.

\begin{lem} Let $V \in \textup{Rep}_R(G)$. If $V$ is finitely generated and admissible over $R$, then $V$ has finite length. \end{lem}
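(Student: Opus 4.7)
The plan is to deduce finite length of $V$ from finite-dimensionality of a suitable space of invariants, by transferring the question from $\textup{Rep}_R(G)$ to modules over a Hecke algebra at a fixed level.

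First, I would use finite generation to pick a compact open subgroup $K \subseteq G$ of invertible pro-order in $R$ such that $V$ is generated as an $R[G]$-module by $V^K$. Such a $K$ exists because compact open subgroups of invertible pro-order are cofinal in the poset of all compact open subgroups of $G$ -- for $G$ reductive and $\ell \neq p$ one may, for instance, shrink any compact open stabiliser of a finite generating set to a pro-$p$ subgroup, whose pro-order is automatically invertible in $R$. By admissibility, $d := \textup{dim}_R V^K < \infty$, and $V^K$ naturally acquires the structure of a module over the Hecke algebra $e_K \mathcal{H}(G) e_K$.

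Second, I would invoke the equivalence, for representations generated at level $K$, between a full subcategory of $\textup{Rep}_R(G)$ and the category of non-degenerate modules over $e_K \mathcal{H}(G) e_K$, realised by the (exact) functor $M \mapsto M^K$. Under this equivalence, $V$ corresponds to the finite-dimensional module $V^K$, which has length at most $d$. The delicate point is that the equivalence preserves the lattice of subobjects only if every subrepresentation $W \subseteq V$ is itself generated by $W^K$. In the modular setting for a reductive $G$, this is controlled by the depth decomposition: $V$ being generated by vectors of depth bounded by that of $K$, the same depth bound propagates to every $W \subseteq V$, and Moy--Prasad theory (together with the noetherianity of $e_K \mathcal{H}(G) e_K$) yields $W = R[G] \cdot W^K$.

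The main obstacle is precisely this level-preservation step. In the banal or characteristic-zero setting it reduces to standard Bernstein theory; in the general modular case it rests on the recent finiteness results for Hecke algebras already recalled in Section \ref{covering-groups-sec} (the algebras $e_K \mathcal{H}(G) e_K$ are finitely generated over their noetherian centres). Granting it, any strictly ascending chain $W_0 \subsetneq W_1 \subsetneq \cdots$ of subrepresentations of $V$ gives, by exactness of $K$-invariants (valid because $K$ has invertible pro-order), a strictly ascending chain $W_0^K \subsetneq W_1^K \subsetneq \cdots$ in the $d$-dimensional space $V^K$. This chain stabilises in at most $d$ steps, so the original chain does too, proving $V$ has finite length. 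This is the content of \cite[II.5.10]{vig_book}.
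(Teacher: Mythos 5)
Your proposal has a genuine gap at the step you yourself flag as "delicate," and the justification given there does not hold. It is false that if $V$ is generated by $V^K$ then every subrepresentation $W \subseteq V$ satisfies $W = R[G] \cdot W^K$, and depth decomposition does not rescue this. A clean counterexample: take $G = \textup{GL}_2(F)$, $K = \textup{GL}_2(\mathcal{O}_F)$, and $V = \mathfrak{i}_B^G(1)$ the (reducible) normalised unramified principal series. Then $V^K$ is one-dimensional, $V$ is generated by $V^K$ (the spherical vector is not contained in the unique proper nonzero subrepresentation and must therefore generate $V$), yet that subrepresentation -- the Steinberg $\textup{St} \subsetneq V$ -- has $\textup{St}^K = 0$. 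The strict chain $0 \subsetneq \textup{St} \subsetneq V$ collapses to $0 = \textup{St}^K \subsetneq V^K$ after taking $K$-invariants, so counting subspaces of the $d$-dimensional $V^K$ does not see all subobjects of $V$. The depth-zero part of $\textup{Rep}_R(G)$ is not equivalent to modules over the single spherical Hecke algebra $e_K \mathcal{H}(G) e_K$; one needs idempotents attached to all parahorics (Iwahori included), and no fixed $K$ will do. For the record, the paper supplies no proof of its own and defers entirely to \cite[II.5.10]{vig_book}; the argument there does not count $K$-invariants at one level, but combines Noetherianity of the category (where finiteness of Hecke algebras enters) with Artinianity via admissible contragredient duality. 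Your ingredients -- admissibility, exactness of $(-)^K$, Noetherianity of Hecke algebras -- all belong to the correct proof, but the invariants functor at a fixed $K$ is exact without being faithful or conservative, and cannot bear the weight you place on it.
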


\noindent As a consequence of the lemma, our two main strategies revolve around the corollary:

\begin{cor} \label{finite_length_criterion_A_cor} Let $A$ be a commutative $R$-algebra and $V \in \textup{Rep}_A(G)$. If $V$ is finitely generated and admissible over $A$, then for all characters $\eta : A \to R$, the representation $V_\eta = V \otimes_{A,\eta} R \in \textup{Rep}_R(G)$ has finite length. \end{cor}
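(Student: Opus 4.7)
The plan is to verify that $V_\eta$ inherits both finite generation and admissibility over $R$ from the corresponding properties of $V$ over $A$, and then to invoke the preceding lemma directly.

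First, for finite generation over $R[G]$, I would observe that if $v_1,\dots,v_n$ generate $V$ over $A[G]$, then the pure tensors $v_i \otimes 1 \in V_\eta = V \otimes_{A,\eta} R$ generate $V_\eta$ over $R[G]$. Indeed, any pure tensor $v \otimes r$ can be rewritten as an $R[G]$-linear combination of the $v_i \otimes 1$ using a decomposition $v = \sum_i \sum_j a_{ij} (g_{ij} v_i)$ together with the identity $(a \cdot w) \otimes r = w \otimes \eta(a) r$, and since general elements of $V_\eta$ are $R$-linear combinations of pure tensors, we are done.

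Second, for admissibility over $R$, I would fix a compact open subgroup $K \subset G$ and aim to show $V_\eta^K$ is finitely generated, hence finite-dimensional, over $R$. Since compact open subgroups of invertible pro-order in $R$ are cofinal in $G$ under the standing hypothesis, it suffices to treat such $K$; any larger compact open subgroup is then sandwiched between one of these and $G$, and admissibility of $V_\eta$ follows from the inclusion $V_\eta^K \subseteq V_\eta^{K'}$ for $K' \subseteq K$. For $K$ of invertible pro-order, the averaging idempotent $e_K = |K|^{-1} \mathbf{1}_K$ lies in the Hecke algebra $H(G,K;A)$ because $|K|$ is invertible in $R$ hence in $A$, and $(-)^K$ is realised as left multiplication by $e_K$. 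Since $e_K$ acts $A$-linearly on $V$, it commutes with the base change along $\eta$:
\[
V_\eta^K = e_K (V \otimes_A R) = (e_K V) \otimes_A R = V^K \otimes_A R.
\]
By admissibility of $V$ over $A$, the module $V^K$ is finitely generated over $A$, and so $V^K \otimes_A R$ is finitely generated over $R$.

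With both finite generation and admissibility of $V_\eta$ over $R$ in hand, the preceding lemma yields that $V_\eta$ has finite length in $\textup{Rep}_R(G)$. The only subtle point is the compatibility of $K$-invariants with the base change along $\eta$, but this is immediate from the $A$-linearity of the averaging idempotent once attention is restricted to $K$ of invertible pro-order, and the cofinality of such subgroups then suffices to verify admissibility at every compact open level.
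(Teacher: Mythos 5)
Your proof is correct and is essentially the argument the paper has in mind; the paper states the corollary without proof as an immediate consequence of the preceding lemma, and the only content to supply is exactly what you check: that finite generation and admissibility descend from $V$ over $A$ to $V_\eta$ over $R$. The key step — commuting $K$-invariants with base change via the idempotent $e_K$ when $K$ has invertible pro-order, then using cofinality to handle arbitrary $K$ — is the standard device and is carried out correctly.
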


Even though the Weil representation is not admissible over $R$, it turns out to be admissible over a a bigger commutative $R$-algebra $A$. Moreover, the largest $\pi$-isotypic quotient $V \twoheadrightarrow V_\pi$, where $\pi \in \textup{Irr}_R(G)$ and $V \in \textup{Rep}_A(G)$, will factor through $V_{\eta_\pi}$ where $\eta_\pi : A \to R$ is the character associated to $\pi$ by Schur's lemma. When the corollary above applies, the largest isotypic quotients have finite length as a consequence of the factorisation $V_{\eta_\pi} \twoheadrightarrow V_\pi$. This is at the heart of our first strategy. The second strategy is very similar, except that it uses the generalised doubling method and the induced representation $I(s)$ instead of the Weil representation.

However, both arguments require the so-called finiteness of Hecke algebras, due to Bernstein in the complex setting and \cite{dhkm-finiteness} in the modular setting:

\begin{theo}[Finiteness of Hecke algebras] Let $G$ be a reductive group over $F$. Then for all compact open subgroups $K$ in $G$, we have:
\begin{itemize}[$\bullet$]
\item the relative Hecke algebra $\mathcal{H}(G,K)$ is finite over its centre $\mathfrak{z}(G,K)$;
\item the centre $\mathfrak{z}(G,K)$ is a finitely generated $R$-algebra.
\end{itemize}
\end{theo}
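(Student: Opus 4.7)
The plan is to reduce the two assertions to statements about finitely many ``pieces'' of the category $\textup{Rep}_R(G)$, each of which is Morita equivalent to a module category over an explicit algebra whose finiteness properties are classical. The fixed-point functor $V \mapsto V^K$ only sees representations generated by their $K$-invariants, and these are of bounded depth (the depth of any non-zero $K$-fixed vector is bounded by the depth of $K$ itself). So it is enough to prove the analogous finiteness statements inside $\textup{Rep}_R^{\leq r}(G)$ for a fixed $r$, and to check that only finitely many Bernstein-like components meet any given $\textup{Rep}_R^{\leq r}(G)$.

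The first main step is therefore to establish the Moy-Prasad depth decomposition $\textup{Rep}_R(G) = \bigoplus_r \textup{Rep}_R^{r}(G)$, which is available because $p \in R^\times$; this is standard. The second and more substantial step is to decompose each depth slice into ``blocks'' and describe each block by a type. In the complex setting this is Bernstein's theorem on decomposition by cuspidal support together with the existence of types (Bushnell-Kutzko in classical cases, Kim-Yu in general), so that each block is Morita equivalent to the module category of a twisted affine Hecke algebra of explicit type. Such affine Hecke algebras admit a Bernstein presentation realising them as finite modules over a central subalgebra of the form $R[T^\vee]^W$ for a complex torus $T^\vee$ and a finite Weyl group $W$; both desired statements then follow by taking $K$-invariants and noting that only finitely many blocks survive at bounded depth.

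The hard part, and the true content of \cite{dhkm-finiteness}, is making this strategy work over a coefficient field $R$ of positive characteristic $\ell \neq p$. In this setting the classical notion of cuspidal support is ill-behaved because cuspidals need no longer be projective/injective, and Bernstein's decomposition can fail to be a block decomposition. The way I would try to circumvent this is to work integrally over $\bar{\mathbb{Z}}_\ell$: one establishes the block decomposition over $\bar{\mathbb{Q}}_\ell$ by Bernstein's theorem, produces $\bar{\mathbb{Z}}_\ell$-integral structures on the idempotents of the center using a generic freeness argument together with the fact that integral Hecke algebras are $\bar{\mathbb{Z}}_\ell$-flat, and then reduces modulo $\ell$. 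Finiteness of $\mathcal{H}(G,K)$ over its centre is preserved by this reduction, while finite generation of the centre descends from finite generation of the integral Bernstein centre, which is itself controlled by the finite generation of the normaliser of a generic character. Second adjunction in the modular setting (Bernstein's second adjointness for mod $\ell$ representations) is essential here to identify the Jacquet functor with compact induction and to transport the combinatorial data from characteristic zero. The main obstacle I foresee is precisely the construction of enough types compatible with reduction modulo $\ell$ in wild situations: without $\ell$ being banal, one must keep track of congruences between inertial supports, and it is here that the argument is most delicate.
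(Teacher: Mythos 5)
The theorem is not proved in the paper: it is stated as a citation of \cite{dhkm-finiteness}, and the paragraph following it describes (without proving) the radically different strategies used over $\mathbb{C}$ and over $R$ of positive characteristic, pointing out that the modular proof rests on Fargues--Scholze excursion operators.

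Your proposed route has a structural circularity that the paper explicitly flags. You write that second adjunction ``is essential here to identify the Jacquet functor with compact induction and to transport the combinatorial data from characteristic zero.'' But the paper stresses that in the modular setting the logical order is \emph{reversed}: Jacquet functors are no longer faithful, the classical Bernstein argument for noetherianity and second adjunction breaks down, and one instead proves the finiteness theorem \emph{first} and then deduces second adjunction as a consequence. Assuming modular second adjunction as an input is therefore begging the question. Your plan also leans on types (Bushnell--Kutzko, Kim--Yu) giving a Morita description of each block by a twisted affine Hecke algebra. This is not available in full generality even over $\mathbb{C}$ unless one imposes tameness or group-specific hypotheses, so it cannot serve as the engine of a theorem stated for arbitrary reductive $G$ over $F$. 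Reduction of idempotents from $\bar{\mathbb{Z}}_\ell$ would also require a block decomposition over $\bar{\mathbb{Z}}_\ell$, which is part of what has to be established, not something one can simply pull back from $\bar{\mathbb{Q}}_\ell$ when $\ell$ is non-banal.

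What \cite{dhkm-finiteness} actually does is of a different nature: it bypasses types and the inertial decomposition entirely and produces, via the excursion operators of Fargues--Scholze, a finitely generated subalgebra of the Bernstein centre over which the Hecke algebras are finite. This is a geometric input with no counterpart in the type-theoretic strategy you sketch, and it is precisely what lets the argument avoid both the missing second adjunction and the incomplete theory of types. Your last paragraph correctly senses that congruences between inertial supports at non-banal $\ell$ are the obstruction; the resolution is not to control those congruences but to sidestep them.
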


In particular $\mathfrak{z}(G,K)$ is noetherian and $\mathcal{H}(G,K)$ is noetherian. Together with the depth decomposition, it implies that $\textup{Rep}_R(G)$ is noetherian and all finitely generated representation are $\mathfrak{z}(G)$-admissible. This theorem is obtained by very different strategies in the modular setting and in the complex/banal one. Indeed, in the classical theory, noetherianity of the category and second adjunction appear rather early as consequences of faithfulness of Jacquet functors \cite[Lem 29]{bernstein_notes}. Then building explicit progenerators show the theorem above.

In the modular setting, the classical proofs of noetherianity and second adjunction break down, because Jacquet functors are no longer faithful -- it is related to the fact that the cuspidal part is not necessarily a direct factor. The strategy actually goes the other way round: the first step consists in proving the finiteness theorem and then use it to deduce second adjunction and other results. For modular representations -- or more generally, representations in families -- the proof of the theorem uses profound results from Fargues-Scholze about excursion operators.

We assumed $G$ to be reductive in the previous theorem, which means the theorem is \textit{a priori} not known for the metaplectic group. In the complex setting, the author is not aware of an actual reference for the depth decomposition of $\textup{Rep}_\mathbb{C}(\textup{Mp}(W))$, even though it may be widely accepted among experts as the theory should work identically. The same remark applies for the finiteness of Hecke algebras. In the modular case, the depth decomposition should still hold, though it would need to be carefully written. However, the excursion operators of Fargues-Scholze have a very geometric flavour and are not available for the metaplectic group as it escapes the realm of reductive algebraic groups. This is the reason why we exclude the metaplectic case below, even though all our arguments would work equally well if both the depth decomposition and the finiteness result were fully known.

\subsection{Finite length when $p\neq 2$}

We assume $H_1$ is not the metaplectic group. For $\pi_1 \in \textup{Irr}_R(H_1)$ and $\eta_{\pi_1} : \mathfrak{z}_{H_1} \to R$ the associated character induced by Schur's lemma, the biggest $\pi_1$-isotypic quotient always factors through the biggest $\eta_{\pi_1}$-isotypic quotient \textit{i.e.} $\omega_{\eta_{\pi_1}} \twoheadrightarrow \omega_{\pi_1}$. We want to apply the previous key corollary to the Weil representation $\omega$ to obtain that $\omega_{\eta_{\pi_1}}$ has finite length, which implies $\omega_{\pi_1}$ has finite length. As we make use of the lattice model in our argument, we have to avoid even residual characteristic.

\paragraph{Finiteness properties of $\omega_{m_1,m_2}$.} We prove the following finiteness properties about the Weil representation.

\begin{lem} \label{weil_rep_is_adm_over_centre_and_fin_gen_lem} Assume $p \neq 2$. Then:
\begin{itemize}[$\bullet$]
\item the Weil representation $\omega_{m_1,m_2}$ is admissible over $\mathfrak{z}_{H_1}$;
\item the depth $k$ factor $e_k^1 \omega_{m_1,m_2} \in \textup{Rep}_R(H_1 \times H_2)$ is finitely generated for all $k$.
\end{itemize} \end{lem}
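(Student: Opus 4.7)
The plan is to exploit the lattice model $(\omega_{\psi,L},S_{\psi,L})$, which is available precisely because $p \neq 2$. Since $p$ is odd, we can choose self-dual $\mathcal{O}_F$-lattices $L_i \subset V_i$ and form the self-dual lattice $L = L_1 \otimes_{\mathcal{O}_F} L_2 \subset W$. The key structural feature of this model is that the stabiliser $K_L \subset \textup{Mp}(W)$ of $L$ is a compact open subgroup, and $K_L \cap (H_1 \times H_2)$ projects onto good maximal compact open subgroups $K_1^0 \subset H_1$ and $K_2^0 \subset H_2$; moreover the $K_L$-invariants in $S_{\psi,L}$ can be described concretely as (a one-dimensional space of) characteristic functions, which generate the whole Weil representation under the $\textup{Mp}(W)$-action.

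For the second bullet (finite generation of $e_k^1 \omega_{m_1,m_2}$), I would argue as follows. Let $K_1^{(k)} \subset H_1$ denote the depth-$k$ Moy--Prasad subgroup; by the depth decomposition for the reductive group $H_1$, the factor $e_k^1 \omega_{m_1,m_2}$ is generated as an $H_1$-module by its $K_1^{(k)}$-invariants. In the lattice model, refining $L$ to a lattice $L^{(k)}$ shrunken by $\varpi^k$ in the relevant direction produces a compact open subgroup of $\textup{Mp}(W)$ whose intersection with $H_1$ lies inside $K_1^{(k)}$, and the corresponding invariant subspace is finite-dimensional (it is a space of functions supported on a finite union of cosets in $W/L^{(k)}$ and satisfying an explicit transformation rule). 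Thus the $K_1^{(k)}$-invariants form a finite-dimensional space (after bounding by the intersection with finitely many depth idempotents), proving that $e_k^1 \omega_{m_1,m_2}$ is finitely generated over $H_1$, hence \emph{a fortiori} over $H_1 \times H_2$.

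For the first bullet ($\mathfrak{z}_{H_1}$-admissibility), fix a compact open subgroup $K \subset H_1$, and choose $k$ with $K \supseteq K_1^{(k)}$. Then the depth decomposition yields $\omega_{m_1,m_2}^K = \bigoplus_{j \leq k}(e_j^1 \omega_{m_1,m_2})^K$, a finite direct sum. By the previous paragraph each $e_j^1 \omega_{m_1,m_2}$ is finitely generated as an $H_1 \times H_2$-representation, so $(e_j^1 \omega_{m_1,m_2})^K$ is finitely generated as a module over $\mathcal{H}(H_1, K) \otimes R[H_2]$. Now invoke the finiteness theorem for Hecke algebras: $\mathcal{H}(H_1, K)$ is a finite module over its centre $\mathfrak{z}(H_1, K)$, which receives a canonical surjection from $\mathfrak{z}_{H_1}$. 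This forces $\omega_{m_1,m_2}^K$ to be finitely generated over $\mathfrak{z}_{H_1}$ (the residual $R[H_2]$-action is irrelevant to the finite generation statement over $\mathfrak{z}_{H_1}$ itself, since we are not asking for admissibility over $R$).

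The main obstacle is making the comparison between the Moy--Prasad filtration of $H_1$ and the natural filtrations arising from the lattice model precise. While the lattice model provides explicit compact open subgroups $K_L^{(k)}$ in $\textup{Mp}(W)$ stabilising dilations of $L$, identifying these with (or comparing them to) the Moy--Prasad subgroups $K_1^{(k)} \times K_2^{(k)}$ in the dual pair requires a careful choice of compatible lattices and verification that the restrictions to $H_1$ and $H_2$ are indeed of the correct depth. A secondary care-point is that when one of the groups has a non-split factor, the identification involves the splitting of the covering on the compact side; this is standard but must be handled. Once these bookkeeping issues are settled, both claims follow from the finite-dimensionality of the relevant invariant spaces in the lattice model combined with the finiteness of Hecke algebras.
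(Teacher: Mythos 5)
The crucial input your argument never reaches for is Waldspurger's finiteness theorem \cite[III.1 Th]{wald}, which asserts that for a sufficiently small compact open $K_2 \subset H_2$, the $K_2$-invariants of the Weil representation satisfy $\omega_{m_1,m_2}^{K_2} = \mathcal{H}(H_1)\cdot V$ for a \emph{finite-dimensional} $V$. That is the one place in the lattice-model argument where you actually get finiteness, and it is specifically a finiteness over the \emph{other} Hecke algebra. Without it, the argument collapses in the places you try to compensate by claiming finite-dimensionality. Specifically, the step ``Thus the $K_1^{(k)}$-invariants form a finite-dimensional space'' is false: $(e_k^1\omega_{m_1,m_2})^{K_1^{(k)}}$ carries an unconstrained smooth $H_2$-action and is infinite-dimensional. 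Your preceding sentence refers to invariants under a full compact open subgroup of $\textup{Mp}(W)$, which \emph{are} finite-dimensional, but those invariants are contained in the $(K_L^{(k)}\cap H_1)$-invariants together with an extra constraint in the $H_2$-direction; nothing you say lets you pass from that small finite-dimensional space to the (much larger) $K_1^{(k)}$-invariants. With the corrected $\subset$/$\supset$ bookkeeping you would only conclude that the $K_1^{(k)}$-invariants \emph{contain} a finite-dimensional subspace, which is vacuous.

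The gap then propagates into your first bullet in two ways. First, admissibility over $\mathfrak{z}_{H_1}$ must be tested against compact open subgroups of $H_1 \times H_2$ (as the paper makes explicit in Lemma~\ref{degenerate_principal_series_admissible_centre_lem}), not merely $K \subset H_1$, so you would have to consider $K = K_1\times K_2$. Second, even granting that $(e_j^1\omega)^{K_1\times K_2}$ is finitely generated over $\mathcal{H}(H_1,K_1)\otimes\mathcal{H}(H_2,K_2)$, the parenthetical remark that the ``residual $R[H_2]$-action is irrelevant'' is not sound: being finitely generated over $A\otimes B$ does not give finite generation over $A$. The point of Waldspurger's theorem is precisely that, after taking $K_2$-invariants, the module \emph{is} finitely generated over $\mathcal{H}(H_1)$ alone; restricting further to $K_1$-invariants gives finite generation over $\mathcal{H}(H_1,K_1)$ alone, and only then does the finiteness of $\mathcal{H}(H_1,K_1)$ over $\mathfrak{z}_{H_1}$ close the argument. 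For the second bullet, the paper's symmetric use of Waldspurger (taking $K_1$-invariants, getting $\omega^{K_1}=\mathcal{H}(H_2)\cdot V$, then hitting this with $\mathcal{H}(H_1)e_k^1$) is what produces finite generation over $H_1\times H_2$; your attempt to get finite generation over $H_1$ alone is in fact a strictly stronger and almost certainly false statement.
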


\begin{proof} Let $K_2$ be a compact open subgroup of $H_2$. To be on the safe side, we may assume our open compact subgroups are small enough so that their pro-order is invertible in $R$. Then by the computation of invariants in \cite[III.1 Th]{wald}, that we can still carry out in the modular setting assuming $K_2$ is small enough, there exists a finite-dimensional $R$-vector space $V$ in $\omega_{m_1,m_2}$ such that:
$$\omega_{m_1,m_2}^{K_2} = \mathcal{H}(H_1) \cdot V.$$
Here this computation is obtained in the lattice model, assuming $p \neq 2$. Therefore there exists a compact open subgroup $K_1$ in $H_1$ fixing $V$ \textit{i.e.} $V \subseteq \omega_{m_1,m_2}^{K_1 \times K_2}$, and in particular $\omega_{m_1,m_2}^{K_1 \times K_2} = \mathcal{H}(H_1,K_1) \cdot V$. Because $V$ has finite dimension, the right-hand side is finitely generated over the Hecke algebra $\mathcal{H}(H_1,K_1)$. We conclude by the finiteness of Hecke algebras over their centres which claims that $\mathcal{H}(H_1,K_1)$ is a finite $\mathfrak{z}_{H_1}$-algebra. 

Regarding the second claim, choose a compact open subgroup $K_1$ in $H_1$ such that all depth $k$ representations are generated by their $K_1$-fixed vectors. As in the previous paragraph there exists a finite dimensional $V$ in $\omega_{m_1,m_2}$ such that:
$$\omega_{m_1,m_2}^{K_1} = \mathcal{H}(H_2) \cdot V.$$
But $e_k^1 \omega_{m_1,m_2} = e_k^1 \mathcal{H}(H_1) \omega_{m_1,m_2}^{K_1} = \mathcal{H}(H_1) e_k^1(K_1) \omega_{m_1,m_2}^{K_1} \in \textup{Rep}_R(H_1 \times H_2)$ is a direct factor of $\mathcal{H}(H_1) \mathcal{H}(H_2) \cdot V$ and the latter is finitely generated.  \end{proof}

\begin{rem} We can always invert the roles of $H_1$ and $H_2$, provided $H_2$ is not the metaplectic group, so the result is equally true with $\mathfrak{z}_{H_2}$ in place of $\mathfrak{z}_{H_1}$. \end{rem}

\paragraph{Finitess of $\Theta(\pi_1,V_2^{m_2})$ when $p \neq 2$.} We finally obtain finiteness of the big theta lift.

\begin{theo} \label{finite_length_theta_p_not_2_thm} Assume $p \neq 2$. Let $\pi_1 \in \textup{Irr}_R(H_1)$. Then $\Theta(\pi_1,V_2^{m_2})$ has finite length. \end{theo}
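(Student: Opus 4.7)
The plan is to apply Corollary \ref{finite_length_criterion_A_cor} with $A = \mathfrak{z}_{H_1}$, the Bernstein centre of $H_1$, and then transfer finite length from an $(H_1 \times H_2)$-representation down to an $H_2$-representation. Let $k$ be the depth of $\pi_1$ and $\eta_{\pi_1} \colon \mathfrak{z}_{H_1} \to R$ the character through which $\mathfrak{z}_{H_1}$ acts on $\pi_1$ via Schur's lemma. Since $\pi_1$ lies in a single Bernstein block sitting at depth $k$, the largest $\pi_1$-isotypic quotient $\omega_{m_1,m_2} \twoheadrightarrow \omega_{\pi_1} \simeq \pi_1 \otimes_R \Theta(\pi_1,V_2^{m_2})$ factors through $\omega_{\eta_{\pi_1}} := \omega_{m_1,m_2} \otimes_{\mathfrak{z}_{H_1},\eta_{\pi_1}} R$, and this latter object is in fact a quotient of the depth-$k$ summand $e_k^1 \omega_{m_1,m_2}$ because $\eta_{\pi_1}$ kills every idempotent $e_j^1$ with $j \neq k$. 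It is therefore enough to show that $\omega_{\eta_{\pi_1}}$ has finite length in $\textup{Rep}_R(H_1 \times H_2)$.

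I will check that $\omega_{\eta_{\pi_1}}$ is simultaneously finitely generated and admissible as an $(H_1 \times H_2)$-representation over $R$, so that the finite-length criterion opening this appendix applies to it. Finite generation over $R[H_1 \times H_2]$ is inherited from the second bullet of Lemma \ref{weil_rep_is_adm_over_centre_and_fin_gen_lem}, since $e_k^1 \omega_{m_1,m_2}$ is finitely generated as an $(H_1 \times H_2)$-representation and $\omega_{\eta_{\pi_1}}$ is a quotient. For admissibility, for any compact open $K = K_1 \times K_2$ of $H_1 \times H_2$ one has $(\omega_{\eta_{\pi_1}})^K = \omega_{m_1,m_2}^K \otimes_{\mathfrak{z}_{H_1},\eta_{\pi_1}} R$; the first bullet of Lemma \ref{weil_rep_is_adm_over_centre_and_fin_gen_lem}, combined with noetherianity of $\mathfrak{z}_{H_1}$ (itself a consequence of the finiteness of Hecke algebras recalled at the start of the appendix), ensures that $\omega_{m_1,m_2}^K$ is finitely generated over $\mathfrak{z}_{H_1}$, so the scalar extension along $\eta_{\pi_1}$ is finite-dimensional over $R$. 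Hence $\omega_{\eta_{\pi_1}}$, and a fortiori its quotient $\omega_{\pi_1}$, has finite length in $\textup{Rep}_R(H_1 \times H_2)$.

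To conclude, I will descend finite length from $H_1 \times H_2$ to $H_2$. Since $R$ is algebraically closed and $\pi_1$ is absolutely irreducible, the functor $\Sigma \mapsto \pi_1 \otimes_R \Sigma$ realises an equivalence between $\textup{Rep}_R(H_2)$ and the full subcategory of $\pi_1$-isotypic objects in $\textup{Rep}_R(H_1 \times H_2)$, and this equivalence preserves length; applying it to $\omega_{\pi_1} \simeq \pi_1 \otimes_R \Theta(\pi_1,V_2^{m_2})$ gives the claim. The main obstacle is the interplay between the two halves of Lemma \ref{weil_rep_is_adm_over_centre_and_fin_gen_lem}: finite generation is only available after cutting by the depth idempotent $e_k^1$, whereas admissibility is only available over the whole Bernstein centre $\mathfrak{z}_{H_1}$. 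Reconciling them requires the fact that the Bernstein decomposition refines the depth decomposition, so that cutting first by $\eta_{\pi_1}$ automatically lands in the depth-$k$ component, and it is precisely at this step that the hypothesis $p \neq 2$ is used through the lattice-model computation underlying Lemma \ref{weil_rep_is_adm_over_centre_and_fin_gen_lem}.
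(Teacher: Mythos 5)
Your proof is correct and follows essentially the same route as the paper: both rest on Lemma \ref{weil_rep_is_adm_over_centre_and_fin_gen_lem} together with Corollary \ref{finite_length_criterion_A_cor} applied to $V=e_k^1\omega_{m_1,m_2}$ and $A=\mathfrak{z}_{H_1}$, and both pass through the surjection $\omega_{\eta_{\pi_1}}\twoheadrightarrow\omega_{\pi_1}$. The only difference is that you inline the proof of Corollary \ref{finite_length_criterion_A_cor} (checking finite generation and admissibility of $\omega_{\eta_{\pi_1}}$ over $R$ by hand, using that $\mathfrak{z}_{H_1}$ commutes with the $K$-averaging idempotent so $K$-invariants and base change along $\eta_{\pi_1}$ commute) and spell out the passage from finite length of $\omega_{\pi_1}$ over $H_1\times H_2$ to finite length of $\Theta(\pi_1,V_2^{m_2})$ over $H_2$, both of which the paper leaves implicit.
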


\begin{proof} The biggest $\pi_1$-isotypic quotient $\omega_{\pi_1} \simeq \pi_1 \otimes \Theta(\pi_1,V_2^{m_2})$ factors through $e_k^1 \omega$ where $k$ is the depth of $\pi_1$. Thanks to Lemma \ref{weil_rep_is_adm_over_centre_and_fin_gen_lem}, we can apply Corollary \ref{finite_length_criterion_A_cor} to the Weil representation, setting $V=e_k^1 \omega$ and $A=\mathfrak{z}_{H_1}$.  Therefore $\omega_{\eta_{\pi_1}} \in \textup{Rep}_R(H_1 \times H_2)$ has finite length, and because it surjects on $\omega_{\pi_1}$, we obtain that $\Theta(\pi_1,V_2^{m_2})$ has finite length. \end{proof}

\subsection{Finite length from any index} \label{finite_length_from_any_index_sec}

We assume that neither of the groups $H_1$ or $H_2$ is the metaplectic group \textit{i.e.} neither $V_1$ nor $V_2$ is odd orthogonal. We are going to show that $\Theta(\pi_1,V_2^{m_2})$ has finite length for all $m_2$ if it has finite length for some $m_2 \geq m_2(\pi_1)$. Actually our proposition is a bit stronger as we only need to assume that $\Theta(\pi_1,V_2^{m_2})$ admits an irreducible quotient for some $m_2$. There is also an intermediate variation with finite generation since: 
$$\textup{non-zero finite length} \Rightarrow \textup{non-zero finitely generated} \Rightarrow \textup{admits an irreducible quotient}.$$

\paragraph{Finiteness properties of $I(s)$.} We use all the notations from Section \ref{Rallis_filtration_sec}, considering the representation $I(s)$ as a representation of $\dot{H}_2\times \ddot{H}_2$.

\begin{lem} The $(\dot{H}_2 \times \ddot{H}_2)$-representation $I(s)$ is compatible with depth decomposition in the sense that $\dot{e}_k I(s) = \ddot{e}_k I(s)$ for the central idempotents defining the depth $k$ categories. \end{lem}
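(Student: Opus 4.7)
The plan is to use the finite filtration of $I(s)$ from Theorem \ref{filtration_degenerate_principal_series_thm} and verify the identity $\dot{e}_k (-) = \ddot{e}_k (-)$ term by term. First I would note that $\dot{e}_k$ and $\ddot{e}_k$ act on $I(s)$ through disjoint factors of $\dot{H}_2 \times \ddot{H}_2$, hence commute, and both define exact endofunctors on $\textup{Rep}_R(\dot{H}_2 \times \ddot{H}_2)$. So for each $0 \leq t \leq \dot{m}_2$, the five-lemma applied inductively along the short exact sequences $0 \to I_{t-1}(s) \to I_t(s) \to J_t(s) \to 0$ reduces the claim to showing $\dot{e}_k J_t(s) = \ddot{e}_k J_t(s)$ for every subquotient.

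Next I would unpack $J_t(s)$, which is a normalised parabolic induction from $\dot{P}_t \times \ddot{P}_{t+\delta}$ of $\chi_{\dot{}} \otimes \chi_{\ddot{}} \otimes \textup{Reg}^{\alpha_t^-, \xi_t^-}(\dot{U}_t, \ddot{U}_{t+\delta})$, where $\chi_{\dot{}} = \nu_{{}_t \dot{X}_2, s}$ and $\chi_{\ddot{}} = \nu_{{}_{t+\delta} \ddot{X}_2, s}$ are characters of the $\textup{GL}$-parts of the respective Levi factors. Parabolic induction is compatible with the Bernstein decomposition: it intertwines depth idempotents on the Levi with depth idempotents on the ambient group (a consequence of Moy–Prasad theory; in the covering-group case this is available under the mild tameness assumption of Section \ref{covering-groups-sec}). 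Hence it suffices to verify the desired identity on the inducing representation viewed as a module over $\dot{M}_t \times \ddot{M}_{t+\delta}$.

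The characters $\chi_{\dot{}}$ and $\chi_{\ddot{}}$ live on the $\textup{GL}$-parts and have depths determined purely by the formula involving $|\det|^{s/2}$, the Hilbert symbol, and the non-normalised Weil factor; this formula is identical on both sides and produces the same depth irrespective of the rank. The core point is then the regular representation factor. Here I would invoke the general fact that any $z \in \mathfrak{z}(G)$ acts by the same operator on $C_c^\infty(G)$ whether one uses the left or right $G$-action, since $z$ is central and convolution with $z$ is a two-sided operation. Combined with the observation that $\alpha_t^-$ is an isomorphism of locally profinite groups induced by an anti-isometry between $\epsilon_2$-hermitian spaces, hence preserves the Moy–Prasad filtration and thus the depth decomposition (and $\xi_t^-$ only twists by a character of controlled depth), this implies that $\dot{e}_k$ and $\ddot{e}_k$ cut out the same subspace of $\textup{Reg}^{\alpha_t^-, \xi_t^-}(\dot{U}_t, \ddot{U}_{t+\delta})$.

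The main obstacle will be formalising the compatibility of parabolic induction with depth in the presence of covering groups: the statement is classical for connected reductive groups but needs some care when $\dot{M}_t$ or $\ddot{M}_{t+\delta}$ contains a metaplectic factor. This is precisely the content built into the preliminaries of Section \ref{covering-groups-sec}, where parabolic restriction is shown to preserve finite length under the relevant tameness hypothesis, and the same arguments (decomposition by depth via Moy–Prasad splittings of the Iwahori, plus finiteness of relative Hecke algebras) deliver the compatibility we need. Once that compatibility is in hand, the rest of the argument is essentially formal bookkeeping via the filtration.
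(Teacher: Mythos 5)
Your proof follows the same strategy as the paper's: reduce to the subquotients $J_t(s)$ of the filtration (the paper simply asserts this reduction; your five-lemma argument is a valid way to justify it, though it is perhaps more direct to observe that $\dot{e}_k(1-\ddot{e}_k)$ is a central idempotent, so its image is a direct summand and is detected on subquotients), then use compatibility of parabolic induction with depth to transfer to the Levi, where the inducing data consists of depth-$0$ characters tensored with the generalised regular representation, which is itself depth-compatible for the two actions. One small caveat: this appendix explicitly excludes metaplectic factors, so the tameness hypotheses of Section \ref{covering-groups-sec} that you invoke are not actually needed here; also, the cleaner way to phrase the contribution of $\chi_{\dot{}}$ and $\chi_{\ddot{}}$ is that they are depth $0$, not merely of "the same depth", since it is depth $0$ that makes the $\textup{GL}$-factor transparent to the Levi idempotent.
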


\begin{proof} It is enough to prove the lemma on each sub-quotient of the filtration of $I(s)$. Parabolic induction functors are compatible with depth \textit{i.e.} $\dot{e}_k \cdot \mathfrak{i}_{\dot{P}_t}^{\dot{H}_2} = \mathfrak{i}_{\dot{P}_t}^{\dot{H}_2} \circ \mathfrak{e}_k$ where $\mathfrak{e}_k$ is the the depth $k$ idempotent of the Levi of $\dot{P}_t$. Because the regular representation is compatible with depth and the characters $\chi$ have depth 0, we can equally consider that $\mathfrak{e}_k$ is the idempotent associated to the Levi of $\ddot{P}_{t+\delta}$. Using again the compatibilty of depth with parabolic induction $\mathfrak{i}_{\ddot{P}_{t+\delta}}^{\ddot{H}_2} \circ \mathfrak{e}_k = \ddot{e}_k \cdot \mathfrak{i}_{\ddot{P}_{t+\delta}}^{\ddot{H}_2}$ yields the result. \end{proof}

\begin{lem} $I(s)$ is locally finitely generated \textit{i.e.} its depth $k$ factor is finitely generated as an $(\dot{H}_2 \times \ddot{H}_2)$-representation. \end{lem}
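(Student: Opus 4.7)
My strategy is to run a finite devissage along the filtration of $I(s)$ from Theorem \ref{filtration_degenerate_principal_series_thm} and reduce the statement to local finite generation of the regular representation of an isometry group, which will follow from the finiteness of Hecke algebras. Since the filtration $0 \subset I_0(s) \subset \cdots \subset I_{\dot m_2}(s) = I(s)$ is finite and a finite extension of finitely generated representations is finitely generated, I would first reduce to showing that the depth $k$ component of each subquotient $J_t(s)$ is finitely generated as a representation of $\dot H_2 \times \ddot H_2$.

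Each subquotient $J_t(s)$ is a parabolic induction from $\dot P_t \times \ddot P_{t+\delta}$, and parabolic induction preserves both the depth decomposition (used in the previous lemma) and finite generation. Therefore it suffices to show that the Levi module
\[
\nu_{{}_t \dot X_2,s} \otimes \nu_{{}_{t+\delta} \ddot X_2,s} \otimes \textup{Reg}^{\alpha_t^-,\xi_t^-}(\dot U_t, \ddot U_{t+\delta})
\]
has finitely generated depth $k$ component. The GL-factors contribute only characters of depth zero, which can be absorbed without affecting finite generation, so the problem reduces to the local finite generation of the generalised regular representation $\textup{Reg}^{\alpha_t^-,\xi_t^-}(\dot U_t, \ddot U_{t+\delta})$ as a $(\dot U_t \times \ddot U_{t+\delta})$-module.

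Using $\alpha_t^-$ to identify $\ddot U_{t+\delta}$ with $\dot U_t$ and absorbing the character $\xi_t^-$, this is isomorphic to the regular representation $C_c^\infty(\dot U_t)$ viewed as a $(\dot U_t \times \dot U_t)$-module by left and right translation. I would then pick a compact open subgroup $K$ of $\dot U_t$ small enough that every representation of depth $\leq k$ is generated by its $K$-fixed vectors. Then $e_k C_c^\infty(\dot U_t)$ is generated as a $(\dot U_t \times \dot U_t)$-module by its $(K \times K)$-invariants, which coincide with $e_k \mathcal{H}(\dot U_t, K)$. By the finiteness of Hecke algebras, the latter is finite as a module over the noetherian ring $e_k \mathfrak{z}(\dot U_t, K)$.

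The concluding ingredient is that the Bernstein centre $\mathfrak{z}(\dot U_t)$ acts on $C_c^\infty(\dot U_t)$ via left translation and, since left and right translations commute, this action is $(\dot U_t \times \dot U_t)$-equivariant. Consequently any finite set of generators of $e_k \mathcal{H}(\dot U_t, K)$ over the Bernstein centre is automatically a finite set of $(\dot U_t \times \dot U_t)$-generators of $e_k C_c^\infty(\dot U_t)$. The main obstacle in this argument is that the finiteness theorem of \cite{dhkm-finiteness} is stated only for reductive groups: since the metaplectic case has been excluded at the start of Section \ref{finite_length_from_any_index_sec}, the isometry factors $\dot U_t$ and $\ddot U_{t+\delta}$ are honest reductive groups, so the finiteness result applies and the plan goes through.
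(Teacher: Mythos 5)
Your proposal follows the same devissage as the paper: reduce along the finite filtration of $I(s)$, push through parabolic induction (which preserves depth and finite generation), and reduce to local finite generation of the regular representation of the isometric Levi factor. The paper dispatches that last reduction in a single sentence ("which is true as a consequence of the depth decomposition and the finiteness of Hecke algebras"), whereas you spell it out: pick $K$ controlling depth $\leq k$, observe the $(K\times K)$-invariants of $e_k C_c^\infty(\dot U_t)$ are $e_k\mathcal{H}(\dot U_t,K)$, invoke finiteness over $e_k\mathfrak{z}(\dot U_t,K)$, and note that convolution by central elements is $(\dot U_t\times\dot U_t)$-equivariant so the finite set of generators over the centre already generates the bimodule. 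This is a correct and welcome unpacking of what the paper leaves implicit, and the closing remark that $\dot U_t$ is split over its covering (so the finiteness theorem for reductive groups applies to the genuine category) is exactly the point that needs flagging; one could be marginally more careful and say that $\dot U_t$ is a split $R^\times$-cover of a reductive group rather than itself a reductive group, with the splitting giving the equivalence of categories, but the substance is right.
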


\begin{proof} Again it is enough to prove it on each sub-quotient of the filtration of $I(s)$. As parabolic induction preserves depth \cite[II.5.12]{vig_book} and finite generation \cite[Cor 1.5]{dhkm-finiteness}, it is enough to show the result for the representation of $\dot{P}_t \times \ddot{P}_{t+\delta}$ we are inducing from. Therefore it is equivalent to ask whether the regular representation is locally finitely generated as a bi-module, which is true as a consequence of the depth decomposition and the finiteness of Hecke algebras. \end{proof}

\begin{lem} \label{degenerate_principal_series_admissible_centre_lem} $I(s)$ is admissible over the centre of any of the two groups \textit{i.e.} for all compact open subgroups $K \subset \dot{H}_2\times \ddot{H}_2$ the module $I(s)^K$ is finitely generated both as a $\mathfrak{z}_{\dot{H}_2}$-module and as a $\mathfrak{z}_{\ddot{H}_2}$-module. \end{lem}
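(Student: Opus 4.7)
The plan is to exploit the finite filtration of Theorem \ref{filtration_degenerate_principal_series_thm} together with the depth compatibility and local finite generation established in the two preceding lemmas. Since the $K$-invariants functor is exact and both $\mathfrak{z}_{\dot{H}_2}$ and $\mathfrak{z}_{\ddot{H}_2}$ are noetherian by the finiteness of Hecke algebras, it suffices to prove admissibility over each centre for every subquotient $J_t(s)$ separately; depth compatibility lets us restrict to finitely many Bernstein components at a time.

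For a fixed $t$ and $K = \dot{K} \times \ddot{K}$, the Iwasawa decomposition expresses $J_t(s)^K$ as a finite direct sum of spaces of the form $(\chi_t \otimes \textup{Reg}^{\alpha_t^-,\xi_t^-}(\dot{U}_t, \ddot{U}_{t+\delta}))^{K'}$ for various compact open subgroups $K' \subset \dot{P}_t \times \ddot{P}_{t+\delta}$, where $\chi_t = \nu_{{}_t\dot{X}_2,s} \otimes \nu_{{}_{t+\delta}\ddot{X}_2,s}$. The action of $\mathfrak{z}_{\dot{H}_2}$ on each summand factors, through the Bernstein centre compatibility of parabolic induction, via $\mathfrak{z}_{\dot{M}_t} = \mathfrak{z}_{\dot{G}_t} \otimes \mathfrak{z}_{\dot{U}_t}$; since $\chi_t$ is a character, the $\mathfrak{z}_{\dot{G}_t}$-factor acts by scalars, so the effective action is that of $\mathfrak{z}_{\dot{U}_t}$ on the regular representation component.

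The problem is thus reduced to showing that the $K_0$-invariants of $\textup{Reg}^{\alpha_t^-,\xi_t^-}(\dot{U}_t, \ddot{U}_{t+\delta})$, for any compact open $K_0 \subset \dot{U}_t \times \ddot{U}_{t+\delta}$, are finitely generated as a $\mathfrak{z}_{\dot{U}_t}$-module. After pulling back through $\alpha_t^-$ and absorbing the character twist by $\xi_t^-$, this space becomes a direct factor of the Hecke algebra $\mathcal{H}(\dot{U}_t, \dot{K}_0)$ under the left action, which is finite over its centre $\mathfrak{z}(\dot{U}_t, \dot{K}_0) \subset \mathfrak{z}_{\dot{U}_t}$ by the finiteness of Hecke algebras. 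The analogous statement for $\mathfrak{z}_{\ddot{H}_2}$ follows by exchanging the roles of the two factors, using $\alpha_t^-$ to identify $\dot{U}_t \simeq \ddot{U}_{t+\delta}$ and the fact that the Bernstein centre of a group acts identically from the left and from the right on its regular representation.

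The main obstacle I expect is the careful verification of the Bernstein centre compatibility with parabolic induction for the covering groups $\dot{H}_2$ and $\ddot{H}_2$: one needs a well-defined algebra homomorphism $\mathfrak{z}_{\dot{H}_2} \to \mathfrak{z}_{\dot{M}_t}$ such that the action of $\mathfrak{z}_{\dot{H}_2}$ on any $\mathfrak{i}_{\dot{P}_t}^{\dot{H}_2}(\sigma)$ is induced from this map. For reductive groups this is well-known; for the covers it should follow from the depth decomposition and finiteness of Hecke algebras under the hypotheses already in force, though some careful bookkeeping is required.
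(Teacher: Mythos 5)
Your proposal takes essentially the same route as the paper's proof: reduce by exactness of $K$-invariants and the finite filtration to each subquotient $J_t(s)$, use the Iwasawa decomposition together with the Bernstein-centre compatibility of parabolic induction to pass to the inducing data on the Levi, observe that the character factor acts by scalars so everything reduces to the regular representation, and invoke the finiteness of Hecke algebras over their centres. The paper states the reduction more tersely, citing only that ``parabolic induction preserves depth and admissibility,'' but your unpacking is the correct mechanism behind that sentence. One step you should make explicit: to promote finite generation over $\mathfrak{z}_{\dot{U}_t}$ (equivalently over $\mathfrak{z}_{\dot{M}_t}$, since the other tensor factor acts by scalars) to finite generation over $\mathfrak{z}_{\dot{H}_2}$, you must use that the Harish-Chandra morphism $\mathfrak{z}_{\dot{H}_2} \to \mathfrak{z}_{\dot{M}_t}$ is finite; this is part of the finiteness package of \cite{dhkm-finiteness} and is cited in the paper a few lines later, in Appendix B.3, for the parallel argument. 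As for your closing concern about Bernstein-centre compatibility for covering groups: Appendix B.2 excludes the metaplectic case from the outset, so $\dot{H}_2$, $\ddot{H}_2$ and their Levi subgroups are split $R^\times$-covers whose genuine-representation categories are equivalent (after absorbing a fixed genuine character) to those of the underlying reductive groups, and the reductive-group theory applies verbatim.
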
 

\begin{proof} We can assume that $K$ is small enough so that the functor of $K$-invariants is an exact functor. Again it is enough to prove the lemma on each sub-quotient of the filtration of $I(s)$. As parabolic induction preserves depth and admissibility \cite[II.2.1 \& II.5.12]{vig_book}, it is enough to show the result for the representation of $\dot{P}_t \times \ddot{P}_{t+\delta}$ we are inducing from. Therefore it is equivalent to ask whether the regular representation is admissible over the Bernstein centre, which is true by the finiteness of Hecke algebras over their centres. \end{proof}

We finally show the main result of this section:

\begin{prop} \label{finite_length_from_any_index_prop} Assume $\Theta(\pi_1,V_2^{m_2})$ admits an irreducible quotient for some $m_2$. Then $\Theta(\pi_1,V_2^{m_2})$ has finite length for all $m_2$. \end{prop}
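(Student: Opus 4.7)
The plan is to leverage the finiteness properties of the degenerate principal series $I(s)$ established in the preceding lemmas of Section \ref{finite_length_from_any_index_sec} (local finite generation, depth compatibility, and admissibility over both Bernstein centres) together with the generalised doubling method, in order to reduce the finite length of $\Theta(\pi_1, V_2^{m_2})$ to that of a well-chosen quotient of $I(s)$.

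For arbitrary $m_2$ and $m_2'$, write $V_2^\square = V_2^{m_2} \oplus (-V_2^{m_2'})$, set $\dot{H}_2 = H_2^{m_2}$ and $\ddot{H}_2 = H_2^{-,m_2'}$, and let $A := \Theta(\chi_1, V_2^\square) \in \textup{Rep}_R(\dot{H}_2 \times \ddot{H}_2)$. Combining Rallis' argument (Theorem \ref{Rallis_argument_thm}) and Lemma \ref{generalised_doubling_theta_lem} yields the chain
\[
I(s) \;\supseteq\; A \;\twoheadrightarrow\; \Theta(\pi_1, V_2^{m_2}) \otimes \Theta(\pi_1^{[\chi_1]}, -V_2^{m_2'}).
\]
The key intermediate claim I would establish is: for every irreducible $\tau \in \textup{Irr}_R(\ddot{H}_2)$, the quotient $A_{\eta_\tau}$ (where $\eta_\tau$ denotes the central character of $\tau$) has finite length as a $\dot{H}_2 \times \ddot{H}_2$-representation. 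Admissibility of $A_{\eta_\tau}$ over $R$ follows from the fact that $A \subseteq I(s)$ inherits admissibility over $\mathfrak{z}_{\ddot{H}_2}$ from Lemma \ref{degenerate_principal_series_admissible_centre_lem} (noetherianity of $\mathfrak{z}_{\ddot{H}_2}$), and the $\eta_\tau$-quotient then turns $\mathfrak{z}_{\ddot{H}_2}$-admissibility into $R$-admissibility. Finite generation of $A_{\eta_\tau}$ follows from the local finite generation of $I(s)$: by noetherianity of each Bernstein block (a consequence of the finiteness of Hecke algebras over their Bernstein centres), $A$ is itself locally finitely generated; the depth compatibility $\dot{e}_k = \ddot{e}_k$ on $I(s)$ is inherited by $A$ and $A_{\eta_\tau}$, so $A_{\eta_\tau}$ is concentrated in a single depth on both sides (namely the depth $k_0$ of $\tau$) and is a quotient of the finitely generated piece $\dot{e}_{k_0} A$. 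The analogous statement on the $\dot{H}_2$-side holds by the same argument, using the other half of Lemma \ref{degenerate_principal_series_admissible_centre_lem}.

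I then finish in two steps, using the claim symmetrically. First, apply it with $m_2 = m_2' = m_2^0$ and $\tau = \pi_2^0$ on the $\dot{H}_2$-side: the finite-length quotient $A_{\eta_{\pi_2^0}}$ surjects onto $\pi_2^0 \otimes \Theta(\pi_1^{[\chi_1]}, -V_2^{m_2^0})$, which forces $\Theta(\pi_1^{[\chi_1]}, -V_2^{m_2^0})$ to have finite length, and hence to admit an irreducible quotient $\sigma$. Second, for arbitrary $m_2$ and $m_2' = m_2^0$, apply the claim with this $\sigma$ on the $\ddot{H}_2$-side: the composition
\[
A_{\eta_\sigma} \;\twoheadrightarrow\; \Theta(\pi_1, V_2^{m_2}) \otimes \Theta(\pi_1^{[\chi_1]}, -V_2^{m_2^0}) \;\twoheadrightarrow\; \Theta(\pi_1, V_2^{m_2}) \otimes \sigma
\]
exhibits $\Theta(\pi_1, V_2^{m_2}) \otimes \sigma$ as a quotient of the finite-length representation $A_{\eta_\sigma}$, and since $\sigma$ is irreducible this forces $\Theta(\pi_1, V_2^{m_2})$ to have finite length.

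The hard part will be verifying the finite generation of $A_{\eta_\tau}$ in the key intermediate claim. It rests on the fact that submodules of locally finitely generated representations remain locally finitely generated inside each Bernstein block---precisely the noetherianity guaranteed by the finiteness of Hecke algebras over their Bernstein centres, which is exactly the reason why the metaplectic group has to be excluded in this appendix.
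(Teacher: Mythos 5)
Your proposal is correct and follows essentially the same route as the paper: embed $A = \Theta(\chi_1, V_2^\square)$ into $I(s)$, transfer the depth compatibility, local finite generation, and $\mathfrak{z}$-admissibility from $I(s)$ to $A$ via the finiteness of Hecke algebras, then apply the $\eta_\tau$-quotient argument (as in Corollary~\ref{finite_length_criterion_A_cor} and Theorem~\ref{finite_length_theta_p_not_2_thm}) and bootstrap via the seesaw surjection, once in each direction. Your version is slightly more explicit than the paper's in spelling out that the intermediate step produces an irreducible quotient $\sigma$ of $\Theta(\pi_1^{[\chi_1]}, -V_2^{m_2^0})$ needed to restart the argument, but this is already implicit in the paper's ``apply the same argument again.''
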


\begin{proof} Let $\pi_2$ be an irreducible quotient of $\Theta(\pi_1,V_2^{m_2})$. Then using the surjection of Lemma \ref{generalised_doubling_theta_lem} we obtain: 
$$\Theta(\chi_1,V_2^{m_2} \oplus (-V_2^{m_2'}))_{\pi_2} \twoheadrightarrow \pi_2 \otimes \Theta(\pi_1^{[\chi_1]},-V_2^{m_2'}).$$
As a submodule of $I(s)$, the representation $\Theta(\chi_1,V_2^{m_2} \oplus (-V_2^{m_2}))$ inherits all the good properties from the previous paragraph, and in particular, it is compatible with depth, it is locally finitely generated and admissible over either of the Bernstein centres -- to transfer this information we crucially rely on the finiteness of Hecke algebras. Similarly to Theorem \ref{finite_length_theta_p_not_2_thm} we obtain that $\Theta(\chi_1,V_2^{m_2} \oplus (-V_2^{m_2'}))_{\pi_2}$ is finitely generated and admissible over $R$, therefore it has finite length. So $\Theta(\pi_1^{[\chi_1]},-V_2^{m_2'})$ has finite length for all $m_2'$. We apply the same argument again, starting with $\Theta(\pi_1^{[\chi_1]},-V_2^{m_2'})$ and reversing the roles of the two spaces, to obtain that $\Theta(\pi_1,V_2^{m_2})$ has finite length for all $m_2$. \end{proof}

\subsection{End of the proof of Proposition \ref{finite_length_from_cuspidals_prop}} \label{end_proof_prop_finite_length_app}

We continue with the notation of Proposition \ref{finite_length_from_cuspidals_prop}. We consider:
$$V=\mathfrak{i}_{Q_{{}^t X_1^k} \times {}^k H_1^{m_1} \times P_t^{m_2}}^{G_{X_1^k} \phantom{k} \times {}^k H_1^{m_1} \times H_2}(\delta_{{}^t X_1^k,n_2} |\textup{det}_{{}^t X_1^k}|^{\frac{s+k-t}{2}} \otimes \textup{Reg}^{\alpha_t,\xi_t}(G_1^t,G_2^t) \otimes \sigma_1 \otimes \Theta(\sigma_1,V_2^{m_2-t})).$$
Similarly to Appendix \ref{finite_length_from_any_index_sec}, the representation above is compatible with depth, locally finitely generated and admissible over the Bernstein centre $\mathfrak{z}_{G_1^t}$, because the regular representation enjoys these properties and the other representations have finite length. Moreover, the Harish-Chandra morphism is finite \cite[Lem 4.2 \& Th 4.3]{dhkm-finiteness} and it factors, in our case, through $\mathfrak{z}_{G_{X_1^k}} \to \mathfrak{z}_{G_1^t}$. Therefore $V$ is admissible over $\mathfrak{z}_{G_{X_1^k}}$. 

We conclude that $V_{\rho_1}$ has finite length for all $\rho_1 \in \textup{Irr}_R(G_{X_1^k})$ by using Corollary \ref{finite_length_criterion_A_cor}. This conclusion is also valid for any subquotient of $V$ by the finiteness of Hecke algberas.

\bibliographystyle{alpha}
\bibliography{lesrefer}

\end{document}